\documentclass[11pt,a4paper]{amsart}

\usepackage{amsmath,amssymb,amsthm,mathrsfs,comment,url,mathtools,enumerate,booktabs}
\usepackage[backref=page]{hyperref}
\usepackage[alphabetic]{amsrefs}
\usepackage{tikz}
\usetikzlibrary{cd}
\usepackage{enumitem}

\newcommand{\gmail}{\url{matsumoto.yuya.m@gmail.com}}
\newcommand{\tusrsmail}{\url{matsumoto_yuya@rs.tus.ac.jp}}
\newcommand{\tusaddressfull}{Department of Mathematics, Faculty of Science and Technology, Tokyo University of Science, 2641 Yamazaki, Noda, Chiba, 278-8510, Japan}

\newcommand{\cO}{\mathcal O}
\newcommand{\cL}{\mathcal L}
\newcommand{\cB}{\mathcal B}
\newcommand{\cR}{\mathcal R}
\newcommand{\cS}{\mathcal S}
\newcommand{\cI}{\mathcal I}
\newcommand{\cX}{\mathcal X}
\newcommand{\bA}{\mathbb A}
\newcommand{\bP}{\mathbb P}
\newcommand{\bZ}{\mathbb Z}
\newcommand{\bQ}{\mathbb Q}
\newcommand{\bC}{\mathbb C}
\newcommand{\bG}{\mathbb G}
\newcommand{\bF}{\mathbb F}
\newcommand{\bN}{\mathbb N}
\newcommand{\bR}{\mathbb R}
\newcommand{\setN}{\bN}
\newcommand{\setZ}{\bZ}
\newcommand{\setQ}{\bQ}
\newcommand{\setR}{\bR}
\newcommand{\setC}{\bC}
\newcommand{\setF}{\bF}

\newcommand{\idealm}{\mathfrak{m}}

\DeclareMathOperator{\Spec}{Spec}
\DeclareMathOperator{\charac}{char}
\DeclareMathOperator{\Pic}{Pic}
\DeclareMathOperator{\ord}{ord}
\DeclareMathOperator{\Aut}{Aut}
\DeclareMathOperator{\Frac}{Frac}
\DeclareMathOperator{\Ker}{Ker}
\DeclareMathOperator{\Image}{Im}
\DeclareMathOperator{\pr}{pr}
\DeclareMathOperator{\Supp}{Supp}
\DeclareMathOperator{\rank}{rank}
\DeclareMathOperator{\disc}{disc}
\DeclareMathOperator{\height}{ht}
\DeclareMathOperator{\Hom}{Hom}

\DeclareMathOperator{\Km}{Km}
\DeclareMathOperator{\Ortho}{O}
\DeclareMathOperator{\Sing}{Sing}
\DeclareMathOperator{\Fix}{Fix}
\DeclareMathOperator{\Ext}{Ext}
\newcommand{\dlog}{\mathop{d \log}}
\DeclareMathOperator{\prank}{\mathit{p}-rank}
\DeclareMathOperator{\Zero}{Zero}
\DeclareMathOperator{\sSpec}{\mathcal{S}\mathit{pec}}

\newcommand{\et}{\mathrm{\acute et}}
\newcommand{\crys}{\mathrm{crys}}
\newcommand{\id}{\mathrm{id}}
\newcommand{\sm}{\mathrm{sm}}
\newcommand{\red}{\mathrm{red}}
\newcommand{\Ga}{\mathrm{\bG_a}}
\newcommand{\Het}{H_\et}
\newcommand{\Hcrys}{H_\crys}
\newcommand{\Hfl}{H_{\mathrm{fl}}}
\newcommand{\isomto}{\stackrel{\sim}{\to}}
\newcommand{\injto}{\hookrightarrow}

\newcommand{\abs}[1]{\lvert #1 \rvert}
\newcommand{\spanned}[1]{\langle #1 \rangle}
\newcommand{\floor}[1]{\lfloor #1 \rfloor}
\newcommand{\ceil}[1]{\lceil #1 \rceil}
\newcommand{\card}[1]{\lvert #1 \rvert}
\newcommand{\restrictedto}[1]{\rvert_{#1}}
\newcommand{\set}[1]{\{#1\}}
\newcommand{\pairing}[2]{\langle #1,#2 \rangle}
\newcommand{\map}[4][\to]{#2 \colon #3 #1 #4}

\newcommand{\namedmapandmapsto}[6][\to]{#2 \colon #3 #1 #4 \colon #5 \mapsto #6}
\newcommand{\namedto}[1]{\xrightarrow{#1}}
\newcommand{\thpower}[2]{{#1}^{(#2)}}
\newcommand{\pthpower}[1]{\thpower{#1}{p}}

\newcommand{\isolatedfix}[1]{\langle #1 \rangle}
\newcommand{\divisorialfix}[1]{(#1)}
\newcommand{\normalization}[1]{#1^{\mathrm{n}}}
\newcommand{\dual}[1]{#1^{\vee}}
\newcommand{\partialdd}[1]{\frac{\partial}{\partial #1}}
\newcommand{\functorspace}{{\mathord{-}}}

 \theoremstyle{plain}
 \newtheorem{thm}{Theorem}[section]
 \newtheorem{lem}[thm]{Lemma}
 \newtheorem{prop}[thm]{Proposition}
 \newtheorem{cor}[thm]{Corollary}
 \newtheorem{claim}[thm]{Claim}
 \theoremstyle{definition}
 \newtheorem{rem}[thm]{Remark}
 \newtheorem{defn}[thm]{Definition}
 \newtheorem{example}[thm]{Example}

\numberwithin{equation}{section}

\newcommand{\Weyl}[1]{\mathrm{Weyl}(#1)}
\newcommand{\Roots}[1]{\mathrm{Roots}(#1)}

\title{Inseparable Kummer surfaces}

\author{Yuya Matsumoto}
\date{2024/03/05}
\address{\tusaddressfull}
\email{\gmail}
\email{\tusrsmail}
\thanks{This work was supported by JSPS KAKENHI Grant Numbers JP16K17560 and JP20K14296.}
\subjclass[2010]{14J28 (Primary) 14L15, 14J17, 14B15 (Secondary)}

\begin{document}
\begin{abstract}
We introduce an inseparable version of Kummer surfaces. 
It is defined as a supersingular K3 surface in characteristic 2 with 16 smooth rational curves 
forming a certain configuration and satisfying a suitable divisibility condition. 
The main result is that such a surface admits an inseparable double covering by a non-normal surface $A$ 
that is similar to abelian surfaces in two aspects: 
its numerical invariants are the same as abelian surfaces, 
and its smooth locus admits an abelian group structure.
\end{abstract}
\maketitle

\section{Introduction}

\subsection{Background}

Kummer surfaces are one of the most classical examples of K3 surfaces.
The Kummer surface $\Km(A)$ attached to an abelian surface $A$ is the minimal resolution $\tilde{X} \to X$ of $X := A / G$,
where $G = \setZ/2\setZ$ acts on $A$ by inversion.
Then $\tilde{X}$ is a K3 surface, precisely unless $A$ is a supersingular abelian surface in characteristic $2$.
It follows from this that there are supersingular Kummer K3 surfaces in every characteristic $\geq 3$ but not in characteristic $2$.

We have been searching an extension or an alternative notion that contains suitable 
supersingular K3 surfaces in characteristic $2$
and, hopefully, is compatible with specialization of abelian surfaces in family.

\subsection{Main idea}

Suppose a K3 surface $\tilde{X}$ is a Kummer surface as above.
We focus on the $16$ exceptional curves of $\tilde{X} \to X$.
There are $3$ possibilities for the configuration of these curves,
depending on whether $\charac(k) = 2$ or not and, when it is $2$, also on the $p$-rank of the abelian surface $A$.
Moreover, the divisor classes of these curves in $\Pic(\tilde{X})$ satisfy a certain divisibility condition, which again depends on the characteristic and the $p$-rank.
Conversely, under a suitable assumption on the characteristic and the height of the K3 surface, 
$16$ smooth rational curves in such a configuration satisfying such a divisibility condition can recover the abelian surface
(see Section \ref{subsec:usual Kummer surfaces}).

The main idea of this paper is to define an \emph{inseparable Kummer surface} to be a supersingular K3 surface in characteristic $2$ equipped with 
$16$ smooth rational curves in such a configuration and satisfying such a divisibility condition.
We say that such $16$ curves form a \emph{Kummer structure}.
The main result is that an inseparable Kummer surface admits a canonical inseparable double covering
that has properties similar to abelian surfaces, most importantly a group structure,
and we call it the \emph{abelian-like covering}.

The main result is based on lattice-theoretic properties of \emph{Kummer lattices},
introduced and discussed in Section \ref{sec:Kummer lattices},
which are certain lattices encapsulating the conditions on the $16$ curves forming Kummer structures,
and also on the computation of the spaces of sections of the sheaves $B_n \Omega^1$ outside the $16$ curves.

\subsection{Results}

Let $\tilde{X}$ be a supersingular K3 surface in characteristic $2$ 
and $C_1, \dots, C_{16}$ be smooth rational curves forming a Kummer structure.
Let $\tilde{X} \to X$ be the contraction of the $16$ curves.

We first show (Proposition \ref{prop:covering of inseparable Kummer surface}) that $X$ is isomorphic to $A/G$,
where $G$ is $\mu_2$ or $\alpha_2$, depending on the type of the Kummer structure,
and $A$ is a surface sharing the following numerical properties with abelian surfaces:
it is Gorenstein, its dualizing sheaf $\omega_A$ is isomorphic to the structure sheaf $\cO_A$,
and $h^i(\cO_A) = 1, 2, 1$ for $i = 0, 1, 2$.
The arguments needed to prove this are essentially given in our previous paper \cite{Matsumoto:k3alphap}.
Unlike abelian surfaces, however, our $A$ always has $1$-dimensional singular locus and hence is non-normal.

One particular property of abelian surfaces is, of course, the group structure.
We show (Theorem \ref{thm:group structure of abelian-like covering}) that
the smooth locus $A^{\sm}$ of $A$ admits a structure of a commutative group variety isomorphic to $\Ga^{\oplus 2}$,
and moreover the fixed locus of the action of $G$ on $A^{\sm}$ is a finite subgroup scheme of $A^{\sm}$.

Moreover, we give explicit projective equations of the surfaces $X$ as above (Theorem \ref{thm:projective equation of supersingular Kummer}).
Actually, we derive the group structure from these equations.

We also determine $\dim H^0(X^{\sm}, B_n \Omega^1)$ (Corollary \ref{cor:dim of B_n of RDP K3}) for general RDP K3 surfaces $X$, 
from which we obtain upper bounds of $\dim H^0(X^{\sm}, Z_{\infty} \Omega^1)$ (Theorem \ref{thm:characterization of Kummer types using Z_infty}) 
used in the proof of Theorem \ref{thm:projective equation of supersingular Kummer}.
The computation of $\dim H^0(X^{\sm}, B_n \Omega^1)$
is based on calculations on Witt vector-valued local cohomology groups of RDPs, 
which are essentially done in our previous paper \cite{Liedtke--Martin--Matsumoto:RDPtors}.

We can also determine which supersingular K3 surface admits a Kummer structure of which type.
This depends only on the Artin invariant of the supersingular K3 surface (Theorem \ref{thm:characterization of supersingular Kummer}).

\subsection{Relations with earlier works}

Coverings by possibly non-normal surfaces appear naturally in the study of surfaces in positive characteristic. 
For example, Bombieri--Mumford \cite{Bombieri--Mumford:III}*{Proposition 9} showed that 
there exist simply-connected Enriques surfaces in characteristic $2$,
and the canonical $G$-covering (where $G$ is $\mu_2$ or $\alpha_2$) of such a surface is possibly non-normal.

Schr\"oer \cite{Schroer:Kummer 2} studied K3 surfaces in characteristic $2$ that can be expressed as the minimal resolution of 
the quotient $(C \times C) / G$ of the self-product of a cuspidal rational curve $C$ by an action of $G = \alpha_2$ of a certain form.
Kondo--Schr\"oer \cite{Kondo--Schroer:Kummer 2} considered a more general case where $G \in \set{\mu_2, \alpha_2}$.
We will see that our framework generalizes their construction: their covering $C \times C$ is, up to a slight birational transform,
an example of an abelian-like covering $A$ in our terminology. See Section \ref{subsec:comparison with Kondo--Schroer} for a more detailed discussion.

Our previous result in \cite{Matsumoto:kummerred} shows that
inseparable Kummer surfaces discussed in this paper naturally arise as the reduction of usual Kummer surfaces $\Km(A)$
when the reduction of $A$ is supersingular in characteristic $2$.
See Section \ref{subsec:supersingular reduction} for details.
Thus, roughly speaking,
our inseparable Kummer surfaces may be viewed as a limit of usual Kummer surfaces.

\subsection{Structure of this paper}

In Section \ref{sec:preliminary} we recall preliminary notions on
derivations, Cartier operator, lattices, rational double points, and divisors and Picard lattices of K3 surfaces.

In Section \ref{sec:Kummer lattices}
we introduce Kummer lattices.
After preparing several results on overlattices of $A_1^{\oplus m}$,
we give definitions and characterizations of Kummer lattices.
Then we prove a remarkable property on their embeddings to negative-definite lattices (Proposition \ref{prop:embedding of Kummer lattices:ter}),
which gives a strong constraint on the configuration of curves on K3 surfaces realizing the lattice (Corollary \ref{cor:embedding of Kummer lattices:bis}).
We also discuss embeddings to Picard lattices of supersingular K3 surfaces in characteristic $2$.

In Section \ref{sec:Kummer structures}
we introduce Kummer structures on K3 surfaces
as $16$ smooth rational curves that induce an embedding of the Kummer lattice into the Picard lattice. 
We recall their relations with usual Kummer surfaces (Section \ref{subsec:usual Kummer surfaces}) .
We then define inseparable Kummer surfaces to be supersingular K3 surfaces in characteristic $2$ equipped with Kummer structures.
We characterize which surface admit such structures (Theorem \ref{thm:characterization of supersingular Kummer})
and show that Kummer structures induce inseparable coverings (Proposition \ref{prop:covering of inseparable Kummer surface}).

In Section \ref{sec:dimension of Z_infty}, 
we give upper bounds of the dimensions of the spaces $H^0(X^{\sm}, Z_{\infty} \Omega^1)$ 
for RDP K3 surfaces $X$ (Theorem \ref{thm:characterization of Kummer types using Z_infty}).
This result is used in the proof of Theorem \ref{thm:projective equation of supersingular Kummer}.
The upper bound is based on the values of $\dim H^0(X^{\sm}, B_n \Omega^1)$ (Corollary \ref{cor:dim of B_n of RDP K3}) and some combinatorial arguments.

In Section \ref{sec:projective equations}, 
we show that any inseparable Kummer surface
belongs to one of two families given by explicit projective equations (Theorem \ref{thm:projective equation of supersingular Kummer}).

In Section \ref{sec:structure of the covering}
we investigate the structure of the abelian-like covering $A$.
Using the projective equations given in the previous section, 
we derive the group structure on the smooth part of $A$ (Theorem \ref{thm:group structure of abelian-like covering}).
We also describe the normalization $\normalization{A}$ of $A$ (Section \ref{subsec:normalization}),
and discuss which subalgebras of $\cO_{\normalization{A}}$ appear as the abelian-like coverings of inseparable Kummer surfaces
(Section \ref{subsec:Kummer quotients}).

In Section \ref{sec:final remarks}, 
we discuss relations with Kondo--Schr\"oer's work (Section \ref{subsec:comparison with Kondo--Schroer})
and relations with good (supersingular) reduction of usual Kummer surfaces (Section \ref{subsec:supersingular reduction}).

\section{Preliminaries} \label{sec:preliminary}

In this paper we work over an algebraically closed field $k$.

\subsection{Derivations and quotients} \label{subsec:derivation}

A (regular) \emph{derivation} on a $k$-scheme $Y$ is a $k$-linear morphism $\map{D}{\cO_Y}{\cO_Y}$
satisfying the Leibniz rule $D(f g) = f D(g) + D(f) g$.
When $Y$ is integral, a \emph{rational derivation} is a $k$-linear morphism $\cO_Y \to k(Y)$ locally of the form $f D$ with $f$ a rational function and $D$ a regular derivation.

The \emph{fixed locus} of a derivation $D$, denoted by $\Fix(D)$,
is the closed subscheme (or closed subset) corresponding to the ideal generated by the vector subspace $D(\cO_X) \subset \cO_X$.

Suppose $D$ is a derivation on a surface $Y$ 
and $Y' \to Y$ is the blow-up at an RDP $x$.
Then $D$ extends to a (regular) derivation on $Y'$ if and only if $x \in \Fix(D)$,
and otherwise $D$ extends only to a rational derivation.

Suppose $Y$ is integral.
A derivation $D$ is \emph{$p$-closed} if there exists $h \in k(Y)$ satisfying $D^p = h D$.
If $D$ is $p$-closed and nonzero, then $Y \to Y^D$ is generically of degree $p$.
Here $Y^D$ is the quotient of $Y$ by $D$ and is described as follows:
the underlying topological space is the same as $Y$, and the structure sheaf is $(\cO_Y)^D = \set{a \in \cO_Y \mid D(a) = 0}$.

If $D$ is a derivation on a variety $Y$ regular in codimension $1$, 
then $\divisorialfix{D}$ is defined to be the Weil divisor corresponding to $\Fix(D)$ at each point of codimension $1$.
It is used in the Rudakov--Shafarevich formula to compare the canonical divisors of $Y$ and $Y^D$.
However, the formula is not useful in this paper 
since we do not assume $Y$ to be regular in codimension $1$.

Actions of $\mu_p$ and $\alpha_p$ correspond to derivations of certain type.

\begin{prop}
Let $Y$ be a scheme over a field $k$ of characteristic $p > 0$.
\begin{enumerate}
\item 
There is a canonical correspondence between the following notions:
\begin{itemize}
\item Derivations $D$ on $Y$ of multiplicative type (i.e.\ $D^p = D$).
\item Actions of $\mu_p$ on $Y$.
\item $\setZ/p\setZ$-grading structures of $\cO_Y$,
i.e., decompositions $\cO_Y = \bigoplus_{i \in \setZ/p\setZ} (\cO_Y)_i$ satisfying $1 \in (\cO_Y)_0$ and $(\cO_Y)_i (\cO_Y)_j \subset (\cO_Y)_{i+j}$.
\end{itemize}
\item 
There is a canonical correspondence between the following notions:
\begin{itemize}
\item Derivations $D$ on $Y$ of additive type (i.e.\ $D^p = 0$).
\item Actions of $\alpha_p$ on $Y$.
\end{itemize}
\end{enumerate}
Moreover, the quotient by the derivation and the quotient by the group scheme action are equal and,
in the case of $\mu_p$, also equal to $(\cO_Y)_0$.
\end{prop}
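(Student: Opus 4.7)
The plan is to work locally, reducing to the affine case $Y = \Spec R$ and describing actions of $\mu_p$ and $\alpha_p$ in terms of coactions $\rho : R \to R \otimes k[G]$, then sheafifying the resulting canonical constructions.

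For (1), I would invoke the Cartier-dual description $\mu_p = (\setZ/p\setZ)\Cartierdual$ together with the standard dictionary between actions of a diagonalizable group scheme and gradings by its character group. Writing $\rho(f) = \sum_{i \in \setZ/p\setZ} f_i \otimes t^i$ in the group-algebra basis of $k[\mu_p]$, coassociativity and counit force $\rho(f_i) = f_i \otimes t^i$ and $f = \sum f_i$, so the subspaces $R_i := \set{f \in R : \rho(f) = f \otimes t^i}$ give a decomposition $R = \bigoplus_i R_i$ with $R_i R_j \subset R_{i+j}$ and $1 \in R_0$. To match this with multiplicative-type derivations, I would assign to such a grading the map $D$ defined by $D|_{R_i} := i \cdot \id$, with $i$ viewed in $\setF_p \subset k$; the Leibniz rule follows from $R_i R_j \subset R_{i+j}$, and $D^p = D$ from Fermat's identity $i^p = i$. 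Conversely, any derivation $D$ with $D^p = D$ has minimal polynomial dividing $\prod_{i \in \setF_p}(x - i)$, yielding an eigenspace decomposition $R = \bigoplus_i \ker(D - i)$, and the Leibniz rule then forces $R_i R_j \subset R_{i+j}$, recovering the grading. These two assignments are manifestly mutual inverses.

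For (2), I would expand an $\alpha_p$-coaction $\rho : R \to R[t]/(t^p)$ as $\rho(f) = \sum_{i=0}^{p-1} D_i(f) \otimes t^i$. The counit axiom gives $D_0 = \id$; coassociativity combined with $\Delta(t^n) = (t \otimes 1 + 1 \otimes t)^n$ yields $\binom{j+k}{j} D_{j+k} = D_j D_k$ for $j + k < p$ and $D_j D_k = 0$ for $j + k \geq p$; and multiplicativity of $\rho$ gives $D_k(fg) = \sum_{i+j=k} D_i(f) D_j(g)$, which at $k = 1$ says $D := D_1$ is a derivation. Inducting on the coassociativity identity (using that $k!$ is a unit in $k$ for $k < p$) yields $D_k = D^k/k!$ for $0 \leq k < p$, and the boundary vanishing at $j + k = p$ then forces $D^p = 0$. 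Conversely, given a derivation $D$ with $D^p = 0$, I would set $\rho(f) := \sum_{k=0}^{p-1} D^k(f) \otimes t^k / k!$ and verify the counit, coassociativity, and multiplicativity axioms directly; the two constructions are again mutual inverses by construction.

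For the quotient statement, the invariant subring for a $G$-action on $\Spec R$ is $R^G = \set{f \in R : \rho(f) = f \otimes 1}$. In the $\mu_p$ case this is the degree-zero piece $R_0$, which coincides with $\ker D$ because $D$ acts on $R_i$ by the unit $i \in k$ for $i \neq 0$. In the $\alpha_p$ case, $\rho(f) = f \otimes 1$ unwinds to $D_k(f) = 0$ for all $k \geq 1$, which reduces to $D(f) = 0$ since $D_k = D^k/k!$. The most delicate step in the whole argument is the $\alpha_p$ case: matching the coassociativity identities together with the boundary vanishing at $j + k = p$ to the pair of conditions $D_k = D^k/k!$ and $D^p = 0$ requires careful tracking of the binomial coefficients $\binom{j+k}{j}$ modulo $p$. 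By contrast, the $\mu_p$ half is essentially formal once the diagonalizable-group-scheme/grading dictionary is in hand.
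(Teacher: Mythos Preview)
Your argument is correct and complete. The route, however, is genuinely different from the paper's. The paper disposes of the derivation $\leftrightarrow$ group-scheme-action correspondence in a single citation to SGA3-1, Th\'eor\`eme VII.7.2(ii) (the general equivalence between actions of height-one group schemes and $p$-Lie algebra maps into derivations), and then simply records that in the $\mu_p$ case $(\cO_Y)_i$ is the $i$-eigenspace of $D$. You instead unwind the coactions $\rho \colon R \to R \otimes k[G]$ by hand in both cases, extracting the divided-power identities $D_k = D^k/k!$ and the boundary relation $D^p = 0$ directly from coassociativity in the $\alpha_p$ case. Your approach is more elementary and self-contained, and it makes the quotient statement (which the paper leaves implicit) fully transparent; the paper's approach is much shorter but relies on the reader being willing to absorb a substantial black box. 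Both are entirely adequate for a preliminaries section.
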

\begin{proof}
The bijection between derivations and group scheme actions follows from
\cite{SGA3-1}*{Th\'eor\`eme VII.7.2(ii)}.

The correspondence between the derivation of multiplicative type and the $\setZ/p\setZ$-grading is that $(\cO_Y)_i$ is the eigenspace of $D$ with eigenvalue $i \in \setF_p$.
\end{proof}

\begin{thm}[Katsura--Takeda formula \cite{Katsura--Takeda:quotients}*{Proposition 2.1}] \label{thm:Katsura--Takeda}
	Let $D$ be a rational derivation on a smooth proper surface $Y$. 
	Then
	\[
	\deg c_2(Y) = \deg \isolatedfix{D} - K_Y \cdot \divisorialfix{D} - \divisorialfix{D}^2.
	\]
\end{thm}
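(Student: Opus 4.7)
The plan is to interpret the rational derivation $D$ as a rational section of the tangent sheaf $T_Y$, absorb its divisorial fixed part into a twist, and then read off the length of the remaining isolated zero scheme via a second Chern class.

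First I would identify the divisorial fixed locus with the divisor of $D$ regarded as a rational section of $T_Y$. Locally, if $D = a \partialdd{x} + b \partialdd{y}$ in smooth coordinates, then $\Fix(D) = V(a,b)$, and its divisorial part is $V(\gcd(a,b))$; this matches the divisor of zeros (minus poles) of $D$ as a section of $T_Y$. Setting $\Delta = \divisorialfix{D}$, we can then factor $D = f D_0$ locally, where $f$ is a local equation for $\Delta$ and $D_0$ has no divisorial fixed part. Globally this presents $D$ as a regular section of the twisted rank-$2$ bundle $T_Y \otimes \cO_Y(-\Delta)$ whose zero scheme is precisely $\isolatedfix{D}$.

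Next I would invoke the standard fact that the zero scheme of a regular section of a rank-$2$ vector bundle $E$ on a smooth proper surface has length equal to $\deg c_2(E)$ (provided the zero scheme is zero-dimensional, which is ensured here by construction). Applied to $E = T_Y \otimes \cO_Y(-\Delta)$, combined with the formula
\[
c_2(E \otimes L) = c_2(E) + c_1(E) c_1(L) + c_1(L)^2
\]
for a rank-$2$ bundle $E$ and line bundle $L$, together with $c_1(T_Y) = -K_Y$ and $c_2(T_Y) = c_2(Y)$, this yields
\[
\deg \isolatedfix{D} = \deg c_2(Y) + K_Y \cdot \Delta + \Delta^2,
\]
which rearranges to the stated identity.

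The main obstacle is the bookkeeping when $D$ is genuinely rational (not regular), so that $\Delta$ may have negative components; then $\cO_Y(-\Delta)$ is no longer a subsheaf of $\cO_Y$ and the "factoring out" must be done locally after multiplying by a function clearing the poles. I would handle this by choosing an auxiliary function $h$ such that $hD$ is a regular derivation, applying the above argument to $hD$, and then comparing $\divisorialfix{hD} = \divisorialfix{D} + \mathrm{div}(h)$ and $\isolatedfix{hD} = \isolatedfix{D}$ (the latter holding on the open locus where $h$ is invertible, and the difference on the zero locus of $h$ being controlled by the twist); the Chern-class identity then transforms correctly because the $h$-contribution cancels between $K_Y \cdot \mathrm{div}(h) + \mathrm{div}(h)^2$ and the change in $\divisorialfix{\cdot}^2$ and $K_Y \cdot \divisorialfix{\cdot}$ terms. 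Once this reduction is verified, the formula reduces to the clean Chern-class computation above.
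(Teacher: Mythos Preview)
The paper does not give its own proof of this statement: it is quoted as a result from the cited reference \cite{Katsura--Takeda:quotients}, so there is nothing in the paper to compare your argument against.

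That said, your argument is the standard one and is essentially correct. One remark: your ``main obstacle'' paragraph is unnecessary. Even when $D$ is only rational, so that $\Delta = \divisorialfix{D}$ has components of both signs, the sheaf $T_Y \otimes \cO_Y(-\Delta)$ is still a perfectly good rank-$2$ vector bundle, and $D$ is tautologically a \emph{regular} section of it with purely zero-dimensional vanishing locus equal to $\isolatedfix{D}$. The Chern-class identity $c_2(T_Y(-\Delta)) = c_2(Y) + K_Y \cdot \Delta + \Delta^2$ then gives the formula directly, with no need to pass through an auxiliary multiplier $h$ or to check any cancellations.
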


If $\Fix(D) = \emptyset$, then we say that $Y \to X$ is a \emph{$G$-torsor}.

\begin{prop}[\cite{Matsumoto:k3alphap}*{Lemma 2.11}] \label{prop:1-form corresponding to G-torsor}
Suppose $X$ is a smooth variety and 
$Y \to X$ is a $G$-torsor ($G \in \set{\mu_p, \alpha_p}$) with corresponding derivation $D$.
Then there is a canonical global section $\eta$ of $\Ker(\Omega^1_X \to \pi_* \Omega^1_Y)$
characterized by the equality $D(t)^p \eta = d(t^p)$ for any $t \in k(Y)$. 
In particular,
\begin{itemize}
\item If $G = \mu_p$ and $t \in \cO_Y^*$ satisfies $D(t) = t$, then $\eta = \dlog(t^p)$.
\item If $G = \alpha_p$ and $t \in \cO_Y$ satisfies $D(t) = 1$, then $\eta = d(t^p)$.
\end{itemize}

Conversely, a $1$-form $\eta$ on a smooth variety $X$ that is locally of the form 
$\eta = \dlog(f)$ with $f \in \cO_X^*$ (resp.\ $\eta = df$ with $f \in \cO_X$)
is induced by a unique $\mu_p$-torsor (resp.\ $\alpha_p$-torsor).
\end{prop}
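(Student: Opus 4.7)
The plan is to reduce everything to local computations on a trivialization. Since $\pi\colon Y \to X$ is a $G$-torsor, étale-locally on $X$ we may write $Y = \Spec \cO_X[t]/(t^p-f)$ with $f \in \cO_X^*$ in the $\mu_p$ case (so that $D = t\,\partial_t$ and $D(t)=t$), or with $f \in \cO_X$ in the $\alpha_p$ case (so that $D = \partial_t$ and $D(t)=1$). In both cases $t^p = f$ already belongs to $\cO_X$, so $d(t^p) = df$ is a genuine local section of $\Omega^1_X$, not merely of $\Omega^1_Y$.

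First, I would construct $\eta$ locally as $\dlog(t^p) = df/f$ in the $\mu_p$ case and as $d(t^p) = df$ in the $\alpha_p$ case. A direct check using $D(t)^p = t^p = f$ (resp.\ $D(t)^p = 1$) verifies the identity $D(t)^p\eta = d(t^p)$ on the trivialization, and the same identity for an arbitrary $s \in k(Y)$ follows by writing $s = \sum a_i t^i$ with $a_i \in k(X)$ and expanding, using $da_i^p = 0$ and $i^p = i$ in $\setF_p$. Moreover $\pi^*\eta = df = pt^{p-1}dt = 0$ in $\Omega^1_Y$, so $\eta$ indeed lies in $\Ker(\Omega^1_X \to \pi_*\Omega^1_Y)$.

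Next, I would globalize by verifying independence of the trivialization. Two local $\mu_p$-equivariant trivializations differ by $t \mapsto ut$ with $u \in \cO_X^*$, hence $f \mapsto u^p f$ and $df/f$ is unchanged since $du^p = 0$. Two $\alpha_p$-trivializations differ by $t \mapsto t + a$ with $a \in \cO_X$, hence $f \mapsto f + a^p$ and $df$ is unchanged. Thus the local forms patch to a canonical global $\eta$, and the characterizing equation $D(t)^p \eta = d(t^p)$ determines it uniquely on the open dense locus where $D(t) \neq 0$, hence everywhere.

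For the converse, given $\eta$ locally of the form $\dlog f$ (resp.\ $df$), I would define a local torsor by $\Spec \cO_X[t]/(t^p - f)$ with the evident $\mu_p$-action (resp.\ $\alpha_p$-action). On overlaps, $\dlog f_1 = \dlog f_2$ gives $\dlog(f_1/f_2) = 0$, which in characteristic $p$ forces $f_1 = u^p f_2$ for a local unit $u$, providing the gluing $t_1 = u t_2$; analogously $\eta = df_1 = df_2$ yields $f_1 - f_2 = a^p$ and the gluing $t_1 = t_2 + a$ in the $\alpha_p$ case. The main obstacle is checking that these local gluings satisfy the cocycle condition and are canonical; this follows from the fact that the automorphism group of the trivial $G$-torsor is $G$ itself, so the freedom in the choice of gluing is exactly that of a $G$-valued cocycle, which does not disturb the underlying scheme and its $G$-action.
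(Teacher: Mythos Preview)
The paper does not give its own proof of this proposition; it simply cites \cite{Matsumoto:k3alphap}*{Lemma 2.11}, so there is nothing here to compare against. Your proposal is a correct self-contained argument along the expected lines: trivialize locally, check the defining identity, and glue.

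One simplification is available in the converse direction. You flag the cocycle condition for the gluings $t_1 = u t_2$ (resp.\ $t_1 = t_2 + a$) as the main obstacle and appeal to the automorphism group of the trivial torsor. In fact there is no ambiguity at all: on a reduced scheme in characteristic $p$, the $p$-th root of an element is unique when it exists, so the $u$ with $u^p = f_1/f_2$ (resp.\ the $a$ with $a^p = f_1 - f_2$) is determined on the nose, and the cocycle identity $u_{12} u_{23} = u_{13}$ (resp.\ $a_{12} + a_{23} = a_{13}$) holds exactly rather than merely up to $G$. This also makes the uniqueness assertion for the torsor immediate. Your remark that a $G$-valued discrepancy ``does not disturb the underlying scheme and its $G$-action'' is not quite the right justification (a nontrivial $2$-cocycle would genuinely obstruct gluing), but since the discrepancy is in fact zero the conclusion stands.
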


We have the following explicit relation between non-normal torsors and their normalizations if $p = 2$.

\begin{prop} \label{prop:normalization}
Let $\map{\pi}{Y}{X}$ be a $G$-torsor over a smooth variety $X$, where $G$ is $\mu_2$ or $\alpha_2$.
Let $\Delta$ be the pullback of the divisorial part of $\Zero(\eta)$ to the normalization $\normalization{Y}$ of $Y$.
Then we have 
$\cO_Y = \cO_X + \cO_{\normalization{Y}}(-\Delta)$.
Here, the sum is taken as subsheaves of $k$-vector spaces of $\cO_{\normalization{Y}}$.
\end{prop}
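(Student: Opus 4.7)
The plan is a local computation after trivializing the torsor. Since the identity is local on $X$, I would pass to an affine open on which $\cO_Y = \cO_X[t]/(t^2 - u)$: for $G = \alpha_2$ with $u \in \cO_X$, $D(t) = 1$, and $\eta = du$; and for $G = \mu_2$ with $u \in \cO_X^\times$, $D(t) = t$, and $\eta = \dlog u$. In the $\mu_2$ case, $u$ being a unit forces the Jacobian of $t^2 - u$, which in characteristic $2$ reduces to $du$, to vanish only in codimension $\geq 2$: a divisorial zero of $\dlog u$ would, after absorbing squares into the defining equation, exhibit a local splitting of the cover and contradict the torsor hypothesis. Thus $Y$ is already normal, $\normalization{Y} = Y$, $\Delta = 0$, and the identity is tautological, so I would focus on $G = \alpha_2$.

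For $G = \alpha_2$, I would factor $u = w^2 u'$ locally by extracting into $w^2$ all square factors that divide the coefficients of $du$, so that $\Zero(du')$ has codimension $\geq 2$. Then the normalization is given locally by $\cO_{\normalization{Y}} = \cO_X[s]/(s^2 - u')$: the inclusion $\cO_Y \hookrightarrow \cO_{\normalization{Y}}$ sending $t \mapsto w s$ is birational, and the target ring is normal because its singular locus $\{u' = 0 = du'\} \subset \Zero(du')$ has codimension $\geq 2$. The claimed identity is then the direct decomposition inside $\cO_{\normalization{Y}}$:
\[
\cO_Y = \cO_X \oplus \cO_X \cdot t = \cO_X \oplus (w \cO_X) \cdot s = \cO_X + w \cO_{\normalization{Y}} = \cO_X + \cO_{\normalization{Y}}(-\mathrm{div}_{\normalization{Y}}(w)).
\]

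It remains to identify $\mathrm{div}_{\normalization{Y}}(w)$ with the divisor $\Delta$ from the statement. In characteristic $2$, $\eta = du = d(w^2 u') = w^2 du'$, so the divisorial part of $\Zero(\eta)$ on $X$ equals $2\,\mathrm{div}_X(w)$, and its pullback along the inseparable cover $\pi \colon \normalization{Y} \to X$ corresponds canonically to $\mathrm{div}_{\normalization{Y}}(w)$: one takes the $p$-th root of this (automatically $p$-divisible) zero divisor of an exact form and then pulls back. Since $w$ is unique up to units, $\mathrm{div}_X(w)$ is canonical and the local identities patch to the global statement.

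The main delicacy is this last identification: it relies on the observation that the divisorial zero locus of an exact $1$-form in characteristic $p$ is automatically divisible by $p$, so the "pullback to $\normalization{Y}$" must be read as extracting the $p$-th root before pulling back. Once that convention is in place, the remaining content is the routine local decomposition shown in the display above, and the rest is a bookkeeping check that the conductor $w \cO_{\normalization{Y}}$ really realizes $\cO_Y$ as $\cO_X + \cO_{\normalization{Y}}(-\Delta)$.
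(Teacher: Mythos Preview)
Your proposal has a genuine gap, and it shows up in both the $\mu_2$ and $\alpha_2$ cases via the single example $X = \Spec k[x,y]$, $u = 1 + x^2 y$.

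\textbf{The $\mu_2$ case is not trivial.} This $u$ is a unit, and $\cO_Y = \cO_X[t]/(t^2 - u)$ is a genuine $\mu_2$-torsor: with $D(t) = t$, the ideal generated by $D(\cO_Y)$ contains $t^2 = u \in \cO_X^\times$, so $\Fix(D) = \emptyset$. Yet $du = x^2\,dy$ vanishes along the divisor $(x = 0)$, and the Jacobian criterion shows $Y$ is singular along that curve. Your claim that a divisorial zero of $\dlog u$ would force a local splitting is false: $u$ is irreducible in $k[x,y]$, so no nontrivial square can be absorbed into the defining equation, and the torsor does not split. Non-normal $\mu_2$-torsors are in fact central to the paper (the abelian-like covering of type $16A_1$ is one), so this case cannot be waved away.

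\textbf{The factorization $u = w^2 u'$ is too restrictive.} Even treating the same $u = 1 + x^2 y$ as an $\alpha_2$-torsor, your factorization step fails: $u$ is irreducible, so any $u = w^2 u'$ with $u' \in \cO_X$ has $w$ a unit, and $du'$ still vanishes along $(x=0)$. The actual normalization generator is $(t+1)/x$, whose square is $y$; one must allow a \emph{shift by a square}, writing $u - a^2 = w^2 u'$ (here $a = 1$, $w = x$, $u' = y$). The paper's proof builds this in from the start, writing $\cO_{\normalization{Y}} = \cO_X \oplus \cO_X \cdot (a + \sqrt{f})/g$ with both $a$ and $g$ to be determined, and then identifies $g$ with the gcd of the partial derivatives of $f$. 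Once you permit the shift $t \mapsto t + a$, your displayed computation goes through, and moreover the $\mu_2$ and $\alpha_2$ cases become a single argument, as in the paper.
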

\begin{proof}
Locally, we can write $\cO_Y = \cO_X[\sqrt{f}]$ with $f \in \cO_X$ and $D(\sqrt{f}) \in \cO_Y^*$.
We have $\cO_{\normalization{Y}} = \cO_X \oplus \cO_X \cdot {(a + \sqrt{f})}/{g}$ for some $a, g \in \cO_X$.

For any choice of local coordinates $x_1, \dots, x_m$ (where $m = \dim X$),
we write $f = \sum_{i_1, \dots, i_m \in \set{0, 1}} f_{(i_1, \dots, i_m)} x_1^{i_1} \dots x_m^{i_m}$
with $f_{(i_1, \dots, i_m)} \in \thpower{\cO_X}{2}$.
Then $g = \gcd\set{f_{(i_1, \dots, i_m)} \mid (i_1, \dots, i_m) \neq (0, \dots, 0)}$,
whereas the divisorial part of $\Zero(\eta)$ is generated by $g' := \gcd\set{\partial f / \partial {x_1}, \dots, \partial f / \partial {x_m}}$ 
because $\eta = df / D(\sqrt{f})^2$.
It remains to show $g = g'$.

Clearly, $g$ divides $g'$. 
We show that $\ord_E(g) = \ord_E(g')$ for any prime divisor $E \subset X$.
We may assume, by restricting to a neighborhood of a smooth point of $E$, that $E$ is defined by $(x_1 = 0)$.
Take an index $(i_1, \dots, i_m)$ minimizing $\ord_{x_1} f_{(i_1, \dots, i_m)}$.
If $i_1 = 1$, then $\ord_{x_1} (\partial f / \partial {x_1})$ attains the same value.
If $i_1 = 0$, then take $j$ such that $i_j = 1$,
and then $\ord_{x_1} (\partial f / \partial {x_j})$ attains the same value.
\end{proof}

\subsection{Cartier operator on differential forms} \label{subsec:Cartier operator}

In this paper we need only the Cartier operator for $1$-forms.
Let $X$ be a smooth variety in characteristic $p > 0$.
Let $\Omega_{X,\text{closed}}^1 = \Ker(\map{d}{\Omega_X^1}{\Omega_X^2})$.
The Cartier operator $\map{C}{\Omega_{X,\text{closed}}^1}{\Omega_X^1}$
satisfies the following properties:
\begin{itemize}
\item $C$ is additive and satisfies $C(f^p \omega) = f C(\omega)$ for $f \in \cO_X^1$.
\item $C(\eta) = 0$ if and only if $\eta$ is locally of the form $df$.
\item $C(\eta) = \eta$ if and only if $\eta$ is locally of the form $\dlog f$.
\end{itemize}

The sequences $B_i \Omega_X^1, Z_i \Omega_X^1 \subset \Omega_X^1$ ($i \geq 0$) of subsheaves are defined by 
$B_0 \Omega_X^1 = 0$,
$Z_0 \Omega_X^1 = \Omega_X^1$, 
$B_{i+1} \Omega_X^1 = C^{-1}(B_i \Omega_X^1)$, 
and $Z_{i+1} \Omega_X^1 = C^{-1}(Z_i \Omega_X^1)$ for $i \geq 0$.
Finally, we let $B_{\infty} \Omega_X^1 = \bigcup_{i \geq 0} B_i \Omega_X^1$ 
and $Z_{\infty} \Omega_X^1 = \bigcap_{i \geq 0} Z_i \Omega_X^1$.
We have
\[
0 = B_0 \Omega_X^1 \subset B_1 \Omega_X^1 \subset \dots \subset B_{\infty} \Omega_X^1 
\subset Z_{\infty} \Omega_X^1 \subset \dots \subset Z_1 \Omega_X^1 \subset Z_0 \Omega_X^1 = \Omega_X^1.
\]

Combining these with results in Section \ref{subsec:derivation}, we obtain the following.
\begin{prop} \label{prop:torsors and 1-forms}
Let $X$ be a smooth variety with $H^0(X, \cO_X) = k$.
\begin{enumerate}
\item There is a canonical correspondence between the following objects.
\begin{itemize}
\item Isomorphism classes of $\mu_p$-torsors over $X$.
\item Classes of $\Pic(X)$ annihilated by $p$.
\item $1$-forms locally of the form $\dlog(f)$ ($f \in \cO_X^*$).
\item Elements of $H^0(X, Z_{\infty} \Omega^1_X)^{C = 1}$.
\end{itemize}
\item There is a canonical correspondence between the following objects.
\begin{itemize}
\item Isomorphism classes of $\alpha_p$-torsors over $X$.
\item Classes of $H^1(X, \cO_X)$ annihilated by $F$.
\item $1$-forms locally of the form $df$ ($f \in \cO_X$).
\item Elements of $H^0(X, Z_{\infty} \Omega^1_X)^{C = 0} = H^0(X, B_1 \Omega^1_X)$.
\end{itemize}
\end{enumerate}
\end{prop}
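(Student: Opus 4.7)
The plan is to establish each four-way correspondence by using the torsor description as a pivot and assembling three pairwise bijections with it: to the $1$-forms of the specified local shape (already supplied by Proposition \ref{prop:1-form corresponding to G-torsor}), to the Cartier-theoretic subsheaves via the Cartier isomorphism, and to the coherent-cohomological classes via flat-cohomology sequences. I would carry out parts (1) and (2) in parallel, since the $\mu_p$ and $\alpha_p$ arguments are structurally identical.

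For the Cartier side I would combine the equivalences recalled in Section \ref{subsec:Cartier operator} with the definitions of the filtrations. A closed $1$-form $\eta$ satisfies $C(\eta) = 0$ if and only if it is locally of the form $df$, which by definition is the subsheaf $B_1 \Omega_X^1$; and $\eta$ satisfies $C(\eta) = \eta$ if and only if it is locally $\dlog f$. The latter condition is preserved under iteration of $C$, so any such $\eta$ automatically lies in $Z_n \Omega_X^1$ for every $n$ and hence in $Z_\infty \Omega_X^1$, giving the identification with $H^0(X, Z_\infty \Omega_X^1)^{C=1}$. The auxiliary equality $H^0(X, Z_\infty \Omega_X^1)^{C=0} = H^0(X, B_1 \Omega_X^1)$ follows because $B_1 \Omega_X^1 \subset Z_\infty \Omega_X^1$ by the filtration chain and $B_1 \Omega_X^1 = \ker C$ by definition.

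For the cohomological side I would invoke the flat-cohomological classification of $G$-torsors. The flat Kummer sequence $1 \to \mu_p \to \bG_m \xrightarrow{p} \bG_m \to 1$ gives a long exact sequence whose $H^0$-contribution vanishes because $H^0(X, \bG_m) = k^*$ is $p$-divisible (as $k$ is algebraically closed), yielding $H^1_{\mathrm{fl}}(X, \mu_p) \cong \Pic(X)[p]$. The analogous sequence $0 \to \alpha_p \to \bG_a \xrightarrow{F} \bG_a \to 0$, together with surjectivity of Frobenius on $H^0(X, \cO_X) = k$, yields $H^1_{\mathrm{fl}}(X, \alpha_p) \cong H^1(X, \cO_X)[F]$.

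The main point requiring care is compatibility: the image of a torsor class in $\Pic(X)$ (resp.\ $H^1(X, \cO_X)$) via the connecting homomorphism should agree with the image on the $1$-form side. To verify this I would unwind the connecting maps explicitly: a local section $t$ of a $\mu_p$-torsor with $t^p \in \cO_X^*$ produces via Kummer the line-bundle class with transition functions $t^p$, which matches $\dlog(t^p)$ from Proposition \ref{prop:1-form corresponding to G-torsor}; the $\alpha_p$ case is strictly analogous via the $d(t^p)$ formula. This compatibility is formal but is what turns the three separate bijections into a single canonical four-way correspondence.
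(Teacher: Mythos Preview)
Your proposal is correct and follows essentially the same route as the paper's proof: the paper also pivots on the torsor classification, invoking the flat Kummer and $\alpha_p$ sequences for the cohomological bijection, Proposition~\ref{prop:1-form corresponding to G-torsor} for the $1$-form bijection, and the Cartier-operator properties from Section~\ref{subsec:Cartier operator} for the identification with $Z_\infty^{C=1}$ and $B_1$. You have simply spelled out the details (the $p$-divisibility of $k^*$, the inductive check that $C(\eta)=\eta$ forces $\eta\in Z_\infty$, and the compatibility of the connecting maps) that the paper's three-line proof leaves implicit.
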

\begin{proof}
Isomorphism classes of torsors correspond to $\Hfl^1$ of the group scheme, 
which can be computed by using exact sequences
$1 \to \mu_p \to \cO_X^* \namedto{p} \cO_X^* \to 1$ and  
$0 \to \alpha_p \to \cO_X \namedto{F} \cO_X \to 0$
(of fppf sheaves).
Other assertions follow from Proposition \ref{prop:1-form corresponding to G-torsor} and the properties of $C$ described above.
\end{proof}

We also recall the following.
\begin{prop} 
Let $\map{C}{V}{V}$ be a semilinear endomorphism of a finite-dimensional $k$-vector space $V$
(in the sense that $C$ is additive and $C(c v) = c^{1/p} C(v)$ for $c \in k$).
Then $V$ admits a canonical decomposition $V = V_{\textrm{ss}} \oplus V_{\textrm{nilp}}$ into $C$-stable subspaces
such that $C$ acts on $V_{\textrm{nilp}}$ nilpotently
and $V_{\textrm{ss}}$ admits a basis $e_i$ such that $C(e_i) = e_i$.
Moreover, $V^{C = 1} := \set{v \in V \mid C(v) = v}$ is an $\setF_p$-vector space of dimension equal to $\dim_k V_{\textrm{ss}}$.
\end{prop}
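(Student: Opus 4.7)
The plan is to filter $V$ by the ascending kernels and descending images of $C$, extract the decomposition from this filtration, and then invoke algebraic closedness of $k$ to trivialize $C$ on the semisimple summand. First I would verify that each $\Ker(C^n)$ and each $C^n(V)$ is a $k$-subspace despite $C$ being only semilinear: from $C^n(cv) = c^{1/p^n} C^n(v)$ it follows that $\Ker(C^n)$ is $k$-stable, and from $c \cdot C^n(v) = C^n(c^{p^n} v)$ that $C^n(V)$ is $k$-stable. Since $\dim_k V < \infty$, both chains stabilize at some $n_0$; set $V_{\textrm{nilp}} := \Ker(C^{n_0})$ and $V_{\textrm{ss}} := C^{n_0}(V)$. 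By construction $C$ is nilpotent on $V_{\textrm{nilp}}$ and surjective, hence bijective, on $V_{\textrm{ss}}$. Rank--nullity for the additive map $C^{n_0}$ yields $\dim_k V = \dim_k V_{\textrm{ss}} + \dim_k V_{\textrm{nilp}}$, while $V_{\textrm{ss}} \cap V_{\textrm{nilp}} = 0$ by injectivity of $C$ on $V_{\textrm{ss}}$; hence $V = V_{\textrm{ss}} \oplus V_{\textrm{nilp}}$ with both summands $C$-stable, and canonicity is immediate from the intrinsic descriptions.

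The main step is to construct a $k$-basis of $V_{\textrm{ss}}$ consisting of $C$-fixed vectors. I would induct on $d = \dim_k V_{\textrm{ss}}$: once a single nonzero fixed vector $w$ is in hand, the quotient $V_{\textrm{ss}}/kw$ inherits a bijective semilinear $C$-action of smaller dimension, and a fixed basis can be lifted. To produce such a $w$, fix $0 \neq v \in V_{\textrm{ss}}$ and let $W$ be the $k$-span of the minimal $C$-cyclic sequence $v, C(v), \dots, C^{m-1}(v)$. Injectivity of $C$ on $V_{\textrm{ss}}$ forces the minimal relation $C^m(v) = \sum_{i=0}^{m-1} a_i C^i(v)$ to have $a_0 \neq 0$ (otherwise one could peel off one $C$ and contradict minimality), so $W$ is $C$-stable. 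Writing $w = \sum c_i C^i(v)$, the equation $C(w) = w$ translates into a finite polynomial system in $c_0^{1/p^j}, \dots, c_{m-1}^{1/p^j}$ with coefficients in $k$; algebraic closedness of $k$ supplies a nonzero solution, which may be viewed as a geometric point of the finite \'etale $\setF_p$-scheme representing the ``fixed-vector'' functor on $W$.

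Once $V_{\textrm{ss}} = \bigoplus_{i=1}^d k e_i$ with $C(e_i) = e_i$ is in hand, the identity $C\bigl(\sum c_i e_i\bigr) = \sum c_i^{1/p} e_i$ shows that such an element is fixed iff each $c_i \in \setF_p$. Nilpotence of $C$ on $V_{\textrm{nilp}}$ rules out any nonzero fixed vector there, so $V^{C = 1} = \bigoplus_{i=1}^d \setF_p e_i$ is an $\setF_p$-vector space of dimension $\dim_k V_{\textrm{ss}}$, as claimed.

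The hard part is producing a single fixed vector on $V_{\textrm{ss}}$, which is the only place where algebraic closedness of $k$ enters essentially; this is morally a form of Lang's theorem for the semilinear action, and care is required in setting up and solving the polynomial system coming from the cyclic vector, since the semilinearity mixes $p$-th roots of coefficients with the unknowns themselves.
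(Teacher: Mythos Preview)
The paper does not prove this proposition: it is introduced with ``We also recall the following'' and stated without argument, as a standard fact about semilinear endomorphisms over an algebraically closed field. Your outline is correct and follows the classical route (Fitting decomposition, then a cyclic-vector argument to produce a fixed vector on the bijective part, then induction).

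The one step you yourself flag as delicate---existence of a nonzero fixed vector in the cyclic subspace $W$---can be made explicit as follows. The system $C(w)=w$ reads $c_0 = a_0\, c_{m-1}^{1/p}$ and $c_i = c_{i-1}^{1/p} + a_i\, c_{m-1}^{1/p}$ for $1\le i\le m-1$; the first $m-1$ of these determine $c_0,\dots,c_{m-2}$ in terms of $t:=c_{m-1}$, and substituting $s=t^{1/p^m}$ turns the last one into a separable additive polynomial
\[
s^{p^m}-\bigl(\text{terms in }s^{p},\dots,s^{p^{m-1}}\bigr)-a_0^{1/p^{m-1}}\,s=0
\]
with nonzero linear coefficient (since $a_0\ne 0$), hence with $p^m$ distinct roots in $k$. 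This is precisely the additive-group instance of Lang's theorem you allude to. Likewise, your lifting of a fixed basis from $V_{\mathrm{ss}}/kw$ to $V_{\mathrm{ss}}$ is not automatic but reduces to solving Artin--Schreier equations $\mu^p-\mu=b$ in $k$, again using algebraic closedness; you should say this explicitly.
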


We can apply this to the Cartier operator on $V = H^0(X, Z_{\infty} \Omega_X^1)$, assuming this space is finite-dimensional, 
and then $V_{\textrm{nilp}} = H^0(X, B_{\infty} \Omega_X^1)$
and $V_{\textrm{ss}} \cong H^0(X, Z_{\infty} \Omega_X^1) / H^0(X, B_{\infty} \Omega_X^1)$.

We will also use (the case of $p = 2$ of) the following explicit description.

\begin{lem} \label{lem:explicit formula for Cartier operator:general}
Suppose $\charac k = p > 0$.
Let $X = \Spec k[x, y, w] / (w^p - H(x, y))$, where $H \in k[x, y]$ satisfies $H \notin k[x^p, y^p]$,
and let $\eta_0 = dx/H_y = - dy/H_x \in H^0(X^{\sm}, \Omega^1)$.
Then $k[x, y] \cdot \eta_0 \subset H^0(X^{\sm},  Z_1 \Omega^1)$
and, for $g \in k[x, y]$, we have
\[
C(g \eta_0) = \biggl( \sum_{a = 0}^{p-1} w^{p-1-a} \sqrt[p]{(g H^a)_{(p-1,p-1)}} \biggr) \eta_0,
\]
where, for $J \in k[x, y]$, we define $J_{(i ,j)} \in k[x^p, y^p]$ ($0 \leq i, j \leq p-1$) by 
$J = \sum_{i, j} J_{(i, j)} x^i y^j$.
In other words, $J_{(p-1, p-1)} = (\partial/\partial x)^{p-1} (\partial/\partial y)^{p-1} J$.
\end{lem}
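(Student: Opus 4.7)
The plan is to interpret the target formula as an exactness claim and verify it by producing an explicit primitive. The closedness assertion $k[x,y]\cdot\eta_0 \subset H^0(X^{\sm}, Z_1\Omega^1)$ follows quickly: differentiating $w^p = H$ and using $d(w^p) = 0$ in characteristic $p$ yields $H_x\, dx + H_y\, dy = 0$ in $\Omega^1_X$, whence $dx \wedge dy = 0$ in $\Omega^2_X$ on all of $X^{\sm} = X \setminus V(H_x, H_y)$. Therefore $d(g \eta_0) = (g_x\, dx + g_y\, dy) \wedge (dx/H_y) = 0$ for any $g \in k[x,y]$.

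To compute $C(g\eta_0)$, I would first identify a convenient preimage of $\eta_0$ under $C$. On the open subset $\set{H_y \neq 0}$ of $X^{\sm}$, set $\omega_0 := x^{p-1}\, dx/H_y^p$. Since $d(H_y^{-p}) = 0$ and $d(x^{p-1}) \wedge dx = 0$, the form $\omega_0$ is closed; and applying the semi-linearity $C(f^p \omega) = f\, C(\omega)$ with $f = 1/H_y$, together with $C(x^{p-1}\, dx) = dx$, gives $C(\omega_0) = dx/H_y = \eta_0$. Setting $r := \sum_{a=0}^{p-1} w^{p-1-a} \sqrt[p]{(gH^a)_{(p-1,p-1)}}$, the additivity of the $p$-th power in characteristic $p$ combined with $w^p = H$ gives $r^p = \sum_a H^{p-1-a}\,(gH^a)_{(p-1,p-1)}$, so $r$ is a legitimate $p$-th root in $\cO_X$. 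Consequently $C(r^p \omega_0) = r\, \eta_0$, and the target formula $C(g\eta_0) = r\eta_0$ reduces to showing that
\[
g\eta_0 - r^p \omega_0 = \Bigl(gH_y^{p-1} - x^{p-1} \sum_a (gH^a)_{(p-1,p-1)} H^{p-1-a}\Bigr) \frac{dx}{H_y^p}
\]
is an exact $1$-form on $\set{H_y \neq 0}$, since $C$ vanishes on exact forms.

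The remaining step is to exhibit a primitive $\phi$ realizing this exactness. For $p = 2$, a direct verification shows that $\phi := gx/H_y$ works: expanding $d\phi$ on $X$ using $dy = -H_x/H_y\, dx$ and $dH_y = H_{(1,1)}\, dx$ (the latter thanks to $H_{yy} = 0$ in characteristic $2$), the claim reduces to the polynomial identity $r^2 = g_x H_y + g_y H_x + g H_{(1,1)}$, which follows from $r^2 = H g_{(1,1)} + (gH)_{(1,1)}$ together with the Leibniz computation $(gH)_{(1,1)} = \partial_x \partial_y (gH) = g_{xy} H + g_x H_y + g_y H_x + g H_{xy}$ (using $g_{xy} = g_{(1,1)}$ and $H_{xy} = H_{(1,1)}$ in characteristic $2$). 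For general $p$, one expects a primitive of analogous shape, built from $gx^{p-1}/H_y^{p-1}$ together with correction terms designed to cancel the higher derivatives of $H_y$, with verification again reducing to a polynomial identity in $k[x,y]$. Constructing and justifying this primitive in the general-$p$ case is the main technical obstacle; for the application later in this paper only the case $p = 2$ is needed.
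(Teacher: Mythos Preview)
Your argument is correct for $p = 2$: the closedness step is fine, the identification $C(\omega_0) = \eta_0$ with $\omega_0 = x\,dx/H_y^2$ is correct, and the primitive $\phi = gx/H_y$ does realize the required exactness on $\{H_y \neq 0\}$ (the verification reduces, exactly as you say, to $r^2 = g_x H_y + g_y H_x + g H_{(1,1)}$, which follows from expanding $(gH)_{(1,1)}$ by Leibniz). Since $X^{\sm}$ is integral, equality of the two $1$-forms on the dense open $\{H_y \neq 0\}$ extends to all of $X^{\sm}$. As you note, only $p = 2$ is used later in the paper, so for the purposes of the applications your argument is complete; for the lemma as stated, the general-$p$ primitive is genuinely missing.

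The paper takes a different route that handles all $p$ uniformly and sidesteps the search for primitives. Rather than computing $C(g\eta_0)$ directly, it defines $C'$ by the target formula and then verifies that $C'$ shares the characterizing properties of $C$ on $k(x,y)\eta_0$: namely $p^{-1}$-semilinearity over $k(X)$, and the values $C'(dF) = 0$ and $C'(F^{p-1}\,dF) = dF$ for $F \in k(x,y)$ (noting $dF = (F_x H_y - F_y H_x)\eta_0$). These last two reduce to the combinatorial identity
\[
\partial_x^{p-1}\partial_y^{p-1}\bigl((F_xH_y - F_yH_x)\,F^b H^a\bigr)
= \begin{cases}(F_xH_y - F_yH_x)^p & (a,b) = (p-1,p-1),\\ 0 & \text{otherwise},\end{cases}
\]
which in turn is proved from the one-variable fact $(d/dt)^{p-1}(F_t F^a) = 0$ for $0 \le a \le p-2$ and $= -F_t^p$ for $a = p-1$. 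Your method is more hands-on and transparent for $p = 2$; the paper's method buys a uniform argument in $p$ at the cost of this auxiliary identity.
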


In particular, if $p = 2$ we have
$C(g \eta_0) = \bigl( w \sqrt{g_{x y}} + \sqrt{(g H)_{x y}} \bigr) \eta_0$.

\begin{proof}
Let $\map{C'}{k(x, y) \eta_0}{\Omega^1_{k(X)/k}}$ be the map defined by the stated formula.
We have $C'(t^p \eta) = t C'(\eta)$ for $t \in k(X)$ (this is clear if $t \in k(x, y)$ and easily checked if $t = w$).
Hence, to show $C' = C \restrictedto{k(x, y) \eta_0}$, it suffices to check the equality for 
a set of generators of the $\pthpower{k(X)}$-vector space $k(x, y) \eta_0$.
We show that
\begin{enumerate}
\item \label{item:C' df} $C'(dF) = 0$ for $F \in k(x, y)$, and 
\item \label{item:C' dlog f} $C'(F^{p-1} dF) = dF$ for $F \in k(x, y)$.
\end{enumerate}

In fact we show that, for integers $0 \leq a, b \leq p-1$,
\[
\partial_x^{p-1} \partial_y^{p-1} ((F_x H_y - F_y H_x) F^b H^a) = 
\begin{cases}
(F_x H_y - F_y H_x)^p & ((a, b) = (p-1, p-1)), \\
0 & (\text{otherwise}),
\end{cases}
\]
where we write $\partial_x := \partial/\partial x$ and $\partial_y := \partial/\partial y$.
Then we obtain (\ref{item:C' df}) and (\ref{item:C' dlog f}) by letting $b = 0$ and $b = p-1$ respectively
(note that $dF = (F_x H_y - F_y H_x) \eta_0$).

Consider
\[
\partial_x^{p-1} \partial_y^{p-1} (F_x H_y F^b H^a) = \sum c \cdot 
\partial_x^{i_1} \partial_y^{j_1} (F_x F^b) \cdot 
\partial_x^{i_2} \partial_y^{j_2} (H_y H^a),
\]
where the sum is taken over the tuples $(i_1, i_2, j_1, j_2)$ of non-negative integers with $i_1 + i_2 = j_1 + j_2 = p-1$,
and $c \in \setF_p$ are coefficients depending on the tuples.
By Lemma \ref{lem:p-1 derivative},
the summand is zero in any of the following cases.
\begin{itemize}
\item $i_1 = p-1$ and $b < p-1$.
\item $i_1 = p-1$ and $j_1 > 0$.
\item $j_2 = p-1$ and $a < p-1$.
\item $j_2 = p-1$ and $i_2 > 0$.
\end{itemize}
If $i_1 = j_2 = a = b = p-1$,
then the summand is $F_x^p H_y^p$.
The remaining terms, which are precisely the ones with $i_2 > 0$ and $j_1 > 0$, can be written as 
$c \cdot \partial_x^{i_1} \partial_y^{j_1-1} (F_{xy} F^b + b F_x F_y F^{b-1}) \cdot 
\partial_x^{i_2-1} \partial_y^{j_2} (H_{xy} H^a + a H_x H_y H^{a-1})$.
We observe that these terms are symmetric on $x$ and $y$ 
and hence cancel with the corresponding terms coming from $\partial_x^{p-1} \partial_y^{p-1} (F_y H_x F^b H^a)$.
\end{proof}

\begin{lem} \label{lem:p-1 derivative}
Let $F$ be a polynomial in one variable $t$ over a ring in characteristic $p > 0$.
Then $(d/dt)^{p-1} (F_t F^a) = 0$ for any integer $0 \leq a \leq p-2$,
and $(d/dt)^{p-1} (F_t F^{p-1}) = -F_t^p$.
\end{lem}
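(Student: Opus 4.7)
The plan is to exploit the fact that in characteristic $p$, $(d/dt)^p$ annihilates every polynomial, since the coefficient of $t^{n-p}$ in $(d/dt)^p(t^n)$ is $n(n-1)\cdots(n-p+1) = p!\binom{n}{p} \equiv 0 \pmod p$. For part (1), I would use the elementary identity $(a+1)\,F_t F^a = (d/dt)(F^{a+1})$ (valid in any characteristic) to rewrite
\[
(a+1)\,(d/dt)^{p-1}(F_t F^a) = (d/dt)^p(F^{a+1}) = 0,
\]
and since $a+1 \not\equiv 0 \pmod p$ for $0 \le a \le p-2$, the desired vanishing follows.

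For part (2), the analogous division by $p$ is unavailable in characteristic $p$, so I would lift: view $F = \sum_i c_i t^i$ as a generic polynomial over $\setZ[c_i]$. Over $\setZ$ we have $(d/dt)(F^p) = pF^{p-1}F_t$, so $F^{p-1} F_t = (d/dt)(F^p)/p$ lies in $\setZ[c_i][t]$, and hence
\[
(d/dt)^{p-1}(F^{p-1} F_t) = (d/dt)^p(F^p)/p
\]
as integer-coefficient polynomials, which I would then reduce modulo $p$.

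To carry out the reduction, I would apply the generalized (multinomial) Leibniz rule, or equivalently expand the Taylor series $F(t+x) = \sum_k F^{(k)}(t)\, x^k/k!$ and raise to the $p$-th power, to identify $(d/dt)^p(F^p) = p! \cdot [x^p]\,F(t+x)^p$. Dividing by $p$ yields $(p-1)! \cdot [x^p]\,F(t+x)^p$; since $(p-1)! \equiv -1 \pmod p$ by Wilson's theorem, it suffices to show $[x^p] F(t+x)^p \equiv F_t^p \pmod p$. In characteristic $p$, the Frobenius identity gives $F(t+x)^p = \sum_i c_i^p (t^p + x^p)^i$, whose $x^p$-coefficient is $\sum_i i\,c_i^p\,t^{p(i-1)}$; this matches $F_t^p = \sum_i i^p c_i^p t^{p(i-1)}$ because $i^p \equiv i \pmod p$.

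The main step is the lifting to $\setZ[c_i]$, which makes the quotient $(d/dt)^p(F^p)/p$ a well-defined polynomial; once that framework is in place, the Leibniz/Taylor identification, Wilson's theorem, and the elementary Frobenius computation combine to produce the sign $-1$ and complete the proof.
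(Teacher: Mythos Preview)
Your proof is correct. Part (1) coincides verbatim with the paper's argument. For part (2), both you and the paper lift to a $p$-torsion-free ring, write $p\,(d/dt)^{p-1}(F_tF^{p-1}) = (d/dt)^p(F^p)$, and then divide by $p$ before reducing; both ultimately invoke Wilson's theorem for the sign $-1$.

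The difference is in how $(d/dt)^p(F^p)$ is unpacked. The paper applies the multinomial Leibniz rule directly, grouping terms by the partition $(b_1,\dots,b_p)$ of $p$ into $p$ parts, and then checks partition by partition that the coefficient is $p$ for $(p,0,\dots,0)$, is $p!\equiv -p\pmod{p^2}$ for $(1,\dots,1)$, and is divisible by $p^2$ otherwise. Your route instead packages Leibniz as the Taylor identity $(d/dt)^p(F^p)=p!\,[x^p]\,F(t+x)^p$, then computes $[x^p]\,F(t+x)^p$ in characteristic $p$ in one stroke via Frobenius, together with $i^p\equiv i\pmod p$. Your organization avoids the case analysis on partitions and is a little slicker; the paper's is more hands-on but equally short. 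Either way the content is the same.
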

\begin{proof}
If $0 \leq a \leq p-2$, then this is clear since $F_t F^a = (d/dt) (a+1)^{-1} F^{a+1}$.
Consider the case $a = p-1$.
Let $E$ be a polynomial in one variable $t$ over a $p$-torsion-free ring.
We have 
\[
p (d/dt)^{p-1} E_t E^{p-1} = (d/dt)^p E^p = \sum c \cdot ((d/dt)^{b_1} E) \dots ((d/dt)^{b_p} E),
\]
where the sum is over the non-increasing $p$-tuples $b_1 \geq \dots \geq b_p \geq 0$ of non-negative integers satisfying $b_1 + \dots + b_p = p$,
and $c \in \setZ$ are coefficients depending on the tuples.
We have $c = p$ if $(b_1, \dots, b_p) = (p, 0, \dots, 0)$,
$c = p! \equiv -p \pmod{p^2}$ if $(b_1, \dots, b_p) = (1, \dots, 1)$, 
and $c \equiv 0 \pmod{p^2}$ if otherwise.
Dividing by $p$ and then going to characteristic $p$, we obtain the stated formula.
\end{proof}

\subsection{Lattices}

A \emph{lattice} is a free $\setZ$-module $L$ of finite rank equipped with a symmetric bilinear pairing 
$\map{\pairing{\functorspace}{\functorspace}}{L \times L}{\setZ}$.
We also use the multiplicative notation for the pairing.

A lattice $L$ is \emph{even} if $v^2 \in 2 \setZ$ for every $v \in L$
and \emph{odd} if otherwise.

The \emph{dual} of a lattice $L$ is $\dual{L} := \Hom(L, \setZ)$.
We say that $L$ is \emph{non-degenerate} (resp.\ unimodular) if
the natural map $L \to \dual{L}$ is injective (resp.\ an isomorphism).
If $L$ is non-degenerate, 
then the cokernel is finite,
$\dual{L}$ can be viewed as a submodule of $L \otimes_{\setZ} \setQ$,
and the pairing on $L$ extends to a $\setQ$-valued symmetric bilinear pairing on $\dual{L}$.

If $L$ is even and non-degenerate,
then the quadratic map $\namedmapandmapsto{q}{\dual{L}}{\setQ}{x}{x^2}$ induces a quadratic map $\map{\bar{q}}{\dual{L}/L}{\setQ/2\setZ}$.
Here, a map $q$ from an abelian group to $\setQ$ (resp.\ $\setQ/2\setZ$) is a quadratic map
if $q(n x) = n^2 q(x)$ for $n \in \setZ$ holds and 
$(q(x + y) - q(x) - q(y))/2$ is a $\setQ$-valued (resp.\ $\setQ/\setZ$-valued) bilinear map.

The \emph{Gram matrix} of a lattice with respect to a basis $e_1, \dots, e_n$ 
is the matrix $(e_i \cdot e_j)_{1 \leq i, j \leq n}$.
The lattice is non-degenerate if and only if the Gram matrix has nonzero determinant,
in which case the determinant is called the \emph{discriminant} of the lattice and denoted by $\disc(L)$.
The discriminant does not depend on the choice of a basis, and its absolute value is equal to the order of $\dual{L}/L$.

A non-degenerate lattice $L$ has \emph{signature} $(r, s)$
if $L \otimes_{\setZ} \setR$ admits an orthogonal basis $(e_i)_{i = 1}^{r + s}$
with $e_i^2 = 1$ for $1 \leq i \leq r$ and $e_i^2 = -1$ for $r + 1 \leq i \leq r + s$.
We say that $L$ is \emph{positive-definite} (resp.\ \emph{negative-definite}) if $s = 0$ (resp.\ $r = 0$).

A \emph{sublattice} of a lattice is simply a submodule, equipped with the same pairing.
An \emph{overlattice} is the opposite relation of a sublattice.

The \emph{saturation} (or \emph{primitive closure}) of a sublattice $M$ in $L$ is the unique overlattice $\bar{M}$ of $M$ contained in $L$
such that $\bar{M}/M$ is finite and $L/\bar{M}$ is torsion-free.
It can be written as $\bar{M} = L \cap (M \otimes_{\setZ} \setQ)$.
We say that $M$ is \emph{saturated} (or \emph{primitive}) in $L$ if its saturation is itself.

A \emph{root} of a lattice $L$ is an element $v \in L$ with $v^2 = -2$.
We denote by $\Roots{L}$ the set of roots.
The \emph{root sublattice} of a lattice $L$ is the sublattice generated by its roots.

The root lattices of type $A_n, D_n, E_n$ are denoted by the same symbols, and are negative-definite by convention.
We call them \emph{ADE lattices}.

The \emph{reflection} with respect to a root $v \in L$ 
is the isometry $\namedmapandmapsto{r_v}{L}{L}{x}{x + \pairing{x}{v} v}$.
It satisfies $r_v^2 = \id$.
The \emph{Weyl group} of $L$, denoted by $\Weyl{L}$, 
is the subgroup of $\Ortho(L)$ generated by $\set{r_v \mid \text{$v$ is a root of $L$}}$.

For a lattice $L = (L, \pairing{\functorspace}{\functorspace})$ and a rational number $q$, 
we denote $(L, q \pairing{\functorspace}{\functorspace})$ by $L(q)$.

Suppose $p$ is a prime. A lattice $L$ is \emph{$p$-elementary}
if $\dual{L}/L$ is annihilated by $p$ (not only by a power of $p$).
If $L$ is $2$-elementary, then we say that $L$ is of \emph{type 2} if $x^2 \in \setZ$ for all $x \in \dual{L}$,
and \emph{type 1} if otherwise.

\subsection{Non-taut rational double points and relation to the height of K3 surfaces}

Let $X$ be a K3 surface (resp.\ an abelian surface) in positive characteristic.
The \emph{height} of $X$ is an invariant taking values in $\set{1, 2, \dots, 10} \cup \set{\infty}$ (resp.\ $\set{1, 2} \cup \set{\infty}$).
One of equivalent characterizations of the height is that 
the slopes of $\Hcrys^2(X/W(k))$ are $1$ if $\height(X) = \infty$
and $1 - 1/h, 1, 1 + 1/h$ if $\height(X) = h < \infty$.
We use the following facts.

\begin{prop}[\cite{Illusie:deRham--Witt}*{Section II.7.1}] \label{prop:height of abelian surfaces}
Let $X$ be an abelian surface in characteristic $p > 0$.
Then $\height(X) = 1, 2, \infty$ if and only if $\prank(X) = 2, 1, 0$ respectively.
\end{prop}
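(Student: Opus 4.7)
The plan is to deduce this proposition directly from the slope characterization of the height stated in the paper, together with the standard relation between the $p$-rank of an abelian variety and the slopes of its first crystalline cohomology.

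First, I would recall that for an abelian variety $X$ the natural cup product map gives an isomorphism of $F$-crystals $\Hcrys^2(X/W(k)) \cong \bigwedge^2 \Hcrys^1(X/W(k))$. Next, I would invoke the fact that the $p$-rank of $X$ equals the multiplicity of the slope $0$ in the Newton polygon of $\Hcrys^1(X/W(k))$, and that by Poincar\'e duality this Newton polygon is symmetric about $1/2$. Since $\dim \Hcrys^1 = 2 \dim X = 4$, the three cases for the slopes of $\Hcrys^1$ are then forced:
\[
\prank(X) = 2 : (0,0,1,1), \quad \prank(X) = 1 : (0, \tfrac12, \tfrac12, 1), \quad \prank(X) = 0 : (\tfrac12, \tfrac12, \tfrac12, \tfrac12).
\]

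Then I would compute the slopes on $\bigwedge^2 \Hcrys^1$ case by case, using that the slopes of an exterior power are obtained by summing slopes over pairs of distinct basis vectors of the slope decomposition. This yields $(0,1,1,1,1,2)$, $(\tfrac12,\tfrac12,1,1,\tfrac32,\tfrac32)$, and $(1,1,1,1,1,1)$ respectively. Matching with the characterization "slopes of $\Hcrys^2$ are $1 - 1/h, 1, 1 + 1/h$ for $h < \infty$ and all equal to $1$ for $h = \infty$" gives $h = 1, 2, \infty$ in the three cases, as claimed.

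There is no serious obstacle: the argument is essentially bookkeeping of slopes under the exterior power operation, once one knows the slope/$p$-rank dictionary for $H^1$ of an abelian variety. The only point requiring a little care is to confirm that the exterior square of an $F$-isocrystal with slopes $\lambda_1, \dots, \lambda_n$ has slopes $\lambda_i + \lambda_j$ for $i < j$, which is classical and can be cited from Illusie's reference.
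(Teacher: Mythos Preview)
Your argument is correct. The paper does not supply its own proof of this proposition; it simply records the statement with a citation to Illusie, so there is no ``paper's proof'' to compare against. Your route via $\Hcrys^2 \cong \bigwedge^2 \Hcrys^1$ and the resulting slope bookkeeping is exactly the standard derivation one finds behind that reference, and your case analysis of the possible Newton polygons of $\Hcrys^1$ for an abelian surface is complete (the symmetry and rank constraints on simple isocrystals rule out any slopes other than the three patterns you list).
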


\begin{prop} \label{prop:height and Picard number}
Let $X$ be a K3 surface. Let $\rho$ be its Picard number (the rank of the free $\setZ$-module $\Pic(X)$) and $\height(X)$ its height.
\begin{enumerate}
\item 
If $\height(X) < \infty$, then $\rho \leq 22 - 2 \height(X)$.
\item $\height(X) = \infty$ if and only if $\rho = 22$.
In this case, $X$ is called \emph{supersingular}.
\end{enumerate}
\end{prop}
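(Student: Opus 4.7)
The plan is to exploit the Newton polygon of crystalline Frobenius on $\Hcrys^2(X/W(k))$, a free $W(k)$-module of rank $22$. Combined with the slope characterization of $\height(X)$ given immediately before the proposition, together with Poincar\'e duality (which forces symmetry of the Newton polygon about slope $1$) and the standard identification of the slope-$(1-1/h)$ piece with the Dieudonn\'e module of the formal Brauer group $\widehat{\mathrm{Br}}(X)$ of height $h$, the rationalized slope-$1$ subspace has $W(k)$-rank $22 - 2h$ when $\height(X) = h < \infty$ and rank $22$ when $\height(X) = \infty$.

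For part (1), I would use the crystalline first Chern class $\map{c_1^{\crys}}{\Pic(X)}{\Hcrys^2(X/W(k))}$. Its image lies in the slope-$1$ part, since Frobenius acts on crystalline classes of algebraic cycles as multiplication by $p$. This map is injective modulo torsion, for instance by compatibility with $\ell$-adic Chern classes together with nondegeneracy of the intersection pairing on $\Pic(X)$. Consequently $\rho$ is bounded above by the $W(k)$-rank of the rationalized slope-$1$ part, giving $\rho \leq 22 - 2 \height(X)$.

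For part (2), the implication ``$\rho = 22 \Rightarrow \height(X) = \infty$'' follows immediately from (1) by contraposition, since any finite height $h \geq 1$ would force $\rho \leq 20$. The reverse direction ``$\height(X) = \infty \Rightarrow \rho = 22$'' is the Artin--Shioda conjecture, equivalent to the Tate conjecture for supersingular K3 surfaces. This is where the real difficulty lies, and I would invoke the existing proofs (in characteristic $p \geq 3$ by Charles, Madapusi Pera, and Kim--Madapusi Pera via the Kuga--Satake construction; in characteristic $2$ by Ito--Ito--Koshikawa) rather than attempt to reprove them. The remainder of the proposition is a routine consequence of the $F$-crystal structure on $\Hcrys^2$; the Artin--Shioda direction is the sole genuine obstacle, and in a paper such as this one it should simply be cited.
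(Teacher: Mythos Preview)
Your proposal is correct and follows essentially the same approach as the paper: the finite-height inequality via the slope decomposition of $\Hcrys^2$ and the crystalline Chern class (the paper simply cites Illusie for this), and the infinite-height case by invoking the Tate conjecture for supersingular K3 surfaces. One minor bibliographic point: Kim--Madapusi Pera's paper concerns $2$-adic integral canonical models and is the reference for characteristic $2$ (together with Ito--Ito--Koshikawa), not for characteristic $\geq 3$.
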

\begin{proof}
In the finite height case, this follows from the characterization of the height using the slopes of the crystalline cohomology.
See \cite{Illusie:deRham--Witt}*{Section II.7.2}.

In the infinite height case, this is the Tate conjecture for supersingular K3 surfaces, proved by 
Madapusi Pera \cite{MadapusiPera:TateK3}*{Theorem 1} for characteristic $\geq 3$,
and by Kim--Madapusi Pera \cite{Kim--MadapusiPera:2-adic}*{Theorem A.1} and 
Ito--Ito--Koshikawa \cite{Ito--Ito--Koshikawa:CM liftings}*{Remarks 6.9 and 6.10}
for characteristic $2$.
\end{proof}

\emph{Rational double points} (\emph{RDPs} for short) are the $2$-dimensional canonical singularities.

The exceptional curves of the minimal resolution of an RDP
are smooth rational curves, whose dual graph 
is a Dynkin diagram of type $A_n$, $D_n$, or $E_n$,
and the RDP is said to be of type $A_n$, $D_n$, or $E_n$ accordingly.

We say that a surface is an \emph{RDP surface} if its singularities (if any) are all RDPs.
An \emph{RDP K3 surface} is a proper RDP surface whose minimal resolution is a K3 surface.
We define the height of an RDP K3 surface to be the height of its minimal resolution.

In most characteristics RDPs are taut, that is, if two RDPs are of the same type then the singularities are formally isomorphic.
However there exist non-taut RDPs in characteristic $2$, $3$, and $5$.
Artin \cite{Artin:RDP} classified non-taut RDPs and introduced the notation $D_n^r$ and $E_n^r$,
where the coindex $r$ takes non-negative integer values up to a bound depending on the characteristic and the dual graph.
However, in this paper we use convention that the range of $r$ in $D_{2l+1}^r$ (in characteristic $2$)
is $\set{1/2, 3/2, \dots, m-1/2}$ as in \cite{Matsumoto:rdpderi}*{Convention 1.2}, 
which is also used in \cite{Liedtke--Martin--Matsumoto:RDPtors} cited below,
instead of $\set{0, 1, \dots, m-1}$ as in Artin's notation.

In \cite{Matsumoto:k3rdpht} we gave relations between the isomorphism class of a non-taut RDP on an RDP K3 surface and the height of the surface.
The following subset of the result is needed in this paper.

\begin{prop}[\cite{Matsumoto:k3rdpht}*{Theorem 1.2}] \label{prop:RDP and height}
Suppose $z \in X$ is a non-taut RDP on an RDP K3 surface $X$ in characteristic $p = 2$.
\begin{itemize}
\item If $z$ is of type $D_4^r$ and $\height(X) = 1$, then $r = 1$.
\item If $z$ is of type $D_8^r$ and $\height(X) = 2$, then $r = 2$.
\item If $z$ is of type $D_{2l}^r$, $D_{2l+1}^{r+1/2}$, or $E_N^r$, and $X$ is supersingular, then $r = 0$.
\end{itemize}
\end{prop}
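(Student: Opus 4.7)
My plan is to derive the three bullets as specializations of \cite{Matsumoto:k3rdpht}*{Theorem 1.2}, which gives a complete classification of the pairs (coindex of a non-taut RDP, height of the K3 surface) that can occur on an RDP K3 surface in positive characteristic. The work here is essentially bookkeeping: extract Artin's normal forms for $D_n^r$, $D_{2l+1}^{r+1/2}$ and $E_N^r$ in characteristic $2$, look up in the table of that theorem which values of the coindex are compatible with heights $1$, $2$ and $\infty$ respectively, and verify that precisely the values $r=1$, $r=2$ and $r=0$ listed above are the only admissible ones.

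The underlying mechanism in the cited theorem is a local-to-global comparison. For each non-taut RDP normal form one computes a local invariant---intuitively the dimension of a space of local $\mu_p$- or $\alpha_p$-torsor classes, equivalently a local contribution to the Artin--Mazur formal Brauer group of $\tilde X$---and one finds that this invariant decreases as the coindex $r$ increases, reflecting the fact that larger coindex corresponds to a \emph{less wild} singularity. On the global side, Proposition~\ref{prop:height and Picard number} together with the slope filtration on $\Hcrys^2(\tilde X / W)$ bounds the dimensions of the analogous global cohomology groups in terms of the height. Summing local contributions over all RDPs of $X$ and comparing with the global bound yields sharp numerical inequalities from which the three bullets are then read off.

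The one subtlety I would flag is the convention for $D_{2l+1}^{r+1/2}$, whose coindex is taken in $\set{1/2, 3/2, \dots, l-1/2}$ here (following \cite{Matsumoto:rdpderi}) rather than in $\set{0, 1, \dots, l-1}$ as in Artin's original numbering; I would check that under this shift the supersingular case of the third bullet correctly reduces to $r=0$ in the shifted parameter, and not to its Artin analogue. The genuinely hard step---the explicit local computation for each non-taut normal form and the passage from the resulting local data to the global height bound---is carried out in \cite{Matsumoto:k3rdpht}, so for the present statement no fresh computation is required; the task reduces to extracting and correctly transcribing the three relevant rows of that classification.
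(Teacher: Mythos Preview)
Your proposal is correct and matches the paper's approach: the paper gives no proof of this proposition at all, treating it purely as a citation of \cite{Matsumoto:k3rdpht}*{Theorem 1.2}, which is exactly what you do. Your additional explanation of the local-to-global mechanism behind the cited theorem and your caution about the half-integer coindex convention are helpful context, but go beyond what the paper itself provides.
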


\subsection{Picard lattices of supersingular K3 surfaces}

\begin{prop}[\cite{Rudakov--Shafarevich:finite}*{Section 1}]  \label{prop:uniqueness of p-elementary lattice}
Suppose $n > 2$ is an integer and $p \neq 2$.
An even non-degenerate $p$-elementary lattice of signature $(1, n-1)$
is uniquely determined (up to isometry) by its discriminant.

If $n > 4$ and $p = 2$, then such a lattice is uniquely determined by its discriminant and type (1 or 2).
\end{prop}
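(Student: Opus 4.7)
The natural strategy is to apply Nikulin's theorem that an even indefinite lattice is determined up to isometry by its signature together with the discriminant form $q_L \colon A_L \to \setQ/2\setZ$ on $A_L = \dual{L}/L$, provided a mild rank bound holds. Once this reduction is in place, the task becomes to classify the finite quadratic form $q_L$ in terms of the invariants appearing in the statement.

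Since $L$ is $p$-elementary, $A_L$ is an $\setF_p$-vector space of dimension $a = \log_p \card{\disc L}$, which is therefore determined by $\disc L$, and $q_L$ takes values in $\frac{1}{p}\setZ/2\setZ$. For $p$ odd, nondegenerate $\setQ/2\setZ$-valued quadratic forms on $\setF_p^a$ of this shape are classified by $a$ together with the square class of the discriminant in $\setF_p^{\times}/(\setF_p^{\times})^2$ (two forms per dimension). That square class is pinned down by Milgram's Gauss-sum formula
\[
\sum_{x \in A_L} \exp(\pi i q_L(x)) = \sqrt{\card{A_L}} \cdot \exp(\pi i \operatorname{sign}(L)/4),
\]
so the signature $(1, n-1)$ and $\card{\disc L}$ together determine $q_L$ up to isomorphism. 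For $p = 2$, the $2$-elementary finite quadratic forms decompose into rank-$1$ odd blocks and rank-$2$ even blocks; the presence of rank-$1$ blocks is precisely what distinguishes type $1$ from type $2$. Within each type, $a$ and the signature (again via Milgram) pin down $q_L$, the hypothesis $n > 4$ giving enough room to normalize the rank-$2$ blocks against each other.

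The main obstacle is verifying Nikulin's rank hypothesis, which in its most general form requires $\rank L \geq l(A_L) + 2$; this can fail for $p$-elementary $L$ when $l(A_L)$ is as large as $n$ (for instance when $L \cong A_1(p)^{\oplus n}$). The remedy is Nikulin's sharpened uniqueness statement specifically for $p$-elementary lattices, whose rank hypothesis is satisfied precisely under the inequalities $n > 2$ for odd $p$ and $n > 4$ for $p = 2$ stated in the proposition. Alternatively, one can argue by induction on $n$, splitting off a hyperbolic plane $U$ at each step using the compatibility of signature with $q_L$ established above, until one reaches a small-rank base case that is handled by a direct computation with the finitely many normal forms.
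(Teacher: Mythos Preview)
The paper does not prove this proposition; it is stated with a citation to Rudakov--Shafarevich \cite{Rudakov--Shafarevich:finite}*{Section 1} and no further argument. Your outline via Nikulin's discriminant-form machinery is the standard route and is essentially what underlies the cited reference: reduce to classifying the finite quadratic form $(A_L, q_L)$, observe that for a $p$-elementary lattice this is a form on an $\setF_p$-space whose isomorphism class is governed by $\dim A_L$, the signature (through the Gauss--Milgram formula), and for $p = 2$ the parity invariant distinguishing type 1 from type 2, and then invoke the appropriate uniqueness theorem.

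One point to tighten: your first paragraph appeals to Nikulin's general theorem that signature and $q_L$ determine $L$, but as you yourself note this requires $\rank L \geq l(A_L) + 2$, which fails exactly when $l(A_L) \in \set{n-1, n}$. Your fallback to ``Nikulin's sharpened uniqueness statement specifically for $p$-elementary lattices'' is correct in spirit, but you should be explicit that what is really being used is that for an \emph{indefinite} lattice of rank $\geq 3$ the genus has a single class (Eichler/strong approximation), combined with the fact that the local genus invariants of a $p$-elementary lattice at each prime are determined by the data in the statement. The alternative induction you sketch (split off a copy of $U$) also works, but you need to justify the base case carefully: when $l(A_L) = n$, the lattice $L(1/p)$ is unimodular and indefinite, hence classified, and one checks which scalings are even of the correct type. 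Either way the hypotheses $n > 2$ (odd $p$) and $n > 4$ ($p = 2$) are what guarantee the argument closes; as written, your proposal asserts this without verification.
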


It is known that the Picard group of a K3 surface is an even non-degenerate lattice of signature $(1, \rho - 1)$
for some $1 \leq \rho \leq 22$.
A K3 surface is supersingular if and only if $\rho = 22$
(Proposition \ref{prop:height and Picard number}).
It is known (\cite{Rudakov--Shafarevich:finite}*{Section 8}) that
the Picard lattice of a supersingular K3 surface in characteristic $p > 0$ is $p$-elementary and, if $p = 2$, then it is moreover of type 2.
The \emph{Artin invariant} of a supersingular K3 surface $X$ is defined to be $\sigma \in \set{1, 2, \dots, 10}$ 
satisfying $\dual{\Pic(X)}/\Pic(X) \cong (\setZ/p\setZ)^{2 \sigma}$.

\subsection{Divisors on K3 surfaces}

Let $X$ be a K3 surface.

\begin{prop} \label{prop:-2 divisor}
If $D \in \Pic(X)$ is a divisor class with $D^2 \geq -2$,
then either $D$ or $-D$ is effective.
\end{prop}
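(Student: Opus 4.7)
The plan is to apply the Riemann--Roch theorem on the K3 surface $X$ together with Serre duality. Since $X$ is a K3 surface, the canonical class $K_X$ is trivial and the arithmetic genus is such that Riemann--Roch reads
\[
\chi(D) = h^0(D) - h^1(D) + h^2(D) = \frac{D^2}{2} + 2.
\]

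Next I would use Serre duality, which on a K3 surface gives $h^2(D) = h^0(K_X - D) = h^0(-D)$ because $K_X = 0$. Dropping the nonnegative term $h^1(D)$ yields the inequality
\[
h^0(D) + h^0(-D) \geq \chi(D) = \frac{D^2}{2} + 2.
\]

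Now the hypothesis $D^2 \geq -2$ gives $\frac{D^2}{2} + 2 \geq 1$, so $h^0(D) + h^0(-D) \geq 1$. Therefore at least one of $h^0(D)$ or $h^0(-D)$ is positive, which means that either $D$ or $-D$ is represented by a nonzero effective divisor, as required.

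There is no real obstacle here; the only thing to double-check is the integrality boundary, namely that $D^2 = -2$ suffices rather than requires $D^2 \geq 0$. The Riemann--Roch bound $\chi(D) \geq 1$ holds precisely at the threshold $D^2 = -2$, which is exactly why the statement is phrased with the inequality $D^2 \geq -2$.
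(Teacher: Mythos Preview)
Your proof is correct and follows essentially the same approach as the paper: apply Riemann--Roch together with Serre duality on the K3 surface to obtain $h^0(D) + h^0(-D) \geq \tfrac{D^2}{2} + 2 \geq 1$. The only trivial wording quibble is that $h^0(D) > 0$ means $D$ is linearly equivalent to an effective divisor, which could be the zero divisor when $D = 0$; but this case is of course fine for the stated conclusion.
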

\begin{proof}
By the Riemann--Roch theorem we have 
$h^0(\cO_X(D)) + h^2(\cO_X(D)) \geq h^0(\cO_X(D)) - h^1(\cO_X(D)) + h^2(\cO_X(D)) = 2 + (1/2)D^2 \geq 1$,
hence $h^0(\cO_X(D)) > 0$ or $h^2(\cO_X(D)) > 0$. We have $h^0(\cO_X(-D)) = h^2(\cO_X(D))$.
\end{proof}

\begin{prop}[\cite{Ogus:crystalline torelli}*{1.10}] \label{prop:nef up to Weyl}
Let $N$ be a non-degenerate sublattice of signature $(1, n-1)$, $n \geq 2$,
of the Picard lattice of a K3 surface.
If $D \in N$ is a divisor class with $D^2 \geq 2$,
then there is a unique class in the $\pm \Weyl{N}$-orbit of $D$ 
that is nef and effective.
\end{prop}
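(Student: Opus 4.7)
My plan is a three-step argument: reduce to the effective case via Riemann--Roch, construct a nef representative by a Weyl-orbit minimization, and derive uniqueness from the fundamental-domain property of $\Weyl{N}$ on the positive cone.

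Since $D^2 \geq 2 > -2$, Proposition \ref{prop:-2 divisor} yields that either $D$ or $-D$ is effective on $X$, and by absorbing the choice into the sign in $\pm\Weyl{N}$ I may assume $D$ is effective. Let $C^+ \subset N \otimes \setR$ denote the connected component of $\{x : x^2 > 0\}$ containing the orthogonal projection to $N \otimes \setR$ of an ample class of $X$. Then $D \in C^+$, and every element of $\Weyl{N}\cdot D$ preserves the self-intersection $D^2 \geq 2$ and lies in $C^+$.

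For existence I would fix an ample divisor $H$ on $X$ and observe that $H\cdot D' \in \setZ_{>0}$ for every effective $D' \in \Weyl{N}\cdot D$. Choose $D'$ in the orbit minimizing $H\cdot D'$ subject to being effective; the claim is that $D'$ is nef. If not, some smooth rational curve $C \subset X$ with $[C]$ a component of $D'$ satisfies $D'\cdot [C] < 0$, and the key step is to deduce from this the existence of an effective root $v\in N$ with $D'\cdot v < 0$. This uses the chamber decomposition of $C^+$ by the hyperplanes $v^\perp$ for roots $v\in N$ and the hyperbolic signature of $N$: since $D'$ does not lie in the chamber containing the ample projection, some wall $v^\perp$ separates them. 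The reflection $r_v(D') = D' + (D'\cdot v)v$ is still in the $\Weyl{N}$-orbit, has the same square $\geq 2$ and hence is effective by a second application of Proposition \ref{prop:-2 divisor}, and satisfies $H\cdot r_v(D') = H\cdot D' + (D'\cdot v)(H\cdot v) < H\cdot D'$ once $v$ is chosen to be effective. This contradicts minimality.

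For uniqueness, $\Weyl{N}$ acts simply transitively on the chambers of $C^+$ cut out by root hyperplanes, so the nef sub-cone lies in a single closed fundamental chamber, and two nef classes in the same $\Weyl{N}$-orbit must coincide. The sign is unambiguous since no nonzero effective class lies in $-C^+$. The principal obstacle is the existence step's passage from a negatively-intersecting curve $C$, which need not itself lie in $N$, to an actual root $v \in N$; this is exactly the point where the signature hypothesis $(1,n-1)$ on $N$ with $n \geq 2$ enters essentially, providing a rich enough root system in $N$ to furnish the required wall.
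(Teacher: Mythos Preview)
The paper does not supply its own proof here; it simply cites Ogus.  Ogus's Proposition~1.10 is the lattice-theoretic statement that for an even hyperbolic lattice $N$ the group $\pm\Weyl{N}$ acts simply transitively on the chambers of the positive cone $C^+\setminus\bigcup_{v\in\Roots{N}}v^\perp$.  The passage to ``nef and effective'' on a K3 surface is a further step.

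Your three-step outline is the standard one, and when $N=\Pic(\tilde X)$ it goes through: every $(-2)$-curve $C$ then has $[C]\in N$, so a curve witnessing non-nefness is itself a root of $N$, and reflecting in $[C]$ strictly lowers $H\cdot D'$ while keeping the class effective.  Your uniqueness argument is also sound in general: a nef class pairs non-negatively with every effective divisor, in particular with every effective root $v\in N$, so it lies in the closure of a single Weyl chamber of $N$, and two $\Weyl{N}$-equivalent classes there must coincide; the sign is fixed because a nonzero nef effective class cannot have negative self-intersection with itself.

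The genuine gap is exactly where you locate it, but your proposed resolution does not close it.  Suppose $D'$ minimizes $H\cdot D'$ and is not nef, witnessed by a $(-2)$-curve $C$ with $[C]\notin N$.  There is then no reason for $D'$ to lie outside the chamber of $N\otimes\setR$ containing the ample projection $h$: since $h\cdot v=H\cdot v$ for $v\in N$, the chamber of $h$ is $\{x\in C^+: x\cdot v\geq 0\text{ for all effective roots }v\in N\}$, and $D'$ can perfectly well satisfy $D'\cdot v\geq 0$ for every such $v$ while $D'\cdot[C]<0$.  No wall $v^\perp$ with $v\in\Roots{N}$ separates $D'$ from $h$ in that situation, so no reflection in $\Weyl{N}$ is available.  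The signature hypothesis $(1,n-1)$ governs the hyperbolic geometry of the chamber decomposition, but it does not force $N$ to have any roots at all; if $\Roots{N}=\emptyset$ then $\Weyl{N}$ is trivial and the orbit is $\{\pm D\}$, yet nothing you have said prevents both from being non-nef.  So your sketch establishes existence only for $N=\Pic(\tilde X)$, not for an arbitrary hyperbolic sublattice.
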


\begin{prop} \label{prop:-2 divisors orthogonal to a big nef}
If $D$ is a nef effective divisor with $D^2 > 0$
and $Z_1, \dots, Z_n$ are effective divisors with $Z_i^2 = -2$ and $Z_i \cdot D = 0$,
then $Z := \bigcup_{i = 1}^n \Supp Z_i$ is an ADE configuration.
\end{prop}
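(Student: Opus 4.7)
The plan is to analyze the irreducible components $E_1, \ldots, E_m$ of $Z$ using the Hodge index theorem applied to the nef big class $D$. First, since $D$ is nef, each $E_j \cdot D \geq 0$. Writing each $Z_i$ as a nonnegative combination of the $E_j$, the hypothesis $Z_i \cdot D = 0$ forces $E_j \cdot D = 0$ for every component appearing in some $Z_i$. Thus each $E_j$ lies in the orthogonal complement $D^{\perp}$ inside $\Pic(X)\otimes\setR$, which is negative-definite by Hodge index since $D^2 > 0$ and the Picard lattice has signature $(1, \rho-1)$.

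Next I would verify that each $E_j$ is a smooth rational $(-2)$-curve. Since $E_j$ is a prime divisor on a K3 surface and $K_X = 0$, adjunction gives $E_j^2 = 2 p_a(E_j) - 2 \geq -2$. On the other hand $E_j$ is a nonzero element of the negative-definite sublattice $D^\perp$, so $E_j^2 < 0$. Combining these, $E_j^2 = -2$ and $p_a(E_j) = 0$, whence $E_j$ is a smooth rational curve.

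For the dual graph, I would observe that distinct irreducible curves satisfy $E_i \cdot E_j \geq 0$. If we had $E_i \cdot E_j \geq 2$, then
\[
(E_i + E_j)^2 = -2 - 2 + 2(E_i \cdot E_j) \geq 0,
\]
contradicting negative-definiteness of $D^\perp$. Hence $E_i \cdot E_j \in \{0, 1\}$ for $i \neq j$, so the dual graph is simply-laced.

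Finally, the sublattice of $D^\perp$ generated by $E_1, \ldots, E_m$ is a negative-definite lattice spanned by vectors of square $-2$ with mutual pairings in $\{0,1\}$; by the standard classification of negative-definite root lattices (equivalently, of finite simply-laced Coxeter--Dynkin diagrams) its connected components are of type $A_n$, $D_n$, or $E_n$. Thus the dual graph of $Z$ is a disjoint union of ADE Dynkin diagrams, which is exactly the assertion. The only nontrivial ingredient is the classification appealed to in this last step, but it is entirely standard and requires no new input.
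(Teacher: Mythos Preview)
Your proof is correct and takes a genuinely different route from the paper's. The paper invokes the Zariski--Fujita decomposition $Z_i = P_i + N_i$ (with $P_i$ nef and $N_i$ supported on an ADE configuration, citing B\u{a}descu), then argues that a nef class orthogonal to the big nef $D$ must vanish, forcing $Z_i = N_i$. Your argument is more elementary and self-contained: you go directly via Hodge index to place all components $E_j$ in the negative-definite space $D^{\perp}$, use adjunction on the K3 to pin down $E_j^2 = -2$ and $E_j \cong \bP^1$, and then read off the simply-laced ADE shape from the classification of negative-definite root systems. This avoids the Zariski decomposition entirely, at the cost of redoing by hand a small piece of what that machinery packages.

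One point worth making explicit in your last step: to pass from ``the sublattice generated by the $[E_j]$ is negative-definite'' to ``the dual graph of the curves is ADE'', you are implicitly using that the classes $[E_1],\dots,[E_m]$ are linearly independent (otherwise the abstract Gram matrix of the graph could merely be negative \emph{semi}-definite, allowing affine diagrams). This is immediate here: any integral relation $\sum a_j [E_j] = 0$ would, after separating positive and negative parts, exhibit a nonzero effective divisor with trivial class, which is impossible since $h^0(\cO_X)=1$. With that sentence added, your argument is airtight.
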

Here, by an \emph{ADE configuration} we mean
a disjoint union of finitely many connected divisors whose irreducible components are smooth rational curves and whose dual graphs are
of type $A_n$, $D_n$, or $E_n$.
\begin{proof}
Take the Zariski--Fujita decomposition $Z_i = P_i + N_i$
	with $P_i$ a nef $\setQ$-divisor and 
	$N_i$ an effective $\setQ$-divisor supported on an ADE configuration (see \cite{Badescu:surfaces}*{Theorem 14.14}).
	Since $P_i$ is nef and orthogonal to a nef big divisor $D$,
	it follows that $P_i = 0$, and hence $N_i$ is a $\setZ$-divisor.
	Then $N_i$ is connected since $N_i^2 = -2$.
	Since $D$ is nef, it is orthogonal to each irreducible component of $N_i$.
	Then all irreducible components of $Z = \bigcup_i \Supp(N_i)$ 
	are orthogonal to $D$,
	hence $Z$ is an ADE configuration.
\end{proof}

\begin{prop} \label{prop:-2 vectors}
Suppose smooth rational curves $C_1, \dots, C_k \subset X$ 
form an ADE configuration.
Let $N' \subset \Pic(X)$ be the sublattice generated by the classes $[C_i]$,
and $N$ its saturation.
Then any class $v \in N$ with $v^2 = -2$ 
is of the form $\sum_{i = 1}^k a_i [C_i]$ with either $a_i \in \setZ_{\leq 0}$ or $a_i \in \setZ_{\geq 0}$.

In particular, the type of the configuration of the curves is the same as the type of the lattice.
\end{prop}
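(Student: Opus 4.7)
The plan is to combine the Riemann--Roch dichotomy of Proposition \ref{prop:-2 divisor} with a contraction argument. First, I would apply that proposition to $v$: since $v^2 = -2$, either $v$ or $-v$ is effective, and after replacing $v$ by $-v$ if necessary I may assume $v$ is effective. Writing the irreducible decomposition $v = \sum_j m_j D_j$ with $m_j \in \setZ_{>0}$ and the $D_j$ pairwise distinct, the aim becomes to show that every $D_j$ belongs to $\{C_1, \dots, C_k\}$; this immediately gives $v = \sum_i a_i [C_i]$ with $a_i \in \setZ_{\geq 0}$, and the case of $-v$ effective furnishes the non-positive alternative.

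To pin down the $D_j$, I would contract the ADE configuration. By Artin's contractibility theorem there is a projective birational morphism $\pi \colon X \to \bar X$ to an RDP surface $\bar X$ whose total exceptional locus is $\bigcup_i C_i$. Fix an ample divisor $H$ on $\bar X$; then $\pi^* H$ is nef and big on $X$ and satisfies $\pi^* H \cdot [C_i] = 0$ for each $i$ by the projection formula. Since $v$ lies in the saturation $N = (N' \otimes \setQ) \cap \Pic(X)$, it is a $\setQ$-linear combination of the $[C_i]$, and therefore $\pi^* H \cdot v = 0$.

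Expanding $0 = \pi^* H \cdot v = \sum_j m_j (\pi^* H \cdot D_j)$, every summand is nonnegative because $\pi^* H$ is nef. For any irreducible $D_j$ not among the $C_i$, the pushforward $\pi_* D_j$ is an irreducible curve on $\bar X$, so $\pi^* H \cdot D_j = H \cdot \pi_* D_j > 0$ by ampleness of $H$. Since every $m_j$ is positive, each $D_j$ must be contracted by $\pi$, i.e.\ $D_j \in \{C_1, \dots, C_k\}$. This yields the required form of $v$. The last sentence, asserting that the type of the configuration of curves matches the type of the lattice, then follows at once: we have shown every $(-2)$-class in $N$ already lies in $N'$, so the root sublattice of $N$ equals $N'$ and inherits the ADE type read off from the dual graph of the $C_i$.

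The only delicate input is the existence of the projective contraction $\pi$ with an ample divisor on $\bar X$, which is standard for ADE configurations of $(-2)$-curves on a smooth projective surface. Beyond that, the heart of the argument is the nef-big intersection computation $\pi^* H \cdot v = 0$ combined with the Riemann--Roch dichotomy, and I do not foresee a more serious obstacle.
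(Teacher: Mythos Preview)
Your proof is correct. The core idea coincides with the paper's: produce a nef big divisor $D$ orthogonal to all the $[C_i]$ (hence to $v \in N$), and use $D \cdot v = 0$ together with effectivity of $v$ to force every irreducible component of $v$ to be one of the $C_i$. The difference is in implementation. The paper takes $D$ abstractly (``Since $N$ is negative-definite, we can take a nef effective nonzero divisor $D$ orthogonal to $N$'') and then invokes Proposition~\ref{prop:-2 divisors orthogonal to a big nef} (Zariski--Fujita decomposition) to conclude that $\bigcup_i C_i \cup \bigcup_j C'_j$ is an ADE configuration, followed by a linear-independence count to get $k'=0$. You instead construct $D = \pi^* H$ explicitly via Artin's contraction and finish by a one-line projection-formula argument, bypassing both the Zariski--Fujita machinery and the linear-independence step. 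Your route is more self-contained for this statement; the paper's route reuses infrastructure already established for other purposes.
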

\begin{proof}
We may assume $v$ is an effective divisor.
Let $C'_1, \dots, C'_{k'}$ be the components of $\Supp v$ different from $C_1, \dots, C_k$.
Since $N$ is negative-definite, we can take a nef effective nonzero divisor $D$ orthogonal to $N$.
Applying Proposition \ref{prop:-2 divisors orthogonal to a big nef} 
to $D$ and $C_1, \dots, C_k, v$,
we observe that $\bigcup_{i = 1}^k C_i \cup \bigcup_{j = 1}^{k'} C'_j$ is an ADE configuration.
This implies that $[C_1], \dots, [C_k], [C'_1], \dots, [C'_{k'}]$ are linearly independent.
Since $v \in N$, we obtain $k' = 0$.
\end{proof}

\section{Kummer lattices} \label{sec:Kummer lattices}

We introduce $5$ specific lattices, which we call \emph{Kummer lattices} (of type $16 A_1, 4 D_4, 2 D_8, 1 D_{16}, 2 E_8$).
We will use them in Section \ref{subsec:definition of Kummer structures} to define Kummer structures, 
and discuss their relevance to usual Kummer surfaces in Section \ref{subsec:usual Kummer surfaces}.
In this section we discuss lattice-theoretic properties of 
Kummer lattices and their embeddings to negative-definite lattices (Section \ref{subsec:embeddings of Kummer lattices to negative-definite lattices})
and to Picard lattices of supersingular K3 surfaces in characteristic $2$ (Section \ref{subsec:embeddings of Kummer lattices to supersingular K3 lattices}).

\subsection{Overlattices of $A_1^{\oplus m}$} \label{subsec:overlattices of A_1^m}

We define a function $\map{f}{\set{0, 1, \dots, 24}}{\setN}$ by
\[
f(m) = 
\begin{cases}
4 - \ceil{\log_2(16 - m)} & (m < 16), \\
5 & (m = 16), \\
m - 12 & (17 \leq m \leq 24),
\end{cases}
\]
and prove the following, although only the cases $m \leq 21$ are needed in this paper.
(This might be well-known to experts.)
\begin{prop} \label{prop:overlattices of A_1^m:bis}
Let $0 \leq m \leq 24$ be an integer.
Let $L$ be an even lattice whose root sublattice is of finite index $2^i$ and isomorphic to $A_1^{\oplus m}$.
Then $i \leq f(m)$, and for every $m$ the equality is achievable.
Moreover, such a lattice with $m = 16$ and $i = 5$ is unique up to isometry.
\end{prop}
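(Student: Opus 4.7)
The plan is to translate the problem into binary coding theory. With $R = A_1^{\oplus m}$, identify $\dual{R}/R \cong \setF_2^m$ equipped with the $\setQ/2\setZ$-valued discriminant quadratic form $q(c) = -\abs{c}/2 \pmod{2\setZ}$, where $\abs{c}$ is the Hamming weight. Even overlattices of $R$ inside $\dual{R}$ of index $2^i$ correspond bijectively to $i$-dimensional isotropic subspaces $C \subset \setF_2^m$, i.e.\ to linear codes that are \emph{doubly-even} (all codeword weights lie in $4\setZ$). The root sublattice of such an overlattice equals $R$ if and only if $C$ contains no codeword of weight $4$: a root $v$ in the overlattice satisfies $v^2 = -2$, which translates into $\sum b_j^2 = 4$ for the integer coordinates $2 v = \sum b_j e_j$, and the only solutions are a single $b_j = \pm 2$ (giving an original $A_1$-root of $R$) or exactly four $b_j = \pm 1$ (forcing $(b_j \bmod 2)$ to be a weight-$4$ codeword of $C$). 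The proposition thus reduces to showing that the maximum dimension $k(m)$ of a doubly-even binary code of length $m$ whose nonzero codewords have weight $\geq 8$ equals $f(m)$, with uniqueness at $(m, k) = (16, 5)$.

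For the upper bound I would split into three ranges. For $m \leq 16$, apply the Griesmer bound $m \geq \sum_{j=0}^{k-1} \ceil{8 \cdot 2^{-j}}$, whose right-hand side equals $16 - 2^{4-k}$ for $k \leq 4$ and $16$ for $k = 5$; this rearranges directly to $k \leq 4 - \ceil{\log_2(16 - m)} = f(m)$ for $m < 16$ and $k \leq 5 = f(16)$. For $m = 24$, doubly-even codes are automatically self-orthogonal (since $x \cdot x = \abs{x} \equiv 0 \pmod{2}$), so $k \leq m/2 = 12 = f(24)$. For the intermediate range $17 \leq m \leq 23$ a more delicate argument is required: the route I would attempt is shortening at the support of a minimum-weight codeword $c$, producing a code of length $m - 8$ with strictly smaller dimension whose nonzero weights are still $\geq 8$ and divisible by $4$; combined with self-orthogonality and iterated as needed, this should yield $k \leq m - 12$.

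Achievability is witnessed by classical constructions: the first-order Reed--Muller code $RM(1, 4)$ is the extremal $[16, 5, 8]$ doubly-even code; the extended binary Golay code is $[24, 12, 8]$; and other lengths are obtained by adjoining zero coordinates or by shortening Golay. Uniqueness at $(m, i) = (16, 5)$ is the classical uniqueness of the doubly-even $[16, 5, 8]$ binary code, realized by $RM(1, 4)$ (equivalently, the code of affine hyperplanes in $\setF_2^4$); this can be proved by a weight-enumerator computation via the MacWilliams identities or by identifying the weight-$8$ codewords of $C$ with an affine-geometry structure on $\setF_2^4$. The main obstacle will be the tight upper bound in the range $17 \leq m \leq 23$: Griesmer and self-orthogonality individually give looser bounds, so one must genuinely exploit the combination of double-evenness, the exclusion of weight-$4$ codewords, and the presence of a codeword realizing the minimum weight exactly. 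Setting up the shortening induction so that the dimension drops in a controlled way at each step is the delicate technical point.
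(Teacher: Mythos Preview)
Your reduction to doubly-even binary codes with minimum nonzero weight $\geq 8$ is exactly the paper's reformulation (the paper phrases it in terms of subsets of $2^S$ with cardinalities in $4\setZ \setminus \set{4}$, but this is the same object). Your achievability examples also coincide with the paper's: the affine-hyperplane code $RM(1,4)$ for $m=16$, the extended Golay code for $m=24$, and padding or restriction for intermediate lengths.

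Where your proof genuinely diverges is the upper bound for $m \leq 16$. The paper develops ad hoc intersection lemmas (Lemmas~\ref{lem:average over affine hyperplane:bis} and~\ref{lem:intersection of subsets:ter}) and then runs a case analysis on the number of weight-$12$ codewords. Your invocation of the Griesmer bound is cleaner: the computation $\sum_{j=0}^{k-1}\ceil{8/2^j} = 16 - 2^{4-k}$ for $k \leq 4$ and $=16$ for $k=5$ immediately yields $k \leq f(m)$ for $m \leq 16$, with no further case analysis. This is a real simplification.

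The gap is precisely where you flag it: the range $17 \leq m \leq 23$. Griesmer gives only $k \leq m - 11$ there, one off from the target $m-12$; self-orthogonality gives only $k \leq \floor{m/2}$, which is even weaker except at $m=23,24$. Your proposed shortening at the support of a weight-$8$ codeword does not obviously close the gap: at $m=17$, shortening at one coordinate of such a codeword produces a doubly-even $[16,5,\geq 8]$ code, which does exist, so no contradiction results. You do not actually carry out the ``controlled dimension drop'' induction you allude to, and I do not see a straightforward way to make it work using only Griesmer plus self-orthogonality.

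The paper's route is to observe that the trivial shortening inequality $g(m+1) \leq g(m) + 1$ reduces the entire range $17 \leq m \leq 24$ to the single case $m=17$, and then to prove $g(17) \leq 5$ directly. That case requires something beyond Griesmer: the paper counts weight-$12$ codewords (noting that the complements of any two distinct dodecads meet in a single point, and three complements have empty intersection, so there are at most six dodecads) and then invokes the intersection lemma~\ref{lem:intersection of subsets:ter}(\ref{item:intersection of subsets:ter:exceptions}). If you want to keep your coding-theoretic framing, you will still need some such structural argument at $m=17$; once you have it, the trivial shortening bound finishes $18 \leq m \leq 24$ immediately.

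Finally, your uniqueness sketch at $(m,i)=(16,5)$ is plausible but thin. The paper's argument is short and direct: one shows there are no dodecads, hence all $30$ nonzero non-full codewords have weight $8$, and then Lemma~\ref{lem:intersection of subsets:ter}(\ref{item:intersection of subsets:ter:all half}) equips $S$ with a canonical $\setF_2^4$-structure under which $V$ becomes the set of affine hyperplanes together with $\emptyset$ and $S$. A MacWilliams computation would also work, but you should actually write it out.
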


All even overlattices of $A_1^{\oplus m}$ of finite index are contained in $\dual{(A_1^{\oplus m})} = (1/2) A_1^{\oplus m}$, 
hence correspond to subgroups $V$ of $(1/2) A_1^{\oplus m} / A_1^{\oplus m} \cong 2^S$, where $\card{S} = m$,
with the property that any element of $V$ has cardinality divisible by $4$.
Having no roots other than those of $A_1^{\oplus m}$ means that no element of $V$ has cardinality $4$.
Thus the proposition is paraphrased as follows.

\begin{prop} \label{prop:dimension of subspaces with 0 mod 4}
Let $0 \leq m \leq 24$ be an integer.
Let $g(m)$ be the maximum dimension of an $\setF_2$-vector subspace
$V \subset 2^S$, where $S$ is a set with $\card{S} = m$,
such that every $B \in V$ satisfies $\card{B} \equiv 0 \pmod{4}$ and $\card{B} \neq 4$.
Then $g(m) = f(m)$.
Moreover, if $m = 16$ and $\dim V = 5$ then $V$ is isomorphic to the one given in Example \ref{ex:5-dimensional in 16}.
\end{prop}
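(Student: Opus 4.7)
The plan is to translate the problem into binary coding theory: identify $2^S$ with $\setF_2^S$ via characteristic functions so that $V$ becomes a binary linear code of length $m$, and the hypotheses become that $V$ is \emph{doubly even} (all weights divisible by $4$) with minimum distance at least $8$. The proposition then asserts that the maximum dimension of such a code is $f(m)$, achieved uniquely at $m = 16$ by the first-order Reed--Muller code $\mathrm{RM}(1, 4)$.

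For the lower bound $g(m) \geq f(m)$, I exhibit explicit constructions. At $m = 16$, identify $S$ with $\setF_2^4$ and take $V = \mathrm{RM}(1, 4)$, consisting of $\emptyset$, $S$, and the $30$ affine hyperplanes; this is $5$-dimensional and doubly even with weights in $\set{0, 8, 16}$. At $m = 24$, take the extended Golay code $G_{24}$. For $17 \leq m \leq 23$, shorten $G_{24}$ by $24 - m$ coordinates (which loses one dimension per shortened coordinate since the minimum dual weight of $G_{24}$ is $8$). For $m < 16$, shorten $\mathrm{RM}(1,4)$ by a set of $16 - m$ coordinates chosen to minimize the dimension loss: this loss equals $1 + \lceil \log_2(16 - m) \rceil$ and is realized by taking the omitted coordinates to fill a minimal affine subspace of $\setF_2^4$ containing them, giving $\dim V = f(m)$.

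For the upper bound $g(m) \leq f(m)$, the argument splits by range. For $m \leq 15$, after reducing to the case where no coordinate is identically $0$ on $V$, a double-count
\[
2^{d - 1} m = \sum_{v \in V} \abs{v} \geq 8 (2^d - 1)
\]
yields $2^d \leq 16/(16 - m)$, hence $d \leq f(m)$. For $m \geq 16$, the polarisation identity $\abs{B_1 \cap B_2} = \frac{1}{2}(\abs{B_1} + \abs{B_2} - \abs{B_1 \triangle B_2})$ together with the weight divisibility forces $\abs{B_1 \cap B_2}$ to be even, so $V$ is self-orthogonal under the standard pairing, giving $\dim V \leq m/2$, and in particular $\dim V \leq 12$ at $m = 24$. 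To descend from this to $\dim V \leq m - 12$ for $17 \leq m \leq 23$, I would invoke Gleason's theorem (doubly even self-dual binary codes have length divisible by $8$) together with the uniqueness of $G_{24}$ as the $[24, 12, 8]$ doubly even code: padding a hypothetical $V$ of too-large dimension to length $24$ by zero coordinates produces a doubly even subcode whose potential extension to a self-dual (hence Golay) code contradicts the transitivity of $\Aut(G_{24})$ on coordinates. The borderline case $m = 16$, $\dim V = 6$, is ruled out by a direct MacWilliams computation: both a priori weight enumerators (depending on whether $\mathbf{1} \in V$) yield a negative coefficient in the dual weight enumerator.

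For uniqueness at $m = 16$, $\dim V = 5$, a similar MacWilliams calculation forces the weight enumerator to be $1 + 30 x^8 y^8 + y^{16}$, whence $V$ consists of $\emptyset$, $S$, and $30$ size-$8$ subsets that, by the doubly-even condition, pairwise intersect in $0$, $4$, or $8$ elements; a classical combinatorial argument then identifies these $30$ subsets with the affine hyperplanes of a unique $\setF_2$-affine space structure on $S$, recovering $\mathrm{RM}(1, 4)$ up to permutation of $S$. The main obstacle is the upper-bound argument in the range $17 \leq m \leq 23$, where the simple weight-count is too weak and self-orthogonality alone only yields $\dim V \leq m/2$; here one must really exploit the rigidity of the Golay code together with Gleason's length restriction.
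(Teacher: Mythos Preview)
Your coding-theoretic reformulation is correct and leads to a genuinely different route from the paper's, which proceeds by elementary combinatorics (an intersection lemma, Lemma~\ref{lem:intersection of subsets:ter}) and never invokes MacWilliams or Gleason. Your weight-count bound $2^{d-1}m \geq 8(2^d-1)$ for $m \leq 15$ is cleaner than the paper's case analysis, and your MacWilliams computation at $m=16$ correctly rules out dimension $6$ and pins down the weight enumerator $1+30x^8y^8+y^{16}$ for dimension $5$.

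The genuine gap is exactly where you flag it: the range $17 \leq m \leq 23$. Your proposed fix does not work as written. Padding $V$ to length $24$ and extending to a doubly even self-dual code is always possible, but ``hence Golay'' is false: there are nine inequivalent Type~II codes of length $24$, and the extension step can introduce codewords of weight $4$, so you cannot conclude that the enlarged code has minimum distance $\geq 8$. Without that, transitivity of $\Aut(G_{24})$ tells you nothing.

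The repair is much simpler than what you sketch. Shortening at one coordinate shows $g(m+1) \leq g(m)+1$, so it suffices to prove $g(17) \leq 5$; then $g(m) \leq 5 + (m-17) = m-12$ for all $17 \leq m \leq 24$. The paper handles $m=17$ by a short direct argument (bounding the number of dodecads, then applying its intersection lemma). In your framework a MacWilliams computation at $(m,d)=(17,6)$ also works: after reducing to the case of no zero columns (else you are back at $m=16$, $d=6$, already excluded), the first moment together with $A_{16}\leq 1$ leaves only $(A_8,A_{12},A_{16})=(53,10,0)$ or $(54,8,1)$, and both produce a negative coefficient at $x^{15}y^{2}$ in the dual enumerator.
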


We obtain $g(m) \geq f(m)$ for $m = 16, 16 - 2^{4-j}, 24$ respectively by Examples \ref{ex:5-dimensional in 16}, \ref{ex:5-i-dimensional in 16-2^i}, and \ref{ex:12-dimensional in 24}.
Together with the inequalities $g(m+1) \geq g(m)$ (obvious) and 
$g(m) \geq g(m+1)-1$ (obtained by restricting to the subsets not containing one particular element),
we obtain $g(m) \geq f(m)$ for every $0 \leq m \leq 24$.

\begin{example}[$m = 16$, $\dim V = 5$] \label{ex:5-dimensional in 16}
Let $S_{16}$ be an $\setF_2$-vector space of dimension $4$ (hence $\card{S_{16}} = 16$). Then 
\[ V_{16} := \set{\emptyset} \cup \set{S_{16}} \cup \set{\text{affine hyperplanes of $S_{16}$}} \subset 2^{S_{16}} \]
gives a $5$-dimensional space satisfying the condition of Proposition \ref{prop:dimension of subspaces with 0 mod 4}.
\end{example}

\begin{example}[$m = 16 - 2^{4-j}$, $\dim V = j$] \label{ex:5-i-dimensional in 16-2^i}
Let $0 \leq j \leq 4$ be an integer.
Let $S_{16}$ and $V_{16}$ be as in Example \ref{ex:5-dimensional in 16}.
Take linearly independent elements $\phi_1, \dots, \phi_j$ in the dual vector space of $S_{16}$,
and let $H_i = (\phi_i = 0) \subset S_{16}$ be the corresponding hyperplanes.
The subspace of $V_{16}$ generated by $H_1, \dots, H_j$ is of dimension $j$ and realized on a subset of $S_{16}$ of cardinality $16 - 2^{4-j}$.
\end{example}

\begin{example}[$m = 24$, $\dim V = 12$] \label{ex:12-dimensional in 24}
The extended binary Golay code $V \subset 2^{24}$ is $12$-dimensional and the cardinalities of its elements are $0, 8, 12, 16, 24$.
\end{example}

To show the other inequality, we need some lemmas.

\begin{lem} \label{lem:average over affine hyperplane:bis}
Let $W$ be a finite dimensional affine space over a finite field,
and $\map{q}{W}{\setR}$ be a function with the property that
the sum $\sum_{w \in H} q(w)$, where $H \subset W$ is an affine hyperplane, is independent of $H$.
Then $q$ is a constant function.
\end{lem}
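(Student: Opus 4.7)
The plan is to prove constancy of $q$ by Fourier analysis on the additive group of the finite vector space underlying $W$. Fix an origin $w_0 \in W$ and identify $W$ with its associated $\setF_\ell$-vector space $V$ of dimension $n$, where $\ell$ denotes the cardinality of the ground field. Every affine hyperplane of $W$ is then of the form $\phi^{-1}(c)$ for some nonzero linear functional $\phi \in \dual{V}$ and some $c \in \setF_\ell$, and conversely every such pair $(\phi, c)$ with $\phi \neq 0$ defines a hyperplane.

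Fix a nontrivial additive character $\chi \colon \setF_\ell \to \setC^*$ and, for each $\phi \in \dual{V}$, define $\hat{q}(\phi) := \sum_{w \in V} q(w) \chi(\phi(w))$. For $\phi \neq 0$, partitioning $V$ by the values of $\phi$ yields
\[
\hat{q}(\phi) = \sum_{c \in \setF_\ell} \chi(c) \sum_{w \in \phi^{-1}(c)} q(w) = S \sum_{c \in \setF_\ell} \chi(c) = 0,
\]
where $S$ is the common value of the hyperplane sums, and the last equality uses the nontriviality of $\chi$. Hence $\hat{q}$ is supported at $\phi = 0$, and Fourier inversion on the finite abelian group $V$ forces $q$ to equal the constant $S / \ell^{n-1}$.

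There is no serious obstacle; the whole argument reduces to vanishing of the nontrivial additive character sum over $\setF_\ell$, which works uniformly for any finite field. If a character-free derivation is preferred, the same conclusion follows by induction on $n = \dim W$. The base case $n = 1$ is trivial because hyperplanes are points. For the inductive step, any codimension-$2$ affine subspace $K \subset W$ is contained in exactly $\ell + 1$ hyperplanes, which cover $W$ and pairwise meet in $K$; a double count of $\sum_{H \supset K} \sum_{w \in H} q(w)$ then gives $\sum_{w \in K} q(w) = ((\ell + 1) S - \sum_{w \in W} q(w))/\ell$, independent of $K$. Thus the restriction of $q$ to any hyperplane $H_0$ satisfies the hypothesis on $H_0$ and is constant by induction; the constants agree across hyperplanes because for $n \geq 2$ any two points of $W$ are contained in some common affine hyperplane.
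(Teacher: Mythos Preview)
Your proposal is correct; both arguments you give are valid. They are, however, genuinely different from the paper's route. The paper fixes a point $w_0$ and produces two independent linear relations between $q(w_0)$ and $\sum_{w\neq w_0} q(w)$ by summing the hyperplane identity over (i) all affine hyperplanes and (ii) all affine hyperplanes through $w_0$; solving this $2\times 2$ system shows $q(w_0)$ equals the average of $q$ and hence is independent of $w_0$. Your Fourier argument is more conceptual: the hypothesis literally says that for each nonzero $\phi\in\dual{V}$ the function $c\mapsto\sum_{\phi=c} q$ is constant, and pairing against a nontrivial character kills it, so all nonzero Fourier coefficients of $q$ vanish. This is the cleanest of the three and makes transparent why the lemma is really a statement about the Radon transform on $V$. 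Your inductive argument is closest in spirit to the paper's counting, but trades the global double count for a local one (the pencil of hyperplanes through a fixed codimension-$2$ flat) and then appeals to the lower-dimensional case; it works but carries slightly more bookkeeping. The paper's argument has the virtue of being entirely self-contained and non-inductive.
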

\begin{proof}
If $W = 0$ this is clear. Suppose $\dim W \geq 1$.
It suffices to prove $q(w_0) = (\card{W} - 1)^{-1} \sum_{w \neq w_0} q(w)$
for an arbitrary choice of $w_0 \in W$. 
We obtain two independent linear relations between $q(w_0)$ and $\sum_{w \neq w_0} q(w)$
by considering the sum along (i) all affine hyperplanes and (ii) all affine hyperplanes containing $w_0$.
\end{proof}

\begin{lem} \label{lem:intersection of subsets:ter}
Let $S$ be a finite set, $V \subset 2^S$ a vector subspace, and $k > 0$ an integer.
\begin{enumerate}
\item \label{item:intersection of subsets:ter:all constant}
Suppose that every $B \in V \setminus \set{\emptyset}$ satisfies $\card{B} = k$.
Then, for any sequence of linearly independent elements $B_1, \dots, B_l \in V$, 
and for any $l$-tuple $(B'_i) \in \prod_{i=1}^l \set{B_i, {B_i}^{\mathrm{c}}}$ not equal to $(B_i^{\mathrm{c}})$,
we have $\card{B'_1 \cap \dots \cap B'_l} = k/2^{l-1}$.
We also have $\card{S} \geq 2k(1 - 1/2^{\dim V})$.
\item \label{item:intersection of subsets:ter:all half}
Suppose that every $B \in V \setminus \set{\emptyset}$ satisfies $\card{B} = \card{S}/2$.
Then, for any sequence of linearly independent elements $B_1, \dots, B_l \in V$, and for any choices $B'_i \in \set{B_i, {B_i}^{\mathrm{c}}}$,
we have $\card{B'_1 \cap \dots \cap B'_l} = \card{S}/2^l$.
In particular, we have $\dim V \leq \ord_2{\card{S}}$.
\item \label{item:intersection of subsets:ter:one particular}
Suppose $C \in V$ is an element such that $C \neq \emptyset$ and 
every $B \in V \setminus \set{\emptyset, C}$ satisfies $\card{B} = k$.
Then, for any sequence of elements $B_1, \dots, B_l \in V$ such that $B_1, \dots, B_l, C$ are linearly independent,
and for any $l$-tuple $(B'_i) \in \prod_{i=1}^l \set{B_i, {B_i}^{\mathrm{c}}}$ not equal to $(B_i^{\mathrm{c}})$,
we have $\card{B'_1 \cap \dots \cap B'_l \cap C} = \card{C}/2^l$.
In particular, we have $\dim V \leq 1 + \ord_2 \card{C}$.
\item \label{item:intersection of subsets:ter:exceptions}
Suppose that $n = \card{\set{B \in V : \card{B} \neq 0, k}} > 0$
and that $C$ is an element of this set.
Then $\dim V \leq \ceil{\log_2 (n + 1)} + \ord_2 \card{C}$.
\end{enumerate}
\end{lem}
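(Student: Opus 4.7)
The plan is to handle parts (1)--(3) uniformly via Fourier analysis on $\setF_2^l$, and to establish (4) by induction on $n$ using (3) as the base case.

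For parts (1)--(3), consider the map $\phi \colon S \to \setF_2^l$ whose $i$-th coordinate is the indicator of $B_i$, and set $N_\epsilon = \card{\phi^{-1}(\epsilon)}$. With the convention $\epsilon_i = 1 \Leftrightarrow B'_i = B_i$ (so $\epsilon_i = 0 \Leftrightarrow B'_i = B_i^{\mathrm{c}}$), each intersection $B'_1 \cap \dots \cap B'_l$ equals $\phi^{-1}(\epsilon)$, so computing $N_\epsilon$ is exactly the goal. For every $v \in \setF_2^l \setminus \set{0}$, the element $\sum v_i B_i$ is nonempty by linear independence, and its cardinality is dictated by the hypothesis: $k$ in (1), $\card{S}/2$ in (2), and, after intersecting with $C$, $\card{C}/2$ in (3). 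These give linear equations $\sum_{\epsilon \colon v \cdot \epsilon = 1} N_\epsilon = \mathrm{const}$ indexed by $v \neq 0$, which character orthogonality on $\setF_2^l$ inverts to the stated values $N_\epsilon = k/2^{l-1}$ for $\epsilon \neq 0$ in (1), and the uniform values $\card{S}/2^l$ in (2) and $\card{C}/2^l$ in (3). The rank bounds then follow from the integrality of these counts with $l$ taken maximal. In (3), the input $\card{B \cap C} = \card{C}/2$ for $B$ in the nonzero span of $B_1, \dots, B_l$ comes from the inclusion--exclusion $\card{B} + \card{C} = \card{B \triangle C} + 2 \card{B \cap C}$ together with the observation that the linear independence of $B_1, \dots, B_l, C$ forces both $B$ and $B + C$ into $V \setminus \set{\emptyset, C}$, so both have cardinality $k$.

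For part (4), I induct on $n$. The base $n = 1$ forces $V \setminus \set{\emptyset, C} \subset \set{B : \card{B} = k}$ and reduces directly to (3). For the inductive step, I fix another element $C' \in \set{B \in V : \card{B} \neq 0, k} \setminus \set{C}$ and average $\card{V' \cap \set{B \in V : \card{B} \neq 0, k}}$ over hyperplanes $V' = \ker \phi \subset V$ with $\phi(C) = 0$ and $\phi(C') = 1$. Each $B$ outside $\langle C, C' \rangle$ lies in $V'$ with probability $1/2$, while $C \in V'$ always, $C' \notin V'$ always, and the status of $C + C'$ is determined once a base point $\phi_0$ is fixed; a direct count yields average $n/2$ (when $\card{C + C'} \in \set{0, k}$) or $(n - 1)/2$ otherwise, so some such hyperplane $V'$ realizes $n'' := \card{V' \cap \set{B \in V : \card{B} \neq 0, k}} \leq \lfloor n/2 \rfloor$. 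The induction hypothesis applied to $(V', C)$ then yields $\dim V' \leq \ceil{\log_2(n'' + 1)} + \ord_2 \card{C}$, and the elementary inequality $\lfloor n/2 \rfloor + 1 \leq 2^{\ceil{\log_2(n+1)} - 1}$ gives $\ceil{\log_2(n'' + 1)} \leq \ceil{\log_2(n + 1)} - 1$, hence $\dim V = \dim V' + 1 \leq \ceil{\log_2(n + 1)} + \ord_2 \card{C}$.

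The main obstacle I anticipate is pinning down the precise constant in (4): the quick alternative of applying a subspace-avoidance lemma inside $V/\langle C \rangle$ only yields the weaker bound $\ceil{\log_2 n} + 1 + \ord_2 \card{C}$, which matches the stated bound only when $n$ is a power of $2$. The sharpened $\ceil{\log_2(n+1)}$ is what forces the averaging argument above, where the specific choice of hyperplanes through $C$ but avoiding $C'$ is what makes the expected exceptional count come out to exactly $n/2$ or $(n-1)/2$ and closes the gap. The rest of the proof is routine bookkeeping built on the Fourier setup introduced for (1)--(3).
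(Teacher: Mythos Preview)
Your proof is correct and follows essentially the same approach as the paper: your Fourier inversion on $\setF_2^l$ in parts (1)--(3) is a repackaging of the paper's Lemma~\ref{lem:average over affine hyperplane:bis} (constant sums over affine hyperplanes force constancy), and your inductive averaging over hyperplanes in part (4) is the same strategy as the paper's, with the only difference that you additionally require $C' \notin V'$, which slightly simplifies the expected-value computation but yields the same bound $n'' \leq \lfloor n/2 \rfloor$.
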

\begin{proof}
(\ref{item:intersection of subsets:ter:all constant})
The function 
$\map{q}{\prod_{i = 1}^l \set{B_i, {B_i}^{\mathrm{c}}}}{\setZ}$ defined by
\[
q(B'_1, \dots, B'_{l}) = 
\begin{cases}
\card{B'_1 \cap \dots \cap B'_l} & ((B'_i) \neq (B_i^{\mathrm{c}})), \\
\card{B'_1 \cap \dots \cap B'_l} + 2 k - \card{S} & ((B'_i) = (B_i^{\mathrm{c}})), 
\end{cases}
\]
satisfies the assumption of Lemma \ref{lem:average over affine hyperplane:bis}. 
Indeed, the sum over an affine hyperplane $H \subset \prod_{i = 1}^l \set{B_i, {B_i}^{\mathrm{c}}}$ is equal to 
\[
\begin{cases}
\card{x_1 B_1 + \dots + x_{l} B_{l}} & ((B_i^{\mathrm{c}}) \notin H)), \\
\card{(x_1 B_1 + \dots + x_{l} B_{l})^{\mathrm{c}}} + 2 k - \card{S} & ((B_i^{\mathrm{c}}) \in H))
\end{cases}
\]
for some $(x_i) \in (\setF_2)^l \setminus \set{0}^l$,
which is equal to $k$ in the either case.
Hence $q$ takes a constant value $c$, which is equal to $2^{-l} \sum q = 2^{-l}(\card{S} + 2k - \card{S}) = k/2^{l-1}$.
We also have $0 \leq \card{B_1^{\mathrm{c}} \cap \dots \cap B_l^{\mathrm{c}}} = c - (2k - \card{S}) = \card{S} - 2k(1 - 1/2^l)$.

(\ref{item:intersection of subsets:ter:all half})
This is a special case of (\ref{item:intersection of subsets:ter:all constant}) where $k = \card{S}/2$.

(\ref{item:intersection of subsets:ter:one particular})
Let $\namedmapandmapsto{\pi}{2^S}{2^C}{B}{B \cap C}$ be the natural projection.
For each $B \in V \setminus \set{\emptyset, C}$,
we have $\card{B \cap C} = (\card{B} + \card{C} - \card{B + C})/2 = \card{C}/2$.
This implies that if $B_1, \dots, B_l$ are linearly independent then $\pi(B_1), \dots, \pi(B_l)$ are linearly independent.
Hence we can apply (\ref{item:intersection of subsets:ter:all half})
to the vector space $W := \spanned{\pi(B_1), \dots, \pi(B_l)} \subset 2^C$ and its elements $\pi(B_1), \dots, \pi(B_l)$.

(\ref{item:intersection of subsets:ter:exceptions})
We use induction on $n$.
The case $n = 1$ is (\ref{item:intersection of subsets:ter:one particular}).
Suppose $n > 1$. 
Fix $C \in V$ with $\card{C} \neq 0, k$.
We consider vector hyperplanes $V' \subset V$ containing $C$ and the corresponding numbers 
$n' = \card{\set{B \in V' : \card{B} \neq 0, k}}$.
Letting $\overline{n'}$ be the average of $n'$ for all such $V'$, we have
\[
\overline{n'} - 1 = \frac{\card{V}/2 - 2}{\card{V} - 2} (n - 1) < \frac{1}{2} (n - 1),
\]
hence we can take $V'$ satisfying $n' - 1 < (1/2)(n - 1)$,
which implies $\ceil{\log_2 (n' + 1)} \leq \ceil{\log_2 (n + 1)} - 1$.
By the induction hypothesis we have 
\begin{align*}
\dim V = \dim V' + 1 &\leq \ord_2 \card{C} + \ceil{\log_2 (n' + 1)} + 1 \\
&\leq \ord_2 \card{C} + \ceil{\log_2 (n + 1)}.
\end{align*}
\end{proof}

\begin{proof}[Proof of Proposition \ref{prop:dimension of subspaces with 0 mod 4}]
As noted above, $g(m) \geq f(m)$ follows from the examples and the inequality $g(m) \leq g(m+1) \leq g(m)+1$.

Let $V \subset 2^S$ be as in the statement.
We prove the following, which, together with the inequality $g(m+1) \leq g(m) + 1$, conclude the proof.
\begin{enumerate}
\item \label{step:uniqueness for m=16}
If $m = 16$ then $\dim V \leq 5$,
and if the equality holds then $V$ is isomorphic to the one given in Example \ref{ex:5-dimensional in 16}.
\item \label{step:upper bound for m=17}
If $m = 17$, then $\dim V \leq 5$.
\item \label{step:upper bound for m<16}
If $m < 16$, then $\dim V \leq 4 - \ceil{\log_2(16 - m)}$.
\end{enumerate}

Let $n_i = \card{\set{B \in V : \card{B} = i}}$ for $i = 0, 8, 12, 16$.
We call an element of $V$ of cardinality $8$ (resp.\ $12$) to be an \emph{octad} (resp.\ a \emph{dodecad}) respectively.

We freely use the equations like $\card{B \cap B'} = (\card{B} + \card{B'} - \card{B + B'})/2 \in 2\setZ$
and $\card{B+B'} + \card{B} + \card{B'} \leq 2 \card{S}$.

(\ref{step:uniqueness for m=16})
For any distinct dodecads $D_1, D_2 \in V$, we have $\card{D_1 \cap D_2} = 8$, and hence the complements of $D_1$ and $D_2$ are disjoint.
We have $n_{12} < 3$, since the sum of $3$ distinct dodecads would have cardinality $4$.

Suppose $n_{12} = 1$ or $n_{12} = 2$. 
We have $n_{16} = 0$, since the sum of a dodecad and $S$ would have cardinality $4$.
By Lemma \ref{lem:intersection of subsets:ter}(\ref{item:intersection of subsets:ter:exceptions}) we have $\dim V \leq 4$.

Now suppose $n_{12} = 0$.
Since $n_{16} \leq 1$, we can take an element $C \in V$ such that every element of $V \setminus \set{\emptyset, C}$ is an octad.
By Lemma \ref{lem:intersection of subsets:ter}(\ref{item:intersection of subsets:ter:one particular}),
we have $\dim V \leq 5$, and if $\dim V = 5$ then $\card{C} = 16$.

Assume $\dim V = 5$.
Take $s_0 \in S$.
Then, in view of Lemma \ref{lem:intersection of subsets:ter}(\ref{item:intersection of subsets:ter:all half}),
$S$ admits a natural structure of an $\setF_2$-vector space as the dual of the subspace $\set{B \in V \mid s_0 \notin B} \subset V$,
and under this structure $V$ is as described in Example \ref{ex:5-dimensional in 16}.

(\ref{step:upper bound for m=17})
We observe that for any two distinct dodecads $D_1, D_2$ we have $\card{D_1^{\mathrm{c}} \cap D_2^{\mathrm{c}}} = 1$,
and that for any three we have $D_1^{\mathrm{c}} \cap D_2^{\mathrm{c}} \cap D_3^{\mathrm{c}} = \emptyset$.
Hence we have $\binom{n_{12}}{2} \leq 17$, hence $n_{12} \leq 6$.
If $n_{12} \geq 1$, then by Lemma \ref{lem:intersection of subsets:ter}(\ref{item:intersection of subsets:ter:exceptions}) we have $\dim V \leq 5$.
If $n_{12} = 0$, the same argument as in (\ref{step:uniqueness for m=16}) works.

(\ref{step:upper bound for m<16})
If $n_{12} = 0$, then we apply 
Lemma \ref{lem:intersection of subsets:ter}(\ref{item:intersection of subsets:ter:all constant}) with $k = 8$.
Now suppose $n_{12} > 0$ (which implies $m \geq 12$).
Since $m < 16$ we have $n_{12} = 1$.
If $n_{8} > 0$, then we have $m \geq 14$ while 
$\dim V \leq 3$ by Lemma \ref{lem:intersection of subsets:ter}(\ref{item:intersection of subsets:ter:one particular}).
If $n_8 = 0$, then $\dim V \leq 1$.
\end{proof}

\subsection{Characterizations of Kummer lattices} \label{subsec:Kummer lattices}

\begin{prop} \label{prop:characterization of Kummer lattices}
Let $(L', i)$ be one of 
$(A_1^{\oplus 16}, 2^5)$, 
$(D_4^{\oplus 4}, 2^2)$, 
$(D_8^{\oplus 2}, 2^1)$, 
$(D_{16}^{\oplus 1}, 2^1)$, or
$(E_8^{\oplus 2}, 2^0)$.
Then, up to isometry, there is a unique even lattice $L$ whose 
root sublattice is isomorphic to $L'$ and is of index $i$.

Moreover, there are no such lattices if we replace $i$ with a strictly greater integer.
\end{prop}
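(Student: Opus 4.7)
The approach is to parametrize even overlattices by isotropic subgroups of the discriminant form. For any even overlattice $L \supset L'$ of finite index $i$, the quotient $L/L'$ embeds as a subgroup $H \subset \dual{(L')}/L'$ of order $i$ on which the induced $\setQ/2\setZ$-valued form $\bar q$ vanishes, and conversely every such isotropic subgroup arises this way. The condition that the root sublattice of $L$ equals $L'$ exactly is then the condition that no nonzero class in $H$ admits a representative $\alpha \in \dual{(L')}$ with $\alpha^2 = -2$, and uniqueness up to isometry becomes transitivity of $\Ortho(L')$ on the set of valid $H$ of each given order.

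The case $(A_1^{\oplus 16}, 2^5)$ is exactly Proposition \ref{prop:overlattices of A_1^m:bis} at $m = 16$, and the case $(E_8^{\oplus 2}, 2^0)$ is trivial since $E_8$ is unimodular. For the three $D$-cases I use the standard description $\dual{D_n}/D_n \cong (\setZ/2\setZ)^2$, generated by the vector class $v$ (minimal representative of norm $1$) and a spinor class $s$ (minimal representative of norm $n/4$), with $\bar q$-values $1$, $n/4$, $1 + n/4$ on the three nontrivial classes modulo $2\setZ$. For each case I enumerate the classes in the relevant product discriminant group, record their minimal norms, and check isotropy and the root criterion.

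For $D_{16}$ the spinor classes are isotropic with minimal norm $4$ while $v$ is not isotropic, so index $2$ is maximal and the two choices of spinor overlattice are exchanged by an automorphism of $D_{16}$. For $D_8^{\oplus 2}$ the spinor classes of a single factor do contain roots (since $8/4 = 2$), so any admissible isotropic class is one of the four ``cross'' elements $(s, s), (s, v+s), (v+s, s), (v+s, v+s)$; these form a single $\Aut(D_8^{\oplus 2})$-orbit, and no rank-$2$ extension remains admissible because any two of them sum either to a non-isotropic class or to $(v, v)$, which has a root representative $(e_1, e_1)$. For $D_4^{\oplus 4}$ every nonzero class in a single factor has $\bar q \equiv 1 \pmod{2\setZ}$ and minimal norm $1$, so a product class with $k$ nonzero coordinates has minimal norm $k$; hence isotropy forces $k$ even and absence of roots forces $k \neq 2$, leaving only $k \in \set{0, 4}$. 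A short projection argument rules out any rank-$3$ subgroup (its nonzero elements would have to inject coordinatewise into $\setF_2^2$), and the valid rank-$2$ subgroups correspond to ``Latin-square'' patterns in which each of the four coordinate projections realizes a bijection of the three nonzero elements of $H$ with $\set{v, s, v+s}$.

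The main obstacle is the uniqueness up to isometry in the $D_4^{\oplus 4}$ case: many different rank-$2$ isotropic subgroups satisfy the constraints, and collecting them into a single $\Aut(D_4^{\oplus 4})$-orbit uses both the triality of each $D_4$ summand (which permutes $v, s, v+s$) and the symmetric group permuting the four copies. I would fix a reference Latin-square subgroup and produce the needed isometries explicitly from the combinatorics; the other two $D$-cases then follow by the same but much shorter orbit analysis.
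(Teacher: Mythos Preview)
Your approach is correct and essentially the same as the paper's: both classify even overlattices via isotropic subgroups of the discriminant form of $L'$ that avoid root representatives, cite Proposition~\ref{prop:overlattices of A_1^m:bis} for $A_1^{\oplus 16}$, note triviality for $E_8^{\oplus 2}$, and handle the $D$-cases through the structure of $\dual{D_n}/D_n$. One minor slip: $\bar q(v+s) \equiv n/4$ (not $1 + n/4$) modulo $2\setZ$, since both half-spinor cosets have minimal norm $-n/4$; your subsequent case analyses use the correct values anyway, and the paper's only difference is that it packages the three $D$-cases more uniformly, observing in one stroke that every component of a nonzero admissible class must be a spinor class before declaring the remaining enumeration ``straightforward''.
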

\begin{defn}
For each case in Proposition \ref{prop:characterization of Kummer lattices},
we  define the \emph{Kummer lattice of type $16 A_1, 4 D_4, 2 D_8, 1 D_{16}, 2 E_8$} to be the lattice $L$,
and denote it by $K(16 A_1), K(4 D_4), K(2 D_8), K(1 D_{16}), K(2 E_8)$.
\end{defn}

\begin{prop} \label{prop:Kummer lattices contain K(16 A_1)}
The Kummer lattices are exactly (the isomorphism classes of) the even lattices containing $K(16 A_1)$ as a sublattice of finite index.
\end{prop}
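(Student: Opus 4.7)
The plan is to establish both containments.

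For $(\Rightarrow)$: Given a Kummer lattice $L$, the strategy is to exhibit inside $L$ an even overlattice $M$ of $16 A_1$ of index $2^5$ whose only roots are those of $16 A_1$; by Proposition \ref{prop:characterization of Kummer lattices}, such $M$ must be isomorphic to $K(16 A_1)$. I first embed $16 A_1 \hookrightarrow L$ by choosing $16$ mutually orthogonal roots in the root sublattice $R$ of $L$; this is possible since each ADE component appearing here ($A_1$, $D_4$, $D_8$, $D_{16}$, $E_8$) contains a sublattice of its own rank spanned by orthogonal roots (for $D_{2k}$ via the pairs $e_{2j-1} \pm e_{2j}$, and for $E_8$ via its $D_8$ sublattice). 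This realizes $L/16 A_1$ as a subspace of $\dual{(16 A_1)}/16 A_1 \cong \setF_2^{16}$, and it suffices to exhibit a $5$-dimensional subspace $W \subseteq L/16 A_1$ all of whose nonzero elements have weight in $\{8, 16\}$; by the uniqueness part of Proposition \ref{prop:dimension of subspaces with 0 mod 4}, such a $W$ is isomorphic to $V_{16}$, and $M := 16 A_1 + \frac{1}{2} W$ has the required properties.

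The case $L = K(16 A_1)$ is immediate. For $L = K(4 D_4)$, labelling $S_{16}$ as pairs $(i,j)$ with $i$ indexing the four $D_4$'s, the $6$-dimensional $L/16 A_1$ is spanned by four ``row'' characteristic functions (weight $4$) from $4 D_4 / 16 A_1$ together with two ``column'' vectors (weight $8$) arising from the Kummer glue; the hyperplane of elements with an even number of rows engaged is a $5$-dimensional subspace whose nonzero elements all have weight in $\{8, 16\}$. Analogous constructions, combining a diagonal subspace with a balancing auxiliary generator, produce the required $W$ in each of $K(2 D_8)$, $K(D_{16})$, and $K(2 E_8)$.

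For $(\Leftarrow)$: Let $L \supseteq K(16 A_1)$ be an even finite-index extension, and set $W := L/16 A_1 \subseteq \setF_2^{16}$. Then $W \supseteq V_{16}$, and evenness of $L$ forces every $C \in W$ to satisfy $|C| \equiv 0 \pmod 4$. The discriminant group $\dual{K(16 A_1)}/K(16 A_1)$ is a $6$-dimensional $\setF_2$-vector space carrying a natural nondegenerate quadratic form $q$ (induced by $v \mapsto v^2 \bmod 2 \setZ$), and the condition $|C| \equiv 0 \pmod 4$ for all $C \in W$ translates to $W/V_{16} \subseteq \dual{K(16 A_1)}/K(16 A_1)$ being totally isotropic for $q$; the possible dimensions of such isotropic subspaces are $0, 1, 2, 3$. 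The root sublattice $R \subseteq L$ is generated by $16 A_1$ together with the weight-$4$ elements of $W$: the compatibility $|C + B| \equiv 0 \pmod 4$ with $B \in V_{16}$ of weight $8$ forces such a $C$ to satisfy $\sum_{v \in C} v = 0$ in $S_{16} \cong \setF_2^4$, i.e.\ to be an affine $2$-flat. Case-by-case analysis of the weight-$4$ elements of $W$ in each of the four cases for $\dim W/V_{16}$ shows $R$ to be, respectively, $16 A_1$, $4 D_4$, $2 D_8$, and one of $D_{16}$ or $2 E_8$ (the last bifurcation corresponding to the two $\Ortho(q)$-orbits of maximal isotropic subspaces in the $6$-dimensional quadratic space). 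By Proposition \ref{prop:characterization of Kummer lattices}, $L$ is then uniquely determined up to isomorphism by $R$, so $L$ is the corresponding Kummer lattice.

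The main obstacle is the detailed verification in $(\Leftarrow)$ that each totally isotropic subspace produces precisely the stated root lattice: one must track how the affine $2$-flats arising from weight-$4$ classes in $W/V_{16}$ assemble into $D$-type or $E$-type root configurations, and rule out any spurious ADE components (involving, for instance, $E_7$ or $D_{12}$) that an a priori overlattice might contribute.
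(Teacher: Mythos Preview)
Your approach is valid and reaches the same destination, but it differs structurally from the paper's. The paper does not separate the two directions: it proves a single key lemma---every nonzero $V_{16}$-coset in such a $W$ contains exactly one $2$-dimensional \emph{vector} subspace of $S \cong \setF_2^4$---so that even overlattices of $K(16A_1)$ are parametrized by collections of $2$-planes in $S$. It then exhibits each Kummer lattice by an explicit such collection (one plane; three planes through a common line inside a $3$-space; all seven planes through a fixed line; all seven planes in a fixed hyperplane) and reads off the root system directly. This makes $(\Rightarrow)$ immediate and reduces $(\Leftarrow)$ to checking that every admissible collection is $GL_4(\setF_2)$-equivalent to one on the list. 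Your discriminant-form viewpoint is the same classification in disguise: the bijection between nonzero singular vectors of $\dual{K(16A_1)}/K(16A_1) \cong \setF_2^6$ and $2$-planes in $S$ is precisely the Klein correspondence, under which point-pencils and plane-pencils in $\bP(S)$ give the two rulings of maximal totally singular subspaces.

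One correction is needed for your $(\Leftarrow)$: the full orthogonal group $\Ortho^+_6(2)$ acts \emph{transitively} on maximal totally singular $3$-spaces; it is the index-$2$ subgroup $\Omega^+_6(2)$ that has two orbits. For your bifurcation into $D_{16}$ and $2E_8$ to survive, you must check that the image of $\Aut(K(16A_1))$ in $\Ortho(q)$ is no larger than $\Omega^+_6(2)$. This holds---sign changes and affine translations of $S$ act trivially on the discriminant group, so the image is exactly $GL_4(\setF_2) \cong \Omega^+_6(2)$---but the point deserves a sentence. Your $(\Rightarrow)$ is fine for $K(4D_4)$ as written; the ``analogous constructions'' for the remaining cases are where the paper's uniform description via $2$-planes buys real economy over a case-by-case search for a good $5$-dimensional subspace.
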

We summarize the properties of the Kummer lattices in Table \ref{table:Kummer lattices}.

Again, these propositions might be well-known to experts.

\begin{table}
\begin{tabular}{lccc}
\toprule
$L$ & $\spanned{\Roots{L}}$ & $[L : \spanned{\Roots{L}}]$ & $[L : K(16 A_1)]$ \\
\midrule
$K(16 A_1)$    & $A_1^{\oplus 16}$   & $2^5$ & $2^0$ \\
$K( 4 D_4)$    & $D_4^{\oplus 4}$    & $2^2$ & $2^1$ \\
$K( 2 D_8)$    & $D_8^{\oplus 2}$    & $2^1$ & $2^2$ \\
$K( 1 D_{16})$ & $D_{16}^{\oplus 1}$ & $2^1$ & $2^3$ \\
$K( 2 E_8)$    & $E_8^{\oplus 2}$    & $2^0$ & $2^3$ \\
\bottomrule
\end{tabular}
\caption{Kummer lattices and their properties}
\label{table:Kummer lattices}
\end{table}

\begin{proof}[Proof of Proposition \ref{prop:characterization of Kummer lattices}]
For $K(16 A_1)$, this is proved in Proposition \ref{prop:overlattices of A_1^m:bis}.

For $K(2 E_8)$ this is clear, since $E_8$ is unimodular.

Let $n \in \set{4, 8, 16}$ and $m = 16/n$.
We have $\dual{D_n} / D_n \cong (\setZ/2\setZ)^2$.
There exists a (set-theoretic) section $\map{f}{\dual{D_n} / D_n}{\dual{D_n}}$ of the natural surjection 
such that the values of $f(\functorspace)^2$ are $0, -1, -n/4, -n/4$.
For any nonzero $(\xi_1, \dots, \xi_m) \in L/D_n^{\oplus m} \subset (\dual{D_n}/D_n)^{\oplus m}$, 
we have $\sum_{j = 1}^m f(\xi_j)^2 \in 2 \setZ$ and $\neq -2$.
It follows that $f(\xi_j)^2 = -n/4$ for all $j$.
It is now straightforward to list possible even overlattices and the result is as stated.
\end{proof}

\begin{proof}[Proof of Proposition \ref{prop:Kummer lattices contain K(16 A_1)}]

We may assume $K(16 A_1)$ is given as in the proof of Proposition \ref{prop:overlattices of A_1^m:bis}.
Then $L$ corresponds to subgroups $W \subset 2^S$ containing $V$ and whose elements have cardinality divisible by $4$.
It follows that the intersection of any two elements of $W$ have cardinality divisible by $2$.

We claim that any nonzero $V$-coset of $W$ contains exactly one vector subspace of $S$ of dimension $2$.
First we show that any $V$-coset contains a tetrad (an element of cardinality $4$).
Take $A \in W \setminus V$. If $A$ is a dodecad take $A + S$. Suppose $A$ is an octad. 
If $A$ satisfies $\card{A \cap H} = 4$ for all affine hyperplanes, 
then $A$ and all vector hyperplanes generate a $5$-dimensional vector space in $2^S$ without using $S$, contradiction.
Hence there exists an affine hyperplane $H$ with $\card{A \cap H} = 2$ or $6$. We take $A + H + S$ or $A + H$ respectively.
Now we show that a tetrad $T \in W$ should be an affine subspace.
Take an affine subspace $U$ of dimension $2$ with $\card{T \cap U} \geq 3$. 
If $t \in T \setminus U$, there is a hyperplane $H$ containing $U$ and not containing $t$, which then satisfies $\card{T \cap H} = 3$, contradiction.
Finally, if $T \in W$ is an affine subspace of dimension $2$ and $0 \notin T$,
then $T_0 := \set{t - t' \mid t, t' \in T}$ is a vector subspace of dimension $2$ and $T \cup T_0$ is a hyperplane.
Hence $T_0 = T + (T \cup T_0)$ is the desired vector subspace.

Thus, such a subgroup of $2^S$ is characterized by the $2$-dimensional vector subspaces of $S$ belonging to that subgroup.
We describe Kummer lattices in this way, using a basis $v_1, \dots, v_4$ of $S$.
\begin{itemize}
\item For $K(4 D_4)$, take one such vector subspace, for example $\spanned{v_1, v_2}$.
\item For $K(2 D_8)$, take three of the form $\spanned{v_1, v_2}$, $\spanned{v_1, v_3}$, $\spanned{v_1, v_2 + v_3}$
for linearly independent elements $v_1, v_2, v_3$.
\item For $K(1 D_{16})$, take every vector subspace containing one particular nonzero element of $S$.
\item For $K(2 E_8)$, take every vector subspace contained in one particular hyperplane.
\end{itemize}

Roots of $L$ different from those of $A_1^{\oplus 16}$ are of the form 
$(1/2)(\pm x_1 \pm x_2 \pm x_3 \pm x_4)$
for $2$-dimensional affine subspaces $\set{x_1, x_2, x_3, x_4}$ belonging to $W$.
Thus there are $2^6 m$ such roots, where $m = 1, 3, 7, 7$ is the number of $2$-dimensional vector subspaces belonging to $W$.
It is straightforward to determine the root system.
\end{proof}

\subsection{Embeddings of Kummer lattices to negative-definite lattices} \label{subsec:embeddings of Kummer lattices to negative-definite lattices}

In this section we will prove Proposition \ref{prop:embedding of Kummer lattices:ter} and Corollary \ref{cor:embedding of Kummer lattices:bis}.

\begin{prop} \label{prop:embedding of Kummer lattices:ter}
Let $\map{\iota}{K}{N}$ is an embedding 
of a Kummer lattice $K$ into a negative-definite even lattice $N$.
Then $\iota(K)$ is orthogonal to the set $\Roots{N} \setminus \Roots{\overline{\iota(K)}}$,
where $\overline{\iota(K)}$ denotes the saturation.

In particular, if $\iota$ is a saturated embedding,
then $\Roots{\iota(K)}$ is orthogonal to $\Roots{N} \setminus \Roots{\iota(K)}$.
\end{prop}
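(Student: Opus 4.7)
The plan is to prove the contrapositive: if $r \in \Roots{N}$ is not orthogonal to $\iota(K)$, then $r \in \Roots{\overline{\iota(K)}}$. Identifying $\iota(K)$ with $K \subseteq N$ and writing $\overline{K}$ for its saturation, the first observation would be that $\overline{K}$ is itself a Kummer lattice: it is an even overlattice of $K(16 A_1)$ of finite index, hence falls under Proposition \ref{prop:Kummer lattices contain K(16 A_1)}. Consequently every $(-2)$-vector in $\overline{K}$ lies in $\Roots{\overline{K}}$, so it suffices to show $r \in \overline{K}$, which (since $r \in N$) is equivalent to $r \in K \otimes \setQ$.

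Next I would fix an orthogonal basis $e_1, \dots, e_{16}$ of the sublattice $A_1^{\oplus 16} \subset K(16 A_1) \subset K$ and set $\epsilon_i := r \cdot e_i$. Decomposing $r = r_{\parallel} + r_{\perp}$ in $N \otimes \setQ$ with $r_{\parallel} \in K \otimes \setQ$ and $r_{\perp} \in (K \otimes \setQ)^{\perp}$ gives $r_{\parallel} = -\tfrac{1}{2} \sum_i \epsilon_i e_i$. A Cauchy--Schwarz argument inside the negative-definite $N$, together with $r \neq \pm e_i$ (otherwise $r \in \overline{K}$), forces $\epsilon_i \in \{-1, 0, 1\}$; then $r_{\parallel}^2 = -|S|/2$ and $r_{\perp}^2 = -2 + |S|/2 \leq 0$ bound $|S| \leq 4$, where $S := \{i : \epsilon_i \neq 0\}$. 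The case $|S| = 4$ would force $r_{\perp} = 0$ and hence $r \in K \otimes \setQ$, which is exactly the desired membership in $\overline{K}$. So the task reduces to ruling out $|S| \in \{1, 2, 3\}$.

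The key input here is that $K(16 A_1) \subseteq K$ contains the classes $(1/2) \sum_{i \in H} e_i$ for every $H$ in the $5$-dimensional $\setF_2$-subspace $V_{16} \subset 2^{S_{16}}$ from Example \ref{ex:5-dimensional in 16}. Integrality of $r \cdot (1/2) \sum_{i \in H} e_i$ will force $|H \cap S| \equiv 0 \pmod{2}$ for every such $H$. Taking $H = S_{16}$ immediately shows $|S|$ is even, eliminating $|S| = 1, 3$. For $|S| = 2$ with $S = \{s_1, s_2\}$, picking an $\setF_2$-linear form $\phi$ on $S_{16}$ with $\phi(s_1) \neq \phi(s_2)$ and taking the affine hyperplane $\phi^{-1}(\phi(s_1)) \in V_{16}$ yields $|H \cap S| = 1$, contradicting the parity.

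The main obstacle I anticipate is handling all five Kummer lattice types uniformly, but this is resolved by routing everything through the fixed sublattice $K(16 A_1) \subseteq K$: the decomposition uses only the orthogonal roots in $A_1^{\oplus 16}$, and the integrality constraints come only from the glue vectors encoded by $V_{16}$, neither of which depends on the precise identity of $K$. The ``in particular'' clause for saturated $\iota$ is then immediate, since $\overline{\iota(K)} = \iota(K)$.
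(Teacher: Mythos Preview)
Your proof is correct and follows the same approach as the paper: reduce to $A_1^{\oplus 16} \subset K(16 A_1) \subset K$, bound the intersection numbers $r \cdot e_i$ via negative-definiteness, and rule out small nonzero $|S|$ using the affine-hyperplane glue vectors of $V_{16}$. The presentational differences (Cauchy--Schwarz and orthogonal decomposition versus the paper's explicit norm computations like $(2w + e_{v_1} + \cdots + e_{v_4})^2 = 0$; parity-first versus a uniform separating hyperplane for $n \in \{1,2,3\}$) are cosmetic, as is your unneeded remark that $\overline{K}$ is again a Kummer lattice---$\Roots{\overline{K}}$ already contains every $(-2)$-vector of $\overline{K}$ by definition.
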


\begin{cor} \label{cor:embedding of Kummer lattices:bis}
Suppose $\map{\iota}{K}{\Pic(\tilde{X})}$ is a saturated embedding of a Kummer lattice $K$
to the Picard lattice of a K3 surface $\tilde{X}$.
Then there exists an element $\gamma \in \pm \Weyl{\Pic(\tilde{X})}$ such that 
$\gamma(\iota(K))$ contains the classes of $16$ smooth rational curves $C_1, \dots, C_{16}$
forming a Kummer structure of the same type as $K$.
\end{cor}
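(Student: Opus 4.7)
The plan is to construct $\gamma$ in two stages: first, apply reflections inside $\iota(L')$ (the root sublattice of $K$) to make the images of a simple system of $L'$ into effective $(-2)$-classes; then, using Propositions \ref{prop:nef up to Weyl}, \ref{prop:-2 divisors orthogonal to a big nef}, and \ref{prop:embedding of Kummer lattices:ter}, identify these as classes of smooth rational curves in an ADE configuration of type $L'$. In each of the five Kummer types, $L'$ has exactly $16$ simple roots $\alpha_1, \dots, \alpha_{16}$ (one per vertex of its Dynkin diagram), so such a configuration will indeed yield a Kummer structure of the same type as $K$.

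For the first stage, I would fix an ample class $A \in \Pic(\tilde X)$. The Hodge index theorem forces $A \cdot r \neq 0$ for every root $r$ of $\iota(L')$, so $A \cdot r > 0$ defines a positive system in $\iota(L')$ with a unique simple subsystem $\tilde\alpha_1, \dots, \tilde\alpha_{16}$. Because $\Weyl{\iota(L')}$ acts simply transitively on simple systems, there exists $\gamma_0 \in \Weyl{\iota(L')} \subset \Weyl{\Pic(\tilde X)}$ with $\gamma_0(\iota(\alpha_i)) = \tilde\alpha_{\sigma(i)}$ for some permutation $\sigma$. Since $A \cdot \tilde\alpha_i > 0$ and $\tilde\alpha_i^2 = -2$, Proposition \ref{prop:-2 divisor} together with an intersection count against $A$ shows each $\tilde\alpha_i$ is effective.

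For the second stage, I would produce a big nef class $D$ in $\iota(K)^\perp$. Since $\iota(K)$ is negative-definite of rank $16$ and saturated, $\iota(K)^\perp$ has signature $(1, \rho - 17)$; in the case of interest $\rho = 22$, giving signature $(1, 5)$, so it contains classes of square $\geq 2$. Proposition \ref{prop:nef up to Weyl} then yields $\gamma_1 \in \pm\Weyl{\iota(K)^\perp}$ moving such a class to a big nef $D$; being generated by reflections in roots of $\iota(K)^\perp$, this $\gamma_1$ fixes $\iota(K)$ pointwise. Each $\tilde\alpha_i \in \iota(L') \subset \iota(K)$ is then an effective $(-2)$-class orthogonal to $D$, so by Proposition \ref{prop:-2 divisors orthogonal to a big nef} the union of their supports is an ADE configuration of smooth rational curves $C_1, \dots, C_m$; denote by $M \subset \Pic(\tilde X)$ the lattice spanned by their classes and by $\overline{M}$ its saturation.

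Finally, Proposition \ref{prop:embedding of Kummer lattices:ter} applied to the saturated embedding $\iota(K) \hookrightarrow \overline{M}$ into the negative-definite $\overline{M}$ gives orthogonality between $\Roots{\iota(L')}$ and $\Roots{\overline{M}} \setminus \Roots{\iota(L')}$, while Proposition \ref{prop:-2 vectors} places every $(-2)$-class of $\overline{M}$ inside $M$. This forces an orthogonal decomposition $M = \iota(L') \oplus M_{\text{extra}}$. I then expect $M_{\text{extra}} = 0$: any $[C_j]$ appearing in some support $\Supp(\tilde\alpha_i)$ must intersect $\tilde\alpha_i$ nontrivially and hence cannot be orthogonal to $\iota(L')$. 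With $M = \iota(L')$ of rank $16$, both $\{\tilde\alpha_i\}$ and $\{[C_j]\}$ form $A$-positive simple systems of the root system $\iota(L')$ and therefore coincide as sets, so $\gamma := \gamma_1 \gamma_0$ does the job. The main obstacle I anticipate is this final rank identification, namely the careful verification that no extra components contribute to the ADE configuration, which relies on the subtle orthogonality statement from Proposition \ref{prop:embedding of Kummer lattices:ter}.
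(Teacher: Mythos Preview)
Your overall strategy matches the paper's: find a big nef $D$ orthogonal to $\iota(K)$ via Proposition~\ref{prop:nef up to Weyl}, apply Proposition~\ref{prop:-2 divisors orthogonal to a big nef} to effective roots of $\iota(K)$, then invoke Proposition~\ref{prop:embedding of Kummer lattices:ter} to pin the resulting curve classes inside $\iota(K)$. One minor difference is that the paper applies Proposition~\ref{prop:nef up to Weyl} with $N = \Pic(\tilde X)$ (so $\gamma$ may genuinely move $\iota(K)$), whereas you take $N = \iota(K)^\perp$, so your $\gamma_1$ fixes $\iota(K)$ pointwise. This is a legitimate sharpening, and it makes $\gamma_0$ redundant, but it requires $\rank \iota(K)^\perp \geq 2$; the paper's version covers the rank~$1$ case as well. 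A small quibble: the reason $A \cdot r \neq 0$ for roots $r$ is not the Hodge index theorem but simply that one of $\pm r$ is effective (Proposition~\ref{prop:-2 divisor}) and $A$ is ample.

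There is, however, a genuine gap in your argument for $M_{\text{extra}} = 0$. The claim that any $[C_j]$ appearing in $\Supp(\tilde\alpha_i)$ must satisfy $[C_j] \cdot \tilde\alpha_i \neq 0$ is false: if $\tilde\alpha_i = [C_1] + [C_2] + [C_3]$ with $C_1, C_2, C_3$ forming an $A_3$-chain, then $[C_2] \cdot \tilde\alpha_i = 0$. The conclusion is still correct, but the argument has to go differently. One fix: once you have the orthogonal splitting $M = \iota(L') \oplus M_{\text{extra}}$, every root of $M$ (in particular every $[C_j]$) lies in exactly one summand; writing $\tilde\alpha_i = \sum_j a_{ij} [C_j]$ and projecting to $M_{\text{extra}}$ gives $0$, so linear independence of the $[C_j]$ forces $a_{ij} = 0$ whenever $[C_j] \in M_{\text{extra}}$, whence $M_{\text{extra}} = 0$. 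The paper argues instead via connectedness: the support of each effective root $Z_v$ is connected, so its components span an irreducible root system $S$, which cannot decompose nontrivially into the orthogonal pieces $S \cap \iota(K)$ and $S \setminus \iota(K)$ supplied by Proposition~\ref{prop:embedding of Kummer lattices:ter}.
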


\begin{proof}[Proof of Proposition \ref{prop:embedding of Kummer lattices:ter}] 
Let $V \subset 2^S$ be the subgroup corresponding to $K \supset A_1^{\oplus 16}$, where $\card{S} = 16$,
and let $e_v \in A_1^{\oplus 16}$ be the standard basis indexed by $v \in S$.
Take $w \in \Roots{N}$ with $w \notin \overline{\iota(K)}$.
Since $A_1^{\oplus 16} \subset K$ is of finite index,
it suffices to show that $e_v \cdot w = 0$ for all $v \in S$.

Consider the set $T = \set{v \in S \mid e_v \cdot w \neq 0}$ and let $n = \card{T}$.
Let $v_1, \dots, v_n$ be the elements of $T$.
If $\abs{e_{v_i} \cdot w} \geq 2$, then $(w \pm e_{v_i})^2 \geq 0$, contradicting negative-definiteness.
Hence we have $e_{v_i} \cdot w = \pm 1$ for all $1 \leq i \leq n$ and,
by changing signs, we may assume $e_{v_i} \cdot w = 1$.
If $n \geq 4$, then $(2 w + e_{v_1} + \dots + e_{v_4})^2 = 0$, again contradicting negative-definiteness.
Suppose $1 \leq n \leq 3$.
Then there exists $H \subset V$ such that $\card{T \cap H} = 1$.
Indeed, as in the proof of Proposition \ref{prop:dimension of subspaces with 0 mod 4}, 
$S$ admits a structure of an $\setF_2$-vector space
such that $V$ contains all affine hyperplanes,
and we can cut out any point in any set of cardinality $1$, $2$, or $3$ in an $\setF_2$-vector space by an affine hyperplane.
Then $w \cdot ((1/2) \sum_{v \in H} e_v) = (1/2) \card{T \cap H} \notin \setZ$, contradiction.
We conclude that $n = 0$.
\end{proof}

\begin{proof}[Proof of Corollary \ref{cor:embedding of Kummer lattices:bis}] 
Take a class $D \in (\iota(K))^{\perp} \subset \Pic(\tilde{X})$ of positive self-intersection.
By Proposition \ref{prop:nef up to Weyl}, by twisting by an element of $\pm \Weyl{\Pic(\tilde{X})}$, we may assume $D$ is nef and effective.
Note that $\iota(K)$ is contained in $\spanned{D}^{\perp}$, which is negative-definite.

	Next, for each $v \in \Roots{K}$,
	exactly one of $\iota(v)$ and $-\iota(v)$ is represented by an effective divisor $Z_v$.
	Applying Proposition \ref{prop:-2 divisors orthogonal to a big nef} to these $Z_v$,
	we obtain that the $Z := \bigcup \Supp Z_v$ is an ADE configuration.
	We show moreover that the classes of the irreducible components $C_1, \dots, C_n$ of each $Z_v$ belong to $\iota(K)$.
	Let $S$ be the set of roots of the sublattice generated by $[C_1], \dots, [C_n]$, which is of type $A_n$, $D_n$, or $E_n$.
	We know that there are no decomposition $S = S_1 \sqcup S_2$ into two nonempty orthogonal subsets.
	By Proposition \ref{prop:embedding of Kummer lattices:ter}, 
	we have a decomposition $S = (S \cap \iota(K)) \sqcup (S \setminus \iota(K))$ into two orthogonal subsets.
	Since $S \cap \iota(K)$ is nonempty (since it contains $[Z_v]$) we obtain $S \cap \iota(K) = S$.

The equality of types follows from Proposition \ref{prop:-2 vectors}.
\end{proof}

\begin{rem}
In fact, we can show the following.
If $K$ is an even overlattice of $A_1^{\oplus m}$ with $m \leq 16$ and 
satisfies the property of Proposition \ref{prop:embedding of Kummer lattices:ter} on embeddings to negative-definite lattices,
then it is isomorphic to $0$, $E_8$, or a Kummer lattice.
Since we do not need this result in this paper, we omit the proof.
\end{rem}

\subsection{Embeddings of Kummer lattices to Picard lattices of supersingular K3 surfaces} \label{subsec:embeddings of Kummer lattices to supersingular K3 lattices}

We determine which Kummer lattices can be embedded to Picard lattices of supersingular K3 surfaces of which Artin invariant,
and when possible, we determine the possibilities for the orthogonal complement $Q$.

\begin{lem} \label{lem:complement is type 2}
Suppose $P$ is an even non-degenerate lattice that is $2$-elementary and of type 2,
and $K \subset P$ is a saturated non-degenerate sublattice that is also $2$-elementary and of type 2.
Then the orthogonal complement of $K$ in $P$ is also $2$-elementary and of type 2.
\end{lem}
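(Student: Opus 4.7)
Let $Q := K^\perp \subset P$. First note that $Q$ is saturated in $P$ (if $x \in P$ and $nx \in Q$ for some $n > 0$, then $x \cdot K = 0$ and so $x \in Q$) and $Q$ is even as a sublattice of the even lattice $P$. The plan is to reduce the two properties of $Q$ to the single structural statement that the orthogonal projection $P \otimes \setQ \to Q \otimes \setQ$ restricts to a surjection $\dual{P} \twoheadrightarrow \dual{Q}$.

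Granting this surjection, both conclusions are immediate. Given $y \in \dual{Q}$, pick $\tilde{a} \in \dual{K}$ with $\tilde{a} + y \in \dual{P}$. Since $P$ is $2$-elementary, $2(\tilde{a} + y) \in P$, and since $K$ is $2$-elementary, $2\tilde{a} \in K \subset P$. Subtracting, $2y \in P \cap (Q \otimes \setQ) = Q$ by saturation of $Q$; hence $Q$ is $2$-elementary. For the type condition, $P$ of type $2$ gives $(\tilde{a} + y)^2 \in \setZ$ and $K$ of type $2$ gives $\tilde{a}^2 \in \setZ$; since $\tilde{a}$ and $y$ are orthogonal, $y^2 = (\tilde{a} + y)^2 - \tilde{a}^2 \in \setZ$, so $Q$ is of type $2$.

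To prove the surjection, fix $y \in \dual{Q}$ and decompose each $p \in P$ as $p = k_p + q_p$ with $k_p \in \dual{K}$ and $q_p \in \dual{Q}$ via orthogonal projection. The assignment $p \mapsto -y \cdot q_p \bmod \setZ$ is a homomorphism $P \to \setQ/\setZ$ that vanishes on $K \oplus Q$ (on $K$ since $q_p = 0$, on $Q$ since $y \in \dual{Q}$), so it descends to $\phi \colon \Gamma \to \setQ/\setZ$ where $\Gamma := P/(K \oplus Q)$. By saturation of $K$, the natural projection $\Gamma \to \dual{K}/K$ is injective, so $\phi$ transports to a homomorphism on its image $\Gamma_K \subset \dual{K}/K$. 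Extend this to a homomorphism $\dual{K}/K \to \setQ/\setZ$ using the divisibility of $\setQ/\setZ$, and realize the extension through the non-degenerate discriminant pairing on $\dual{K}/K$ as a class $\tilde{a} \bmod K$ for some $\tilde{a} \in \dual{K}$. By construction $(\tilde{a} + y) \cdot p \in \setZ$ for every $p \in P$, so $\tilde{a} + y \in \dual{P}$, as required.

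The only substantive step is the surjection of duals, which is essentially a version of Nikulin's gluing picture: the main point to check is that primitivity of $K$ in $P$ is what gives the injection $\Gamma \hookrightarrow \dual{K}/K$ allowing the extension problem to be solved; once this is available, both conclusions fall out from the identity $(\tilde{a}+y)^{\bullet} - \tilde{a}^{\bullet} = y^{\bullet}$ applied with $\bullet$ being squaring (for type $2$) or doubling (for $2$-elementarity).
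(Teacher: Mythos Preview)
Your argument is correct and follows essentially the same route as the paper: both reduce to the surjection $\dual{P}\twoheadrightarrow\dual{Q}$ and then read off the two conclusions from $2y = 2(\tilde a+y)-2\tilde a$ and $y^2=(\tilde a+y)^2-\tilde a^2$. The paper obtains the surjection in one line from the saturation of $Q$ (since $P/Q$ is free, every $\setZ$-linear functional on $Q$ extends to $P$), which is shorter than your detour through the discriminant group of $K$. One small slip: the injectivity of $\Gamma\hookrightarrow\dual{K}/K$ comes from the saturation of $Q$, not of $K$ (if $k_p\in K$ then $q_p=p-k_p\in P\cap(Q\otimes\setQ)=Q$); this is harmless since you already established that $Q$ is saturated.
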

\begin{proof}
Let $Q$ be the orthogonal complement. Clearly $Q$ is also non-degenerate. 
Thus we have chain of inclusions $K \oplus Q \subset P \subset \dual{P} \subset \dual{K} \oplus \dual{Q}$, each of finite index.
Since $Q$ is saturated in $P$, the natural map $\dual{P} \to \dual{Q}$ is surjective.
Hence, for each $z \in \dual{Q}$, there is $y \in \dual{K}$ such that $x = (y, z) \in \dual{P}$.
Then, since $K$ and $P$ are $2$-elementary, we have $2 x = (2 y, 2 z) \in P \cap (K \oplus \dual{Q}) = K \oplus Q$, hence $2 z \in Q$.
Moreover, we have $z^2 = x^2 - y^2 \in \setZ$ since $K$ and $P$ are of type 2.
\end{proof}

\begin{defn} \label{def:lattice Q}
We define lattices $Q_4$ and $Q_2$ by the following pairing with respect to bases $w_1, \dots, w_6$.

$Q_4$: $w_i^2 = -2$, $w_1 \cdot w_i = 1$ ($2 \leq i \leq 6$), other intersections $= 0$. 

$Q_2$: $w_i^2 = -2$, $w_1 \cdot w_i = 1$ ($2 \leq i \leq 5$), $w_5 \cdot w_6 = 1$, other intersections $= 0$. 
\end{defn}
The pairings on $Q_4$ and $Q_2$ are visualized in Figure \ref{fig:dual graph of Q},
in which the circle with $i$ denotes the element $w_i$.

Note that $Q_4$ (resp.\ $Q_2$) is $2$-elementary
with $\dual{Q_4}/Q_4 \cong (\setZ/2\setZ)^4$
(resp.\ $\dual{Q_2}/Q_2 \cong (\setZ/2\setZ)^2$),
of sign $(+1, -5)$, and of type 2.
By Proposition \ref{prop:uniqueness of p-elementary lattice}, 
$Q_4$ (resp.\ $Q_2$) is the unique lattice satisfying these properties.

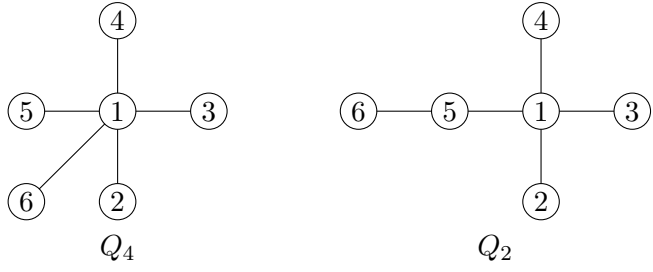
\begin{figure}
\begin{tabular}{ccc}
\begin{tikzpicture}[scale = 0.6]
	\draw (2,2) circle [radius=0.4];
	\draw (0,2) circle [radius=0.4];
	\draw (0,0) circle [radius=0.4];
	\draw (2,0) circle [radius=0.4];
	\draw (2,4) circle [radius=0.4];
	\draw (4,2) circle [radius=0.4];
	\draw (0.4,2)--(1.6,2) (2.4,2)--(3.6,2);
	\draw (2,0.4)--(2,1.6) (2,2.4)--(2,3.6);
	\draw (0.28,0.28)--(1.72,1.72);
	\draw (2,2) node {$1$};
	\draw (2,0) node {$2$};
	\draw (4,2) node {$3$};
	\draw (2,4) node {$4$};
	\draw (0,2) node {$5$};
	\draw (0,0) node {$6$};
\end{tikzpicture}
& &
\begin{tikzpicture}[scale = 0.6]
	\draw (0,2) circle [radius=0.4];
	\draw (2,2) circle [radius=0.4];
	\draw (4,0) circle [radius=0.4];
	\draw (4,2) circle [radius=0.4];
	\draw (4,4) circle [radius=0.4];
	\draw (6,2) circle [radius=0.4];
	\draw (0.4,2)--(1.6,2) (2.4,2)--(3.6,2) (4.4,2)--(5.6,2);
	\draw (4,0.4)--(4,1.6) (4,2.4)--(4,3.6);
	\draw (0,2) node {$6$};
	\draw (2,2) node {$5$};
	\draw (4,0) node {$2$};
	\draw (4,2) node {$1$};
	\draw (4,4) node {$4$};
	\draw (6,2) node {$3$};
\end{tikzpicture}
\\
$Q_4$  & \qquad \qquad & $Q_2$
\end{tabular}
\caption{Visualization of $Q_4$ and $Q_2$}
\label{fig:dual graph of Q}
\end{figure}

\begin{prop} \label{prop:structure of Q}
Let $P$ be a lattice isomorphic to the Picard lattice of a supersingular K3 surface in characteristic $2$,
and $K \subset P$ a saturated sublattice isomorphic to the Kummer lattice (of some type).
Then the orthogonal complement $Q$ of $K$ in $P$ is isomorphic to either $Q_4$ or $Q_2$.
\end{prop}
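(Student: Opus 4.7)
The plan is to reduce the identification of $Q$ to its discriminant via Proposition \ref{prop:uniqueness of p-elementary lattice}, and then to exclude the two extremal values of $\abs{\disc Q}$ by signature obstructions.

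First I would check that each Kummer lattice $K$ is itself $2$-elementary of type $2$. The chain $A_1^{\oplus 16} \subset K \subset \dual{K} \subset \dual{(A_1^{\oplus 16})} = \frac{1}{2} A_1^{\oplus 16}$ gives $2 \dual{K} \subset K$, so $K$ is $2$-elementary. For type $2$ it suffices to treat $K(16 A_1)$, since the larger Kummer lattices contain it and hence have $\dual{K} \subset \dual{K(16 A_1)}$. An element $x = \frac{1}{2} \sum_v c_v e_v \in \dual{K(16 A_1)}$ has $x^2 = -\frac{1}{2} \sum_v c_v^2$; since $c_v^2 \equiv c_v \pmod{2}$, this is an integer iff $\sum_v c_v$ is even, which is forced by integral pairing with $\frac{1}{2} \sum_{v \in S} e_v \in K(16 A_1)$ (noting $S \in V_{16}$ by Example \ref{ex:5-dimensional in 16}). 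Lemma \ref{lem:complement is type 2} then gives that $Q$ is also $2$-elementary of type $2$.

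Since $K$ is negative-definite of rank $16$ as a finite-index overlattice of $A_1^{\oplus 16}$, and $P$ has signature $(1, 21)$, the complement $Q$ has rank $6$ and signature $(1, 5)$. By Proposition \ref{prop:uniqueness of p-elementary lattice}, such a lattice is determined up to isometry by $\abs{\disc Q}$, which is a power of $2$ at most $2^6$. The lattices $Q_2$ and $Q_4$ realize $\abs{\disc} = 4$ and $\abs{\disc} = 16$, so it remains to rule out $\abs{\disc Q} = 1$ and $\abs{\disc Q} = 64$.

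The case $\abs{\disc Q} = 1$ is immediate, since no even unimodular lattice of signature $(1, 5)$ exists ($r_+ - r_- = -4 \not\equiv 0 \pmod{8}$). The main obstacle is ruling out $\abs{\disc Q} = 64$; the plan there is to observe that $[\dual{Q} : Q] = 2^6 = [\dual{Q} : 2 \dual{Q}]$ combined with $2 \dual{Q} \subset Q$ force $Q = 2 \dual{Q}$, so $\dual{Q} = \frac{1}{2} Q$. Then $\pairing{x}{y}_Q = 2 \pairing{\tfrac{x}{2}}{y}_Q \in 2 \setZ$ for all $x, y \in Q$, so the Gram form of $Q$ is $2$-divisible and $Q \cong M(2)$ for the rank-$6$ lattice $M$ whose pairing is half that of $Q$. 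The type $2$ condition on $Q$ is equivalent to $\pairing{x}{x}_Q \in 4 \setZ$ for all $x \in Q$ (via $\tfrac{x}{2} \in \dual{Q}$), which in turn is equivalent to $M$ being even; meanwhile a rank count gives $\abs{\disc M} = \abs{\disc Q} / 2^{\rank Q} = 1$, making $M$ even unimodular of signature $(1, 5)$, which is again impossible. Hence $\abs{\disc Q} \in \set{4, 16}$ and by uniqueness $Q$ is isomorphic to $Q_2$ or $Q_4$.
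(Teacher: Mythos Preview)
Your argument has a genuine gap: after establishing that $Q$ is $2$-elementary of type~2 and of signature $(1,5)$, you write that $\abs{\disc Q}$ is ``a power of $2$ at most $2^6$'' and then claim it remains only to rule out $\abs{\disc Q} = 1$ and $\abs{\disc Q} = 64$. But this silently discards the odd exponents $b = 1, 3, 5$ without justification. Nothing in what you have written so far excludes them.

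The paper fills this gap by using the embedding $K \oplus Q \subset P$: since both $K$ and $Q$ are saturated in $P$, one has $\abs{\dual{Q}/Q} = \abs{\dual{P}/P} \cdot [P : K \oplus Q]^2 \cdot \abs{\dual{K}/K}^{-1}$, and since $\abs{\dual{P}/P} = 2^{2\sigma}$ and $\abs{\dual{K}/K} \in \set{2^6, 2^4, 2^2, 2^0}$ are squares, so is $\abs{\dual{Q}/Q}$. This forces $b$ even. Alternatively, you could argue directly from the discriminant form: for a $2$-elementary lattice of type~2 the quadratic form $\bar q$ on $\dual{Q}/Q$ takes values in $\setZ/2\setZ$, and the indecomposable pieces of such forms (in Nikulin's classification) all have $\setF_2$-rank~$2$, so $b$ is even. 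Either way, you need to say something.

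Once $b \in \set{0, 2, 4, 6}$ is established, your treatment of the extremal cases is correct and in fact slightly cleaner than the paper's for $b = 6$: rather than invoking the classification of indefinite unimodular lattices to identify $Q(1/2)$ with $\mathrm{I}_{1,5}$ and then deriving a contradiction with type~2, you use type~2 directly to show $Q(1/2)$ is even unimodular of signature $(1,5)$ and invoke the signature obstruction immediately.
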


\begin{proof}

Since the Kummer lattice (of any type) is $2$-elementary and of type 2,
the complement $Q$ is also $2$-elementary and of type 2 by Lemma \ref{lem:complement is type 2}.
Write $\dual{Q}/Q \cong (\setZ/2\setZ)^b$.
By the uniqueness arguments given above,
it remains to show that $b \in \set{2, 4}$.
Clearly we have $b \leq \rank Q = 6$.

Since $\card{\dual{K}/K} \in \set{2^6, 2^4, 2^2, 2^0}$ (depending on the type of the Kummer structure)
and $\card{\dual{P}/P} = 2^{2 \sigma}$ (depending on the Artin invariant $\sigma$)
are squares,
so is $\card{\dual{Q}/Q}$,
since $\card{\dual{Q}/Q} 
= \card{\dual{(K \oplus Q)}/\dual{P}} \cdot \card{\dual{P}/P} \cdot \card{P / (K \oplus Q)} \cdot \card{\dual{K}/K}^{-1}
= \card{\dual{P}/P} \cdot \card{P / (K \oplus Q)}^{2} \cdot \card{\dual{K}/K}^{-1}$.

Hence $b \in \set{0, 2, 4, 6}$.
If $b = 0$, then $Q$ is an even unimodular lattice of sign $(+1, -5)$, which does not exist.
Suppose $b = 6$.
Then $Q(1/2)$ is a unimodular lattice of sign $(+1, -5)$.
By the classification of indefinite unimodular lattices (even ones and odd ones), we have $Q(1/2) \cong {\mathrm{I}}_{1,5}$, i.e., 
$Q(1/2)$ has a basis $e_1, \dots, e_6$ with $e_1^2 = 1$, $e_i^2 = -1$ ($2 \leq i \leq 6$), and $e_i \cdot e_j = 0$ ($i \neq j$).
Then, considering $e_i$ as elements of $Q$, we have $e_i^2 = \pm 2$,
but since $e_i \in Q = 2 \dual{Q}$ this contradicts the fact that $Q$ is of type 2.
\end{proof}

\begin{prop} \label{prop:embedding of Kummer lattices to supersingular K3 lattices}
Let $P$ be the lattice isomorphic to the Picard lattice of a supersingular K3 surface in characteristic $2$ of Artin invariant $\sigma$.
Then there exists a saturated embedding of the Kummer lattice of type $16 A_1, 4 D_4, 2 D_8, 1 D_{16}, 2 E_8$ into $P$
if and only if $\sigma \leq 5, 4, 3, 2, 2$ respectively.
\end{prop}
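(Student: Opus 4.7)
The plan is to translate the existence of a saturated embedding $K \hookrightarrow P$ into a question about discriminant forms. By Proposition~\ref{prop:uniqueness of p-elementary lattice}, $P$ is determined up to isometry by $\sigma$ (once signature and type 2 are fixed), so the task reduces to deciding for which $\sigma$ there is an even, $2$-elementary, type 2 lattice of signature $(1,21)$ and discriminant $2^{2\sigma}$ that is an overlattice of $K \oplus Q$ for some $Q \in \set{Q_4, Q_2}$; that $Q$ must lie in this set is forced by Proposition~\ref{prop:structure of Q}. Such overlattices correspond to totally isotropic subgroups $\Gamma \subset (\dual{K}/K) \oplus (\dual{Q}/Q)$, and the saturation of $K$ (resp.\ of $Q$) in the overlattice is equivalent to the projection of $\Gamma$ onto $\dual{K}/K$ (resp.\ $\dual{Q}/Q$) being injective. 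Moreover, the type 2 condition on $P$ is automatic, via the inclusion $\dual{P} \subset \dual{K} \oplus \dual{Q}$, because both $K$ and $Q$ are of type 2.

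For necessity, set $d_K := \dim_{\setF_2} \dual{K}/K$ and $d_Q := \dim_{\setF_2} \dual{Q}/Q \in \set{2,4}$. The identity $\disc(K) \cdot \disc(Q) = [P : K \oplus Q]^2 \cdot \disc(P)$ gives $\dim_{\setF_2}\Gamma = (d_K + d_Q)/2 - \sigma$, and the two injectivity constraints above force $0 \leq \dim_{\setF_2}\Gamma \leq \min(d_K, d_Q)$, hence
\[
\tfrac{1}{2}|d_K - d_Q| \leq \sigma \leq \tfrac{1}{2}(d_K + d_Q).
\]
Reading off $d_K \in \set{6,4,2,0,0}$ for the five Kummer lattices and taking the union of the above intervals over $Q \in \set{Q_4, Q_2}$ (together with $\sigma \geq 1$, which always holds for the Artin invariant) yields the admissible ranges $[1,5],[1,4],[1,3],\set{1,2},\set{1,2}$, which matches the stated upper bounds.

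For sufficiency, I construct for each admissible $\sigma$ a suitable $\Gamma$. The extremal cases are immediate: $P = K \oplus Q_4$ realizes $\sigma = \sigma_{\max}$ for the three non-unimodular $K$'s, and $P = K \oplus Q_2$ (resp.\ $P = K \oplus Q_4$) realizes $\sigma = 1$ (resp.\ $\sigma = 2$) for the two unimodular Kummer lattices. For the remaining intermediate values I proceed by descent on $\sigma$: given an embedding $K \hookrightarrow P_\sigma$, I adjoin an isotropic glue vector $\xi \in \dual{P_\sigma}/P_\sigma$ whose projection to the $\dual{Q}/Q$-summand of the discriminant group is nonzero; this produces an index-$2$ overlattice of Artin invariant $\sigma - 1$ in which $K$ remains saturated, since the nonzero $\dual Q/Q$-component is exactly what guarantees that the enlarged $\Gamma$ still projects injectively onto $\dual{K}/K$. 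The existence of such a $\xi$ at each stage is verified using the explicit descriptions of Kummer lattices from Proposition~\ref{prop:Kummer lattices contain K(16 A_1)} and of $Q_4, Q_2$ from Definition~\ref{def:lattice Q}.

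The main obstacle is this last verification: at each descent step one must confirm that the discriminant form of the current $P_\sigma$ admits an isotropic vector whose $Q$-component is nonzero. This reduces to a finite combinatorial check on $\setZ/2\setZ$-valued non-degenerate $\setF_2$-quadratic forms of dimension at most $6$; since such forms are explicitly computable from the generators of $K$ over $A_1^{\oplus 16}$ and the Gram matrices of $Q_4, Q_2$, each individual case amounts to exhibiting one isotropic vector meeting a single linear condition, which is straightforward.
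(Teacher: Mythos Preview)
Your approach is essentially the paper's: translate to isotropic glueing subgroups of $(\dual K/K)\oplus(\dual Q/Q)$ via Lemma~\ref{lem:glueing lattices}, use Proposition~\ref{prop:structure of Q} to force $Q\in\{Q_2,Q_4\}$, read off the necessity bound from the discriminant identity, and construct glueings for sufficiency. Two points deserve correction or completion.

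First, you have the saturation/injectivity correspondence backwards. Saturation of $K$ in $P$ is equivalent to the projection $\Gamma\to\dual Q/Q$ being injective (not $\dual K/K$); see the proof of Lemma~\ref{lem:glueing lattices}. This does not damage your argument, because $Q$ is an orthogonal complement and hence automatically saturated, so both projections are injective anyway and $\dim\Gamma\leq\min(d_K,d_Q)$ holds. But the sentence ``the nonzero $\dual Q/Q$-component is exactly what guarantees that the enlarged $\Gamma$ still projects injectively onto $\dual K/K$'' is the wrong justification: the nonzero $\dual Q/Q$-component (more precisely, a component outside the current image of $\Gamma$) is what keeps $\Gamma\to\dual Q/Q$ injective, i.e.\ keeps $K$ saturated.

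Second, your final paragraph promises a ``finite combinatorial check'' but does not perform it, and your step-by-step descent must be careful that early choices do not block later ones. The paper avoids this by exhibiting the full glueing at once: for $K\in\{K(16A_1),K(4D_4),K(2D_8)\}$ it writes down explicit elements $t_1,\dots,t_n\in\dual K/K$ and $u_1,\dots,u_n\in\dual{Q_4}/Q_4$ (with $n=4,3,2$) whose quadratic-form values match, giving a maximal $\Gamma$ realizing $\sigma=1$; subgroups of this $\Gamma$ then give every intermediate $\sigma$. That is cleaner than descending, and you should either reproduce such explicit generators or at least argue abstractly (via the classification of nondegenerate $\setF_2$-quadratic forms by Arf invariant) that the required totally isotropic subspace of the correct dimension, injecting into both summands, exists.
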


\begin{lem} \label{lem:glueing lattices}
Let $L_1$ and $L_2$ be even non-degenerate lattices. 
Denote by $\map{q_i}{\dual{L_i}/L_i}{\setQ/2\setZ}$ be the induced quadratic maps.
We have a canonical bijection between the sets of the following objects.
\begin{itemize}
\item Even overlattices $L$ of $L_1 \oplus L_2$ of finite index in which both $L_1$ and $L_2$ are saturated.
\item Triples $(M_1, M_2, \psi)$, where $M_i$ are subgroups of $\dual{L_i}/L_i$ respectively
and $\map{\psi}{M_1}{M_2}$ is an isomorphism of abelian groups satisfying $q_1(x) + q_2(\psi(x)) = 0$ for every $x \in M_1$.
\end{itemize}
Moreover, $L / (L_1 \oplus L_2)$ is naturally isomorphic to $M_1$ and $M_2$.
\end{lem}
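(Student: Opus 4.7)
The plan is to construct explicit maps in both directions and verify they are mutually inverse; this is essentially the standard Nikulin gluing of lattices via the discriminant form, so I would lay out the bookkeeping carefully but expect no deep obstacle.

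First I would note that any even overlattice $L$ of $L_1 \oplus L_2$ of finite index automatically sits inside the dual: since the pairing on $L$ is $\setZ$-valued and $L/(L_1 \oplus L_2)$ is finite, one has $L_1 \oplus L_2 \subset L \subset \dual{(L_1 \oplus L_2)} = \dual{L_1} \oplus \dual{L_2}$. Thus the quotient $M := L/(L_1 \oplus L_2)$ is naturally a subgroup of $\dual{L_1}/L_1 \oplus \dual{L_2}/L_2$, with images $M_i \subset \dual{L_i}/L_i$ under the two projections. The key observation is that $L_i$ is saturated in $L$ if and only if $M \cap (\dual{L_j}/L_j) = 0$ for $j \neq i$: an element $(x_1, x_2) \in L$ lies in $\dual{L_1} \cap L = L_1$ on the nose iff its class in $M$ has trivial first coordinate, and saturation of $L_1$ in $L$ says precisely that $L \cap (\dual{L_1} \otimes \setQ) = L_1$. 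Under the saturation hypothesis both projections $M \to M_i$ are therefore isomorphisms, and composing one with the inverse of the other gives the desired isomorphism $\psi \colon M_1 \isomto M_2$. The evenness of $L$, via the identity $(x_1 + x_2)^2 = x_1^2 + x_2^2$ for $x_i \in \dual{L_i}$, translates directly to the compatibility $q_1(\bar x) + q_2(\psi(\bar x)) = 0$ in $\setQ/2\setZ$ for all $\bar x \in M_1$.

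Conversely, given a triple $(M_1, M_2, \psi)$ satisfying the stated condition, I would define $L$ to be the preimage in $\dual{L_1} \oplus \dual{L_2}$ of the graph $\Gamma_\psi = \set{(\bar x, \psi(\bar x)) : \bar x \in M_1} \subset \dual{L_1}/L_1 \oplus \dual{L_2}/L_2$ under the quotient map. Then $L_1 \oplus L_2 \subset L$ is of finite index equal to $\card{M_1}$, the compatibility condition guarantees that every element of $L$ has integer square lying in $2\setZ$, and each $L_i$ is saturated in $L$ because $\Gamma_\psi$ intersects each factor $\dual{L_j}/L_j$ trivially (by virtue of being the graph of a bijection). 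The image of $L$ in $\dual{L_i}/L_i$ is exactly $M_i$, so the last assertion $L/(L_1 \oplus L_2) \cong M_1 \cong M_2$ follows from this construction.

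Finally, I would check that the two constructions are mutually inverse, which is immediate from how they were set up: starting with $L$, passing to $(M_1, M_2, \psi)$ and then back produces the preimage of $\Gamma_\psi$, which is $L$ itself; starting with $(M_1, M_2, \psi)$, reading off the projections of $L/(L_1 \oplus L_2)$ recovers $M_1, M_2$ and $\psi$. The main — but still routine — technical point is the equivalence between saturation of $L_i$ in $L$ and injectivity of the projection $M \to M_j$ for $j \neq i$; everything else amounts to unwinding definitions of the discriminant form.
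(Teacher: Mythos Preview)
Your approach is the same as the paper's --- the standard Nikulin gluing via the graph of $\psi$ in the discriminant group --- and the overall structure (both constructions and the check that they are inverse) is correct. One minor slip: the equivalence should read ``$L_i$ is saturated in $L$ if and only if $M \cap (\dual{L_i}/L_i) = 0$'' (same index, not $j \neq i$), since an element $(x_1,0) \in L$ with $x_1 \notin L_1$ witnesses non-saturation of $L_1$ and has class lying in $\dual{L_1}/L_1$; equivalently, saturation of $L_i$ is what makes $\pr_j \colon M \to \dual{L_j}/L_j$ injective for $j \neq i$. Your concluding sentence gets this right, so the argument is unaffected.
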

\begin{proof}
Suppose we are given $L$. 
Let $M = L / (L_1 \oplus L_2)$.
The map $L/L_2 \to \dual{L_1}$ is injective since $L_2$ is saturated,
and hence $\map{\pr_1}{M}{\dual{L_1}/L_1}$ is injective.
The same holds for $\pr_2$.
Let $M_i := \Image(\pr_i)$ and $\psi := \pr_2 \circ \pr_1^{-1}$.

Given a triple $(M_1, M_2, \psi)$, let
$L = \set{(x_1, x_2) \in \dual{L_1} \oplus \dual{L_2} \mid 
(x_i \bmod L_i) \in M_i, \psi((x_1 \bmod L_1)) = (x_2 \bmod L_2)}$.
\end{proof}

\begin{proof}[Proof of Proposition \ref{prop:embedding of Kummer lattices to supersingular K3 lattices}]
Suppose $K \subset P$ is a saturated sublattice isomorphic to the Kummer lattice.
Then we have $\dual{K} / K \cong (\setZ/2\setZ)^{a}$, $a = 6, 4, 2, 0, 0$ respectively,
and $\dual{Q}/Q \cong (\setZ/2\setZ)^b$, $b \in \set{4, 2}$ by Proposition \ref{prop:structure of Q}.
Hence $2 \sigma = \log_2 \card{\dual{\Pic({\tilde{X}})}/\Pic({\tilde{X}})}
\leq \log_2 \card{\dual{K}/K} + \log_2 \card{\dual{Q}/Q}
= a + b \leq a + 4$.

Now consider the converse statement.
By the uniqueness (Proposition \ref{prop:uniqueness of p-elementary lattice}),
it suffices to give, for each pair of $T \in \set{16 A_1, 4 D_4, 2 D_8, 1 D_{16}, 2 E_8}$ and $\sigma$,
a saturated embedding of $K(T)$ into a lattice $P$ satisfying the properties characterizing $\Pic(\tilde{X})$ of Artin invariant $\sigma$..

If $T$ is $1 D_{16}$ or $2 E_8$, then the lattices $K \oplus Q_4$ and $K \oplus Q_2$ satisfy the required properties with $\sigma = 2$ and $\sigma = 1$ respectively.

Suppose $T$ is $16 A_1$, $4 D_4$, or $2 D_8$, and let $n= 4, 3, 2$ respectively.
We give subgroups $M_1 \subset \dual{K}/K$ and $M_2 \subset \dual{Q_4}/Q_4$ of order $2^n$
and an isomorphism $\map{\psi}{M_1}{M_2}$ satisfying the condition of Lemma \ref{lem:glueing lattices}.
This induces an even overlattice of $K \oplus Q_4$ of index $2^n$, which is the lattice with $\sigma = 1$, in which $K$ is saturated (and so is $Q_4$).
Replacing $M_i$ with subgroups, we obtain lattices with every $\sigma$ satisfying  $1 \leq \sigma \leq 1 + n$.

We may assume $K(16 A_1)$ is as in the proof of Proposition \ref{prop:dimension of subspaces with 0 mod 4},
and $K(16 A_1) \subset K(4 D_4) \subset K(2 D_8)$ are given as overlattices of $K(16 A_1)$ 
as in the proof of Proposition \ref{prop:Kummer lattices contain K(16 A_1)}.
Thus we describe classes of $\dual{K}/K \subset ((1/2) A_1^{\oplus 16})/K$ by 
subsets of $2^S$, using a basis $v_1, v_2, v_3, v_4$ of $S$. Let
\begin{gather*}
t_1 = \spanned{v_1, v_4}, t_2 = \spanned{v_2, v_3} \in \dual{K(2 D_8)}, \\ 
t_3 = \spanned{v_1 + v_2, v_3 + v_4} \in \dual{K(4 D_4)}, \\
t_4 = \spanned{v_1 + v_2 + v_3, v_1 + v_3 + v_4} \in \dual{K(16 A_1)}.
\end{gather*}
For $1 \leq i \leq 5$, let $u_i = (1/2) \sum_{2 \leq j \leq 6, j \neq i + 1} w_i \in \dual{Q_4}/Q_4$.
We observe that the value of the quadratic map $q$ (see Lemma \ref{lem:glueing lattices})
of the sum of $m$ distinct elements of $t_1, \dots, t_4$ (resp.\ $u_1, \dots, u_5$)
is $0, 0, 1, 1, 0$ (in $\setQ/2\setZ$) if $m = 0, 1, 2, 3, 4$ respectively.
Hence the map $\spanned{t_1, \dots, t_n} \to \spanned{u_1, \dots, u_n}$ defined by $t_i \mapsto u_i$ satisfies the required conditions.
\end{proof}

\begin{rem}
When $T$ is $16 A_1$, $4 D_4$, or $2 D_8$,
we can give similar overlattices of $K \oplus Q_2$ of index $2^n$, where $n = 2, 2, 1$.
For example, we let 
\begin{gather*}
t_1 = \set{v_1, v_2, v_3, v_2 + v_3, v_4, v_1 + v_4} \in \dual{K(2 D_8)}, \\
t_2 = \set{v_1, v_2, v_4, v_2 + v_4, v_3 + v_4, v_1 + v_3 + v_4} \in \dual{K(4 D_4)},
\end{gather*}
$u_i = (1/2) (w_i + w_3) \in \dual{Q_2}/Q_2$ ($i = 1, 2$), and use the map $\spanned{t_1, \dots, t_n} \to \spanned{u_1, \dots, u_n}$ defined by $t_i \mapsto u_i$.

Thus we obtain the following refinement of Proposition \ref{prop:embedding of Kummer lattices to supersingular K3 lattices}. 
Suppose $T$ and $\sigma$ satisfy the condition of Proposition \ref{prop:embedding of Kummer lattices to supersingular K3 lattices}.
\begin{itemize}
\item If $(T, \sigma) = (16 A_1, 5), (16 A_1, 1), (4 D_4, 4), (2 D_8, 3), (1 D_{16}, 2), (2 E_8, 2)$, then $Q$ is isomorphic to $Q_4$.
\item If $(T, \sigma) = (1 D_{16}, 1), (2 E_8, 1)$, then $Q$ is isomorphic to $Q_2$.
\item In all other cases, both $Q_4$ and $Q_2$ can occur as the orthogonal complement $Q$ of $K$.
\end{itemize}
This result is not used in this paper.
\end{rem}

\section{Kummer structure on K3 surfaces} \label{sec:Kummer structures}

\subsection{Definition of Kummer structures} \label{subsec:definition of Kummer structures}

\begin{defn} \label{def:Kummer structure}
Let $T$ be one of $16 A_1$, $4 D_4$, $2 D_8$, $1 D_{16}$, or $2 E_8$.
We say that $16$ smooth rational curves $C_1, \dots, C_{16}$ on a K3 surface $\tilde{X}$
form a \emph{Kummer structure of type $T$}
if the saturation of the sublattice of $\Pic(\tilde{X})$ generated by the classes $[C_i]$ is isomorphic to
the Kummer lattice $K(T)$ of type $T$.

We also say that Kummer structures of type other than $16 A_1$ to be of \emph{additive type}.
\end{defn}

\begin{rem}
If $C_i$ form a Kummer structure of type $T$, then, by Proposition \ref{prop:-2 vectors},
the curves $C_i$ are precisely the smooth rational curves whose classes belong to the saturation,
and their configuration is necessarily of type $T$.
\end{rem}

\begin{defn} \label{def:inseparable Kummer surface}
An \emph{inseparable Kummer surface} is a K3 surface in characteristic $2$ equipped with a Kummer structure.
We say that the contraction of the $16$ curves is a \emph{contracted inseparable Kummer surface}.
\end{defn}

\subsection{Kummer structures and usual Kummer surfaces} \label{subsec:usual Kummer surfaces}

Propositions \ref{prop:abelian surface to kummer structure} and \ref{prop:kummer structure to abelian surface}
show that, for a K3 surface of appropriate characteristic and height,
a structure of usual Kummer surface is essentially equivalent to
$16$ curves forming a Kummer structure.

\begin{prop} \label{prop:abelian surface to kummer structure}
Let $A$ be an abelian surface over an algebraically closed field $k$.
Let $\tilde{X}$ be the minimal resolution of $X := A / \set{\pm 1}$.
\begin{enumerate}
\item \label{item:abelian surface to kummer structure:16 A_1}
Suppose $\charac k \neq 2$. 
Then $\Sing(X)$ is $16 A_1$, and
the exceptional curves of $\map{\phi}{\tilde{X}}{X}$
form a Kummer structure of type $16 A_1$.
\item \label{item:abelian surface to kummer structure:additive}
Suppose $\charac k = 2$ and $\prank(A) = 2$ (resp.\ $1$).
Then the same assertions as in (\ref{item:abelian surface to kummer structure:16 A_1}) hold, with $16 A_1$ replaced with $4 D_4$ (resp. $2 D_8$).
Moreover, the height of $\tilde{X}$ is $1$ (resp.\ $2$),
and each RDP of $X$ is of type $D_4^1$ (resp.\ $D_8^2$).
\end{enumerate}
\end{prop}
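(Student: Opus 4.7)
The plan is to: (i) identify $\Fix([-1]) = A[2]$ scheme-theoretically and use a local coordinate calculation to determine the types of RDPs on $X$; (ii) exhibit enough half-integral classes in $\Pic(\tilde X)$ to force the saturation of $\spanned{[C_1], \dots, [C_{16}]}$ to realize the correct Kummer lattice; (iii) deduce the height by relating $H^2_\crys(\tilde X/W)$ to $H^1_\crys(A/W)$.

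For (i): in characteristic $\neq 2$, $A[2]$ is \'etale of length $16$, and a standard local analysis shows that each fixed point gives rise to an $A_1$-singularity on $X$. In characteristic $2$, $A[2]$ has length $16$ but only $4$ (resp.\ $2$) geometric points when $\prank(A) = 2$ (resp.\ $1$). Because the tangent action of $[-1]$ is trivial in characteristic $2$, the singularities are necessarily worse than $A_1$; a local computation using coordinates adapted to the non-reduced group-scheme structure of $A[2]$ identifies them as $D_4^1$ when $\prank = 2$ and $D_8^2$ when $\prank = 1$. Such local computations are essentially carried out in \cite{Matsumoto:k3alphap}, so I would cite them rather than redo them. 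In each case the exceptional divisor of $\tilde X \to X$ consists of exactly $16$ smooth rational curves whose dual graph is $16 A_1$, $4 D_4$, or $2 D_8$ respectively, giving a root sublattice $L' \cong A_1^{\oplus 16}$, $D_4^{\oplus 4}$, or $D_8^{\oplus 2}$.

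For (ii): by Propositions~\ref{prop:characterization of Kummer lattices} and \ref{prop:Kummer lattices contain K(16 A_1)}, it suffices to produce overlattices of $L'$ inside $\Pic(\tilde X)$ of indices $2^5$, $2^2$, $2^1$ respectively; the uniqueness in those propositions will then identify the saturation with $K(T)$. The required witnesses are the classical Kummer half-sums: for each affine subspace $H$ of the reduced scheme $A[2]_{\red}$ of the appropriate dimension, the class $(1/2) \sum_{x \in H} [C_x]$ (or the analogous half-sum of components when each fixed point contributes a $D_4$ or $D_8$ fiber) is integral on $\tilde X$, being realized by pulling back a $2$-torsion class in $\Pic(A)$ through the rational double cover $\tilde X \dashrightarrow X$ and extending it across the exceptional locus. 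Counting such affine subspaces in $A[2]_{\red}$, which is $\setF_2^4$, $\setF_2^2$, or $\setF_2$ respectively, produces overlattices of exactly the prescribed indices.

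Finally for (iii), the $16$ exceptional classes contribute only slope $1$ to the $F$-isocrystal $H^2_\crys(\tilde X/W)$, while their orthogonal complement is isogenous to $\wedge^2 H^1_\crys(A/W)$. By Proposition~\ref{prop:height of abelian surfaces} the slopes of $H^1_\crys(A/W)$ are $\set{0,0,1,1}$ when $\prank(A) = 2$ and $\set{0, 1/2, 1/2, 1}$ when $\prank(A) = 1$; the resulting extremal slopes of $\wedge^2 H^1_\crys(A/W)$ are $0$ and $2$ (hence $\height(\tilde X) = 1$) in the first case and $1/2$ and $3/2$ (hence $\height(\tilde X) = 2$) in the second. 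The main obstacle in the whole argument is the local identification of the RDP coindex ($r = 1$ for $D_4$ and $r = 2$ for $D_8$) in characteristic $2$: the dual graph alone does not distinguish $D_4^r$ for different $r$, so the coindex must be extracted from the infinitesimal structure of $A[2]$ near each fixed point, and this is precisely the delicate analysis handled in \cite{Matsumoto:k3alphap}.
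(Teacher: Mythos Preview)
Your three-step outline matches the paper's strategy, and the height computation via slopes of $\wedge^2 H^1_\crys(A/W)$ is exactly what the paper does.  Two corrections are needed.

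First, the local identification of the quotient singularities in characteristic $2$ as $D_4^1$ and $D_8^2$ is not carried out in \cite{Matsumoto:k3alphap}; the paper cites \cite{Katsura:Kummer2}*{Theorem C} and \cite{Artin:wild2}*{Examples} for this.

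Second, your step (ii) has a genuine gap in characteristic $2$.  Classes in $L/L'$ correspond to $\mu_2$-torsors over $X^{\sm}$, and the paper produces them from \emph{$\mu_2$-quotient isogenies} $B \to A$: the descended map $B/\set{\pm 1} \to A/\set{\pm 1\}} = X$ is the required covering.  Such isogenies are parametrized by the nonzero elements of $\hat A(k)[2] \cong (\setZ/2\setZ)^{\prank(A)}$, giving exactly $3$ (resp.\ $1$) nontrivial classes when $\prank(A) = 2$ (resp.\ $1$), hence $[L:L'] \geq 2^2$ (resp.\ $2^1$).  Your proposal to read these classes off from ``affine subspaces of $A[2]_{\red}$'' does not work here: the covering $A \to X$ is a $\setZ/2\setZ$-torsor (Artin--Schreier) in characteristic $2$, not a $\mu_2$-torsor, so it contributes nothing to $L/L'$; and the three nontrivial classes of $K(4 D_4)$ over $D_4^{\oplus 4}$ project to a nonzero spinor class in \emph{every} copy of $\dual{D_4}/D_4$ (see the proof of Proposition~\ref{prop:characterization of Kummer lattices}), so they are not half-sums over any proper subset of the four $D_4$-blocks.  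Your underlying idea of using $2$-torsion in $\Pic(A)$ is correct and is precisely what the paper uses, but the bookkeeping must go through $\hat A(k)[2]$, not through $A[2]_{\red}$.  (In characteristic $\neq 2$ your affine-hyperplane description is fine and is essentially Nikulin's classical argument; the paper phrases it instead as $30$ coverings coming from degree-$2$ isogenies plus the covering $A \to X$ itself.)  Finally, the phrase ``rational double cover $\tilde X \dashrightarrow X$'' is garbled: $\tilde X \to X$ is the resolution; the double cover is $A \to X$.
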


\begin{proof}
In each case, let $L \subset \Pic(\tilde{X})$ be the saturation of the lattice $L'$ generated by the $16$ exceptional curves.
By Proposition \ref{prop:-2 vectors}, 
the root sublattice of $L$ is exactly $L'$.
To show that the curves form a Kummer structure,
we shall show that the singularities are as stated and that $[L : L'] \geq 2^5, 2^2, 2^1$ if $\Sing(X)$ is $16 A_1, 4 D_4, 2 D_8$ respectively.
Classes in $L/L'$ correspond to $\mu_2$-coverings (which are the same as $\setZ/2\setZ$-coverings in characteristic $\neq 2$) of $X$ that are free outside $\Sing(X)$.

(\ref{item:abelian surface to kummer structure:16 A_1})
It is clear that each singularity is $A_1$, 
and the number of singular points is equal to $\card{A[2]} = 16$.
The additional classes are given by 
the covering $A \to X$ and 
$\setZ/2\setZ$-coverings $B/\set{\pm 1} \to A/\set{\pm 1} = X$
corresponding to isogenies $B \to A$ of degree $2$.
There are $30$ such coverings in total, 
since there are $16$ choices of an origin of $A$,
there are $15$ choices of an isogeny for each choice of an origin,
and each morphism $B \to A$ appears for $8$ pairs of choices.

(\ref{item:abelian surface to kummer structure:additive})
See \cite{Katsura:Kummer 2}*{Theorem C} and \cite{Artin:wild2}*{Examples} for the assertion on quotient singularities.

The assertion on the height of $\tilde{X}$ follows from the comparison of the slopes of 
$\Hcrys^2$ of $\tilde{X}$ and those of $\Hcrys^1$ of $A$
(use Proposition \ref{prop:height of abelian surfaces} and the arguments in the proof of Proposition \ref{prop:height and Picard number}).

The additional classes are given by 
$\mu_2$-coverings $B/\set{\pm 1} \to A/\set{\pm 1} = X$
corresponding to isogenies $B \to A$ of degree $2$ that are $\mu_2$-quotients.
Since $\prank(A) = 2$ (resp.\ $1$), there are $3$ (resp.\ $1$) such coverings.
\end{proof}

\begin{rem}
If $p = 2$ and $\prank(A) = 0$ (equivalently, $A$ is supersingular), then $\Sing(A / \set{\pm 1})$ consists of one elliptic double point, and $\Km(A)$ is a rational surface.
See \cite{Shioda:Kummer2} and \cite{Katsura:Kummer2} for details.
\end{rem}

\begin{rem} \label{rem:Kummer surface is supersingular iff}
It is also known that, if $p > 0$ and $\Km(A)$ is a K3 surface, then $\Km(A)$ is supersingular if and only if $A$ is so.
This can be proved by computing the slopes of crystalline cohomology groups, as in the proof of Proposition \ref{prop:abelian surface to kummer structure}.
Thus there exist supersingular Kummer K3 surfaces in all characteristic $\geq 3$, but not in characteristic $2$.
\end{rem}

\begin{prop} \label{prop:kummer structure to abelian surface}
Let $\tilde{X}$ be a K3 surface over an algebraically closed field $k$.
Let $C_1, \dots, C_{16}$ be smooth rational curves on $\tilde{X}$
forming an ADE configuration, and let $\map{\phi}{\tilde{X}}{X}$ be the contraction of the curves.
\begin{enumerate}
\item \label{item:kummer to abelian:16 A_1}
Suppose $\charac k \neq 2$ and the configuration of the $16$ curves is $16 A_1$.
Then there is a morphism $A \to X$ that is the quotient morphism by the action of $\set{\pm 1}$ on an abelian surface $A$.
\item \label{item:kummer to abelian:4 D_4 2 D_8}
Suppose $\charac k = 2$, the configuration of the $16$ curves is $4 D_4$ (resp.\ $2 D_8$),
and the height of $\tilde{X}$ is $1$ (resp.\ $2$).
Then the same conclusion holds,
and $\prank(A) = 2$ (resp.\ $1$).
\end{enumerate}
\end{prop}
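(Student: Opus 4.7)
The plan is to invert the construction of Proposition \ref{prop:abelian surface to kummer structure}: build a double cover of $X$ out of the divisibility structure of the Kummer lattice $K(T)$ implied by the hypothesis on the $C_i$, verify it is an abelian surface $A$ using numerical invariants, and identify the covering involution with inversion. The key observation is that each Kummer lattice $K(T)$ contains a class of the form $\delta = (1/2) \sum_{i=1}^{16} e_i$ with $\delta^2 = -8$, so from the assumption that the saturation of $\spanned{[C_i]}$ in $\Pic(\tilde X)$ is isomorphic to $K(T)$ one obtains a class $\delta \in \Pic(\tilde X)$ with $2\delta \sim \sum_i C_i$.

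In case (\ref{item:kummer to abelian:16 A_1}) this class directly gives a classical $\setZ/2\setZ$-cover $\hat A \to \tilde X$ branched along $\sum C_i$; each $C_i$ pulls back to a $(-1)$-curve, and contracting these sixteen curves produces a smooth projective surface $A$. In case (\ref{item:kummer to abelian:4 D_4 2 D_8}) I would reduce $\delta$ modulo $\spanned{[C_i]}$ to obtain a $2$-torsion class in $\Pic(X^{\mathrm{sm}})$, apply Proposition \ref{prop:torsors and 1-forms} to get a $\mu_2$-torsor $A^{\mathrm{sm}} \to X^{\mathrm{sm}}$ (choosing $\mu_2$ rather than $\alpha_2$ because we want $\prank A \geq 1$), and extend to a finite cover $A \to X$ via the appropriate normal closure. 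The height hypothesis combined with Proposition \ref{prop:RDP and height} forces the RDPs of $X$ to be of type $D_4^1$ (resp.\ $D_8^2$), and in explicit local coordinates one checks that the corresponding $\mu_2$-cover of each singularity is smooth. In all cases, the double-cover formulas applied to $\delta^2 = -8$ give $\chi(\cO_{\hat A}) = 0$ and $K_{\hat A} \sim \sum_i \tilde C_i$; after contracting the $(-1)$-curves $\tilde C_i$ we obtain $K_A \sim 0$, $\chi(\cO_A) = 0$, and $h^1(\cO_A) = 2$, so by the Enriques--Kodaira--Bombieri--Mumford classification $A$ is an abelian surface.

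It remains to identify $A \to X$ with the quotient by inversion. The covering involution (or $\mu_2$-action, in characteristic $2$) fixes the points above the RDPs and acts as $-1$ on $H^1(\cO_A)$, so after translating one fixed point to the origin of $A$ it must coincide with inversion, giving $X = A/\set{\pm 1}$. The $p$-rank assertion in case (\ref{item:kummer to abelian:4 D_4 2 D_8}) follows because a $\mu_2$-cover corresponds to a multiplicative $2$-torsion class, combined with Proposition \ref{prop:height of abelian surfaces}. The main technical obstacle is the local analysis in case (\ref{item:kummer to abelian:4 D_4 2 D_8}): verifying that the specific $\mu_2$-cover extracted from the Kummer-lattice data really is smooth above each $D_4^1$ (resp.\ $D_8^2$) singularity. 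This hinges on the explicit normal forms of these non-taut RDPs in characteristic $2$ and their compatible $\mu_2$-actions, and is where the height hypothesis is truly used.
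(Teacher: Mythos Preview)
For part~(\ref{item:kummer to abelian:16 A_1}) your approach agrees with the paper's. One correction: the hypothesis is only that the sixteen curves form a $16 A_1$ configuration, not that their saturation is $K(16 A_1)$; the $2$-divisibility of $\sum_i [C_i]$ is supplied not by any Kummer-lattice assumption but by Nikulin's theorem (valid in characteristic $\neq 2$ via $\ell$-adic cohomology), which the paper invokes.

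For part~(\ref{item:kummer to abelian:4 D_4 2 D_8}) there is a genuine error. The morphism $A \to X = A/\set{\pm 1}$ you are trying to reconstruct is the quotient by inversion, an action of the \emph{constant} group scheme $\setZ/2\setZ$; over $X^{\sm}$ it is therefore an \'etale $\setZ/2\setZ$-torsor. In characteristic $2$ this is entirely different from a $\mu_2$-torsor, which is purely inseparable and can never be an abelian surface mapping separably to $X$. Since $2$-torsion in $\Pic(X^{\sm})$ classifies $\mu_2$-torsors (Proposition~\ref{prop:torsors and 1-forms}), the class $\delta$ you propose builds the wrong kind of cover; the $\mu_2$-covers so obtained are the surfaces $B/\set{\pm 1}$ appearing in Proposition~\ref{prop:abelian surface to kummer structure}(\ref{item:abelian surface to kummer structure:additive}), not $A$ itself. (Separately, $(1/2)\sum_i [C_i]$ does not even lie in $\dual{(D_4^{\oplus 4})}$, so no such $\delta$ exists in the $4 D_4$ case.) The paper instead works on the Artin--Schreier side: after pinning down the singularities as $D_4^1$ or $D_8^2$ via the height hypothesis, it studies Frobenius on the local groups $\Ext^1(I_z, \cO_{X,z})$ and finds $F$-\emph{fixed} basis elements there --- this is precisely what distinguishes $D_4^1$, $D_8^2$ from $D_4^0$, $D_8^0$ --- and a suitable global $F$-fixed class in $\Ext^1(\cI, \cO_X)$ then yields the required $\setZ/2\setZ$-cover. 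Your final step, identifying the covering involution with $[-1]$ via its action on $H^1(\cO_A)$, also fails in characteristic~$2$ since $-1 = 1$ there; the paper appeals instead to the structure of endomorphism rings of abelian varieties.
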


\begin{proof}
(\ref{item:kummer to abelian:16 A_1})
First we recall that
there always exists an invertible sheaf $\cL$ on $\tilde{X}$ satisfying $\cL^{\otimes 2} \cong \cO_{\tilde{X}}(-\sum_i C_i)$.
(This is no longer true in the inseparable setting considered below,
as we will see in Remark \ref{rem:divisibility condition}.)
If $k = \setC$, then this is \cite{Nikulin:Kummer surfaces}*{Theorem 1}.
If $k$ is a general field of characteristic $\neq 2$, 
then Nikulin's proof works if we replace $H^2(\tilde{X}, \setZ)$ with $\Het^2(\tilde{X}, \setZ_2)$.

Let $\cL$ be as above, that is, it satisfies $\cL^{\otimes 2} \cong \cO_{\tilde{X}}(-\sum_i C_i)$.
The contraction $\phi$ induces an isomorphism $\tilde{X} \setminus \bigcup_i C_i \isomto X^{\sm}$.
Then $\sSpec (\cO_{X^{\sm}} \oplus \cL \restrictedto{X^{\sm}}) \to X^{\sm}$ is a $\setZ/2\setZ$-torsor and extends to 
a $\setZ/2\setZ$-covering $A = \sSpec (\cO_X \oplus \phi_* \cL) \to X$,
and $A$ is an abelian surface.
Choosing one of the points above $\Sing(X)$ as the origin, this identifies $\tilde{X}$ with $\Km(A)$.
Here, it follows from the classification of the endomorphism rings of abelian varieties that 
an involution with $0$-dimensional fixed locus is the inversion.

(\ref{item:kummer to abelian:4 D_4 2 D_8})
Suppose that the configuration is $4 D_4$ and $\height(\tilde{X}) = 1$.
By Proposition \ref{prop:RDP and height}, $\Sing(X)$ is $4 D_4^1$.
For each $z \in \Sing(X)$, let $I_z \subset \cO_{X, z}$ be the ideal as in \cite{Matsumoto:k3alphap}*{Convention 6.5}
(in this case, $I_z$ is the maximal ideal $\idealm_z$), 
and let $\cI := \Ker (\cO_X \to \bigoplus_{z \in \Sing(X)} \cO_{X, z}/I_z)$.
As in \cite{Matsumoto:k3alphap}*{proof of Theorem 7.3}, we have an exact sequence
\[
 0 \to \Ext^1(\cI, \cO_X) \namedto{\beta} \bigoplus_{z \in \Sing(X)} \Ext^1(I_z, \cO_{X, z}) \namedto{\gamma} H^2(X, \cO_X) \to 0,
\]
and bases $e_z \in \Ext^1(I_z, \cO_{X, z})$ and $e_X \in H^2(X, \cO_X)$ satisfying 
$F(e_z) = e_z$ and $\gamma(e_z) = e_X$.
The extension $0 \to \cO_X \to \cB \to \cI \to 0$ corresponding to the element $e := \sum_{z \in \Sing(X)} e_z \in \Ext^1(\cI, \cO_X)$,
which is $F$-invariant,
induces an $\setZ/2\setZ$-covering $A := \sSpec \cB \to X$.
As in loc.cit., we can show that $A$ is an abelian surface.
To compute $h^1(\cO_A)$, we note that $\cO_X/\cI$ is of length $4$.
Choosing an origin, this identifies $\tilde{X}$ with $\Km(A)$.

The case where the configuration is $2 D_8$ and $\height(\tilde{X}) = 2$ is parallel.
We describe only the differences.
The singularities are $D_8^2$.
The ideal $I_z$ is not the maximal ideal $\idealm_z$ but a $\idealm_z$-primary ideal.
We have bases $e_z, f_z \in \Ext^1(I_z, \cO_{X, z})$ and $e_X \in H^2(X, \cO_X)$
such that $F(e_z) = 0$, $F(f_z) = f_z$, $\gamma(e_z) = e_X$, $\gamma(f_z) = 0$.
We use the element $e := \sum_{z \in \Sing(X)} f_z \in \Ext^1(\cI, \cO_X)$.
\end{proof}

\subsection{Characterization of supersingular K3 surfaces admitting Kummer structures} \label{subsec:characterization of supersingular Kummer}

The main subject of this paper is 
supersingular K3 surfaces in characteristic $2$
equipped with Kummer structures.
Before discussing their geometric properties, we determine which K3 surfaces admit such structures.

\begin{thm} \label{thm:characterization of supersingular Kummer}
Let $T$ be one of $16 A_1$, $4 D_4$, $2 D_8$, $1 D_{16}$, or $2 E_8$,
and let $K = K(T)$ be the Kummer lattice of type $T$.
Let $\tilde{X}$ be a supersingular K3 surface in characteristic $2$.
The following are equivalent.
\begin{enumerate}
\item \label{condi:Kummer structure} 
$\tilde{X}$ admits a Kummer structure of type $T$.
\item \label{condi:saturated embedding} 
There is a saturated embedding $K(T) \injto \Pic(\tilde{X})$.
\item \label{condi:Artin invariant}
The Artin invariant $\sigma$ of $\tilde{X}$ is $\leq 5, 4, 3, 2, 2$ respectively.
\end{enumerate}
\end{thm}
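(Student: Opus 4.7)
The plan is to establish the three equivalences by combining results that have already been proved in the paper; no new lattice-theoretic work is required at this stage.

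The implication (\ref{condi:Kummer structure}) $\Rightarrow$ (\ref{condi:saturated embedding}) is essentially tautological. If $C_1,\dots,C_{16}$ form a Kummer structure of type $T$, then by Definition \ref{def:Kummer structure} the saturation $K$ in $\Pic(\tilde{X})$ of the sublattice generated by the classes $[C_i]$ is isomorphic to the Kummer lattice $K(T)$. Since $K$ is saturated in $\Pic(\tilde{X})$ by construction, the inclusion $K \hookrightarrow \Pic(\tilde{X})$ is the desired saturated embedding.

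The implication (\ref{condi:saturated embedding}) $\Rightarrow$ (\ref{condi:Kummer structure}) is a direct application of Corollary \ref{cor:embedding of Kummer lattices:bis}. Given a saturated embedding $\iota \colon K(T) \hookrightarrow \Pic(\tilde{X})$, that corollary produces $\gamma \in \pm \Weyl{\Pic(\tilde{X})}$ such that $\gamma(\iota(K(T)))$ contains the classes of $16$ smooth rational curves $C_1,\dots,C_{16}$ forming a Kummer structure of the same type $T$. Since $\gamma$ is an isometry of $\Pic(\tilde{X})$ preserving effectiveness up to sign, this produces the required Kummer structure on $\tilde{X}$ itself.

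The equivalence (\ref{condi:saturated embedding}) $\Leftrightarrow$ (\ref{condi:Artin invariant}) is the numerical content, and it is Proposition \ref{prop:embedding of Kummer lattices to supersingular K3 lattices}. Here we use that the Picard lattice of a supersingular K3 surface in characteristic $2$ is an even non-degenerate $2$-elementary lattice of signature $(1, 21)$ and of type $2$, uniquely determined by $\sigma$ via Proposition \ref{prop:uniqueness of p-elementary lattice}, so the hypothesis on $\Pic(\tilde{X})$ fits the setup of that proposition verbatim. The bounds $\sigma \le 5, 4, 3, 2, 2$ correspond respectively to the types $16 A_1, 4 D_4, 2 D_8, 1 D_{16}, 2 E_8$ via the discriminant counts $a + b \ge 2\sigma$ established there.

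Since each of the three implications reduces to an already-proved statement, there is no real obstacle. The only point requiring a moment of care is verifying that Corollary \ref{cor:embedding of Kummer lattices:bis} is applicable in the supersingular setting; but that corollary is formulated for any K3 surface, so it applies directly. Consequently the proof consists entirely of citing \ref{cor:embedding of Kummer lattices:bis} and \ref{prop:embedding of Kummer lattices to supersingular K3 lattices} in the right order.
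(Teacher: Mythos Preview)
Your proof is correct and follows essentially the same approach as the paper's own proof: the paper likewise dismisses (\ref{condi:Kummer structure}) $\Rightarrow$ (\ref{condi:saturated embedding}) as trivial, invokes Corollary \ref{cor:embedding of Kummer lattices:bis} for (\ref{condi:saturated embedding}) $\Rightarrow$ (\ref{condi:Kummer structure}), and cites Proposition \ref{prop:embedding of Kummer lattices to supersingular K3 lattices} for the equivalence (\ref{condi:saturated embedding}) $\Leftrightarrow$ (\ref{condi:Artin invariant}). Your slightly more detailed justifications are fine but add nothing beyond what the paper records in three lines.
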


\begin{proof}
(\ref{condi:Kummer structure}) $\implies$ (\ref{condi:saturated embedding}): Trivial.

(\ref{condi:saturated embedding}) $\implies$ (\ref{condi:Kummer structure}):
This follows from Corollary \ref{cor:embedding of Kummer lattices:bis}.

(\ref{condi:saturated embedding}) $\iff$ (\ref{condi:Artin invariant}):
This is Proposition \ref{prop:embedding of Kummer lattices to supersingular K3 lattices}.
\end{proof}

\begin{rem} \label{rem:moduli dimension of Kummer}
The moduli space of supersingular K3 surfaces is stratified by the Artin invariant,
and the locus with Artin invariant $\leq i$ has dimension $i-1$ for each $1 \leq i \leq 10$.
Thus we have a $4$-dimensional family of supersingular K3 surfaces equipped with Kummer structures of type $16 A_1$
(We will also give an explicit family in Theorem \ref{thm:projective equation of supersingular Kummer}).
This is, perhaps surprisingly, 
strictly larger than the dimension $3$ of the moduli space of usual Kummer surfaces (which correspond to abelian surfaces).
On the other hand, we will see in Section \ref{subsec:Kummer quotients} that the moduli space of the abelian-like coverings have dimension $3$, which is equal to the moduli space of abelian surfaces.

Similarly, there are families of dimension $3, 2, 1, 1$ of supersingular K3 surfaces equipped with Kummer structures of type $4 D_4, 2 D_8, 1 D_{16}, 2 E_8$ respectively.
\end{rem}

\subsection{Kummer structures on supersingular K3 surface in characteristic $2$} \label{subsec:supersingular Kummer}

\begin{prop} \label{prop:covering of inseparable Kummer surface}
Let $\tilde{X}$ be an inseparable Kummer surface.
If it is of type $16 A_1$, then there is a canonical $\mu_2$-covering $\map{\pi}{A}{X}$ that satisfies the following properties.
If it is of additive type, then there is a $2$-dimensional family of $\alpha_2$-coverings $\map{\pi}{A}{X}$ that satisfy the following properties.
\begin{enumerate}
\item $A$ is reduced and Gorenstein. \label{item:covering:reduced Gorenstein}
\item The dualizing sheaf $\omega_A$ is isomorphic to $\cO_A$. \label{item:covering:trivial dualizing sheaf}
\item $h^i(\cO_A) = 1, 2, 1$ for $i = 0, 1, 2$. \label{item:covering:h^i}
\item $\Sing(A) \cap \pi^{-1}(\Sing(X)) = \emptyset$. \label{item:covering:smooth above sing}
\item $\Sing(A)$ is $1$-dimensional. In particular, $A$ is not normal.
\item $A$ is a rational surface.
\end{enumerate}
\end{prop}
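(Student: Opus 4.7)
The plan is to construct $A$ explicitly in each case by combining the Kummer-lattice data with Proposition \ref{prop:torsors and 1-forms}, and then verify the six listed properties, largely by transporting the arguments of \cite{Matsumoto:k3alphap} to this setting.

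First I would construct the covering. For type $16 A_1$, the overlattice description of $K(16 A_1)$ (Proposition \ref{prop:Kummer lattices contain K(16 A_1)}) produces a class $\cL \in \Pic(\tilde X)$ satisfying $\cL^{\otimes 2} \cong \cO_{\tilde X}(-\sum_{i=1}^{16} C_i)$, unique since $\Pic(\tilde X)$ is torsion-free. Restricting to $X^{\sm}$ gives a $2$-torsion class, hence a canonical $\mu_2$-torsor there by Proposition \ref{prop:torsors and 1-forms}, which I would extend to a finite covering $A := \sSpec_X(\cO_X \oplus \phi_* \cL)$ with algebra structure pushed forward from $\cL^{\otimes 2} \hookrightarrow \cO_{\tilde X}$. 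For the additive types, the refinements of $K(16 A_1)$ instead produce $F$-invariant classes in an appropriate $\Ext^1(\cI, \cO_X)$, where $\cI$ is built from Artin's ideals $I_z \subset \cO_{X,z}$ at the non-$A_1$ singularities exactly as in \cite{Matsumoto:k3alphap}; each such class produces an $\alpha_2$-covering via the corresponding extension $0 \to \cO_X \to \pi_* \cO_A \to \cI \to 0$, and the $2$-dimensional family of such coverings arises from the dimension of the relevant $F$-invariant space.

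Properties (1)--(3) follow from the construction. The Gorenstein and reducedness property (1) is a consequence of the rank-$2$ flat algebra structure of $\pi_* \cO_A$ combined with Serre's $S_2$ criterion. For (2), in the $\mu_2$ case the double-cover formula gives $\omega_A \cong \pi^*(\omega_X \otimes \cL) \cong \pi^* \cL$ using $\omega_X \cong \cO_X$, and $\pi^* \cL$ trivializes on $A$ tautologically; in the $\alpha_2$ case, $\omega_A \cong \cO_A$ follows from the defining extension and $\omega_X \cong \cO_X$. For (3), using $\pi_* \cO_A = \cO_X \oplus \phi_* \cL$ (resp.\ the $\cI$-extension) together with $h^i(\cO_X) = 1, 0, 1$, I would reduce to computing $h^i(\tilde X, \cL) = 0, 2, 0$. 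For type $16 A_1$ this combines Riemann--Roch ($\cL^2 = -8$ gives $\chi(\cL) = -2$) with $H^0(\tilde X, \cL^{\pm 1}) = 0$: the latter vanishing uses that $h^0(\cO_{\tilde X}(\sum C_i)) = 1$, so any effective $E$ with $2E \sim \sum C_i$ would force $2E = \sum C_i$ as divisors, contradicting the parity of the coefficients; the additive types are handled similarly via the cohomology of $\cI$.

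Properties (4)--(6) demand more delicate local analysis. For (4), in the $\mu_2$ case the preimage $\tilde A := \sSpec_{\tilde X}(\cO_{\tilde X} \oplus \cL) \to \tilde X$ is smooth (the branch divisor is a disjoint union of smooth curves), and the preimage of each $C_i$ is a smooth rational $(-1)$-curve that contracts by Castelnuovo to a smooth point of $A$; the $\alpha_2$ case is an analogous local calculation. For (5), if $A$ were smooth then (1)--(3) would force $A$ to be an abelian surface, and Proposition \ref{prop:abelian surface to kummer structure} would then force $\tilde X$ to have finite height, contradicting supersingularity of the inseparable Kummer surface; combined with (4), this forces $\Sing(A)$ to be purely $1$-dimensional, and it is identified with the divisorial fixed locus $\divisorialfix{D}$ of the derivation on $A$ corresponding to the $G$-action. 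For (6), rationality is obtained by exhibiting a rational pencil on $A$, using that $\tilde A$ is a double cover of a K3 surface branched along smooth rational curves and hence is itself ruled. The main obstacle will be property (5): ruling out smoothness of $A$ is immediate, but pinning down that $\Sing(A)$ is exactly $1$-dimensional (and non-empty) requires a local analysis of $\pi_* \cO_A$ around generic points of the ramification curve in order to identify the divisorial fixed locus of the $G$-action. A secondary technical issue is that the existence and $F$-invariance of the needed extension classes in the additive case must be extracted from the enhanced Kummer-lattice structure via a case-by-case matching across the types $4 D_4, 2 D_8, 1 D_{16}, 2 E_8$, extending the single-type construction of \cite{Matsumoto:k3alphap}.
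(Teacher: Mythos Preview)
Your construction of the coverings and treatment of properties (1)--(4) broadly track the paper's approach (which largely defers to \cite{Matsumoto:k3alphap}*{proof of Theorem 7.3}), though in the additive case the relevant classes in $\Ext^1(\cI, \cO_X)$ are \emph{annihilated by} $F$, not $F$-invariant: it is precisely $Fe = 0$ that produces an $\alpha_2$-covering rather than a $\setZ/2\setZ$-covering. The $2$-dimensional family then arises because this $\Ext^1$ is $3$-dimensional and proportional primitive classes yield isomorphic surfaces (with different $\alpha_2$-actions). Also, in the additive case the construction uses only the RDP types (via Proposition \ref{prop:RDP and height}), not the overlattice data.

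The real gap is in property (5). Your argument (``if $A$ were smooth then $A$ is abelian and $\tilde X$ has finite height'') at best rules out smoothness; it says nothing about whether $\Sing(A)$ is $0$- or $1$-dimensional. Over $X^{\sm}$ the covering is a $G$-torsor, and its singular locus is the zero locus of the associated $1$-form $\eta$ (equivalently, of the dual derivation $\delta$), which could a priori consist of isolated points. Your proposal to ``analyze $\pi_* \cO_A$ around generic points of the ramification curve'' presupposes that there is such a curve. The paper settles this by a global numerical argument: extend $\delta$ to a rational vector field on $\tilde X$, compute the contributions from the $16$ exceptional curves via \cite{Matsumoto:k3alphap}*{Lemma 3.11}, and apply the Katsura--Takeda formula (Theorem \ref{thm:Katsura--Takeda}) to obtain $\divisorialfix{\delta}^2 = -24 + 32 + \deg\isolatedfix{\delta} > 0$. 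This positivity forces $\divisorialfix{\delta}$ to be a nonzero effective divisor, hence $\Sing(A)$ is $1$-dimensional.

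Property (6) in the paper then rides on (5): because the non-normal locus of $A$ is a divisor, the conductor makes the canonical divisor of the normalization $\normalization{A}$ anti-effective, and \cite{Matsumoto:k3alphap}*{Proposition 4.1} (together with $\thpower{(\normalization{A})}{2} = X^{\delta}$) gives rationality. Your proposed route via $\tilde A$ being ``ruled'' is not convincing: an inseparable double cover of a K3 surface branched along disjoint $(-2)$-curves is not automatically ruled---separable Hurwitz-type heuristics do not apply in characteristic $2$, and the canonical bundle of such a $\tilde A$ is not visibly negative without first knowing something equivalent to (5).
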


\begin{proof}
First we construct $A$.
First consider the case of type $16 A_1$.
By the description of the Kummer lattice of type $16 A_1$,
there exists a line bundle $\tilde{\cL}$ on $\tilde{X}$ with an isomorphism $\tilde{\cL}^{\otimes 2} \cong \cO_{\tilde{X}}(- \sum C_i)$.
Let $\cL := \phi_* \tilde{\cL}$.
We equip $\cB := \cO_X \oplus \cL$ with a structure of an $\cO_X$-algebra using the just-mentioned isomorphism, 
and obtain a $\mu_2$-covering $A := \sSpec \cB \to X$.

Now consider the case of additive type (i.e., one of $4 D_4, 2 D_8, 1 D_{16}, 2 E_8$).
The singularities of $X$ are then $4 D_4^0, 2 D_8^0, 1 D_{16}^0, 2 E_8^0$ by Proposition \ref{prop:RDP and height}.
Let $\cI \subset \cO_X$ be the ideal defined as in 
the proof of Proposition \ref{prop:kummer structure to abelian surface}(\ref{item:kummer to abelian:4 D_4 2 D_8}) (using \cite{Matsumoto:k3alphap}*{Convention 6.5}).
As in loc.cit., $\dim_k \Ext^1(\cI, \cO_X)$ is $3$-dimensional and annihilated by $F$.
We say that a class $e \in \Ext^1(\cI, \cO_X)$ is \emph{primitive}
if its restriction to $\Ext^1(\cI_z, \cO_{X,z})$ generates the $\cO_{X,z}$-module for every $z \in \Sing(X)$.
The subset of primitive classes is the complement of the union of finitely many hyperplanes.
Then the exact sequence $0 \to \cO_X \to \cB \to \cI \to 0$ corresponding to each primitive element 
induces an $\alpha_2$-covering $A := \sSpec \cB \to X$.
Proportional elements of $\Ext^1(\cI, \cO_X)$ induce the same surface with different $\alpha_p$-actions.

Now we show the properties of $A$.
(\ref{item:covering:reduced Gorenstein}), (\ref{item:covering:trivial dualizing sheaf}), (\ref{item:covering:h^i}), and (\ref{item:covering:smooth above sing})
are proved as in \cite{Matsumoto:k3alphap}*{proof of Theorem 7.3}.
For (\ref{item:covering:h^i}), we note that $\cO_X/\cI$ is of length $4$.

Let $\eta$ be the $1$-form on $X^{\sm}$ induced by the restriction to $X^{\sm}$ of the $G$-covering $A \to X$ (Proposition \ref{prop:1-form corresponding to G-torsor}).
Let $\delta \in H^0(X^{\sm}, \Theta)$ induced from $\eta$ by the isomorphism $\Omega^1_{X^{\sm}} \cong \Theta_{X^{\sm}}$ (recall that the canonical divisor is trivial).
It satisfies $\Fix(\delta) \cap \Sing(X) = \emptyset$.
By applying the Katsura--Takeda formula (Theorem \ref{thm:Katsura--Takeda}) to the extension of $\delta$ to $\tilde{X}$, 
we obtain $\divisorialfix{\delta}^2 = -24 + 32 + \deg \isolatedfix{\delta} > 0$.
Here, $24$ is $\deg c_2(X)$, and 
$32$ is the contribution from the exceptional curves (computed by using \cite{Matsumoto:k3alphap}*{Lemma 3.11}).
In particular, the divisor $\divisorialfix{\delta} \subset X^{\sm}$ is nontrivial, 
and hence $\Sing(A)$ is $1$-dimensional.
Therefore the covering $A$ is always non-normal.

The normalization $\normalization{A}$ of $A$ 
has anti-effective canonical divisor and
satisfies $\thpower{(\normalization{A})}{2} = X^{\delta}$ (\cite{Matsumoto:k3alphap}*{Proposition 2.15(4)}).
By \cite{Matsumoto:k3alphap}*{Proposition 4.1}, $\normalization{A}$ is a rational surface.
\end{proof}

\begin{rem}
We note that this proposition holds under a weaker assumption on the $16$ curves:
The $\mu_2$-covering can be constructed 
if the configuration is $16 A_1$ and 
$\sum [C_i] \in \Pic(\tilde{X})$ is divisible by $2$,
and the family of $\alpha_2$-coverings can be constructed
if the configuration of $C_i$ is $4 D_4$, $2 D_8$, $1 D_{16}$, or $2 E_8$, without assuming any divisibility condition.
The divisibility condition will be used in the subsequent sections.
\end{rem}

\begin{rem} \label{rem:divisibility condition}
As we recalled in the proof of Proposition \ref{prop:kummer structure to abelian surface},
if $C_1, \dots, C_{16}$ are disjoint smooth rational curves on a K3 surface in characteristic $\neq 2$,
then $[C_1] + \dots + [C_{16}] \in \Pic(\tilde{X})$ is automatically divisible by $2$.
We show that this does not hold in characteristic $2$.

Let $H \in H^0(\bP^2, \cO(6))$ be a generic element. Then the double sextic surface $X = (w^2 = H(x, y, z))$
is a supersingular RDP K3 surface with $\Sing(X) = 21 A_1$ and Artin invariant $\sigma = 10$,
and $\Pic(\tilde{X})$ is generated by $e_1, \dots, e_{21}, h$, and $(1/2)(h + \sum e_i)$,
where $e_i$ are the classes of the exceptional curves and $h$ is the hyperplane class.
It follows that, while the $21$ exceptional curves are disjoint, no nontrivial sum of their classes is divisible by $2$. 
\end{rem}

\section{Spaces of $1$-forms on RDP K3 surfaces} \label{sec:dimension of Z_infty}

In the proof of Theorem \ref{thm:projective equation of supersingular Kummer},
we will need the following upper bound for contracted inseparable Kummer surfaces $X$.
We consider the sheaves $B_i \Omega^1 \subset B_{\infty} \Omega^1 \subset Z_{\infty} \Omega^1 \subset Z_i \Omega^1 \subset \Omega^1$ on $X^{\sm}$
(see Section \ref{subsec:Cartier operator} for their definitions).
\begin{thm} \label{thm:characterization of Kummer types using Z_infty}
Let $X$ be a supersingular RDP K3 surface in characteristic $2$,
with at most $16$ exceptional curves.
Then $\dim H^0(X^{\sm}, Z_{\infty} \Omega^1) \leq 5$, and the equality holds if and only if the exceptional curves form a Kummer structure (of any type),
that is, $X$ is a contracted inseparable Kummer surface.
\end{thm}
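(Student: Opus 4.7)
The plan is to use the Cartier decomposition of $V := H^0(X^{\sm}, Z_{\infty} \Omega^1)$ recalled in Section~\ref{subsec:Cartier operator}: $V = V_{\textrm{ss}} \oplus V_{\textrm{nilp}}$, where $V_{\textrm{nilp}} = H^0(X^{\sm}, B_{\infty} \Omega^1)$ and $V_{\textrm{ss}}$ admits a basis of $C$-fixed vectors. By Proposition~\ref{prop:torsors and 1-forms} (applied to the smooth variety $X^{\sm}$, which satisfies $H^0(X^{\sm}, \cO) = k$ by normality of $X$), the $C$-fixed subspace is canonically identified with the $\setF_2$-vector space of isomorphism classes of $\mu_2$-torsors over $X^{\sm}$, which is $\Pic(X^{\sm})[2]$. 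Hence
\[
\dim_k V_{\textrm{ss}} = \dim_{\setF_2} V^{C=1} = \dim_{\setF_2} \Pic(X^{\sm})[2].
\]
The remaining piece $V_{\textrm{nilp}}$ is the increasing union $\bigcup_n H^0(X^{\sm}, B_n \Omega^1)$, and Corollary~\ref{cor:dim of B_n of RDP K3} expresses each $\dim H^0(X^{\sm}, B_n \Omega^1)$ as a sum of local contributions attached to the individual RDPs of $X$.

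Next I would bound the semisimple part via lattice theory. Let $R \subset \Pic(\tilde{X})$ be the sublattice spanned by the exceptional curves and $\bar{R}$ its saturation. Then $\Pic(X^{\sm}) = \Pic(\tilde{X})/R$ has torsion subgroup $\bar{R}/R$, so $\Pic(X^{\sm})[2] = (\bar{R}/R)[2]$. By Proposition~\ref{prop:-2 vectors}, $\bar{R}$ admits no roots outside $R$, so $\bar{R}$ is a root-preserving even overlattice of $R$. Applying Proposition~\ref{prop:overlattices of A_1^m:bis} (and the analogues for $D_n, E_n$ components extracted from the proof of Proposition~\ref{prop:characterization of Kummer lattices}) component by component, the quantity $\dim_{\setF_2}(\bar{R}/R)[2]$ is controlled by the root configuration: it equals $5, 2, 1, 1, 0$ exactly when the configuration is of Kummer type $16A_1, 4D_4, 2D_8, 1D_{16}, 2E_8$ respectively, and is strictly smaller for any other configuration of at most $16$ exceptional curves of the same total rank.

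For the nilpotent part, Corollary~\ref{cor:dim of B_n of RDP K3} supplies a local contribution per RDP depending only on its analytic type: taut RDPs (in particular $A_n$) contribute nothing, whereas the non-taut $D_n^r$ and $E_n^r$ in characteristic~$2$ contribute an amount depending on the coindex $r$. Combined with Proposition~\ref{prop:RDP and height}, which pins down the coindex to the smallest possible value once $X$ is supersingular, this gives an explicit formula for $\dim V_{\textrm{nilp}}$ for each admissible RDP configuration. One then verifies by direct comparison that the two summands add up to exactly $5$ for each of the five Kummer configurations, and to a strictly smaller number in all other cases.

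The main obstacle is the final combinatorial step: one must enumerate all admissible partitions of at most $16$ exceptional curves into ADE components, apply the local formula from Corollary~\ref{cor:dim of B_n of RDP K3} to compute $\dim V_{\textrm{nilp}}$, apply the overlattice bounds from Section~\ref{sec:Kummer lattices} to bound $\dim V_{\textrm{ss}}$, and check both the inequality $\dim V \le 5$ and the sharp characterization of the equality case. Propositions~\ref{prop:dimension of subspaces with 0 mod 4} and \ref{prop:Kummer lattices contain K(16 A_1)} are the essential inputs that reduce this verification to a finite and structurally transparent check, by forcing the maximal overlattices to coincide with the Kummer lattices.
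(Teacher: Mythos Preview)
Your overall architecture matches the paper's: split $V$ into $V_{\textrm{ss}}\oplus V_{\textrm{nilp}}$ via the Cartier operator, identify $V_{\textrm{ss}}$ with $(\bar R/R)[2]$ through Proposition~\ref{prop:torsors and 1-forms}, compute $\dim V_{\textrm{nilp}}$ from Corollary~\ref{cor:dim of B_n of RDP K3}, and finish combinatorially. But the bound on $\dim V_{\textrm{ss}}$ has a gap.

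You propose to bound $\dim(\bar R/R)[2]$ ``component by component'', invoking Proposition~\ref{prop:overlattices of A_1^m:bis} on each block of the root lattice and unspecified analogues for the $D$ and $E$ blocks. This does not yield an upper bound: an even overlattice of $R$ with no extra roots can mix distinct ADE summands (indeed $K(16A_1)\supset A_1^{\oplus 16}$ and $K(4D_4)\supset D_4^{\oplus 4}$ are built exactly this way), so there is no reason the per-block maxima should sum to a global bound. The paper's device is Lemma~\ref{lem:dimension of C=1 in terms of RDPs}: for each RDP $z$ one selects $m_z$ disjoint exceptional curves whose half-sums already represent all of $(\dual{L'_z}/L'_z)[2]$, obtaining a single embedding of $(\bar R/R)[2]$ into $\dual{(A_1^{\oplus m})}/A_1^{\oplus m}$ with $m=\sum_z m_z$; then Proposition~\ref{prop:overlattices of A_1^m:bis} gives the uniform bound $\dim V_{\textrm{ss}}\le f(m)$. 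Note also that your assertion ``$\dim V_{\textrm{ss}}$ is strictly smaller for any other configuration'' is not the right statement: for type $2E_8$ one has $\dim V_{\textrm{ss}}=0$, and it is $\dim V_{\textrm{nilp}}=5$ that does the work. Only the sum is maximized precisely on the five Kummer types.

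The ``final combinatorial step'' you flag is Proposition~\ref{prop:leq 5 for configuration of index 16}: one must show $f(m)+b-n_B\le 5$ for every collection of RDP types with $\sum i_z\le 16$ (using Proposition~\ref{prop:RDP and height} to pin down the coindices), with equality exactly for the five listed collections. The paper does not enumerate; it first applies monotone replacements ($A_N\rightsquigarrow NA_1$, $D_{2l+1}^{1/2}\rightsquigarrow D_{2l}^0+A_1$, $E_6^0\rightsquigarrow D_4^0+2A_1$, $E_7^0+A_1\rightsquigarrow D_8^0$) that do not decrease $f(m)+b-n_B$, reducing to collections built from $A_1$, $D_{2l}^0$, $E_8^0$, and then runs a short case analysis on $n_B$ using the inequality $b_z\le c\, i_z$ with $c$ depending on $n_B$. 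Your sketch names the right inputs but omits this reduction, without which the check is not actually finite.
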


To prove this, we study $\dim H^0(X^{\sm}, B_n \Omega^1)$ and $\dim H^0(X^{\sm}, Z_{\infty} \Omega^1)/H^0(X^{\sm}, B_{\infty} \Omega^1)$ 
for general RDP K3 surfaces $X$.
We determine $\dim H^0(X^{\sm}, B_n \Omega^1)$ (Corollary \ref{cor:dim of B_n of RDP K3}(\ref{item:dim of B_infty of RDP K3})).
This is based on local computations on RDPs explained in Section \ref{subsec:B_n of RDP}, 
which are essentially done in \cite{Liedtke--Martin--Matsumoto:RDPtors}.
We give upper bounds for $\dim H^0(X^{\sm}, Z_{\infty} \Omega^1)/H^0(X^{\sm}, B_{\infty} \Omega^1)$ (Lemma \ref{lem:dimension of C=1 in terms of RDPs}). 
We then give a combinatorial argument to show the inequality of Theorem \ref{thm:characterization of Kummer types using Z_infty}.

\subsection{Witt vectors}
For $\setF_p$-algebras $A$ and non-negative integers $n$, we have the ring $W_n A$ of (truncated) Witt vectors, 
together with ring homomorphisms $\map{R}{W_{n+1} A}{W_n A}$ called restriction,
ring homomorphisms $\map{F}{W_n A}{W_n A}$ called Frobenius, 
abelian group homomorphisms $\map{V}{W_n A}{W_{n+1} A}$ called Verschiebung,
satisfying certain relations,
and canonical isomorphisms $W_1(A) \cong A$ and $W_0(A) = 0$.
These are all functorial on $A$.
We do not recall the description of the ring structures and the morphisms, 
since all computations needed are done in our previous papers \cite{Matsumoto:k3rdpht} and \cite{Liedtke--Martin--Matsumoto:RDPtors}.

For $\setF_p$-schemes $Y$ and non-negative integers $n$, we have the schemes $W_n Y$
with underlying topological space homeomorphic to $Y$ if $n > 0$ and $\emptyset$ if $n = 0$, and with structure sheaf $W_n \cO_Y$.

\subsection{$B_n$ for RDPs} \label{subsec:B_n of RDP}

Let $Y$ be the spectrum of a local ring at an RDP $z$ of an algebraic surface,
and $\tilde{Y} \to Y$ its minimal resolution.

\begin{prop} \label{prop:isomorphisms of B_n:RDP}
Let $n \geq 0$ an integer.
\begin{enumerate}
\item \label{item:isomorphisms of B_n:RDP}
We have isomorphisms 
\[
H^0(Y^{\sm}, B_n \Omega^1) / H^0(\tilde{Y}, B_n \Omega^1) \isomto
H^1(Y^{\sm}, W_n \cO)[F] \isomto H^2_z(W_n \cO)[F].
\]
Here, 
$[F]$ denotes the kernel of the action of the Frobenius, 
and $H^2_z$ is the local cohomology.
\item \label{item:isomorphisms of B_n:I and V}
The isomorphisms in (\ref{item:isomorphisms of B_n:RDP}) are compatible with 
the inclusion $B_{n-1} \injto B_n$ and the Verschiebung $\map{V}{W_{n-1}}{W_n}$.
\item \label{item:isomorphisms of B_n:C and R}
The isomorphisms in (\ref{item:isomorphisms of B_n:RDP}) are compatible with 
the Cartier operator $\map{C}{B_n}{B_{n-1}}$ and the restriction map $\map{R}{W_n}{W_{n-1}}$.
\end{enumerate}
\end{prop}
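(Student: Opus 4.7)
The plan is to reduce everything to Illusie's exact sequence of sheaves on any smooth $\setF_p$-scheme $Z$:
\[
0 \to W_n\cO_Z \xrightarrow{F} W_n\cO_Z \to B_n\Omega^1_Z \to 0
\]
(this is compatible with $(V, \iota)$ and with $(R, C)$ as the index $n$ varies). I would apply the associated long exact sequence of cohomology twice, once to $Z = Y^{\sm}$ and once to $Z = \tilde{Y}$, and compare.

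For $Z = Y^{\sm}$, first I would establish, by induction on $n$ using the short exact sequence
$0 \to \cO_Y \xrightarrow{V^{n-1}} W_n\cO_Y \xrightarrow{R} W_{n-1}\cO_Y \to 0$,
that $\mathrm{depth}_z W_n\cO_Y \geq 2$ (starting from the RDP being Cohen--Macaulay). This gives $H^0(Y^{\sm}, W_n\cO) = W_n\cO_Y$ and $H^1_z(W_n\cO_Y) = 0$, so that the local cohomology sequence (combined with $H^i(Y, W_n\cO) = 0$ for $i > 0$, since $W_n Y$ is affine) identifies $H^1(Y^{\sm}, W_n\cO) \isomto H^2_z(W_n\cO_Y)$; this comparison is manifestly $F$-equivariant, which yields the second isomorphism in (1) after passing to $[F]$. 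Plugging the depth computations into the long exact sequence attached to Illusie's sequence produces
\[
0 \to W_n\cO_Y / F W_n\cO_Y \to H^0(Y^{\sm}, B_n\Omega^1) \to H^2_z(W_n\cO_Y)[F] \to 0.
\]

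For $Z = \tilde{Y}$, I would invoke rationality of the RDP: $R^i \pi_* \cO_{\tilde{Y}} = 0$ for $i>0$ and $\pi_*\cO_{\tilde{Y}} = \cO_Y$. By induction on $n$ along $0 \to \cO \to W_n\cO \to W_{n-1}\cO \to 0$ this upgrades to $H^0(\tilde{Y}, W_n\cO) = W_n\cO_Y$ and $H^1(\tilde{Y}, W_n\cO) = 0$, so Illusie's sequence on $\tilde{Y}$ gives an isomorphism $W_n\cO_Y / F W_n\cO_Y \isomto H^0(\tilde{Y}, B_n\Omega^1)$. Combining this with the previous four-term sequence and noting that the restriction $H^0(\tilde{Y}, B_n\Omega^1) \hookrightarrow H^0(Y^{\sm}, B_n\Omega^1)$ is exactly the inclusion $W_n\cO_Y / F W_n\cO_Y \hookrightarrow H^0(Y^{\sm}, B_n\Omega^1)$ from Illusie's construction (this is where compatibility of the sequence with pullback is used), I obtain the first isomorphism in (1).

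For (2) and (3), I would only need to check that Illusie's sequence fits into commutative ladders
$V\colon W_{n-1}\cO \to W_n\cO$ paired with $\iota\colon B_{n-1}\Omega^1 \to B_n\Omega^1$, and $R\colon W_n\cO \to W_{n-1}\cO$ paired with $C\colon B_n\Omega^1 \to B_{n-1}\Omega^1$. Functoriality of the long exact sequence and of local cohomology then transports these compatibilities through the isomorphisms. The main obstacle I anticipate is the verification that the connecting map $H^0(Y^{\sm}, B_n\Omega^1) \to H^1(Y^{\sm}, W_n\cO)$ carries the $(\iota, V)$- and $(C, R)$-pairs to each other in the expected way; this boils down to naturality of Illusie's exact sequence in $n$, which can be taken from \cite{Liedtke--Martin--Matsumoto:RDPtors}, but the bookkeeping around which side $F^{n-1}d$ lives on (and hence whether $C$ or $F$ shows up) must be done with care.
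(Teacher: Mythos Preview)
Your proposal is correct and follows essentially the same route as the paper: both use the exact sequence $0 \to W_n\cO \to F_* W_n\cO \to B_n\Omega^1 \to 0$ on the smooth schemes $\tilde{Y}$ and $Y^{\sm}$, invoke rationality of the RDP to control $H^i(\tilde{Y}, W_n\cO)$, and use the $(S_2)$/depth property of $W_n\cO_Y$ (which you make explicit by induction on $n$) to identify $H^0(Y^{\sm}, W_n\cO)$ with $W_n\cO_Y$. The paper packages the comparison into a single commutative diagram rather than two separate long exact sequences, but the content is identical; the compatibilities in (2) and (3) reduce in both cases to naturality of the Serre/Illusie sequence in $n$.
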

\begin{proof}
Over smooth varieties (in characteristic $p$), we have short exact sequences
\[
0 \to W_n \cO \to F_* W_n \cO \to B_n \Omega^1 \to 0
\]
of quasi-coherent sheaves (of $W_n \cO$-modules), given by Serre \cite{Serre:topologie}*{Proposition 8}, 
where the map on the right is $(f_0, f_1, \dots, f_{n-1}) \mapsto \sum_{i = 0}^{n-1} f_i^{p^{n-1-i}-1} df_i$.
These sequences satisfy the compatibility conditions as in 
(\ref{item:isomorphisms of B_n:I and V}) and (\ref{item:isomorphisms of B_n:C and R}).
Since $z$ is a rational singularity, 
the cohomology groups $H^i(\tilde{Y}, \functorspace)$ of $W_n \cO$ and $F_* W_n \cO$ 
coincide with $H^i(Y, \functorspace)$, and vanish for $i > 0$ since $W_n Y$ is affine.
Thus, we obtain a commutative diagram with exact columns
\[ 
\begin{tikzcd}
H^0(Y, W_n \cO) \ar[d] \ar[r, "\sim"] & 
H^0(Y^{\sm}, W_n \cO) \ar[d] \\
H^0(Y, F_* W_n \cO) \ar[d] \ar[r, "\sim"] & 
H^0(Y^{\sm}, F_* W_n \cO) \ar[d] \\
H^0(\tilde{Y}, B_n \Omega^1) \ar[d] \ar[r] &
H^0(Y^{\sm}, B_n \Omega^1) \ar[d] \\
0 \ar[d] \ar[r] &
H^1(Y^{\sm}, W_n \cO) \ar[d,"F"] \\
0 \ar[r] &
H^1(Y^{\sm}, F_* W_n \cO).  
\end{tikzcd}
\]
The two horizontal morphisms on the top are isomorphisms since the sheaves $W_n \cO$ and $F_* W_n \cO$ satisfy the condition $(S_2)$.
Hence we obtain an isomorphism $H^0(Y^{\sm}, B_n \Omega^1) / H^0(\tilde{Y}, B_n \Omega^1) \isomto H^1(Y^{\sm}, W_n \cO)[F]$.

The second isomorphism comes from the long exact sequence 
\[
\dots \to H^1(Y, W_n \cO) \to H^1(Y^{\sm}, W_n \cO) \to H^2_z(W_n \cO) \to H^2(Y, W_n \cO) \to \dots,
\]
where $H^i(Y, W_n \cO) = 0$ ($i = 1, 2$) since $W_n Y$ is an affine scheme.
\end{proof}

Let $z$ and $Y$ be as above.
We write $\bar{B}_{n, z} := H^0(Y^{\sm}, B_n \Omega^1) / H^0(\tilde{Y}, B_n \Omega^1)$.
It turns out (Proposition \ref{prop:dim B}) that
$\dim_k \bar{B}_{n, z}$ is finite, that it depends only on $n$ and the type of the RDP,
and that it is constant for $n$ large enough.
We write $\bar{B}_{\infty, z} := \bar{B}_{n, z}$ for $n$ large enough.
\begin{defn}
We define the \emph{$B$-index} of $z$, and denote it by $n_B(z)$, 
to be the smallest non-negative integer $n$ such that 
$\dim \bar{B}_{n, z} = \dim \bar{B}_{n+1, z} = \dots$ holds.
\end{defn}
In particular, the $B$-index of $z$ is $0$ if and only if $\dim \bar{B}_{n, z} = 0$ for all $n$.

The $B$-index and the dimensions of $\bar{B}_{n, z}$ are essentially determined 
in \cite{Liedtke--Martin--Matsumoto:RDPtors}*{Proposition 6.2}, as follows.

\begin{prop} \label{prop:dim B}
The $B$-index of $z$ is $0$ if and only if $z$ is an $F$-injective RDP.

For non-$F$-injective RDPs,
the $B$-index and the sequence $\dim_k \bar{B}_{n, z}$ ($1 \leq n \leq n_B(z)$) are as in Table \ref{table:dim B_n}.
\end{prop}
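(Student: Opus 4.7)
The plan is to leverage Proposition~\ref{prop:isomorphisms of B_n:RDP}, which identifies $\bar{B}_{n,z} \cong H^2_z(W_n \cO)[F]$ compatibly with the Verschiebung on the Witt side (corresponding to the inclusion $B_{n-1} \subset B_n$) and with the restriction $R$ (corresponding to the Cartier operator $C$). This reduces the entire statement---both when $\dim \bar{B}_{n,z}$ stabilizes and what its stable value is---to an analysis of the Frobenius action on $H^2_z(W_n\cO)$, and nearly all of the required computation has been carried out in \cite{Liedtke--Martin--Matsumoto:RDPtors}*{Proposition 6.2}.

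For the equivalence $n_B(z) = 0 \iff z$ is $F$-injective: the case $n = 1$ gives $\bar{B}_{1,z} \cong H^2_z(\cO)[F]$, and since an RDP is a two-dimensional Cohen--Macaulay singularity, the vanishing of this $F$-kernel is precisely the definition of $F$-injectivity. One direction is therefore immediate. For the converse, I would argue by induction on $n$ that $F$-injectivity forces $H^2_z(W_n\cO)[F] = 0$ for every $n$. The inductive step uses the short exact sequence $0 \to \cO \namedto{V^n} W_{n+1}\cO \namedto{R} W_n\cO \to 0$, which upon taking local cohomology (and using Cohen--Macaulayness to kill $H^1_z$ of the Witt sheaves, proved by an easier induction on the same sequence) gives a short exact sequence $0 \to H^2_z(\cO) \to H^2_z(W_{n+1}\cO) \to H^2_z(W_n\cO) \to 0$. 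Given $x \in H^2_z(W_{n+1}\cO)$ with $Fx = 0$, commuting $F$ with $R$ and applying the induction hypothesis yields $Rx = 0$, hence $x = V^n y$ for some $y \in H^2_z(\cO)$; the Witt-vector identity $FV = p$ then reduces $FV^n y = 0$ to an equation that pins down $Fy = 0$ in the bottom layer, forcing $y = 0$ by $F$-injectivity.

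For non-$F$-injective RDPs, the tabulated dimensions $\dim_k \bar{B}_{n,z}$ and the index $n_B(z)$ are obtained by reading off the dimensions of the $F$-kernel on each level from the explicit description of $H^2_z(W_n\cO)$ and its Frobenius action given in \cite{Liedtke--Martin--Matsumoto:RDPtors}*{Proposition 6.2}; the stabilization of the sequence, and the precise value of $n_B(z)$, are then combinatorial consequences of the Witt-vector module structure recorded there. The main obstacle is the induction step above: extracting $Fy = 0$ from $FV^n y = 0$ requires clean handling of the interplay between $F$, $V$, and multiplication by $p$ in $W_{n+1}$ of an $\setF_p$-algebra (where $p \neq 0$ for $n \geq 1$), but this is a standard Witt-vector manipulation and no novel input beyond [LMM] is otherwise needed.
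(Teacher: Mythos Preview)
Your proposal is correct and follows essentially the same route as the paper: both reduce to the explicit description of $H^2_z(W_n\cO)$ together with its $F$- and $V$-actions given in \cite{Liedtke--Martin--Matsumoto:RDPtors}*{Proposition 6.2}, and read off the $F$-kernels level by level. One small correction to your inductive argument for the $F$-injective case: the identity you actually want is $FV = VF$ (which holds over $\setF_p$-algebras), not $FV = p$; from $F(V^n y) = V^n(Fy) = 0$ and the injectivity of $V^n$ on local cohomology (established via your short exact sequence) you get $Fy = 0$ immediately.
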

Note that we use the convention that the range of $r$ in $D_{2l+1}^r$ in characteristic $2$
is $\set{1/2, 3/2, \dots, l-1/2}$ as in \cite{Matsumoto:rdpderi}*{Convention 1.2}, 
which is also used in \cite{Liedtke--Martin--Matsumoto:RDPtors}.

\begin{table}
\centering
\begin{tabular}{llll}
\toprule
$p$ & type & $B$-index & $\dim_k \bar{B}_{n, z}$ \\
\midrule
$5$ & $E_8^0$ & $1$ & $1$ \\
\midrule
$3$ & $E_8^1, E_7^0, E_6^0$ & $1$ & $1$ \\
$3$ & $E_8^0$ & $2$ & $1, 2$ \\
\midrule
$2$ & $E_8^3, E_7^2, E_6^0$ & $1$ & $1$ \\
$2$ & $E_8^2, E_7^1$ & $2$ & $1, 2$ \\
$2$ & $E_8^1, E_7^0$ & $3$ & $1, 2, 3$ \\
$2$ & $E_8^0$ & $3$ & $2, 3, 4$ \\
$2$ & $D_N^r$ & $\ceil{\log_2 ((N/2) - r)}$ & $\ceil{(1 - 2^{-n})((N/2) - r - 1)}$ \\ 
\bottomrule
\end{tabular}
\caption{$B$-index and the dimensions of $\bar{B}_{n, z}$}
\label{table:dim B_n}
\end{table}

\begin{proof}
We may assume that $\cO_{Y, z}$ is complete.
In \cite{Liedtke--Martin--Matsumoto:RDPtors}*{Proposition 6.2}, 
we gave generators for the larger space 
$\varinjlim_{m, n} H^2_z(W_n \cO)[F^m]$
and determined the actions of $V$ and $F$.
Restricting to the subspaces annihilated by $V^n$ and $F$, we obtain bases for the subspaces 
$H^2_z(W_n \cO)[F]$.

Let us look at the case of $D_N^r$ in characteristic $2$ in more detail.
We have a basis 
\[
\set{f_j^{(l)} \mid l, j \in \setN_{>0}, \; l \leq n, \; c(l, j) \leq (N/2) - r - 1},
\]
where the function $\map{c}{\setN_{>0} \times \setN_{>0}}{\setN_{>0}}$ is defined by 
$c(l, j) := 2^{l-1} (2 j - 1)$ and is a bijection.
The action of $V$ (to be precise, $RV$) is $V(f_j^{(l)}) = f_j^{(l-1)}$ for $l \geq 2$ and $V(f_j^{(1)}) = 0$.
It follows that $\dim_k \bar{B}_{n, z}$ is equal to the number of positive integers $1 \leq i \leq (N/2) - r - 1$
satisfying $\ord_2(i) < n$,
which is equal to $(N/2) - r - 1 - \floor{((N/2) - r - 1)/2^n} = \ceil{(1 - 2^{-n})((N/2) - r - 1)}$.
\end{proof}

\subsection{$B_n$ for RDP K3 surfaces}

Now let $X$ be an RDP K3 surface.
Recall that, for each non-negative integer $n$, 
the $W_n(k)$-module $H^i(X, W_n \cO_X)$ is of length $n, 0, n$ for $i = 0, 1, 2$,
and that, for each non-negative integers $n \leq n'$, 
\[
0 \to H^2(X, W_n \cO_X) \namedto{V^{n' - n}} H^2(X, W_{n'} \cO_X) \namedto{R^{n}} H^2(X, W_{n' - n} \cO_X) \to 0
\]
is exact
(see \cite{Matsumoto:k3rdpht}*{Lemma 5.1}).

\begin{prop} \label{prop:isomorphisms of B_n:K3}
We have isomorphisms
\[ 
H^0(X^{\sm}, B_n \Omega^1) \isomto H^0(X^{\sm}, B_n \Omega^1) / H^0(\tilde{X}, B_n \Omega^1) \isomto H^1(X^{\sm}, W_n \cO_X)[F].
\]
\end{prop}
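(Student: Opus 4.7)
The plan is to repeat, at the global level, the sheaf-cohomology argument used in Proposition \ref{prop:isomorphisms of B_n:RDP}, applying the Serre exact sequence
\[
0 \to W_n \cO \to F_* W_n \cO \to B_n \Omega^1 \to 0
\]
(on smooth varieties) first to $\tilde{X}$ to kill the global sections on the resolution, then to $X^{\sm}$ to identify the global sections on the smooth locus with a Frobenius-kernel in Witt-vector cohomology.

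First I would apply the sequence to the K3 surface $\tilde{X}$. Taking the associated long exact sequence, the initial segment reads
\[
0 \to H^0(\tilde{X}, W_n \cO) \namedto{F} H^0(\tilde{X}, F_* W_n \cO) \to H^0(\tilde{X}, B_n \Omega^1) \to H^1(\tilde{X}, W_n \cO).
\]
By the values quoted just before the statement, $H^1(\tilde{X}, W_n \cO_X) = 0$ and $H^0(\tilde{X}, W_n \cO_X) = W_n(k)$, on which the Frobenius is an isomorphism since $k$ is perfect. Thus $H^0(\tilde{X}, B_n \Omega^1) = 0$, which gives the first isomorphism in the statement (since $X^{\sm}$ is an open subset of $\tilde{X}$ and $H^0(\tilde X, B_n\Omega^1) \to H^0(X^{\sm}, B_n\Omega^1)$ is the restriction).

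Next I would apply the same Serre sequence on $X^{\sm}$. The sheaves $W_n \cO$ and $F_* W_n \cO$ satisfy Serre's condition $(S_2)$, so restriction from $X$ to $X^{\sm}$ (removing the codimension-$2$ set $\Sing X$) induces isomorphisms $H^0(X, W_n \cO) \isomto H^0(X^{\sm}, W_n \cO)$ and similarly after Frobenius-pushforward. Consequently the initial segment
\[
0 \to H^0(X^{\sm}, W_n\cO) \namedto{F} H^0(X^{\sm}, F_*W_n\cO) \to H^0(X^{\sm}, B_n \Omega^1) \to H^1(X^{\sm}, W_n\cO) \namedto{F} H^1(X^{\sm}, F_*W_n\cO)
\]
again has its leftmost arrow the Frobenius on $W_n(k)$, which is bijective. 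Combined with the vanishing $H^0(\tilde X, B_n\Omega^1) = 0$ from the previous paragraph, this yields the desired isomorphism
\[
H^0(X^{\sm}, B_n \Omega^1) = H^0(X^{\sm}, B_n \Omega^1)/H^0(\tilde X, B_n \Omega^1) \isomto H^1(X^{\sm}, W_n \cO_X)[F].
\]
No step here is substantive beyond what was already done locally in Proposition \ref{prop:isomorphisms of B_n:RDP}; the only mild care required is checking that the $(S_2)$-property applies at the RDPs so that sections extend uniquely across $\Sing X$, and that the Frobenius acts as an isomorphism on $H^0(X, W_n\cO_X) = W_n(k)$ (which is automatic).
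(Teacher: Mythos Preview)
Your argument is correct and follows essentially the same approach as the paper: both use Serre's short exact sequence $0 \to W_n\cO \to F_*W_n\cO \to B_n\Omega^1 \to 0$, the $(S_2)$-property of $W_n\cO$ across the RDPs, and the K3 vanishing $H^1(\tilde X, W_n\cO)=0$. The only cosmetic difference is that for the first isomorphism the paper simply observes $H^0(\tilde X, B_n\Omega^1)\subset H^0(\tilde X,\Omega^1)=0$ (the Hodge number $h^{1,0}=0$ for a K3 surface), whereas you deduce the same vanishing from the exact sequence; and for the second isomorphism the paper refers back to the diagram of Proposition~\ref{prop:isomorphisms of B_n:RDP}, while you run the sequence directly on $X^{\sm}$.
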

\begin{proof}
The first map is an isomorphism since $H^0(\tilde{X}, B_n \Omega^1) \subset H^0(\tilde{X}, \Omega^1) = 0$.
The second can be showed as in the proof of Proposition \ref{prop:isomorphisms of B_n:RDP},
where we use that $\tilde{X}$ is a K3 surface to show $H^1(\tilde{X}, W_n \cO_{\tilde{X}}) = 0$.
\end{proof}

For each $z \in X$ and $n \geq 0$, there is a morphism
$\map{\gamma = \gamma_{n, z}}{H^2_z(W_n \cO_X)}{H^2(X, W_n \cO_X)}$.
The morphism $H^2_z(\cO_X)[\idealm_z] \namedto{\gamma_{1, z}} H^2(X, \cO_X)$ is an isomorphism
(see \cite{Matsumoto:k3rdpht}*{proof of Proposition 6.5}).

\begin{prop} \label{prop:surjective to H^2 of K3}
Let $z \in \Sing(X)$.
For any non-negative integer $n \leq n_B(z)$,
the map $H^2_z(W_n \cO_X)[F] \namedto{\gamma_{n, z}} H^2(X, W_n \cO_X)$ is surjective.
In particular, the Frobenius action on $H^2(X, W_n \cO_X)$ is $0$.
\end{prop}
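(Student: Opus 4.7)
The plan is induction on $n$, with $n=0$ trivial. For the base case $n=1$ (assuming $n_B(z)\ge 1$, otherwise there is nothing to prove), I would combine the isomorphism $H^2_z(\cO_X)[\idealm_z]\isomto H^2(X,\cO_X)$ recalled just above the proposition with the Serre short exact sequence $0\to\cO_X\xrightarrow{F} F_*\cO_X\to B_1\Omega^1\to 0$ on $X^{\sm}$. Using the vanishings $H^1_z(\cO_X)=0$ (Cohen--Macaulayness of RDPs), $H^1(X,\cO_X)=0$ (rationality of RDPs), and $H^0(\tilde X,B_1\Omega^1)\subseteq H^0(\tilde X,\Omega^1)=0$, the resulting long exact sequences together with the isomorphism of Proposition \ref{prop:isomorphisms of B_n:K3} identify the cokernel of $\gamma_{1,z}|_{[F]}$ in $H^2(X,\cO_X)\cong k$ with a quotient that is forced to vanish once $\bar B_{1,z}\ne 0$, which is guaranteed by $n_B(z)\ge 1$.

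For the inductive step, I would apply the short exact sequence $0\to W_{n-1}\cO_X\xrightarrow{V} W_n\cO_X\xrightarrow{R}\cO_X\to 0$ to both local cohomology at $z$ and global cohomology on $X$. This gives a commutative diagram whose bottom row is the short exact sequence $0\to H^2(X,W_{n-1}\cO_X)\xrightarrow{V} H^2(X,W_n\cO_X)\xrightarrow{R} H^2(X,\cO_X)\to 0$ (the lemma from \cite{Matsumoto:k3rdpht} cited just before the proposition), and whose top row is $0\to H^2_z(W_{n-1}\cO_X)[F]\xrightarrow{V} H^2_z(W_n\cO_X)[F]\xrightarrow{R} H^2_z(\cO_X)[F]$, exact on the left by Cohen--Macaulayness (via $H^1_z=H^3_z=0$) but not necessarily on the right. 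Given $\alpha\in H^2(X,W_n\cO_X)$, I would use the base case to find $\beta\in H^2_z(\cO_X)[F]$ with $\gamma_{1,z}(\beta)=R\alpha$, lift $\beta$ to some $\tilde\beta\in H^2_z(W_n\cO_X)$ via the right-exactness of $R$ on local cohomology, handle the residual $\alpha-\gamma_{n,z}(\tilde\beta)\in V(H^2(X,W_{n-1}\cO_X))$ by invoking the inductive hypothesis on its $V$-preimage, and use the Witt-vector identity $F\circ V=V\circ F$ to see that the $V$-contribution stays in $[F]$.

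The principal obstacle is arranging $F\tilde\beta=0$. From $R(F\tilde\beta)=F(R\tilde\beta)=F\beta=0$ we have $F\tilde\beta\in V\,H^2_z(W_{n-1}\cO_X)$; writing $F\tilde\beta=V\xi$ and replacing $\tilde\beta$ by $\tilde\beta-V\psi$ reduces the task, by injectivity of $V$ and the identity $FV=VF$, to solving $F\psi=\xi$ in $H^2_z(W_{n-1}\cO_X)$. Solvability relies on the explicit description of the generators $f_j^{(l)}$ and the $V$- and $F$-actions on $\varinjlim_m H^2_z(W_m\cO)[F^\infty]$ recalled in the proof of Proposition \ref{prop:dim B}, and crucially uses the hypothesis $n\le n_B(z)$ (so that $n-1<n_B(z)$ and we remain strictly below the stabilization point of $\dim\bar B_{\bullet,z}$). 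Once $\tilde\beta\in[F]$ is secured, adding the $V$-correction coming from the induction hypothesis produces the required preimage in $H^2_z(W_n\cO_X)[F]$, and surjectivity of $\gamma_{n,z}$ on $[F]$, together with the consequence $F=0$ on $H^2(X,W_n\cO_X)$, follows at once.
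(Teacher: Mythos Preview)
Your inductive scheme is natural, but the ``principal obstacle'' you identify is a genuine gap that your proposal does not close. You need to solve $F\psi=\xi$ in $H^2_z(W_{n-1}\cO_X)$, where $V\xi=F\tilde\beta$. The Frobenius on $H^2_z(W_{n-1}\cO_X)$ is far from surjective (already for $n=2$ it is the $p$-th power map on an infinite-dimensional $k$-vector space), and the explicit description you invoke from \cite{Liedtke--Martin--Matsumoto:RDPtors} concerns only the $F$-nilpotent part, on which $F$ is nilpotent and hence again not surjective. Unwinding your correction, the obstruction to choosing $\tilde\beta\in[F]$ is exactly the class of $\xi$ in $H^2_z(W_{n-1}\cO_X)/F(H^2_z(W_{n-1}\cO_X))$; equivalently, you need $\beta$ to lie in the image of $R^{n-1}\colon H^2_z(W_n\cO_X)[F]\to H^2_z(\cO_X)[F]$. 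Nothing in your argument guarantees this, and it is precisely the content of what the paper establishes.

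The paper takes a shorter and different route that sidesteps this lifting problem entirely. Rather than lifting a prescribed $\beta$, it starts from any $\xi\in H^2_z(W_n\cO_X)[F]$ not in the image of $V$ (such $\xi$ exists by the very definition of $n_B(z)$ once $n=n_B(z)$), so that $R^{n-1}\xi\neq 0$ in $H^2_z(\cO_X)[F]$; then one arranges, via multiplication by a Teichm\"uller lift of a suitable $f\in\idealm_z$, that $R^{n-1}\xi$ lies in the socle $H^2_z(\cO_X)[\idealm_z]$, on which $\gamma_{1,z}$ is the recalled isomorphism. This single element, together with its images under $V^iR^i$, already generates the target, and the reduction from arbitrary $n\le n_B(z)$ to $n=n_B(z)$ uses only the surjectivity of $R$ on global $H^2$. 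Your base case $n=1$ is fine (since $H^2_z(\cO_X)[F]$ is a nonzero $\cO_{X,z}$-submodule and hence contains the socle), but to rescue the induction you would still need the paper's key input: an element of $H^2_z(W_n\cO_X)[F]$ with nonzero image under $R^{n-1}$.
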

\begin{proof}
The assertion is trivial if $n = 0$. Hereafter we assume $n > 0$ and hence $n_B(z) > 0$.

Since $\map{R}{H^2(X, W_n \cO_X)}{H^2(X, W_{n-1} \cO_X)}$ is surjective,
it suffices to show the case $n = n_B(z)$.
Moreover, since $H^2(X, W_n \cO_X) / \Ker(R^{n-1}) \cong H^2(X, \cO_X)$ is $1$-dimensional over $k$,
it suffices to find an element $\xi \in H^2_z(W_n \cO_X)[F]$ with $\gamma(\xi) \notin \Ker(R^{n-1})$.

Let $\xi$ be any element of the complement of $H^2_z(W_{n-1} \cO_X)[F]$ in $H^2_z(W_n \cO_X)[F]$,
which is nonempty by the definition of $n_B(z)$.
The element $R^{n-1}(\xi) \in H^2_z(\cO_X)$ is nonzero.
If $R^{n-1}(\xi) \in H^2_z(\cO_X)[\idealm_z] \setminus \set{0}$ then, 
since $H^2_z(\cO_X)[\idealm_z] \namedto{\gamma_{1, z}} H^2(X, \cO_X)$ is an isomorphism,
the element $\gamma(R^{n-1}(\xi)) = R^{n-1}(\gamma(\xi))$ is nonzero, as desired.
In the general case, we can find an element $f \in \idealm_z \subset \cO_{X, z}$ 
such that $f R^{n-1}(\xi) \in H^2_z(\cO_X)[\idealm_z] \setminus \set{0}$, 
and then we use $[f] \cdot \xi$ in place of $\xi$, where $[f] \in W_n \cO_{X, z}$ is the Teichm\"uller lift of $f$.
(Actually, it follows from the explicit results of Proposition \ref{prop:dim B} that 
$\dim \bar{B}_{n(B), z} / \bar{B}_{n(B)-1, z} = 1$ and hence $R^{n-1}(\xi) \in H^2_z(\cO_X)[\idealm_z] \setminus \set{0}$ for any choice of $\xi$.)
\end{proof}

\begin{defn}
We define the \emph{$B$-index} of an RDP K3 surface as 
$n_B(X) := \max \set{n_B(z) \mid z \in \Sing(X)}$ if $\Sing(X) \neq \emptyset$, 
and $n_B(X) := 0$ if $\Sing(X) = \emptyset$.
\end{defn}

\begin{prop} \label{prop:exact sequence of B_n}
Let $X$ be an RDP K3 surface. 
For each non-negative integer $n$, we have an exact sequence
\[
0 \to H^0(X^{\sm}, B_n \Omega^1)
\to \bigoplus_{z \in \Sing(X)} \bar{B}_{n, z} 
\namedto{\gamma} H^2(X, W_n \cO_X),
\]
which satisfies the compatibilities as in Proposition \ref{prop:isomorphisms of B_n:RDP}.
Moreover, $\gamma$ is surjective if $n \leq n_B(X)$.
\end{prop}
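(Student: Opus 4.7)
The plan is to derive the exact sequence from the local cohomology long exact sequence of $W_n\cO_X$ with support on $\Sing(X)$, and then take the $F$-kernel. Concretely, one begins with
\[
\cdots \to H^1_{\Sing(X)}(W_n\cO_X) \to H^1(X, W_n\cO_X) \to H^1(X^{\sm}, W_n\cO_X) \to \bigoplus_{z\in\Sing(X)} H^2_z(W_n\cO_X) \to H^2(X, W_n\cO_X)\to\cdots
\]
(using that $\Sing(X)$ is a finite set of closed points, so the local cohomology splits as a direct sum).

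The first task is to establish two vanishings. On the local side, $H^1_z(W_n\cO_X)=0$ for every $z\in\Sing(X)$; this follows by induction on $n$ from the short exact sequence $0\to W_{n-1}\cO_X\xrightarrow{V}W_n\cO_X\xrightarrow{R^{n-1}}\cO_X\to 0$ and the $S_2$-property of $\cO_X$ at an RDP (so $W_n\cO_X$ has depth $\geq 2$ at $z$). Globally, $H^1(X, W_n\cO_X)=0$: the rationality of RDPs gives $R^i\pi_* W_n\cO_{\tilde X}=0$ for $i>0$ (again by induction on $n$ using the Verschiebung sequence, starting from $R^1\pi_*\cO_{\tilde X}=0$), while $H^1(\tilde X, W_n\cO_{\tilde X})=0$ is the standard K3 vanishing (induction on $n$ from $H^1(\tilde X,\cO)=0$). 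These two vanishings cut down the long exact sequence to the left-exact sequence
\[
0\to H^1(X^{\sm}, W_n\cO_X)\to \bigoplus_{z} H^2_z(W_n\cO_X)\to H^2(X, W_n\cO_X).
\]

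The second task is to pass to $F$-kernels. All three maps are Frobenius-equivariant, and the functor $(\functorspace)[F]=\Ker(F)$ is left-exact, which yields
\[
0\to H^1(X^{\sm}, W_n\cO_X)[F]\to \bigoplus_{z} H^2_z(W_n\cO_X)[F]\to H^2(X, W_n\cO_X),
\]
where the last arrow is the composite with the tautological inclusion $H^2(X,W_n\cO_X)[F]\hookrightarrow H^2(X,W_n\cO_X)$. Applying Proposition \ref{prop:isomorphisms of B_n:K3} to the leftmost term and Proposition \ref{prop:isomorphisms of B_n:RDP} to each summand of the middle term identifies this with the sequence in the statement. The claimed compatibilities with $V,R$, and the Cartier operator are immediate from the corresponding compatibilities in those two propositions and the naturality of local cohomology.

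Finally, for the surjectivity of $\gamma$ when $n\leq n_B(X)$: by the definition of $n_B(X)$ there exists $z\in\Sing(X)$ with $n_B(z)\geq n$, and Proposition \ref{prop:surjective to H^2 of K3} then says the single component $\gamma_{n,z}\colon H^2_z(W_n\cO_X)[F]\to H^2(X,W_n\cO_X)$ is already surjective. The main obstacle in this plan is really just the two vanishing statements; once those are in place everything else is formal bookkeeping. The depth verification could be done with a bit of care but is essentially standard, and the global $H^1$-vanishing is well-known in this set-up, so I expect no serious difficulty.
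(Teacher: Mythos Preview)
Your proposal is correct and follows essentially the same route as the paper: both derive the left-exact sequence $0\to H^1(X^{\sm},W_n\cO_X)\to\bigoplus_z H^2_z(W_n\cO_X)\to H^2(X,W_n\cO_X)$, take $F$-kernels, and then invoke Propositions \ref{prop:isomorphisms of B_n:RDP}, \ref{prop:isomorphisms of B_n:K3}, and \ref{prop:surjective to H^2 of K3}. The only cosmetic difference is that the paper obtains this sequence as an inductive limit of $\Ext^1(\cI,W_n\cO_X)$-sequences (citing \cite{Matsumoto:k3alphap}*{proof of Lemma 6.8}) rather than writing out the local-cohomology long exact sequence and the two vanishings $H^1_z(W_n\cO_X)=0$, $H^1(X,W_n\cO_X)=0$ directly as you do.
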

\begin{proof}
Let $n$ be an arbitrary non-negative integer for the moment,
and let $S \subset X$ be a finite set of closed points.
Let $\cI \subset W_n \cO_X$ be a sheaf of ideals with $\Supp(W_n \cO_X/\cI) \subset S$.
Mimicking the arguments of \cite{Matsumoto:k3alphap}*{proof of Lemma 6.8},
we obtain an exact sequence
\[
0 \to \Ext^1_X(\cI, W_n \cO_X) \to \bigoplus_{z \in S} \Ext^1_{X_z}(\cI_z, W_n \cO_{X, z}) \to H^2(X, W_n \cO_X),
\]
where $X_z := \Spec \cO_{X, z}$.
By taking the inductive limit with respect to $\cI$, we obtain an exact sequence
\[
0 \to H^1(X \setminus S, W_n \cO_X) \to \bigoplus_{z \in S} H^1(X_z \setminus \set{z}, W_n \cO_{X, z}) \to H^2(X, W_n \cO_X).
\]

Letting $S = \Sing(X)$, and taking the Frobenius kernels, we obtain an exact sequence
\[
0 \to H^1(X^{\sm}, W_n \cO_X)[F] \to \bigoplus_{z \in \Sing(X)} H^1(X_z^{\sm}, W_n \cO_{X, z})[F] \namedto{\gamma} H^2(X, W_n \cO_X)[F].
\]
By Propositions \ref{prop:isomorphisms of B_n:RDP} and \ref{prop:isomorphisms of B_n:K3} this is the desired exact sequence, 
and if $n \leq n_B(X)$ then $\gamma$ is surjective by Proposition \ref{prop:surjective to H^2 of K3}.
\end{proof}

\begin{cor} \label{cor:dim of B_n of RDP K3}
Let $X$ be an RDP K3 surface. 
\begin{enumerate}
\item \label{item:B_n of RDP K3 is constant after B-index}
The sequence $H^0(X^{\sm}, B_n \Omega^1)$ is constant for $n \geq n_B(X)$.
\item \label{item:dim of B_n of RDP K3}
We have $\dim_k H^0(X^{\sm}, B_n \Omega^1) = \sum_{z \in \Sing(X)} \dim_k \bar{B}_{n, z} - n$
for $0 \leq n \leq n_B(X)$.
\item \label{item:dim of B_infty of RDP K3}
We have $\dim_k H^0(X^{\sm}, B_{\infty} \Omega^1) = \sum_{z \in \Sing(X)} \dim_k \bar{B}_{\infty, z} - n_B(X)$.
\end{enumerate}
In particular, $\dim_k H^0(X^{\sm}, B_n \Omega^1)$ and $\dim_k H^0(X^{\sm}, B_{\infty} \Omega^1)$ depend only on the types of the RDPs.
\end{cor}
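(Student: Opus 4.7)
The plan is to extract everything from the exact sequence of Proposition~\ref{prop:exact sequence of B_n} together with the facts that $\dim_k H^2(X,W_n\cO_X)=n$ and that the Verschiebung acts compatibly on both sides.

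For part (\ref{item:dim of B_n of RDP K3}), I would simply observe that when $n\le n_B(X)$ the map $\gamma$ in Proposition~\ref{prop:exact sequence of B_n} is surjective, so the exact sequence becomes short exact and yields
\[
\dim_k H^0(X^{\sm},B_n\Omega^1) \;=\; \sum_{z\in\Sing(X)} \dim_k \bar B_{n,z} \;-\; \dim_k H^2(X,W_n\cO_X),
\]
and I would quote $\dim_k H^2(X,W_n\cO_X)=n$ from the length statement recalled just before Proposition~\ref{prop:isomorphisms of B_n:K3}.

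For part (\ref{item:B_n of RDP K3 is constant after B-index}), the plan is to compare levels $n$ and $n+1$ via the commutative diagram
\[
\begin{tikzcd}
0 \ar[r] & H^0(X^{\sm},B_{n+1}\Omega^1) \ar[r] & \bigoplus_{z} \bar B_{n+1,z} \ar[r,"\gamma_{n+1}"] & H^2(X,W_{n+1}\cO_X) \\
0 \ar[r] & H^0(X^{\sm},B_n\Omega^1) \ar[u,hook] \ar[r] & \bigoplus_{z} \bar B_{n,z} \ar[u,"V"] \ar[r,"\gamma_n"] & H^2(X,W_n\cO_X) \ar[u,"V",hook]
\end{tikzcd}
\]
whose commutativity is supplied by Propositions~\ref{prop:isomorphisms of B_n:RDP}(\ref{item:isomorphisms of B_n:I and V}) and~\ref{prop:exact sequence of B_n}. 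For $n\ge n_B(X)\ge n_B(z)$ the vertical map $V\colon \bar B_{n,z}\to\bar B_{n+1,z}$ is an isomorphism (the source and target have equal finite dimension by definition of $n_B(z)$, and the inclusion is induced by $V$), while the right vertical $V$ is injective. Chasing dimensions in the two short exact-on-the-left sequences gives
\[
\dim H^0(X^{\sm},B_{n+1}\Omega^1)-\dim H^0(X^{\sm},B_n\Omega^1)
 \;=\; \dim\Image(\gamma_n)-\dim\Image(\gamma_{n+1}).
\]
The left-hand side is $\ge 0$ because $B_n\Omega^1\hookrightarrow B_{n+1}\Omega^1$, while the right-hand side is $\le 0$ because $\Image(\gamma_{n+1})$ contains the injective image of $\Image(\gamma_n)$ under $V$. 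Both differences therefore vanish, proving stabilization.

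Part (\ref{item:dim of B_infty of RDP K3}) is then immediate by specializing (\ref{item:dim of B_n of RDP K3}) to $n=n_B(X)$ and noting $\dim\bar B_{n_B(X),z}=\dim\bar B_{\infty,z}$ since $n_B(X)\ge n_B(z)$. The concluding sentence follows because each local term $\dim_k\bar B_{n,z}$ and the invariant $n_B(z)$ depend only on the analytic type of the RDP by Proposition~\ref{prop:dim B}. The only subtle point is the comparison step above; the main obstacle I anticipate is a careful bookkeeping of the Verschiebung compatibility (both in the local-to-global exact sequence and between $H^2(X,W_n\cO_X)$'s), but this is already packaged in Propositions~\ref{prop:isomorphisms of B_n:RDP} and~\ref{prop:exact sequence of B_n}, so no new technical work is needed.
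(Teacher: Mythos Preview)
Your approach is essentially the same as the paper's, and your treatment of part~(\ref{item:B_n of RDP K3 is constant after B-index}) is in fact more detailed than the paper's one-line justification. The one point you gloss over is in part~(\ref{item:dim of B_n of RDP K3}): the statement recalled before Proposition~\ref{prop:isomorphisms of B_n:K3} gives only the \emph{length} of $H^2(X,W_n\cO_X)$ as a $W_n(k)$-module, not its $k$-dimension; a priori this cohomology group need not be a $k$-vector space at all. The paper fills this by observing that for $n\le n_B(X)$ the Frobenius on $H^2(X,W_n\cO_X)$ vanishes (Proposition~\ref{prop:surjective to H^2 of K3}), whence $p=FV$ acts by zero and the module is a $k$-vector space of dimension equal to its length. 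Alternatively, the surjectivity of $\gamma$ you already invoke, from the $k$-vector space $\bigoplus_z\bar B_{n,z}$, forces $p$ to annihilate the target. With this remark your argument is complete.
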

\begin{proof}
(\ref{item:B_n of RDP K3 is constant after B-index})
In the exact sequences of Proposition \ref{prop:exact sequence of B_n} for varying $n \geq n_B(X)$,
the transition maps at the middle (resp.\ right) are constant (resp.\ injective).

(\ref{item:dim of B_n of RDP K3})
Dimension counting.
(Under the assumption $n \leq n_B(X)$, the action of $p = FV$ on $H^2(X, W_n \cO_X)$ is $0$,
which implies that it is a $k$-vector space, not merely a $W_n(k)$-module.)

(\ref{item:dim of B_infty of RDP K3})
This follows from (\ref{item:B_n of RDP K3 is constant after B-index}) and (\ref{item:dim of B_n of RDP K3}).
\end{proof}

\subsection{Proof of Theorem \ref{thm:characterization of Kummer types using Z_infty}}

For each RDP $z$, we define 
$i_z$ to be the index of the type of the RDP (i.e.\ $i_z = N$ if $z$ is of type $A_N$, $D_N$, or $E_N$).
We also define $m_z, b_z \in \setN$ by 
\[
m_z = 
\begin{cases}
l+1 & (A_{2l+1}), \\
0 & (A_{2l}), \\
l+1 & (D_{2l}), \\
2 & (D_{2l+1}), \\
0 & (E_6), \\
3 & (E_7), \\
0 & (E_8), 
\end{cases}
\qquad
b_z = \dim \bar{B}_{\infty, z} = 
\begin{cases}
0 & (A_N), \\
l-1 & (D_{2l}^0), \\
l-1 & (D_{2l+1}^{1/2}), \\
1 & (E_6^0), \\
3 & (E_7^0), \\
4 & (E_8^0).
\end{cases}
\]
We recall the function $\map{f}{\set{0, 1, \dots, 21}}{\setN}$ introduced in Section \ref{subsec:overlattices of A_1^m}:
$f(m) = 4 - \ceil{\log_2 (16-m)}$ for $0 \leq m < 16$,
$f(16) = 5$, and 
$f(m) = m - 12$ for $17 \leq m \leq 21$.

\begin{lem} \label{lem:dimension of C=1 in terms of RDPs}
If $X$ is an RDP K3 surface in characteristic $2$,
then we have $\dim_k H^0(X^{\sm}, Z_{\infty})/H^0(X^{\sm}, B_{\infty}) \leq f(m)$,
where $m = \sum_{z \in \Sing(X)} m_z$.
\end{lem}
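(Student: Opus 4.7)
The plan is to identify $\dim_k H^0(X^{\sm}, Z_\infty \Omega^1) / H^0(X^{\sm}, B_\infty \Omega^1)$ with $\dim_{\setF_2} \Pic(X^{\sm})[2]$ and then bound the latter by transferring to an $A_1^{\oplus m}$-overlattice problem. First apply the semilinear endomorphism proposition of Section \ref{subsec:Cartier operator} to the Cartier operator on $V := H^0(X^{\sm}, Z_\infty \Omega^1)$: this gives $\dim_k V / V_{\textrm{nilp}} = \dim_{\setF_2} V^{C = 1}$, and by the discussion after that proposition $V_{\textrm{nilp}} = H^0(X^{\sm}, B_\infty \Omega^1)$, while Proposition \ref{prop:torsors and 1-forms} identifies $V^{C = 1}$ with $\Hfl^1(X^{\sm}, \mu_2) = \Pic(X^{\sm})[2]$.

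For the lattice-theoretic bound, write $P = \Pic(\tilde X)$ and let $N = \bigoplus_z R_z \subset P$ be the sublattice generated by the exceptional curves. Since $P$ is torsion-free, the exact sequence $0 \to N \to P \to \Pic(X^{\sm}) \to 0$ gives $\Pic(X^{\sm})[2] = \bar N_{[2]} / N$, where $\bar N$ is the saturation of $N$ in $P$ and $\bar N_{[2]} := \set{D \in \bar N : 2D \in N}$. This group embeds into the discriminant $2$-torsion $(\dual N / N)[2] = \bigoplus_z (\dual{R_z}/R_z)[2]$ and is subject to two constraints: it is totally isotropic for the $\setQ/2\setZ$-valued quadratic form $\bar q$ induced from the even pairing on $P$, and by Proposition \ref{prop:-2 vectors} no nonzero class is represented by a $(-2)$-vector of $\bar N$ (since every root of $\bar N$ already lies in $N$).

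The key remaining step is to construct, for each RDP type, an $A_1^{\oplus m_z}$-sublattice $L_z \subset R_z$ together with a $\bar q$-preserving $\setF_2$-linear injection $\phi_z \colon (\dual{R_z}/R_z)[2] \hookrightarrow \dual{L_z}/L_z \cong \setF_2^{m_z}$, where the latter carries the cardinality form $S \mapsto -\card{S}/2 \bmod 2\setZ$. The values $m_z$ in the definition are calibrated precisely so that such embeddings exist: for $A_{2l+1}$ the $2$-torsion generator (with $\bar q \equiv -(l+1)/2 \pmod{2\setZ}$) is sent to the cardinality-$(l+1)$ element of $\setF_2^{l+1}$; for $D_{2l}$, $D_{2l+1}$, and $E_7$ the nonzero classes are mapped to subsets of small cardinality chosen to preserve both the group structure and the $\bar q$-values. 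Assembling over $z$ gives an $\setF_2$-linear injection $\Phi \colon \Pic(X^{\sm})[2] \hookrightarrow \setF_2^m$ with $m = \sum_z m_z$. By construction, $\Phi(\Pic(X^{\sm})[2])$ consists of subsets whose cardinality lies in $4\setZ$ (from isotropy) and is not equal to $4$ (since a cardinality-$4$ image would correspond to a class in $\bar N$ of square $-2$ outside $N$, contradicting Proposition \ref{prop:-2 vectors}). Proposition \ref{prop:dimension of subspaces with 0 mod 4} then gives $\dim_{\setF_2} \Phi(\Pic(X^{\sm})[2]) \leq f(m)$, which is the desired bound.

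The main obstacle is the construction of the per-RDP embeddings $\phi_z$ and the verification that the composite $\Phi$ translates the no-exotic-root condition on $\bar N$ into the no-cardinality-$4$ condition on subsets. Case analysis is intricate: for $D_{2l}$ with $l \geq 4$ the three nonzero classes of $(\dual{R_z}/R_z)[2]$ have varying $\bar q$-values, which forces the embedding to use subsets of mixed cardinalities, and this is precisely why $m_z$ must be taken as $l+1$ rather than the smaller value a purely local analysis might suggest.
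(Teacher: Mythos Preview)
Your overall strategy matches the paper's: identify the quotient with $\Pic(X^{\sm})[2] \cong (\bar N/N)[2]$, embed this into $\dual{(A_1^{\oplus m})}/A_1^{\oplus m}$ via per-RDP maps, and invoke Proposition~\ref{prop:dimension of subspaces with 0 mod 4}. But there is a genuine gap in the step you flag as the ``main obstacle''.

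You construct $\phi_z$ as an abstract $\bar q$-preserving injection. That is not enough to deduce the no-cardinality-$4$ condition. A cardinality-$4$ image only tells you $\bar q(c) \equiv 0 \pmod{2\setZ}$, which you already knew; it does \emph{not} produce an element of $\bar N$ of square exactly $-2$. Concretely, take a $D_{16}$ singularity: the two spinor classes in $(\dual{D_{16}}/D_{16})[2]$ have $\bar q \equiv 0$ but their minimal representatives have square $-4$, not $-2$. An abstract $\bar q$-preserving map into $\setF_2^{9}$ is free to send such a class to a cardinality-$4$ subset, and then your argument would wrongly forbid that class from lying in $\bar N/N$.

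What the paper does, and what you need, is to take $\phi_z$ to be the \emph{specific} map coming from a choice of $m_z$ pairwise disjoint exceptional curves $e_{z,1},\dots,e_{z,m_z}$ above $z$ with the property that every class of $(\dual{R_z}/R_z)[2]$ is represented by $(1/2)\sum_{i\in S} e_{z,i}$ for some subset $S$. With this choice, for $c\in(\bar N/N)[2]$ the element $v := (1/2)\sum_{(z,i)\in\Phi(c)} e_{z,i}$ satisfies $v\equiv c \pmod N$, hence $v\in\bar N$, and $v^2 = -\lvert\Phi(c)\rvert/2$ exactly. Now $\lvert\Phi(c)\rvert=4$ forces $v$ to be a root of $\bar N$, hence $v\in N$ by Proposition~\ref{prop:-2 vectors}, hence $c=0$. (In the $D_{16}$ example this map sends the spinor classes to cardinality-$8$ subsets, not cardinality-$4$.) So the content is not that $\phi_z$ preserves $\bar q$---that is automatic---but that the half-sum representative lands in $\bar N$; verifying that such $m_z$ curves exist for each ADE type is the actual case check.
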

\begin{proof}
By Proposition \ref{prop:torsors and 1-forms}, the dimension in question is equal to $\log_2 \card{(L/L')[2]}$,
where $L$ is the saturation of the lattice $L' \subset \Pic(\tilde{X})$ generated by the exceptional curves.

For each RDP $z$, let $L'_z$ be the lattice generated by the exceptional curves above $z$ (which is isomorphic to the lattice $A_N$, etc., if $z$ is of type $A_N$, etc.).
There is a set of $m_z$ disjoint exceptional curves 
such that every element of $(\dual{(L'_z)}/L'_z)[2]$ is represented by $(1/2)$ times a sum of (not necessarily all of) those curves.
Hence $(L/L')[2]$ can be embedded into $\dual{(A_1^{\oplus m})}/(A_1^{\oplus m})$.
By applying Proposition \ref{prop:overlattices of A_1^m:bis} to the image,
which satisfies the condition on the cardinalities since $L$ contains no more roots than $L'$,
we obtain the desired bound.
\end{proof}

We now prove the following.
\begin{prop} \label{prop:leq 5 for configuration of index 16}
Consider a finite collection (possibly with repetitions) of types of RDPs in characteristic $2$ 
belonging to $\set{A_N, D_{2l}^0, D_{2l+1}^{1/2}, E_6^0, E_7^0, E_8^0}$.
Let $i = \sum_z i_z$, $m = \sum_z m_z$, $b = \sum_z b_z$, and $n_B = \max_z n_B(z)$.
If $i \leq 16$, then $f(m) + b - n_B \leq 5$,
and the equality holds if and only if the collection is one of $16 A_1, 4 D_4^0, 2 D_8^0, 1 D_{16}^0, 2 E_8^0$.
\end{prop}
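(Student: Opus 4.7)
The plan is a case analysis driven by the value of $n_B$. I would begin by tabulating the invariants $(i_z, m_z, b_z, n_B(z))$ for every allowed type, using the definitions of $i_z, m_z$ stated just above the proposition together with Table \ref{table:dim B_n} for $b_z, n_B(z)$. Two key observations follow by direct inspection: $m_z \leq i_z$, with equality only for $z$ of type $A_1$, and $b_z \geq n_B(z)$ in every row (so $b = n_B = 0$ precisely when the collection is pure $A$-type, and $b - n_B \geq 0$ otherwise).

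The pure $A$-type case is easy. There $f(m) + b - n_B = f(m)$, and since $f$ is non-decreasing and $m \leq i \leq 16$ we have $f(m) \leq f(16) = 5$. Equality forces $m = 16$, and combined with $i \leq 16$ and $m_z \leq i_z$ this pins down every summand as $A_1$, yielding the configuration $16 A_1$.

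For the mixed case, pick an RDP $z^*$ realizing $n_B(z^*) = n_B \geq 1$ and rewrite the target as
\[
f(m) + b - n_B = f(m) + (b_{z^*} - n_B(z^*)) + \sum_{z \neq z^*} b_z.
\]
I would then split on $n_B \in \{1,2,3\}$: in each subcase Table \ref{table:dim B_n} restricts the types $z$ with $n_B(z) \leq n_B$, and the constraint $i \leq 16$ bounds the total multiplicity. The main obstacle is the $n_B = 3$ subcase, where the admissible types are most numerous (the larger $D_{2l}^0$ and $D_{2l+1}^{1/2}$, together with $E_7^0$ and $E_8^0$), but once $z^*$ is fixed the residual budget $16 - i_{z^*}$ is small, so the enumeration of multiplicity vectors is short and finite. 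Evaluating $f(m) + b - n_B$ via the closed-form invariants in each case establishes the inequality and shows that equality is attained precisely in the four remaining extremal configurations $4 D_4^0$, $2 D_8^0$, $1 D_{16}^0$, $2 E_8^0$; conversely, a direct computation from the tabulated invariants verifies the equality in each listed case.
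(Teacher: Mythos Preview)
Your approach is sound in outline and would succeed, but it differs substantially from the paper's argument. The paper does not split by the value of $n_B$ and then enumerate all residual configurations. Instead, it first pads with $A_1$'s to reach $i = 16$ and then applies a sequence of type-replacement moves (e.g.\ $A_N \rightsquigarrow N\,A_1$, $D_{2l+1}^{1/2} \rightsquigarrow D_{2l}^0 + A_1$, $E_6^0 \rightsquigarrow D_4^0 + 2A_1$, $E_7^0 + A_1 \rightsquigarrow D_8^0$), each of which preserves $i$ and $b$, weakly increases $m$, and weakly decreases $n_B$, hence weakly increases $f(m)+b-n_B$. After these reductions only $A_1$, $D_{2l}^0$, and $E_8^0$ survive, and for the first two types one has the exact relation $m_z + b_z = i_z$, giving $m + b = 16$ whenever no $E_8^0$ is present. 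The problem then collapses to maximizing $f(m) - m + 16 - n_B$ subject to a single sharp lower bound on $m$ for each value of $n_B \in \{0,1,2,3\}$, which is a one-line check. The $E_8^0$ case is handled separately and is short.

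Your route avoids these reductions and works directly with the original types; this is conceptually simpler but pays for it in bookkeeping. In particular, your assertion that the $n_B = 3$ enumeration is ``short'' is optimistic: fixing $z^*$ constrains $i_{z^*} \geq 7$, but the remaining summands may be \emph{any} admissible types with total index $\leq 9$ (the condition $n_B(z) \leq 3$ is vacuous here), so the list of multiplicity vectors is not small. It is finite and each case is mechanical, so your plan does go through; but the paper's replacement trick and the identity $m+b=16$ are what turn a sprawling enumeration into a short structured argument, and they also make the equality characterization transparent (one simply traces back through the reductions).
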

Theorem \ref{thm:characterization of Kummer types using Z_infty} follows from this. 
Indeed, for $X$ as in Theorem \ref{thm:characterization of Kummer types using Z_infty}, we have
$\dim H^0(X^{\sm}, Z_{\infty} \Omega^1)/H^0(X^{\sm}, B_{\infty} \Omega^1) \leq f(m)$ by Lemma \ref{lem:dimension of C=1 in terms of RDPs}
and $\dim H^0(X^{\sm}, B_{\infty} \Omega^1) = b - n_B$ by Corollary \ref{cor:dim of B_n of RDP K3}(\ref{item:dim of B_infty of RDP K3}).
Here we use Proposition \ref{prop:RDP and height} to determine the coindex of $D_N$ and $E_N$.
Hence $\dim H^0(X^{\sm}, Z_{\infty} \Omega^1) \leq 5$ by Proposition \ref{prop:leq 5 for configuration of index 16}.
The equality holds if and only if the exceptional curves form a Kummer structure.

\begin{proof}[Proof of Proposition \ref{prop:leq 5 for configuration of index 16}]
By adding $A_1$'s if necessary, we may assume $i = 16$. 

The procedure replacing $A_N$ ($N \geq 2$) with $N A_1$,
$D_{2l+1}^{1/2}$ with $D_{2l}^0 + A_1$, or
$E_6$ with $D_4^0 + 2 A_1$ 
preserves $i, b, n_B$ and increases $m$.
The procedure replacing $E_7^0 + A_1$ with $D_8^0$ preserves $i$ and $b$, increases $m$, 
and either preserves or decreases $n_B$.
Hence, by applying these, 
we may assume that only $A_1$, $D_{2l}^0$, and $E_8^0$ appear.

At the end of the proof, we can show that if the collection became one of the equality cases after these procedures,
then the original value of $f(m) + b - n_B$ is strictly less than $5$. 
We omit the verification.

For the readers' convenience, we recall the values:
\[
(i_z, m_z, b_z) = 
\begin{cases}
(1, 1, 0) & (A_1), \\
(2l, l+1, l-1) & (D_{2l}^0), \\
(8, 0, 4) & (E_8^0),
\end{cases}
\qquad
n_B(z) = 
\begin{cases}
0 & (A_1), \\
1 & (D_4^0), \\
2 & (D_6^0, D_8^0), \\
3 & (D_{10}^0, \dots, D_{16}^0, E_8^0).
\end{cases}
\]

Suppose there is at least one $E_8^0$.
We have $m \leq 8$ and $n_B = 3$.
If $m = 8$, then the collection is $E_8^0 + 8 A_1$, in which case $f(m) + b - n_B = 2$.
Otherwise, we have $m < 8$ and hence $f(m) = 0$.
We have $b \leq (1/2) \cdot 16 = 8$ since $b_z \leq (1/2) i_z$ for all $z$,
and the equality holds only if every $z$ is $E_8^0$.
Thus we have $f(m) + b - n_B \leq 5$, with equality only if the collection is $2 E_8^0$.

Now suppose there is no $E_8^0$.
We have $i_z = m_z + b_z$ for all $z$, hence $m + b = 16$.
Suppose $n_B = 0$ (resp.\ $1$, $2$, $3$).
Let $c = 0$ (resp.\ $1/4$, $3/8$, $7/16$): this is the smallest constant such that 
$b_z \leq c i_z$ for all $z$ satisfying $n_B(z) \leq n_B$,
and the equality holds only for $A_1$ (resp.\ $D_4^0$, $D_8^0$, $D_{16}^0$).
Hence $m = 16 - b \geq 16 (1 - c) = 16$ (resp.\ $12$, $10$, $9$),
and the equality holds only if every $z$ is $A_1$ (resp.\ $D_4^0$, $D_8^0$, $D_{16}^0$).
We observe that the maximum value of $f(m) + b = f(m) - m + 16$ in this range
is $5$ (resp.\ $6$, $7$, $8$), achieved only at $m = 16$ (resp.\ $12$, $10$, $9$).
We conclude that, under the assumption on $n_B$, the inequality $f(m) + b - n_B \leq 5$ holds,
and the equality holds only if the collection is $16 A_1$ (resp.\ $4 D_4^0$, $2 D_8^0$, $1 D_{16}^0$).
\end{proof}

\begin{rem}
Lemma \ref{lem:dimension of C=1 in terms of RDPs} does not give the sharp bound for all collections of types of RDPs.
For example, suppose $\Sing(X)$ is $13 A_1 + D_4$, then $m$ is $16$ and the lemma gives an upper bound $5$.
However, any element of the corresponding space $V$ should contain an even number of the curves among the $3$ curves over the RDP of type $D_4$,
which makes dimension $5$ impossible.
\end{rem}

\section{Projective equations} \label{sec:projective equations}

Let $\tilde{X}$ together with $C_1, \dots, C_{16}$ be an inseparable Kummer surface, and $X$ the contraction of the curves.
Let $K \subset \Pic(\tilde{X})$ be the saturation of the sublattice generated by $[C_i]$, which is isomorphic to the Kummer lattice.
Let $Q$ be the orthogonal complement of $K$ in $\Pic(\tilde{X})$.
We know by Proposition \ref{prop:structure of Q} that $Q$ is isomorphic to $Q_4$ or $Q_2$ defined in Definition \ref{def:lattice Q}.
We then say that the inseparable Kummer surface (or the Kummer structure) is \emph{of class $4$} or $2$ accordingly.
In each case, we can give an explicit projective equation, as follows.

\begin{thm} \label{thm:projective equation of supersingular Kummer}
Let $\tilde{X}$ be as above.
Then there exist $4$ smooth rational curves $\Gamma_1, \dots, \Gamma_4$,
forming a $D_4$ configuration and disjoint from the $16$ curves of the Kummer structure,
such that the contraction $X'$ of the $20$ curves is of the following form.
\begin{itemize}
\item If $Q \cong Q_4$, then 
$X' = (w^2 = H(x, y))$ is an inseparable double covering of $\bP^1 \times \bP^1$, and $H$ is of the form
\begin{equation} \label{eq:H:class 4}
H = x^4 y + x y^4 + h_{30} x^3 + h_{21} x^2 y + h_{12} x y^2 + h_{03} y^3 + h_{11} xy,
\end{equation}
and $\Gamma_1, \dots, \Gamma_4$ are the exceptional curves over the RDP $P_{\infty}$ at $(x, y) = (\infty, \infty)$.
\item If $Q \cong Q_2$, then 
$X' = (y^2 = H(x, t))$ is a Weierstrass form of a quasi-elliptic surface, and $H$ is of the form
\begin{equation} \label{eq:H:class 2}
H = x^3 + h_{11} x t + h_{12} x t^2 + h_{03} t^3 + h_{05} t^5 + t^9,
\end{equation}
and $\Gamma_1, \dots, \Gamma_4$ are the exceptional curves over the RDP $P_{\infty}$ at $(x/t^4, 1/t) = (0, 0)$.
\end{itemize}
Conversely, let $X'$ be the surface defined by one of the two equations above. 
Then the exceptional curves other than the ones over $P_{\infty}$ 
form a Kummer structure on the minimal resolution $\tilde{X}$ of $X'$,
provided $\Sing(X')$ consists only of RDPs.
\end{thm}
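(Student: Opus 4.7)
My plan is to prove the two directions separately, treating the two classes ($Q \cong Q_4$ and $Q \cong Q_2$) in parallel, since the lattice-theoretic input in Section \ref{sec:Kummer lattices} provides essentially the same kind of data in each case. The forward direction proceeds in three phases: realization of curves representing $Q$, extraction of a fibration or linear system giving a projective model, and pinning down the exact monomial structure of $H$. The converse amounts to a direct verification that the singular surfaces defined by (\ref{eq:H:class 4}) and (\ref{eq:H:class 2}) give Kummer structures.

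For the realization phase, I would first lift the abstract generators $w_1, \ldots, w_6$ of $Q$ to effective $(-2)$-divisors on $\tilde{X}$ as follows: by Proposition \ref{prop:-2 divisor} each $\pm w_i$ is represented by an effective class, and by applying Proposition \ref{prop:-2 divisors orthogonal to a big nef} to a nef big class in $K^{\perp}$ and to the effective representatives of the roots of $Q$, I can arrange for the $w_i$ to be smooth rational curves forming exactly the dual graph of Figure \ref{fig:dual graph of Q}. Since $w_1, w_2, w_3, w_4$ (in either $Q_4$ or $Q_2$) form a $D_4$-subdiagram and lie in $Q = K^{\perp}$, taking $\Gamma_i := w_i$ for $1 \le i \le 4$ gives the required four curves disjoint from the $16$ Kummer curves.

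For the fibration phase, I would use the remaining curves (namely $w_5, w_6$ in each case) together with the Kummer curves to construct two genus-zero fibrations (in the $Q_4$ case) or one quasi-elliptic fibration with a $2$-section (in the $Q_2$ case), by finding isotropic vectors in $\Pic(\tilde{X})$ and applying Proposition \ref{prop:nef up to Weyl} to make them nef-effective. In the $Q_4$ case this yields a map $X' \to \bP^1 \times \bP^1$, and in the $Q_2$ case a Weierstrass model $X' \to \bP^1$. The map is of degree $2$ because of the inseparable $\mu_2$- or $\alpha_2$-covering produced by Proposition \ref{prop:covering of inseparable Kummer surface}; the corresponding $1$-form $\eta \in H^0(X^{\sm}, Z_{\infty}\Omega^1)$ gives a concrete generator of the degree-$2$ extension, and writing $\eta = dw$ locally identifies $w^2$ with a rational function $H$ on the base. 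The degree bounds on $H$ come from intersection numbers of the polarization with the components of $\pi^{-1}(P_\infty)$, and the leading terms $x^4 y + xy^4$ (resp. $x^3 + \cdots + t^9$) are forced by the requirement that the singularity at $P_\infty$ be $D_4$.

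The main obstacle is then to rule out all monomials in $H$ other than those appearing in (\ref{eq:H:class 4}) or (\ref{eq:H:class 2}). My plan is to use Theorem \ref{thm:characterization of Kummer types using Z_infty}: each admissible monomial contributes to $H^0(X^{\sm}, Z_{\infty}\Omega^1)$, and the sharp bound $\dim H^0(X^{\sm}, Z_{\infty}\Omega^1) \le 5$ eliminates any extra term, up to changes of coordinates (translation and rescaling of $x, y, w$ or $x, t, y$) that absorb the ``trivial'' parameters. For the converse direction, given $H$ of the prescribed form with $\Sing(X')$ consisting only of RDPs, a direct computation of partial derivatives identifies the non-$P_{\infty}$ RDPs and shows that their exceptional curves give exactly $16$ smooth rational curves; that these form a Kummer structure then follows by verifying the saturation condition using the $1$-form $\eta$ coming from $w^2 = H$, together with Corollary \ref{cor:embedding of Kummer lattices:bis}. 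The serious technical work is concentrated in the monomial elimination, where the interplay between the Cartier operator computations of Section \ref{sec:dimension of Z_infty} and the explicit coordinate changes must be handled carefully.
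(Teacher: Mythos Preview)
Your overall architecture is close to the paper's, but there is a genuine gap at the step where you assert that the map $X' \to \Sigma'$ (to $\bP^1 \times \bP^1$ or to the Hirzebruch surface) is \emph{inseparable}. You write that this ``is of degree $2$ because of the inseparable $\mu_2$- or $\alpha_2$-covering produced by Proposition~\ref{prop:covering of inseparable Kummer surface}'', and then try to use the associated $1$-form to produce $w$ with $w^2 = H$. But $A \to X$ and $X' \to \Sigma'$ are different morphisms; the linear system you construct only gives an equation $w^2 + I w + H = 0$, and nothing about $A \to X$ forces $I = 0$ directly. The paper isolates this as a separate claim and proves it by a different mechanism: Proposition~\ref{prop:proportional 1-forms:ter} says all global $1$-forms on $X^{\sm}$ are proportional, so the $5$-dimensional space $V = H^0(X^{\sm}, Z_\infty \Omega^1)$ (Theorem~\ref{thm:characterization of Kummer types using Z_infty}) maps either injectively or trivially into each rank-$1$ summand of $\psi'^*\Omega^1_{\Sigma'}$; but those summands have global sections of dimension $<5$, so both maps are zero, hence $V$ kills $\psi'^*\Omega^1_{\Sigma'}$, which is impossible if $\psi'$ is separable. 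Your proposal does not supply this (or any substitute), and without it the monomial-elimination phase never gets started.

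A second, smaller gap is in the converse. You propose to verify the saturation condition ``using the $1$-form $\eta$'' together with Corollary~\ref{cor:embedding of Kummer lattices:bis}. But that corollary goes the wrong way: it starts from a saturated embedding of a Kummer lattice and produces curves. Here you already have $16$ curves and must show that the saturation of the lattice they span is a Kummer lattice. The paper does this by computing directly from the explicit $H$ that $H^0(X^{\sm}, Z_\infty\Omega^1) = H^0(Z_2)$ is $5$-dimensional (via Lemma~\ref{lem:explicit formula for Cartier operator:general}), and then invoking the equality case of Theorem~\ref{thm:characterization of Kummer types using Z_infty}. Your outline does not indicate how you would get the required divisibility classes in $\Pic(\tilde X)$, and the single $1$-form $\eta$ accounts for at most one of them.
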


Moreover, we can explicitly describe the configuration of the singularities and hence the type, in terms of the coefficients $h_{ij}$.
Define subspaces $W, Z_1, Z_2 \subset \bA^4$ by 
\begin{align*}
W &= \set{(u_0, u_1, u_2, u_3) \mid u_1 u_2 - u_0 u_3 = 0}, \\
Z_1 &= \set{(u_0, u_1, u_2, u_3) \mid u_1 u_2 - u_0 u_3 = u_1^2 - u_0 u_2 = u_2^2 - u_1 u_3 = 0}, \\
Z_2 &= \set{(u_0, u_1, u_2, u_3) \mid u_1 u_2 - u_0 u_3 = u_2^2 - u_0 u_1 = u_1^2 - u_2 u_3 = 0}.
\end{align*}
(We also note that $Z_1 \cap Z_2 = \set{(c a^3, c a^2 b, c a b^2, c b^3) \mid a, b \in \setF_4, c \in k}$.)
\begin{prop} \label{prop:singularities}
\begin{enumerate}
\item \label{item:singularities:class 4}
Let $X' = (w^2 = H(x, y))$ and $P_{\infty}$ as above, with 
$H$ as in equation \eqref{eq:H:class 4}. Then, letting $u = (h_{30}, h_{21}, h_{12}, h_{03})$, 
\[
\Sing(X') \setminus \set{P_{\infty}} = 
\begin{cases}
16 A_1   & (h_{11} \neq 0), \\
4 D_4^0  & (h_{11} = 0, u \notin W), \\
2 D_8^0  & (h_{11} = 0, u \in W, u \notin Z_1, u \notin Z_2), \\
1 D_{16}^0 & (h_{11} = 0, u \in W, u \notin Z_1, u    \in Z_2), \\
2 E_8^0  & (h_{11} = 0, u \in W, u    \in Z_1, u \notin Z_2), \\
1 \, \text{non-RDP} & (h_{11} = 0, u \in W, u \in Z_1, u \in Z_2).
\end{cases}
\]
\item \label{item:singularities:class 2} 
Let $X' = (y^2 = H(x, t))$ and $P_{\infty}$ as above, with 
$H$ as in equation \eqref{eq:H:class 2}. Then
\[
\Sing(X') \setminus \set{P_{\infty}} = 
\begin{cases}
16 A_1   & (h_{11} \neq 0), \\
4 D_4^0  & (h_{11} = 0, h_{03} \neq 0), \\
2 D_8^0  & (h_{11} = 0, h_{03} = 0, h_{12} \neq 0, h_{05} \neq 0), \\
1 D_{16}^0 & (h_{11} = 0, h_{03} = 0, h_{12} \neq 0, h_{05} = 0), \\
2 E_8^0  & (h_{11} = 0, h_{03} = 0, h_{12} = 0, h_{05} \neq 0), \\
1 \, \text{non-RDP} & (h_{11} = 0, h_{03} = 0, h_{12} = 0, h_{05} = 0).
\end{cases}
\]
\end{enumerate}
\end{prop}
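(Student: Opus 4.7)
The plan is to compute the singular locus of $X'$ directly from the defining equation and identify each singularity by local analysis. Since $\charac k = 2$, the singular points of $X' = (w^2 = H)$ (respectively $y^2 = H$) away from $P_{\infty}$ are precisely the points above the common zero locus of $H_x$ and $H_y$ (respectively $H_x$ and $H_t$), with the remaining coordinate uniquely determined by $w^2 = H$. A direct computation gives, in case (1),
\[
H_x = y^4 + h_{30} x^2 + h_{12} y^2 + h_{11} y, \qquad
H_y = x^4 + h_{21} x^2 + h_{03} y^2 + h_{11} x,
\]
and in case (2),
\[
H_x = x^2 + h_{11} t + h_{12} t^2, \qquad
H_t = h_{11} x + h_{03} t^2 + h_{05} t^4 + t^8.
\]

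For case (1) with $h_{11} \neq 0$, the two curves cut out by $H_x = 0$ and $H_y = 0$ meet in $16$ reduced points (by B\'ezout on a suitable compactification and counting the contribution from $P_\infty$), and an inspection of the Hessian at each shows it is an ordinary node, i.e.\ $A_1$ in characteristic $2$. When $h_{11} = 0$, each of $H_x, H_y$ becomes the square of an affine conic:
\[
H_x = (y^2 + \sqrt{h_{12}}\, y + \sqrt{h_{30}}\, x)^2, \qquad
H_y = (x^2 + \sqrt{h_{21}}\, x + \sqrt{h_{03}}\, y)^2,
\]
so the reduced singular scheme is the intersection of two conics, and its geometric structure is controlled precisely by the incidence of $u = (h_{30}, h_{21}, h_{12}, h_{03})$ with $W$, $Z_1$, $Z_2$. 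Generically ($u \notin W$) the two conics meet transversely in four points, producing four $D_4^0$'s; each successive incidence condition forces a collision that deepens the singularity to $D_8^0$, then $D_{16}^0$ or $E_8^0$, identified by an explicit analytic change of variables reducing $H$ locally to Artin's normal form. Case (2) is analogous and more direct: the conditions $h_{03} = 0$, $h_{12} = 0$, $h_{05} = 0$ successively increase the order of tangency between $H_x = 0$ and $H_t = 0$ at the origin, which translates into ascending singularity type along the section $t = 0$. In all cases, the coindex is forced to $r = 0$ by Proposition \ref{prop:RDP and height} applied to the supersingular K3 resolution $\tilde{X}$.

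The main obstacle is the distinction between $D_{16}^0$ and $2 E_8^0$ in the triply-degenerate regime, and the verification that $u \in Z_1 \cap Z_2$ (respectively $h_{11} = h_{03} = h_{12} = h_{05} = 0$) produces a singularity that is not an RDP. The first difficulty is resolved by observing that $Z_1$ and $Z_2$ are exchanged by the coordinate swap $x \leftrightarrow y$ and correspond to whether the tangent direction of contact between the two conics $H_x = 0$ and $H_y = 0$ lies along one axis or the other: one alignment yields a local equation reducible to the $E_8^0$ normal form (with two such points by symmetry), while the other yields $D_{16}^0$ (with one collided point). For the non-RDP case, one performs a sequence of weighted blowups at the degenerate point and exhibits an exceptional divisor of positive arithmetic genus, so that the singularity fails to be rational and in particular fails to match any of Artin's normal forms.
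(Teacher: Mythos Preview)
Your overall outline—locate the singular scheme as the common zeros of $H_x,H_y$ and analyse it case by case—is reasonable, but there is a concrete error in your handling of the $D_{16}$ versus $2E_8$ distinction, and you are missing the idea that makes the paper's argument clean.

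You assert that $Z_1$ and $Z_2$ are exchanged by the swap $x \leftrightarrow y$. They are not. That swap sends $(u_0,u_1,u_2,u_3) = (h_{30},h_{21},h_{12},h_{03})$ to $(u_3,u_2,u_1,u_0)$, and a direct check shows that each of $Z_1$ and $Z_2$ is \emph{invariant} under this involution. (Geometrically this must be so: the singularity type of $X'$ cannot change under a global automorphism.) Consequently your proposed mechanism—``one alignment yields $E_8^0$, the other yields $D_{16}^0$''—does not work, and your claim that the two $E_8^0$ points arise ``by symmetry'' is unsupported: the origin is fixed by $x \leftrightarrow y$, so this symmetry cannot produce a second singular point. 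You therefore have no argument for the multiplicity of singular points in the degenerate cases.

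The paper proceeds quite differently. It observes that for any singular point $(\alpha,\beta,\gamma)$ of $X'$, the difference $H(x,y)-H(x-\alpha,y-\beta)$ involves only monomials $(x-\alpha)^i(y-\beta)^j$ with $(i,j)$ among $(0,0),(1,0),(0,1),(2,0),(0,2),(4,0),(0,4)$; since the linear terms vanish at a singular point, what remains is a perfect square $f(x,y)^2$, so $(x,y,w)\mapsto(x-\alpha,y-\beta,w-f(x,y))$ is an automorphism of $X'$ carrying the chosen singularity to the origin. Thus $\Aut(X')$ acts transitively on $\Sing(X')\setminus\{P_\infty\}$, so all singularities have the same type, and it suffices to identify the type at $(0,0)$ by a single local computation (this also gives the coindex $r=0$ directly, without appealing to Proposition~\ref{prop:RDP and height}). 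The \emph{number} of singular points is then read off from the global Tyurina count: the total index of RDPs is $20$, of which $P_\infty$ contributes $4$. This bypasses entirely the delicate intersection geometry of the two conics that you attempt.
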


\subsection{Spaces of $1$-forms}

\begin{prop} \label{prop:proportional 1-forms:ter}
Let $X$ be a contracted inseparable Kummer surface.
Then any two (nonzero) elements of $H^0(X^{\sm}, \Omega^1)$ are proportional,
that is, linearly dependent over $k(X)$.
\end{prop}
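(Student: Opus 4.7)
The plan is to reduce to showing that the wedge product of any two global $1$-forms vanishes, and then derive a contradiction from the upper bound $\dim H^0(X^{\sm}, Z_\infty \Omega^1) \leq 5$ given in Theorem~\ref{thm:characterization of Kummer types using Z_infty}.

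Since $X$ has only rational double points, it is normal, and the minimal resolution $\tilde X \to X$ is crepant, so $\omega_X \cong \cO_X$. Hence $H^0(X^{\sm}, \Omega^2_{X^{\sm}}) = H^0(X, \omega_X) = k \cdot \Omega_0$ is one-dimensional, spanned by a nowhere-vanishing global $2$-form $\Omega_0$. For $\omega_1, \omega_2 \in H^0(X^{\sm}, \Omega^1)$, the wedge lies in this space, so $\omega_1 \wedge \omega_2 = c \Omega_0$ with $c \in k$, and the proposition amounts to showing $c = 0$. Assume, for contradiction, $c \neq 0$. Then $\omega_1, \omega_2$ form a coframe at every point of $X^{\sm}$, trivializing $\Omega^1_{X^{\sm}}$, and using $H^0(X^{\sm}, \cO_X) = k$ (which follows from the normality and properness of $X$), the space $H^0(X^{\sm}, \Omega^1) = k\omega_1 \oplus k\omega_2$ is exactly $2$-dimensional.

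Next, I would extract from this $2$-dimensional global space a substantial subspace of $H^0(X^{\sm}, Z_\infty \Omega^1)$. The differential $d \colon H^0(\Omega^1) \to H^0(\Omega^2) = k$ has rank at most $1$, so the closed subspace $V := H^0(Z_1 \Omega^1) \cap (k\omega_1 + k\omega_2)$ has dimension at least $1$. If $\dim V = 2$, both $\omega_i$ are closed, and the Cartier operator maps $V$ into $H^0(\Omega^1) = V$, so iteratively $V \subseteq H^0(Z_\infty \Omega^1)$, placing two linearly independent closed forms of nondegenerate wedge into $H^0(Z_\infty)$. If $\dim V = 1$, a finer analysis of the Cartier orbit of the unique closed direction is required, showing that it either lives in $H^0(Z_\infty)$ or exits $Z_1$ in a way that interacts with the nondegenerate wedge to force additional closed global forms.

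The hard part will be the final dimension count. By Corollary~\ref{cor:dim of B_n of RDP K3}, the values $\dim H^0(B_\infty \Omega^1) = 0, 3, 4, 4, 5$ for Kummer types $16 A_1, 4 D_4, 2 D_8, 1 D_{16}, 2 E_8$ already saturate or nearly saturate the bound $\dim H^0(Z_\infty) \leq 5$; the precise inequalities come from Proposition~\ref{prop:leq 5 for configuration of index 16}. The extra closed global forms produced from the assumption $c \neq 0$, together with the Cartier-semilinear structure and the correspondence between the Cartier-fixed part and $\mu_2$-torsors (Proposition~\ref{prop:torsors and 1-forms}), must then push $\dim H^0(Z_\infty \Omega^1)$ strictly above $5$, contradicting Theorem~\ref{thm:characterization of Kummer types using Z_infty}. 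Making this count sharp for each Kummer type, probably via a type-by-type analysis that parallels the combinatorial step in the proof of Proposition~\ref{prop:leq 5 for configuration of index 16}, is the heart of the argument.
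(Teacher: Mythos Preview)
Your setup is correct up to the point where you conclude that if $c \neq 0$ then $\Omega^1_{X^{\sm}}$ is globally free of rank $2$ and $H^0(X^{\sm},\Omega^1)=k\omega_1\oplus k\omega_2$ is $2$-dimensional. But from there your plan goes in the wrong direction. You propose to manufacture additional elements of $H^0(X^{\sm},Z_\infty\Omega^1)$ and ``push $\dim H^0(Z_\infty\Omega^1)$ strictly above $5$''. This is impossible: $Z_\infty\Omega^1$ is a subsheaf of $\Omega^1$, so under your assumption $\dim H^0(Z_\infty\Omega^1)\le \dim H^0(\Omega^1)=2$. In particular the values $\dim H^0(B_\infty\Omega^1)=3,4,4,5$ you quote for the additive types already contradict $\dim H^0(\Omega^1)=2$ immediately, without any further Cartier analysis; and for type $16A_1$ the equality case of Theorem~\ref{thm:characterization of Kummer types using Z_infty} gives $\dim H^0(Z_\infty\Omega^1)=5>2$ directly. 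So a salvageable version of your argument exists, but the contradiction is with the bound $2$, not with the bound $5$, and the third paragraph of your proposal (extracting one or two closed forms from $k\omega_1+k\omega_2$ and chasing Cartier orbits) is beside the point.

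The paper's proof is much shorter and avoids Theorem~\ref{thm:characterization of Kummer types using Z_infty} entirely. It observes (from Proposition~\ref{prop:covering of inseparable Kummer surface}) that the $1$-form $\eta$ associated to the $G$-covering $A\to X$ is a nonzero global section of $\Omega^1_{X^{\sm}}$ that \emph{vanishes somewhere} on $X^{\sm}$ (since $\Sing(A)$ is $1$-dimensional). Hence $\Omega^1_{X^{\sm}}$ is not free, which is exactly what $c\neq 0$ would force. This uses only the existence of the covering and nothing about $Z_\infty$, $B_\infty$, or dimension counts.
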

\begin{proof}
Since the $1$-form corresponding to $A \to X$ vanishes somewhere on $X^{\sm}$ (Proposition \ref{prop:covering of inseparable Kummer surface}),
the sheaf $\Omega^1_{X^{\sm}}$ is not free.

Let $\eta_1, \eta_2 \in H^0(X^{\sm}, \Omega^1)$ be two nonzero global sections.
If $\eta_1$ and $\eta_2$ are not proportional, then $\eta_1 \wedge \eta_2$ is a nonzero global section of $\Omega^2_{X^{\sm}} \cong \cO_{X^{\sm}}$, 
which implies $\Omega^1_{X^{\sm}}$ is free with basis $\eta_1, \eta_2$, contradiction.
\end{proof}

We also use the equality $\dim_k H^0(X^{\sm}, Z_{\infty} \Omega^1) = 5$ (Theorem \ref{thm:characterization of Kummer types using Z_infty}).

\subsection{Curves generating the orthogonal complement}
We keep the notation.
By Proposition \ref{prop:structure of Q}, the orthogonal complement $Q$ of $K$ in $\Pic(\tilde{X})$ is 
isomorphic to either $Q_2$ or $Q_4$.
We next show that the basis of $Q_4$ or $Q_2$ as in Definition \ref{def:lattice Q} is realized by smooth rational curves.
\begin{prop} \label{prop:curves generating Q}
There exist $6$ smooth rational curves $\Gamma_1, \dots, \Gamma_6$ generating $Q$
and having intersection numbers as in Definition \ref{def:lattice Q}.
\end{prop}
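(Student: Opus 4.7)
The plan is to construct $\Gamma_2, \dots, \Gamma_6$ iteratively as pairwise orthogonal smooth rational curves in $Q$ (with the appropriate tail modification in the $Q_2$ case), and then to locate the central $\Gamma_1$ as an effective representative of a specific root of $Q$ chosen to have the right intersection pattern with the others.

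The key preliminary lemma states: any effective class $v \in Q$ with $v^2 = -2$ decomposes on $\tilde X$ as a positive $\setZ$-combination of classes of smooth rational $(-2)$-curves all lying in $Q$. Smoothness of components and self-intersection $-2$ follow from the identity
\[
v^2 = \sum_j a_j^2 [C_j']^2 + 2 \sum_{i<j} a_i a_j [C_i'] \cdot [C_j'] = -2
\]
together with the K3 bound $[C_j']^2 \geq -2$ and non-negativity of mutual intersections of distinct irreducible curves. To confine the components to $Q$, apply Proposition \ref{prop:nef up to Weyl} to $Q$ (signature $(1,5)$) to obtain a nef, big, effective $D \in Q$; restricting to $v$ with $v \cdot D = 0$, all components lie in the negative-definite lattice $D^\perp$, and Proposition \ref{prop:embedding of Kummer lattices:ter} applied to the saturation of $K$ together with the $[C_j']$ inside $D^\perp$ forces each $[C_j']$ to be either in $K$ (hence one of the Kummer curves) or perpendicular to $K$ (hence in $Q$). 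The Kummer-curve contributions cannot actually appear since $v \in Q$ and $K \cap Q = 0$.

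Now for the construction in the $Q_4$ case: choose $D$ specifically so that its Weyl-adjusted version is orthogonal to a prescribed root of $Q$ (for example, a Weyl-adjustment of $D_0 = 3 w_1 + w_2 + \dots + w_6$, which has $D_0^2 = 2$ and $D_0 \cdot (w_1 + w_i) = 0$ for each $i \geq 2$). Applying the lemma yields a smooth rational $\Gamma_2$ with class in $Q$. Next, inside the saturated sublattice $[\Gamma_2]^\perp \cap Q$ of signature $(1,4)$, a fresh application of Proposition \ref{prop:nef up to Weyl} produces a nef, big class there, and the lemma again yields $\Gamma_3$ disjoint from $\Gamma_2$. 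Iterating through sublattices of signatures $(1,3)$, $(1,2)$, $(1,1)$ yields pairwise orthogonal smooth rational curves $\Gamma_2, \dots, \Gamma_6$ in $Q$. The $Q_2$ case is identical except that in the last step one arranges the root chosen to pair nontrivially with $\Gamma_5$ so that $\Gamma_6 \cdot \Gamma_5 = 1$. Finally, the explicit Gram matrix of $Q$ guarantees a root $w \in Q$ with $w \cdot [\Gamma_i] = 1$ for each of the required $i$; using an auxiliary $D_1 \in Q$ with $D_1^2 > 0$ and $D_1 \cdot w = 0$ (for example a Weyl-adjustment of $5 w_1 + 2(w_2 + \dots + w_6)$ in the $Q_4$ case), the lemma decomposes the effective representative of $w$ into smooth rational curves in $Q$; the intersection constraints together with $w^2 = -2$ force the decomposition to consist of a single irreducible $\Gamma_1$.

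The main obstacle is the final irreducibility assertion. A reducible decomposition $E = \sum a_j [C_j']$ would have to meet simultaneously the rigid constraints $\sum a_j ([C_j'] \cdot [\Gamma_i]) = 1$ for each $i$, $\sum a_j^2 \cdot (-2) + 2 \sum_{j<l} a_j a_l [C_j'] \cdot [C_l'] = -2$, and non-negativity of $[C_j'] \cdot [C_l']$ for distinct components. Using Proposition \ref{prop:-2 vectors} to classify possible $(-2)$-subconfigurations in $Q$, a short case analysis eliminates all multi-component possibilities, leaving only the single-component case consistent with the specified Gram matrix of $Q$.
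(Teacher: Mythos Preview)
Your approach shares the right ingredients with the paper's proof (Propositions \ref{prop:nef up to Weyl}, \ref{prop:-2 divisors orthogonal to a big nef}, \ref{prop:embedding of Kummer lattices:ter}), but the iterative construction has a genuine gap. When you pass to $[\Gamma_2]^\perp \cap Q$ and apply Proposition \ref{prop:nef up to Weyl} to obtain a nef big $D'$, the effective representative of your new root $v'$ decomposes into smooth rational curves whose classes lie in $Q \cap (D')^\perp$ --- but this lattice has rank $5$ and contains $[\Gamma_2]$ itself, so there is no reason the components are orthogonal to $\Gamma_2$. Concretely, in an $A_3$ configuration with simple roots $\alpha_1,\alpha_2,\alpha_3$, the root $\alpha_1+\alpha_2+\alpha_3$ is orthogonal to $\alpha_2$, yet no simple root is; so ``pick a component disjoint from $\Gamma_2$'' can fail. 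The same problem recurs more seriously at the last step: to realize your central root $w$ you need a nef big $D_1$ with $D_1 \cdot w = 0$, but Weyl-adjusting $D_1$ inside $Q$ moves $w$ to $\gamma w$, and $\gamma w$ need not have the prescribed intersection numbers with the already-fixed curves $\Gamma_2,\dots,\Gamma_6$. You also assert without proof that a root $w$ with $w \cdot [\Gamma_i] = 1$ exists for \emph{whatever} five pairwise-orthogonal roots $[\Gamma_i]$ you ended up with; this is not automatic from the Gram matrix.

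The paper avoids all of this by performing a \emph{single} Weyl adjustment at the outset: one takes $D = 2w_1 + \sum_{i \geq 2} w_i$, which is orthogonal to $w_2,\dots,w_6$ simultaneously, and after adjusting within $\Weyl{Q}$ the (relabelled) $w_i$ still satisfy the same Gram relations. Then the $\Gamma_2,\dots,\Gamma_6$ produced by the ADE decomposition have classes \emph{equal} to $w_2,\dots,w_6$ (since $\langle w_2,\dots,w_6 \rangle$ is saturated in $Q$), so the existence of the candidate $w_1$ is immediate. For the irreducibility of $w_1$ the paper does \emph{not} try to find a nef big class orthogonal to $w_1$ alone; instead it embeds $K \oplus \langle \Gamma_2,\Gamma_3,\Gamma_4, w_1 \rangle \cong K \oplus D_4$ (using only three of the five $\Gamma_i$), whose orthogonal complement has rank $2$, and argues that the extra components $F_1,\dots,F_l$ of the effective representative of $w_1$ satisfy $l \in \{1,2\}$. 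The case $l=2$ forces a $D_5$ configuration on $\Gamma_2,\Gamma_3,\Gamma_4,F_1,F_2$ with a specific incidence pattern, and varying the choice of three among $\Gamma_2,\dots,\Gamma_6$ yields a contradiction. This last case-analysis is the substantive content you flagged as ``the main obstacle'' but did not carry out.
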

\begin{proof}
First suppose $Q \cong Q_4$.
Let $w_1, \dots, w_6 \in Q$ as in Definition \ref{def:lattice Q},
and let $D = 2 w_1 + \sum_{i = 2}^6 w_i $.
We have $D \cdot w_i = 0$ ($i = 2, \dots, 6$) and $D^2 = 2$.
Applying Proposition \ref{prop:nef up to Weyl} to $D \in Q$,
we may assume that $D$ is big and nef.
This induces an embedding of $K \oplus A_1^{\oplus 5}$ into $L := \spanned{D}^{\perp}$, which is of finite index (because both sides have rank $21$).
By Propositions \ref{prop:-2 divisors orthogonal to a big nef} and \ref{prop:embedding of Kummer lattices:ter},
there exist smooth rational curves $\Gamma_2, \dots, \Gamma_{m+1}$ such that
$\bigcup_{j=2}^{m+1} \Gamma_j$ is an ADE configuration disjoint from $C_1 \cup \dots \cup C_{16}$
and $A_1^{\oplus 5} \subset \spanned{[\Gamma_j]}$.
By considering the rank, we have $m = 5$.
Since $\spanned{w_2, \dots, w_6} \subset Q_4$ is saturated, the configuration of $\Gamma_2 \cup \dots \cup \Gamma_6$ is $5 A_1$.

It remains to show that $w_1$ is the class of a smooth rational curve.
Consider the embedding of $K \oplus \spanned{\Gamma_2, \Gamma_3, \Gamma_4, w_1}$.
This is contained in the span of the classes of $19 + l$ curves $C_1, \dots, C_{16}, \Gamma_2, \Gamma_3, \Gamma_4, F_1, \dots, F_l$, with $l \in \set{1, 2}$.
If $l = 1$ then we are done. Suppose $l = 2$.
The divisor $\Gamma_2 \cup \Gamma_3 \cup \Gamma_4 \cup F_1 \cup F_2$ is an ADE configuration.
It should be $D_5$ since it is connected and contains $D_4$ as a sublattice.
By considering positive roots of $D_5$, we conclude that 
$w_6 = [F_1] + [F_2]$, and exactly $2$ among $\Gamma_j$ ($j = 2, 3, 4$) intersects $F_1$. 
We derive a contradiction by considering all choice of $3$ curves among $\Gamma_j$ ($2 \leq j \leq 6$).

The case where $Q \cong Q_2$ can be proved similarly.
It is easier than the previous case because we see at once that $\Gamma_1, \Gamma_i, \Gamma_j, \Gamma_5, \Gamma_6$ form a $D_5$
for any choice of distinct $i, j$ in $\set{2, 3, 4}$.
\end{proof}

\subsection{Proof of Theorem \ref{thm:projective equation of supersingular Kummer} in the case of class 4} \label{subsec:proof of equation:class 4}

Suppose $Q \cong Q_4$.

Take $\Gamma_1, \dots, \Gamma_6$ as in Proposition \ref{prop:curves generating Q}.
The linear system defined by the divisor $4 \Gamma_1 + 2 (\Gamma_2 + \Gamma_3 + \Gamma_4) + \Gamma_5 + \Gamma_6$
(which is big and nef, and orthogonal to $\Gamma_1, \dots, \Gamma_4$)
induces a double covering 
$\map{\psi'}{X'}{\Sigma' := \bP^1 \times \bP^1}$,
and $X'$ is defined by an equation of the form $w^2 + I(x, y) w + H(x, y) = 0$
with 
\[
I \in \spanned{x^i y^j \mid 0 \leq i, j \leq 2}_k, \quad 
H \in \spanned{x^i y^j \mid 0 \leq i, j \leq 4}_k.
\]
Here the map $\psi'$ is $(w, x, y) \mapsto (x, y)$.
We note that $X'$ is the contraction of 
$C_1 \cup \dots \cup C_{16} \cup \Gamma_1 \cup \Gamma_2 \cup \Gamma_3 \cup \Gamma_4 \subset \tilde{X}$.
\begin{claim} \label{claim:I=0}
$I = 0$. In other words, the map $\psi'$ is inseparable.
\end{claim}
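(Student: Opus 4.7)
The plan is to assume $I \neq 0$ and derive a contradiction, using Proposition \ref{prop:proportional 1-forms:ter} (that any two $1$-forms on $X^{\sm}$ are proportional over $k(X)$) together with Theorem \ref{thm:characterization of Kummer types using Z_infty} (that $\dim H^0(X^{\sm}, Z_{\infty} \Omega^1) \leq 5$). If $I \neq 0$, then $\psi'$ is generically \'etale with deck involution $w \mapsto w + I$, and the Poincar\'e residue applied to $f = w^2 + Iw + H$ (noting $\partial f/\partial w = I$ in characteristic $2$) identifies the canonical $2$-form on $X'^{\sm}$ as $\omega = (dx \wedge dy)/I$ up to a nonzero scalar. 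In particular, $dx$ and $dy$ remain non-proportional as rational $1$-forms on $X'$.

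The Artin--Schreier substitution $W := w/I$, valid where $I \neq 0$, converts the equation to $W^2 + W + H/I^2 = 0$. Since $d(I^{-2}) = 0$ in characteristic $2$, we compute $dW = d(H/I^2) = dH/I^2$, which is exact and closed, hence a section of $B_1 \Omega^1 \subset Z_{\infty} \Omega^1$ wherever it is defined. I would try to show that $dW$, or a suitable modification by a rational function (taking into account ramification and behavior at infinity), extends to a global section of $\Omega^1$ on $X^{\sm}$ that is linearly independent over $k(X)$ from the $1$-form coming from the canonical $G$-covering $A \to X$ provided by Proposition \ref{prop:covering of inseparable Kummer surface}. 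Two non-proportional global $1$-forms on $X^{\sm}$ contradict Proposition \ref{prop:proportional 1-forms:ter}; alternatively, the extra form contributes to $H^0(X^{\sm}, Z_{\infty} \Omega^1)$, pushing its dimension above $5$ and contradicting Theorem \ref{thm:characterization of Kummer types using Z_infty}.

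The main obstacle will be the extension analysis. The candidate form $dW$ has poles along the ramification $\psi'^{-1}((I = 0))$ (since $W = w/I$ blows up there) and, viewed upstairs, may also have poles along $\psi'^{-1}((x = \infty) \cup (y = \infty))$ and along the curves $\Gamma_1, \dots, \Gamma_4$ (which are contracted on $X'$ but are divisors on $X$, so sections of $H^0(X^{\sm}, \Omega^1)$ cannot have poles there). Controlling these requires explicit local computations in charts, exploiting the non-taut type $D_4^0$ of the singularity at $P_\infty$ (Proposition \ref{prop:RDP and height}) and the structure of the linear system $|D'|$ used to define $\psi'$; in particular the curves $\Gamma_5, \Gamma_6$ lying over fibers of the two rulings of $\Sigma'$ will play a role in understanding the divisor of $dW$ at infinity. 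Producing one genuinely global $1$-form via this route, given that the five $1$-forms in $H^0(X^{\sm}, Z_{\infty} \Omega^1)$ afforded by the Kummer structure appear to be saturated, is where the real content of the argument lies.
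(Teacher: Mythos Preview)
Your proposal is incomplete, and the gap you yourself identify is the whole proof. You never establish that $dW$ (or any modification of it) extends to a regular section of $\Omega^1$ on $X^{\sm}$. The poles along $\psi'^{-1}(I=0)$ are genuine: on $X^{\sm}$ this locus is a curve in the smooth part, and $dW = dH/I^2$ has a pole of order $2$ there. Multiplying by $I^2$ gives $dH = H_x\,dx + H_y\,dy$, which is regular on the affine chart but then acquires poles along the $\Gamma_i$ and at infinity; there is no evident correction that simultaneously removes both kinds of poles and leaves a nonzero form. Without a regular global form you have nothing to feed into Proposition~\ref{prop:proportional 1-forms:ter}. Your alternative idea of ``pushing the dimension above $5$'' cannot work either: $dW$ lies in $B_1\Omega^1$, and by Corollary~\ref{cor:dim of B_n of RDP K3} the space $H^0(X^{\sm}, B_1\Omega^1)$ already sits inside the $5$-dimensional $H^0(X^{\sm}, Z_\infty\Omega^1)$, so no new dimension is gained.

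The paper's argument avoids constructing any new form. Instead it uses the $5$-dimensional space $V = H^0(X^{\sm}, Z_\infty\Omega^1)$ already guaranteed by Theorem~\ref{thm:characterization of Kummer types using Z_infty}. Proportionality (Proposition~\ref{prop:proportional 1-forms:ter}) forces the two restriction maps $V \to \Hom(\psi'^*F_i,\cO_{X'^{\sm}})$ (for the two summands $F_i$ of $\Omega^1_{\Sigma'}$) to be each either injective or zero; by adjunction the target embeds in $H^0(\Sigma',\cO(2,0))$, which has dimension $3<5$, so both maps vanish. Hence every element of $V$ annihilates $\psi'^*\Omega^1_{\Sigma'}$, and if $\psi'$ were separable this subsheaf would have full rank, forcing $V=0$ --- contradicting $\dim V = 5$. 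This is a dimension-count on an existing space rather than a construction of a new form, and it sidesteps all the local extension analysis you were bracing for.
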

\begin{proof}
Let $V$ be the image of $H^0(X^{\sm}, Z_{\infty} \Omega^1) \subset H^0(X^{\sm}, \Omega^1) \to H^0(X'^{\sm}, \Omega^1)$.
Note that $\dim V = 5$ (Theorem \ref{thm:characterization of Kummer types using Z_infty}) and any two elements of $V$ are proportional (Proposition \ref{prop:proportional 1-forms:ter}).
Let $\Sigma'^{\circ} := \psi'(X'^{\sm}) \subset \Sigma'$.

The sheaf $\Omega^1_{\Sigma'}$ decomposes into the sum of rank $1$ sheaves $F_1, F_2$,
each consisting of proportional $1$-forms ($F_1$ to $dx$ and $F_2$ to $dy$).
By the proportionality of $V$,
the restriction maps $V \to \Hom(\Omega^1_{X'^{\sm}}, \cO_{X'^{\sm}}) \to \Hom(\psi^* F_i \restrictedto{\Sigma'^{\circ}}, \cO_{X'^{\sm}})$ 
to $\psi^* F_i \subset \psi^* \Omega^1_{\Sigma} \subset \Omega^1_{X'}$
($i = 1, 2$) are either injective or zero.
By the pullback-pushforward adjunction and
isomorphisms $F_1 \cong \cO(-2, 0)$ and $\psi_* \cO_{X'} \cong \cO \oplus \cO(-2, -2)$,
the image of $V$ lies in 
$\Hom(F_i, \psi_* \cO_{X'}) \cong H^0(\Sigma', \cO(2, 0) \oplus \cO(0, -2)) \cong H^0(\Sigma', \cO(2, 0))$.
Since $\dim H^0(\Sigma', \cO(2, 0)) < 5$, 
we conclude that this map is zero.
We obtain the same conclusion for $F_2$.
Therefore, any element of $V$ annihilates $\psi'^* (F_1 \oplus F_2) = \psi'^* \Omega^1_{\Sigma'^{\circ}} \subset \Omega^1_{X'^{\sm}}$.
If $I \neq 0$, equivalently, if $\psi'$ is separable, then this subsheaf is of full rank, hence $V = 0$, contradiction.
\end{proof}
Write $H = \sum_{0 \leq i, j \leq 4} h_{ij} x^{i} y^{j}$.
We may assume $h_{ij} = 0$ when $i$ and $j$ are both even.
Placing the image of $\Gamma_1 \cup \dots \cup \Gamma_4$, which is an RDP of type $D_4$, at $P_{\infty} = (x = \infty, y = \infty)$, 
we have $h_{33} = h_{34} = h_{43} = 0$.
Since $\Sing(X') \setminus \set{P_{\infty}}$ is disjoint with $\Gamma_5 \cup \Gamma_6 = (x = \infty) \cup (y = \infty)$,
we have $h_{41}, h_{14} \neq 0$, and we may assume $h_{41} = h_{14} = 1$.
By placing a singularity at $(0, 0)$, we obtain $h_{10} = h_{01} = 0$. 

We want to show that $h_{31}, h_{32}, h_{13}, h_{23}$ are all zero,
using the assumption that the exceptional curves form a Kummer structure.
To show this, we compute the action of the Cartier operator $C$ explicitly.
We identify $H^0(X^{\sm}, \Omega^1)$ and its subspaces, such as $H^0(Z_i) := H^0(X^{\sm}, Z_i \Omega^1)$,
with their images in $H^0(X'^{\sm}, \Omega^1)$.
Let $\eta_0 = dx/H_y = dy/H_x \in H^0(X'^{\sm}, \Omega^1)$.
We have bases
\begin{align*}
H^0(X^{\sm}, \Omega^1) = H^0(Z_0) &= \spanned{x^i y^j \mid (i, j) \in I_1}_k \cdot \eta_0 \oplus kw \cdot \eta_0 \\
\supset H^0(Z_1) &= \spanned{x^i y^j \mid (i, j) \in I_1}_k \cdot \eta_0 \\
\supset H^0(Z_2) &= \spanned{x^i y^j \mid (i, j) \in I_2}_k \cdot \eta_0,
\end{align*}
where 
\begin{align*}
I_1 &:= \set{(0, 0), (1, 0), (2, 0), (0, 1), (0, 2), (1, 1)}, \\
I_2 &:= \set{(0, 0), (1, 0), (2, 0), (0, 1), (0, 2)}.
\end{align*}
Here we used Lemma \ref{lem:explicit formula for Cartier operator:general} to determine $H^0(Z_2) = C^{-1}(H^0(Z_1))$.
By the same lemma, the explicit formula for $\map{C}{H^0(Z_2)}{H^0(Z_1)}$ is 
\begin{gather*}
C \biggl( \bigl( \sum_{(i, j) \in I_2} g_{ij} x^i y^j \bigr) \eta_0 \biggr)
= \bigl( \sum_{(i, j) \in I_1} \sqrt{f_{ij}} x^i y^j \bigr) \eta_0, \\
f_{ij} := \sum_{i_1 + i_2 = 2 i + 1, \; j_1 + j_2 = 2 j + 1} h_{i_1 j_1} g_{i_2 j_2}.
\end{gather*}
We observe that $H^0(Z_3) = H^0(Z_2)$ 
if and only if all coefficients of $f_{11}$ are zero, 
that is, $(h_{31}, h_{32}, h_{13}, h_{23}) = (0, 0, 0, 0)$.
But since (by Theorem \ref{thm:characterization of Kummer types using Z_infty}) 
$5 = \dim H^0(Z_{\infty}) \leq \dim H^0(Z_3) \leq \dim H^0(Z_2) = 5$, we indeed have $H^0(Z_3) = H^0(Z_2)$.

\bigskip

We now show the converse statement.
Consider the surface $X' = (w^2 = H(x, y))$ with $H$ as in equation \eqref{eq:H:class 4},
and $X$ be the minimal resolution of $P_{\infty}$.
First we confirm (the case of class $4$ of) Proposition \ref{prop:singularities}.

It is clear that no point on $(x = \infty) \cup (y = \infty)$ other than $P_{\infty}$ is a singular point.
It is straightforward to check that the point $(x, y) = (0, 0)$ is of the type as stated.
We claim that $\Aut(X')$ acts on $\Sing(X') \setminus \set{P_{\infty}}$ transitively,
from which it follows that every singularity is of the same type.
Take a singular point $P$ at $(x, y, w) = (\alpha, \beta, \gamma)$.
We can write
\[
H(x, y) = H(x - \alpha, y - \beta) + \sum c_{ij} (x - \alpha)^i (y - \beta)^j
\]
for some $c_{ij} \in k$,
where the range of the index $(i, j)$ is \[ \set{(0, 0), (1, 0), (2, 0), (4, 0), (0, 1), (0, 2), (0, 4)}. \]
But since $P$ is a singular point,
we have $c_{10} = c_{01} = 0$, and all remaining indices are pairs of even integers.
Hence $H(x, y) = H(x - \alpha, y - \beta) + f(x, y)^2$ for a suitable polynomial $f$,
and thus $(x, y, w) \mapsto (x - \alpha, y - \beta, w - f(x, y))$ is an automorphism taking $P$ to the origin.

By computing the intersection number of $(H_x = 0)$ and $(H_y = 0)$ on $\bP^2$
and comparing it with the Tyurina numbers of the singularities,
we see that the sum of the indices of the RDPs including $P_{\infty}$ is $20$, provided there is no non-RDP.
We conclude that $\Sing(X') \setminus \set{P_{\infty}}$ is as stated.
When a non-RDP singularity exists, its Tyurina number is $32$ and the argument is parallel.

Since $H^0(Z_{\infty}) = H^0(Z_2)$ we have $\dim H^0(Z_{\infty}) = 5$,
and then by Theorem \ref{thm:characterization of Kummer types using Z_infty}
the $16$ exceptional curves form a Kummer structure.

\subsection{Proof of Theorem \ref{thm:projective equation of supersingular Kummer} in the case of class 2} \label{subsec:proof of equation:class 2}

We now consider the case where $Q \cong Q_2$.
The proof is parallel to the previous case ($Q \cong Q_4$).

Take $\Gamma_1, \dots, \Gamma_6$ as in Proposition \ref{prop:curves generating Q}.
The linear system defined by the divisor $2 \Gamma_1 + (\Gamma_2 + \dots + \Gamma_5)$
(which is orthogonal to $\Gamma_1, \dots, \Gamma_5$)
induces a genus $1$ fibration having this divisor as a fiber and $\Gamma_6$ as a section. Let
\[ y^2 + I(x, t) y + H(x, t) = 0 \]
be its Weierstrass equation with 
\[
I \in \spanned{x t^0, \dots, x t^2, t^0, \dots, t^6}_k, \quad
H - x^3 \in \spanned{x t^0, \dots, x t^8, t^0, \dots, t^{12}}_k.
\]
Let $X'$ be the surface defined by this equation, which is the contraction of 
$C_1 \cup \dots \cup C_{16} \cup \Gamma_1 \cup \Gamma_2 \cup \Gamma_3 \cup \Gamma_4 \subset \tilde{X}$.
Write $\map{\psi'}{X'}{\Sigma' := \Sigma_4}$ the map $(y, x, t) \mapsto (x, t)$.
Here $\Sigma_n$ denotes the Hirzebruch surface.

As in Claim \ref{claim:I=0}, we want to show $I = 0$.
Let $\Sigma' \to \bP^1$ be the $\bP^1$-fibration, 
$f$ the image of $\Gamma_5$ (which is a fiber of the fibration),
and $C_0$ the section with $C_0^2 = -4$ (which is equal to $(x = \infty)$).
Let $t$ be a coordinate on $\bP^1$ such that $f = (t = \infty)$.

Let $V \subset H^0(X'^{\sm}, \Omega^1)$ as in the proof of Claim \ref{claim:I=0}.
Since $\Omega^1_{\Sigma'} \cong \cO_{\Sigma'}(-2f) \oplus \cO_{\Sigma'}(-2 C_0 - 4 f)$,
we have to consider the images of $V$ in 
$H^0(\Sigma', \cO_{\Sigma'}(2f)) = \spanned{1, t, t^2}$ and 
$H^0(\Sigma', \cO_{\Sigma'}(2 C_0 + 4 f)) = \spanned{1, t, t^2, t^3, t^4, x}$.
Since $V$ comes from $H^0(X^{\sm}, \Omega_X^1)$, the image should vanish twice at the image of $\Gamma_1 \cup \dots \cup \Gamma_4$, 
which is the point $(1/t = 0, x/t^4 = 0)$. 
Then it follows that the image of $V \to H^0(\Sigma', \cO_{\Sigma'}(2 C_0 + 4 f))$ is actually contained in $\spanned{1, t, t^2, x}$,
and we can conclude as in the case of $Q \cong Q_4$ that $I = 0$.

We may assume 
\[
H = x^3 + (h_{11} t + h_{12} t^2 + h_{13} t^3 + h_{15} t^5 + h_{16} t^6) x + (h_{03} t^3 + h_{05} t^5 + h_{07} t^7 + t^9).
\]
We want to show $h_{13} = h_{15} = h_{16} = h_{07} = 0$.

As in the case of class $4$, we describe the Cartier operator.
Let $\eta_0 = dx/H_t = dt/H_x \in H^0(X'^{\sm}, \Omega^1)$.
We have bases
\begin{align*}
H^0(X^{\sm}, \Omega^1) = H^0(Z_0) &= \spanned{x^i t^j \mid (i, j) \in I_1}_k \cdot \eta_0 \oplus k y \cdot \eta_0 \\
\supset H^0(Z_1) &= \spanned{x^i t^j \mid (i, j) \in I_1}_k \cdot \eta_0 \\
\supset H^0(Z_2) &= \spanned{x^i t^j \mid (i, j) \in I_2}_k \cdot \eta_0,
\end{align*}
where 
\begin{align*}
I_1 &:= \set{(1, 0), (0, 0), (0, 1), (0, 2), (0, 3), (0, 4)}, \\
I_2 &:= \set{(1, 0), (0, 0), (0, 1), (0, 2), (0, 4)},
\end{align*}
and the explicit formula 
\begin{gather*}
C \biggl( \bigl( \sum_{(i, j) \in I_2} g_{ij} x^i t^j \bigr) \eta_0 \biggr)
= \bigl( \sum_{(i, j) \in I_1} \sqrt{f_{ij}} x^i t^j \bigr) \eta_0, \\
f_{ij} := \sum_{i_1 + i_2 = 2 i + 1, \; j_1 + j_2 = 2 j + 1} h_{i_1 j_1} g_{i_2 j_2}.
\end{gather*}
We observe that $H^0(Z_3) = H^0(Z_2)$ 
if and only if all coefficients of $f_{03}$ are zero, 
that is, $(h_{13}, h_{15}, h_{16}, h_{07}) = (0, 0, 0, 0)$.

The rest of the proof is parallel.

\section{Structure of the abelian-like covering} \label{sec:structure of the covering}

Let $X$ be a contracted inseparable Kummer surface. 
The covering(s) $A \to X$ constructed in Proposition \ref{prop:covering of inseparable Kummer surface} is never normal.
Using the projective equation of Theorem \ref{thm:projective equation of supersingular Kummer},
we show (Theorem \ref{thm:group structure of abelian-like covering}) that the smooth locus $A^{\sm}$ of $A$ admits a structure of abelian group variety,
which would justify the name of \emph{abelian-like} coverings.
When the covering $A$ is not unique, we have to choose a suitable one. 

In Section \ref{subsec:normalization}, we describe the relation between $\cO_X$, $\cO_A$, and $\cO_{\normalization{A}}$, 
where $\normalization{\functorspace}$ denotes the normalization.
In Section \ref{subsec:Kummer quotients}, we discuss, for a fixed $\cO_{\normalization{A}}$, 
which $G$-actions ($G = \mu_2, \alpha_2$) on which subalgebras $\cO_A \subset \cO_{\normalization{A}}$ induce inseparable Kummer quotients.

\subsection{Group structure} \label{subsec:group structure}

\begin{thm} \label{thm:group structure of abelian-like covering}
Let $X$ be a contracted inseparable Kummer surface.
If the Kummer structure is of type $16 A_1$ (resp.\ of additive type), 
then the smooth locus $A^{\sm}$ of $A$,
where $A$ is the $\mu_2$-covering (resp.\ one of the $\alpha_2$-coverings) of $X$,
admits a structure of a group variety isomorphic to $\Ga^{\oplus 2}$,
and $\Fix(G)$ is a finite subgroup scheme.
\end{thm}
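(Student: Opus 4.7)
My plan is to use the explicit projective equations of Theorem~\ref{thm:projective equation of supersingular Kummer} to construct the abelian-like covering $A$ concretely and then read off the $\Ga^{\oplus 2}$-structure by direct inspection. I sketch the class~$4$ case ($Q \cong Q_4$); the class~$2$ case is handled analogously, using a parametrisation adapted to the quasi-elliptic fibration instead of the double cover of $\bP^1 \times \bP^1$.

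For $X' = (w^2 = H(x, y))$ with $H$ as in \eqref{eq:H:class 4}, the starting point is the algebraic identity obtained by a short expansion in $\charac k = 2$:
\[
H(u^2, v^2) = W(u, v)^2, \quad W := u^4 v + u v^4 + \sqrt{h_{30}}\,u^3 + \sqrt{h_{21}}\,u^2 v + \sqrt{h_{12}}\,u v^2 + \sqrt{h_{03}}\,v^3 + \sqrt{h_{11}}\,uv.
\]
The morphism $(u, v) \mapsto (u^2, v^2, W(u, v))$ realises $\Spec k[u, v]$ as a finite degree-$2$ covering of (the affine part of) $X'$. After analysing the behaviour at $P_\infty$ via the explicit description of $X'$ there, this extends to a finite covering of $X$ by a projective surface $\widetilde{A}$, which I will identify with the normalisation $\normalization{A}$ of the abelian-like covering $A$ from Proposition~\ref{prop:covering of inseparable Kummer surface}. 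The surface $A$ itself is then recovered from $\normalization{A}$ by pinching along an appropriate divisor, using the explicit formula of Proposition~\ref{prop:normalization}, and one checks that $A^{\sm}$ gets identified with an open subscheme of $\normalization{A}$ isomorphic to $\bA^2$.

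Writing $W = p\,u + q\,v + r\,uv$ with $p, q \in k[x, y]$ and $r = \sqrt{h_{11}}$, the derivation $D = (q + r u)\,\partial_u + (p + r v)\,\partial_v$ on $\Spec k[u, v]$ satisfies $D(u^2) = D(v^2) = D(W) = 0$, and a direct computation gives $D^2 = r D$. For $h_{11} \neq 0$ (type $16 A_1$), $D/r$ is of multiplicative type and defines the $\mu_2$-action whose quotient is $X$; uniqueness in Proposition~\ref{prop:covering of inseparable Kummer surface} identifies this with the canonical $\mu_2$-covering. For $h_{11} = 0$ (additive types), $D$ is nilpotent and defines a specific member of the $2$-parameter family of $\alpha_2$-coverings, which I pin down by computing the corresponding class in $\Ext^1(\cI, \cO_X)$. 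Once $A^{\sm}$ is equipped with the $\Ga^{\oplus 2}$-structure (with identity at a chosen point of $\Fix(D)$ and coordinates arranged so that the translation-invariant vector fields commute with $D$), the fixed scheme $\Fix(G) = V(D(\cO_{A^{\sm}}))$ is a finite subgroup scheme of length $16$; a Bezout count on $V(q + ru, p + rv)$ gives the same total length, matching the sixteen contracted $A_1$'s on $X$.

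\textbf{Main obstacle.} The hardest step is the final compatibility: the derivation $D$ is not translation-invariant in the naive coordinates $(u, v)$, so realising $\Fix(G)$ as a subgroup of $\Ga^{\oplus 2}$ requires a careful choice of group structure, together with a correct analysis of the behaviour at infinity (since $\Spec k[u, v] \ne \normalization{A}$ projectively). A cleaner alternative is to present $\normalization{A}$ birationally as a product of two projective cuspidal rational curves (as in the Kondo--Schr\"oer construction, see Section~\ref{subsec:comparison with Kondo--Schroer}), so that the $\Ga^{\oplus 2}$-structure on $A^{\sm}$ is manifest as a product of the $\Ga$-structures on the smooth loci of the two factors; making this identification rigorous for both classes ($Q_4$ and $Q_2$) and both group schemes ($\mu_2$ and $\alpha_2$) is where I expect the bulk of the technical work to reside.
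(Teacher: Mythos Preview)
Your approach is essentially the paper's, but you have inverted the difficulty: what you flag as the ``main obstacle'' is in fact immediate, while the detour through the normalisation and pinching is unnecessary.

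The point you are missing is this. In your notation, $D(u) = q + ru = W_v$ and $D(v) = p + rv = W_u$. Now expand: from \eqref{eq:H:class 4} one has $H_x = y^4 + h_{30}x^2 + h_{12}y^2 + h_{11}y$ and $H_y = x^4 + h_{21}x^2 + h_{03}y^2 + h_{11}x$, so every monomial appearing is of the form $x^{2^e}$ or $y^{2^e}$. Hence
\[
W_u = \sqrt{H_x} = v^4 + \sqrt{h_{30}}\,u^2 + \sqrt{h_{12}}\,v^2 + \sqrt{h_{11}}\,v,
\qquad
W_v = \sqrt{H_y} = u^4 + \sqrt{h_{21}}\,u^2 + \sqrt{h_{03}}\,v^2 + \sqrt{h_{11}}\,u
\]
are \emph{additive} polynomials in $u = \sqrt{x}$, $v = \sqrt{y}$. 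Therefore $(W_v, W_u) \colon \Ga^{\oplus 2} \to \Ga^{\oplus 2}$ is a homomorphism of group schemes, and $\Fix(G) = V(W_u, W_v)$ is its kernel. The naive coordinates $(u,v)$ already give the correct $\Ga^{\oplus 2}$-structure; no change of coordinates, no translation-invariance analysis, and no Kondo--Schr\"oer detour is needed. This is exactly how the paper argues.

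The normalisation step is also superfluous for identifying $A^{\sm}$. One checks directly that $\Sing(A)$ lies entirely over $(x=\infty)\cup(y=\infty)$ together with the exceptional locus of $X \to X'$ above $P_\infty$; hence $A^{\sm} = \Spec k[\sqrt{x},\sqrt{y}]$ on the nose, and this open piece of $A$ already coincides with its normalisation. Finally, in the additive case the paper does not compute an $\Ext^1$ class: it simply selects the $\alpha_2$-covering corresponding to the $1$-form $\eta_0 = dx/H_y$, and then the same additive-polynomial argument applies verbatim. The class~$2$ case is parallel.
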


\begin{proof}
We use the equations of Theorem \ref{thm:projective equation of supersingular Kummer}.
First consider the case of class $4$.

We have $\Sing(A) = \Delta_1 \cup \dots \cup \Delta_6$,
where $\Delta_5$ and $\Delta_6$ are the inverse images of the strict transforms of the curves $(x = \infty), (y = \infty) \subset X'$ and 
$\Delta_1 \cup \dots \cup \Delta_4$ is the inverse image of the exceptional divisor of $X \to X'$.

First suppose $h_{11} \neq 0$, equivalently, the Kummer structure is of type $16 A_1$.
We have $A^{\sm} = \Spec k[\sqrt{x}, \sqrt{y}]$.
The derivation $D$ corresponding to the $\mu_2$-action on $A$ is given by 
$D(\sqrt{x}) =  \sqrt{h_{11}^{-1} H_y}, D(\sqrt{y}) = \sqrt{h_{11}^{-1} H_x}$,
and hence the fixed locus $\Fix(G)$ is the closed subscheme $(\sqrt{H_x} = \sqrt{H_y} = 0)$.
Since all the monomials in $H_x$ and $H_y$ are of the form $x^{2^e}$ or $y^{2^e}$, 
the fixed locus is a finite subgroup scheme of $\Ga^{\oplus 2}$
under the identification $\Spec k[\sqrt{x}, \sqrt{y}] \cong \Ga^{\oplus 2}$ given in the obvious way.
In this case, the scheme $\Fix(G)$ is reduced.

Now suppose $h_{11} = 0$, equivalently, the Kummer structure is of additive type.
The set of $1$-forms $\eta$ corresponding to $\alpha_2$-coverings 
is the complement of the union of finitely many hyperplanes in 
$\set{L^2 \eta_0 \mid L \in \spanned{1, x, y}_k} = H^0(X^{\sm}, B_1 \Omega^1) \subset H^0(X^{\sm}, \Omega^1)$.
If $Y$ is the covering corresponding to $L^2 \eta_0$, then 
$\cO_Y$ is generated by $\frac{L}{\sqrt{H_y}} \sqrt{x}$ and $\frac{L}{\sqrt{H_x}} \sqrt{y}$ over $\Spec k[w,x,y]/(w^2 + H) \subset X$.

Among $\alpha_2$-coverings of $X$, we denote by $A$ the one corresponding to the $1$-form $\eta_0$.
Then it is straightforward to observe that $\Sing(A)$ is exactly the points above $(x = \infty) \cup (y = \infty)$ 
and that $A^{\sm} = \Spec k[\sqrt{x}, \sqrt{y}]$.
The derivation $D$ corresponding to $\alpha_2$-action on $A$ is given by 
$D(\sqrt{x}) = \sqrt{H_y}, D(\sqrt{y}) = \sqrt{H_x}$.
As in the case of $16 A_1$, we conclude that $\Fix(G)$ is a finite subgroup scheme of $A^{\sm} \cong \Ga^{\oplus 2}$,
non-reduced in this case.

The case of class $2$ is done in a parallel way.
\end{proof}

\begin{defn}
The \emph{abelian-like covering} of $X$ is the covering $A \to X$ in the proof of Theorem \ref{thm:group structure of abelian-like covering}.
\end{defn}

\begin{rem} \label{rem:group structure using 1-forms}
Here is another way to describe the group structure (we omit the proof).
Choose $P \in \Sing(X)$.
Take $\eta_0$ as in Sections \ref{subsec:proof of equation:class 4} or \ref{subsec:proof of equation:class 2}.
Let $\eta_1, \dots, \eta_4$ be a basis of the space 
$\set{\eta \in H^0(X^{\sm}, Z_{\infty} \Omega^1) \mid \text{$\eta$ vanishes at $P$}}$.
Since any two elements of $H^0(X^{\sm}, \Omega^1)$ are proportional (Proposition \ref{prop:proportional 1-forms:ter}), 
the elements $\eta_0, \dots, \eta_4$ define a rational map $\map{\Phi}{X}{\bP^4}$, which is a morphism.
Let $U = \set{(W_0 : \dots : W_4) \mid W_0 \neq 0} \subset \bP^4$.
Then $\Phi^{-1}(U) = X \setminus \Supp(\Gamma)$, and 
$\map{\Phi \restrictedto{\Phi^{-1}(U)}}{\Phi^{-1}(U)}{U \cap \Phi(X)}$ coincides with $X \setminus \Supp(\Gamma) \to \thpower{(A^{\sm})}{2}$.
Here, if $X$ is of additive type, $A$ is the one chosen in the proof of Theorem \ref{thm:group structure of abelian-like covering}.
We claim that there exist $2$ elements 
$f_1, f_2$ of the form $f_i = \sum_{j = 1}^4 (c_{ij} w_j + d_{ij} w_j^2)$ with $c_{ij}, d_{ij} \in k$, where $w_j = W_j/W_0$,
that generate the ideal defining $U \cap \Phi(X) \subset U = \Spec k[w_1, \dots, w_4]$.
Indeed, this assertion does not depend on the choice of the basis $\eta_1, \dots, \eta_4$,
and it clearly holds if we take 
$x \eta_0, x^2 \eta_0, y \eta_0, y^2 \eta_0$ 
(resp.\ $x \eta_0, t \eta_0, t^2 \eta_0, t^4 \eta_0$)
if $X$ is of class $4$ (resp.\ $2$).
It follows that $U \cap \Phi(X)$ is a subgroup scheme of $U \cong \Ga^{\oplus 4}$.
\end{rem}

\subsection{Normalization} \label{subsec:normalization}

Let $\map{\pi}{A}{X}$ be the abelian-like covering of a contracted inseparable Kummer surface, 
and $\Delta$ be the pullback of $\Zero(\eta_0)$ to the normalization $\normalization{A}$ of $A$.
Then we have $\cO_A = \cO_X + \cO_{\normalization{A}}(-\Delta)$: 
this equality is obvious outside $\Supp(\Delta)$,
and follows from Proposition \ref{prop:normalization} over $X^{\sm}$.

\begin{prop}
Let $X, A, \normalization{A}$, and $\Delta$ be as above,
and suppose $X$ is of class $b \in \set{4, 2}$.
Then the surface $\normalization{A}$ and the divisor $\Delta$ on $\normalization{A}$ 
depends only on $b$, not on the contracted inseparable Kummer surface $X$.
\end{prop}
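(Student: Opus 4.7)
The strategy is to use the canonical identification $A^{\sm} \cong \Ga^{\oplus 2}$ from the proof of Theorem~\ref{thm:group structure of abelian-like covering}---in class~$4$, $A^{\sm} = \Spec k[u,v]$ with $(u,v) = (\sqrt{x}, \sqrt{y})$, and in class~$2$, $A^{\sm} = \Spec k[u,s]$ with $(u,s) = (\sqrt{x}, \sqrt{t})$---together with the explicit equations of Theorem~\ref{thm:projective equation of supersingular Kummer}. In particular, the function field $k(A) = k(u,v)$ (resp.\ $k(u,s)$) and the subring $\cO_{A^{\sm}}$ are intrinsic and independent of the coefficients $h_{ij}$. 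The goal is to extend this independence from $A^{\sm}$ to the full $\normalization{A}$ and to $\Delta$ by showing both are determined by the leading shape of $H$---namely $x^4 y + x y^4$ in class~$4$ and $x^3 + t^9$ in class~$2$---which is common to the entire family.

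For $\normalization{A}$: since $A^{\sm}$ is smooth, $\normalization{A}$ agrees with $A$ on $A^{\sm}$, giving the $\bA^2$ affine chart independently of the $h_{ij}$. The remaining task is to compute $\cO_{\normalization{A}}$ at the generic points of the six components of $\Sing(A) = \Delta_1 \cup \dots \cup \Delta_6$ described in the proof of Theorem~\ref{thm:group structure of abelian-like covering}, and at the points of $\normalization{A}$ lying over $P_\infty$. The key local input is the monomial-by-monomial squaring from the proof of Theorem~\ref{thm:projective equation of supersingular Kummer}: each correction monomial $h_{ij} \cdot x^{a} y^{b}$ of $H$ (with appropriate parity) pulls back to a perfect square in $k[u,v]$ (resp.\ $k[u,s]$) under $x = u^2, y = v^2$ (resp.\ $x = u^2, t = s^2$). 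This ensures that at every generic point of $\Sing(A)$ and at any point of $\normalization{A}$ above $P_\infty$, the integral closure of $\cO_X$ in $k(A)$ is determined purely by the leading shape of $H$, hence is independent of $h_{ij}$. Patching these local descriptions yields a specific normal projective surface (a blow-up of $\bP^1_u \times \bP^1_v$ that resolves the image of the $D_4^0$ singularity above $P_\infty$ in class~$4$, and an analogous surface compatible with the Weierstrass pullback in class~$2$), common across the family.

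For $\Delta$: by Proposition~\ref{prop:normalization}, $\Delta$ is the pullback to $\normalization{A}$ of the divisorial zero locus of the $1$-form $\eta_0 = dx/H_y = dy/H_x$ on $X^{\sm}$. Its multiplicities are determined by local calculations at the generic points of the components of its support, and depend only on the leading bidegree-maximal behaviour of $H$ there: the lower-order $h_{ij}$-dependent terms of $H_x$ and $H_y$ contribute only lower-order corrections that do not alter the orders of vanishing. Combined with the universality of $\normalization{A}$ established above, this yields that $\Delta$ is also the same divisor across the family. The main obstacle is the explicit local analysis at the six generic points of $\Sing(A)$ and along the exceptional curves lying above $P_\infty$: one must describe the line bundle $\cL$ and a local square-root generator $\lambda$ explicitly in coordinates, verify the local integral closures, and read off the vanishing orders of $\eta_0$, confirming manifestly in each case that $h_{ij}$-dependence drops out by virtue of the same characteristic-$2$ monomial squaring identity.
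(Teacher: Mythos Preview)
Your plan is correct and will work, but it misses a conceptual shortcut that the paper exploits. The paper observes that since $k(A)=k(\sqrt{x},\sqrt{y})$ (resp.\ $k(\sqrt{x},\sqrt{t})$), the relative Frobenius identifies $\thpower{k(A)}{2}$ with $k(\Sigma')=k(x,y)$ (resp.\ $k(x,t)$); because $X$ is normal and $\cO_A$ is integral over $\cO_X$, this yields $\thpower{\normalization{A}}{2}=\Sigma$, where $\Sigma$ is defined by $\cO_\Sigma=\cO_X\cap k(\Sigma')$. Over the algebraically closed field $k$ the Frobenius twist is an isomorphism of abstract schemes, so describing $\normalization{A}$ is equivalent to describing $\Sigma$. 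The advantage is that $\Sigma$ sits \emph{below} $X$ rather than above it: since $\psi\colon X\to\Sigma$ is a homeomorphism and $X\to X'$ is just the minimal resolution of the $D_4^0$ point $P_\infty$ (whose analytic type is fixed by the leading part $x^4y+xy^4$, resp.\ $x^3+t^9$), one reads off $\Sigma$ as an explicit iterated blow-up of $\Sigma'$ at points determined by that leading part alone. The divisor $\Delta$ is likewise read off on $\Sigma$.

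Your approach computes the same object directly in the coordinates $u=\sqrt{x},\,v=\sqrt{y}$ (resp.\ $u=\sqrt{x},\,s=\sqrt{t}$), by taking integral closures along the six boundary components and checking that the lower-order $h_{ij}$ drop out. This is sound---normality (S2) guarantees that the codimension-$1$ data you assemble pins down the scheme, and the six boundary valuations are indeed fixed by the leading terms---but it amounts to redoing, in square-root coordinates, exactly the blow-up computation that the Frobenius reduction lets one perform in the original $(x,y)$ (resp.\ $(x,t)$). The paper's route therefore buys a cleaner bookkeeping and an immediate link to the resolution of $P_\infty$, while yours is more hands-on; neither requires an idea the other lacks.
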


Let $\map{\psi'}{X'}{\Sigma'}$ be the covering given in Sections \ref{subsec:proof of equation:class 4} and \ref{subsec:proof of equation:class 2},
and $\map{\psi}{X}{\Sigma}$ be the map induced by $\psi'$ 
(that is, $\psi$ is a homeomorphism and $\cO_{\Sigma} = \cO_X \cap k(\Sigma')$).
We observe that $\thpower{k(A)}{2} = k(\Sigma)$ and hence $\thpower{\normalization{A}}{2} = \Sigma$.
Thus it suffices to prove the assertion for $\Sigma$ and the corresponding divisor, which we also write $\Delta$.
We only give the descriptions and omit the verifications.

Suppose $b = 4$.
Let $\Sigma''$ be the blowup of $\Sigma' = \bP^2 \supset \Spec k[x, y]$ at the point $(x', y') = (0, 0)$, where $x' = 1/x$ and $y' = 1/y$,
and $\Sigma$ the blowup of $\Sigma''$ at the points $(x'/y')^3 = 1$ on the exceptional curve.
Let $\Delta_5$ and $\Delta_6$ be the strict transforms of $(x' = 0), (y' = 0) \subset \Sigma'$,
$\Delta_1$ the strict transform of the exceptional curve of $\Sigma'' \to \Sigma'$,
and $\Delta_2, \Delta_3, \Delta_4$ the exceptional curves of $\Sigma \to \Sigma''$.
Then $\Delta = 3 \Delta_1 + 2 (\Delta_2 + \dots + \Delta_6)$.

Suppose $b = 2$.
Let $\Sigma''$ be the blowup of $\Sigma' = \Sigma_4 \supset \Spec k[x, t]$ at the point $(x', t') = (0, 0)$, where $x' = x/t^4$ and $t' = 1/t$,
and $\Sigma$ the blowup of $\Sigma''$ at the points $(x'/t')^3 = 1$ on the exceptional curve.
Let $\Delta_5$ and $\Delta_6$ be the strict transforms of $(t' = 0), (1/x = 0) \subset \Sigma'$,
$\Delta_1$ the strict transform of the exceptional curve of $\Sigma'' \to \Sigma'$,
and $\Delta_2, \Delta_3, \Delta_4$ the exceptional curves of $\Sigma \to \Sigma''$.
Then $\Delta = 5 \Delta_1 + 4 (\Delta_2 + \dots + \Delta_4) + 6 \Delta_5 + 2 \Delta_6$.

\subsection{Kummer quotients of the abelian-like covering} \label{subsec:Kummer quotients}

If an involution of an abelian surface has nonempty $0$-dimensional fixed locus,
then it is the inversion with respect to some choice of the origin,
and the minimal resolution of the quotient is a Kummer surface (K3 or not).
We now consider an inseparable analogue of this fact.
A substantial difference is that, in general, our $A$ admits many non-isomorphic inseparable Kummer quotients.

\begin{lem} \label{lem:derivation regular on Delta}
Suppose $D$ is a rational derivation on a variety $Y$ in characteristic $2$
satisfying $D^2 = c D$ for some $c \in k$.
Suppose $\cI \subset \cO_Y$ is a sheaf of ideals satisfying $D(\cI) \subset \cO_Y$.
Let $\cR = \Ker(\map{D \restrictedto{\cO_Y}}{\cO_Y}{k(Y)})$ and $\cS = \cR + \cI$ 
(the sum is taken as subsheaves of $k$-vector spaces of $\cO_Y$).
Then $\cR \subset \cS \subset \cO_Y$ are subsheaves of $k$-algebras,
$D$ is a regular derivation on $\cS$, and $\cS^D = \cR$.
The morphism $\sSpec \cS \to \sSpec \cR$ is a $G$-quotient morphism,
with $G = \mu_2$ if $c \neq 0$ and $G = \alpha_2$ if $c = 0$.
\end{lem}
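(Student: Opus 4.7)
The plan is to verify the four conclusions in order; the crucial input of the hypothesis $D^2 = cD$ enters at step two.

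\emph{$\cR$ and $\cS$ are sheaves of $k$-subalgebras of $\cO_Y$.} The first is standard: the kernel of a derivation contains $1$ and is closed under multiplication by the Leibniz rule. For $\cS$, given local sections $r_j \in \cR$, $i_j \in \cI$ ($j=1,2$), the product
\[
(r_1 + i_1)(r_2 + i_2) = r_1 r_2 + (r_1 i_2 + r_2 i_1 + i_1 i_2)
\]
lies in $\cR + \cI$ because $\cI$ is an ideal of $\cO_Y$ and $\cR$ is a subring, so the cross and $\cI\cI$ terms stay in $\cI$. The unit $1 \in \cR \subset \cS$.

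\emph{$D$ restricts to a regular derivation on $\cS$.} This is the key step. Since $D$ vanishes on $\cR$, it suffices to show $D(\cI) \subset \cS$. For a local section $i \in \cI$, write the identity
\[
D(i) = \bigl( D(i) - c\, i \bigr) + c\, i.
\]
Since $c \in k$ and $\cI$ is an ideal, $c\,i \in \cI$; and applying $D$ to the bracketed term gives $D^2(i) - c\,D(i) = 0$ by the assumption $D^2 = cD$, so $D(i) - c\,i \in \cR$. Thus $D(i) \in \cR + \cI = \cS$, so $D|_\cS$ is a well-defined derivation of $\cS$, a priori regular since $D(\cI) \subset \cO_Y$ by hypothesis and $D(\cR) = 0$.

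\emph{The equality $\cS^D = \cR$ and the $G$-structure.} The inclusion $\cR \subset \cS^D$ is immediate. Conversely, any $s \in \cS^D$ may be written $s = r + i$ with $r \in \cR$ and $i \in \cI$; then $0 = D(s) = D(i)$ shows $i \in \cR$, so $s \in \cR$. For the group-scheme structure I invoke the correspondence recalled in Section~\ref{subsec:derivation}. If $c = 0$ then $D|_\cS$ satisfies $D^2 = 0$, so it is of additive type and induces an $\alpha_2$-action with quotient $\sSpec \cR$. If $c \neq 0$, the rescaled derivation $\tilde D := c^{-1} D$ on $\cS$ satisfies $\tilde D^2 = c^{-2} D^2 = c^{-1} D = \tilde D$, so it is of multiplicative type and induces a $\mu_2$-action; since $\Ker \tilde D = \Ker D = \cR$, the quotient is again $\sSpec \cR$.

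The only nonroutine point is the decomposition $D(i) = (D(i) - c\,i) + c\,i$ in step two, which simultaneously exploits that $\cI$ is an ideal (to absorb the $c\,i$ term) and that $D^2 = cD$ (to force the complementary term into $\cR$). Beyond this I do not anticipate any obstacle.
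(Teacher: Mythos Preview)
Your proof is correct and follows essentially the same approach as the paper: the key step is the decomposition $D(i) = (D(i) - c\,i) + c\,i$, which is exactly the paper's observation that $D(\cI) \subset (D - c)(\cI) + c\,\cI \subset \cR + \cI$. The paper dismisses everything else as ``straightforward,'' while you have written out the details.
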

\begin{proof}
$\cS$ is closed under $D$ since $D(\cI) \subset (D - c)(\cI) + c \cI \subset \cR + \cI$.
The remaining assertions are straightforward.
\end{proof}

Now fix $b \in \set{4, 2}$.
Let $\normalization{A}$ and $\Delta$ be as in Section \ref{subsec:normalization}, defined from an inseparable Kummer surface of class $b$.
Despite the notation, they depend only on $b$ and not on $X$ nor $A$.
We regard $A^{\sm} := \normalization{A} \setminus \Supp(\Delta)$ as a $\Ga^{\oplus 2}$-torsor by using $x, y$ (resp.\ $x, t$) as coordinates if $b = 4$ (resp.\ $b = 2$).
For each closed point $P$ of $A^{\sm}$, 
we can equip $A^{\sm}$ with a structure of a group variety by declaring $P$ to be the origin.

We consider the following conditions (\ref{item:k-p-closed})--(\ref{item:fixed locus is a subgroup})
on a rational derivation $D$ on $\normalization{A}$.
\begin{enumerate}[label=(\roman*), ref=\roman*]
\item \label{item:k-p-closed}
$D^2 = c D$ for some $c \in k$.
\item \label{item:regular on -Delta}
$D(\cO_{\normalization{A}}(-\Delta)) \subset \cO_{\normalization{A}}$.
\end{enumerate}
Assuming (\ref{item:k-p-closed}) and (\ref{item:regular on -Delta}), 
we can apply Lemma \ref{lem:derivation regular on Delta} to $D$ and $\cO_{\normalization{A}}(-\Delta)$,
and we write $A = \sSpec \cS$, $X = \sSpec \cR = A^D$.
The surfaces $\normalization{A}$ and $A^{\sm}$ defined above coincide with the normalization and the smooth part of $A$ respectively.
\begin{enumerate}[label=(\roman*), ref=\roman*]
\addtocounter{enumi}{2}
\item \label{item:0-dimensional fixed locus} 
$\Fix(D \restrictedto A)$ is at most $0$-dimensional and contained in $A^{\sm}$. 
\item \label{item:fixed locus is a subgroup}
Moreover, $\Fix(D \restrictedto A)$ is a subgroup scheme of $A^{\sm} \cong \Ga^{\oplus 2}$ for a suitable choice of origin.
\end{enumerate}

\begin{prop} \label{prop:derivations on normalization}
Let $b$ and $\normalization{A}$ be as above.
Suppose $D$ satisfies (\ref{item:k-p-closed})--(\ref{item:0-dimensional fixed locus}). 
\begin{enumerate}
\item \label{item:all derivations}
If $b = 4$ (resp. $b = 2$),
then there exists $H \in k[x, y]$ as in equation \eqref{eq:H:class 4:general}
(resp.\ $H \in k[x, t]$ as in equation \eqref{eq:H:class 2:general})
such that $D$ is a nonzero scalar multiple of 
$H_y \partialdd{x} + H_x \partialdd{y}$ 
(resp.\ $H_t \partialdd{x} + H_x \partialdd{t}$).
\item \label{item:derivations whose fixed locus is a subgroup}
If $b = 4$, then (\ref{item:fixed locus is a subgroup}) always holds.
If $b = 2$, then (\ref{item:fixed locus is a subgroup}) holds if and only if $h_{07} = 0$.
\end{enumerate}
\end{prop}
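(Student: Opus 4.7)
The plan is to work in explicit coordinates on the smooth open $A^{\sm} \subset \normalization{A}$. For $b = 4$, identify $A^{\sm}$ with $\Spec k[u, v]$ where $u^2 = x$ and $v^2 = y$; for $b = 2$, identify $A^{\sm}$ with $\Spec k[u, s]$ where $u^2 = x$ and $s^2 = t$. Any regular derivation on $A^{\sm}$ has the form $D = f \partial_u + g \partial_v$ (respectively $D = f \partial_u + g \partial_s$) for certain polynomials $f, g$ in the coordinate ring. The components of $\Delta$ on $\normalization{A}$ have been made explicit in Section \ref{subsec:normalization}. Condition (\ref{item:regular on -Delta}) then controls the order of pole of $D$ along each component, and in coordinates this translates to explicit degree bounds on $f$ and $g$.

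To complete Part (\ref{item:all derivations}), I would combine these degree bounds with condition (\ref{item:k-p-closed}): the identity $D^2 = cD$ is equivalent to $D(f) = cf$ and $D(g) = cg$, which expands to a pair of polynomial identities in $f, g$. Together with the degree bounds, these identities sharply restrict the admissible pairs, and a direct case analysis on the leading terms shows that $f$ and $g$ must be of the form $f = \lambda \sqrt{F}$ and $g = \lambda \sqrt{G}$ in the coordinate ring, where $F, G$ lie in the subring $k[u^2, v^2] = k[x, y]$ (respectively $k[x, t]$). A short integrability computation then forces $F = H_y$ and $G = H_x$ for a single polynomial $H$ of the form \eqref{eq:H:class 4:general} (respectively \eqref{eq:H:class 2:general}), recovering the symbolic expression $D = \lambda(H_y \partialdd{x} + H_x \partialdd{y})$. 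Condition (\ref{item:0-dimensional fixed locus}) excludes the degenerate cases in which $f$ or $g$ vanishes identically --- which would leave a curve of fixed points --- and thus forces $\lambda \neq 0$.

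For Part (\ref{item:derivations whose fixed locus is a subgroup}), the fixed locus on $A^{\sm}$ is $\Zero(f) \cap \Zero(g)$. Under the identification $A^{\sm} \cong \Ga^{\oplus 2}$, a closed subscheme is a subgroup scheme (for a suitable choice of origin) if and only if, after an appropriate translation, its defining ideal is generated by \emph{additive} polynomials, i.e.\ elements of the form $\sum_i \alpha_i u^{2^i} + \sum_j \beta_j v^{2^j}$ (with only pure powers of a single variable, each exponent a power of $2$). When $b = 4$, every monomial $x^a y^b$ appearing in $H$ as given by \eqref{eq:H:class 4:general} has exponent pair $(2a, 2b)$, so that after differentiating and taking square roots in $k[u, v]$, each contribution is a pure power of $u$ or $v$ with power-of-$2$ exponent; thus $\sqrt{H_y}$ and $\sqrt{H_x}$ are already additive, and (\ref{item:fixed locus is a subgroup}) holds unconditionally. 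When $b = 2$, the same analysis applies to every monomial except $h_{07} t^7$: its contribution to $H_t$ becomes $\sqrt{h_{07}} \, s^6$ in $k[u, s]$ upon taking square roots, and the exponent $6$ is not a power of $2$. Hence (\ref{item:fixed locus is a subgroup}) holds if and only if $h_{07} = 0$.

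The main obstacle is the classification step in Part (\ref{item:all derivations}): one must extract the very specific polynomial shape of $f, g$ from condition (\ref{item:regular on -Delta}), which records pole bounds along several irreducible components of $\Delta$ with differing multiplicities computed in Section \ref{subsec:normalization}, in conjunction with the nonlinear constraint $D^2 = cD$ in (\ref{item:k-p-closed}). The coordinate bookkeeping is delicate. Once $D$ is brought into the explicit form $\lambda (H_y \partialdd{x} + H_x \partialdd{y})$, Part (\ref{item:derivations whose fixed locus is a subgroup}) reduces to a direct verification.
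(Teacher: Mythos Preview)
Your outline for Part~(\ref{item:all derivations}) follows the paper's route, but one step is not doing the work you think it is. The claim ``$f=\lambda\sqrt{F}$ with $F\in k[u^2,v^2]$'' is vacuous in characteristic~$2$: every element of $k[u,v]$ squares into $k[u^2,v^2]$, so this places no constraint on $f$. The real content is the system $f_v=0=g_u$, $f_u=c=g_v$, which the paper extracts by writing out $D^2=cD$ as $f(f_u+c)=gf_v$ and $g(g_v+c)=fg_u$ and then using $\gcd(f,g)=1$; note that this coprimality is exactly condition~(\ref{item:0-dimensional fixed locus}), so it must be invoked here and not merely at the end to exclude $f=0$ or $g=0$. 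Once $f_v=g_u=0$ one has a potential $H$ with $H_{uv}=c$, and condition~(\ref{item:regular on -Delta}) --- including the $\setF_4$-twisted constraints at $\Delta_2,\Delta_3,\Delta_4$, which are more than plain degree bounds --- then determines its monomial support.

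For Part~(\ref{item:derivations whose fixed locus is a subgroup}) your sufficiency argument is correct. The necessity direction for $b=2$ has a genuine gap: knowing that the \emph{particular} generator $\sqrt{H_t}$ contains the non-additive monomial $\sqrt{h_{07}}\,s^6$ does not by itself prevent the ideal from being Hopf, since a Hopf ideal only needs \emph{some} additive generating set, not that every chosen generator be additive (for instance $(u,\,s^2+us)=(u,s^2)$ is Hopf although $s^2+us$ is not additive). The paper closes this by projecting $\Fix(D)$ to the $t$-line: the $t$-coordinates of the fixed points are cut out by the eliminant $h_{11}^2H_x+H_t^2$, a polynomial in $t$ alone; if $\Fix(D)$ is a subgroup scheme then so is its image in $\Ga$, forcing this eliminant to be a $2$-power of an additive polynomial and hence its $t^{12}$-coefficient $h_{07}^2$ to vanish.
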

Case $b = 4$:
\begin{gather} \label{eq:H:class 4:general}
H = x^4 y + x y^4 + \sum_{(i, j) \in I} h_{ij} x^i y^j,
\\ I = \set{(3, 0), (2, 1), (1, 2), (0, 3), (1, 1), \underline{(1, 0)}, \underline{(0, 1)}}. \notag
\end{gather}

Case $b = 2$:
\begin{gather} \label{eq:H:class 2:general}
H = x^3 + t^9 + \sum_{(i, j) \in I} h_{ij} x^i t^j, 
\\ I = \set{\underline{(2, 1)}, (1, 1), (1, 2), \underline{(1, 4)}, (0, 3), (0, 5), (0, 7), \underline{(1, 0)}, \underline{(0, 1)}}. \notag
\end{gather}

The coefficients for the underlined elements in $I$ can be eliminated by twisting by automorphisms, see Remark \ref{rem:simplification}.

\begin{proof}
(\ref{item:all derivations}) This is proved by a tedious computation.

Sketch in the case $b = 4$:
We have $D = f \partialdd{x} + g \partialdd{y}$ with $f, g \in k[x, y]$,
and $f$ and $g$ have no common divisor by assumption (\ref{item:0-dimensional fixed locus}).
This and the equalities $c f = f_x f + f_y g$, $c g = g_x f + g_y g$ (which follow from assumption (\ref{item:k-p-closed}))
imply that $f_x = c$, $f_y = 0$, $g_x = 0$, $g_y = c$.
This in particular implies that there exists $H$ with $H_x = g$, $H_y = f$, and $H_{xy} = c$,
and we may assume $H$ has no monomials $x^i y^j$ with both $i$ and $j$ even.
Assumption (\ref{item:regular on -Delta}) at the divisors $\Delta_5$ and $\Delta_6$ imply that
$f$ and $g$ are of the form
$f = \sum_{0 \leq i \leq 4, 0 \leq j \leq 2} f_{ij} x^i y^j$,
$g = \sum_{0 \leq i \leq 2, 0 \leq j \leq 4} g_{ij} x^i y^j$.
By assumption (\ref{item:regular on -Delta}) at the divisors $\Delta_2, \Delta_3, \Delta_4$,
these conditions should hold after coordinate changes of the form $y' = y + a x$ for any $a \in \setF_4$.
It follows that $f_{42} = f_{22} = g_{24} = g_{22} = 0$ and $f_{40} = g_{04}$.
Since it has no fixed points on $\Supp(\Delta)$, we obtain $f_{40} \neq 0$.

Case $b = 2$ is done similarly.

(\ref{item:derivations whose fixed locus is a subgroup})
By Theorem \ref{thm:group structure of abelian-like covering}, we have the group structure if $b = 4$ or if $b = 2$ and $h_{07} = 0$.
Conversely, suppose $b = 2$ and $\Fix(D)$ is a subgroup of $\Spec k[x, t] \cong \Ga^{\oplus 2}$ for a suitable choice of origin.
The $t$-coordinates of the points of $\Fix(D)$, counted with multiplicity, are exactly the roots of 
\[
f(t) := h_{11}^2 H_x + H_t^2 = h_{11}^3 t + h_{11}^2 h_{12} t^2 + \sum_{j = 3,5,7,9} h_{0j}^2 t^{2(j-1)}.
\]
Then $f(t) = h_{09}^2 g(t)^m$, where $m$ is the multiplicity of each point of $\Fix(D)$ and 
$g(t)$ is the monic polynomial whose roots are the elements of the subgroup $\Fix(D)_{\red} \subset \Ga$.
It is easy to show that then nonzero coefficients of $g$ appear only on degree $0$ or $2^e$.
Since $m = \deg f / \deg g$ is a power of $2$, the same holds for $f$.
Hence $h_{07} = 0$.
\end{proof}

\begin{rem} \label{rem:simplification}
In this remark, an \emph{automorphism} is that of the surface $\normalization{A}$ preserving every component of $\Delta$.

If $b = 4$ then, 
up to an automorphism of the form $(x, y) \mapsto (x - a, y - a')$,
we can assume that $h_{10} = h_{01} = 0$, hence it is as in equation \eqref{eq:H:class 4},
and $\Sing(X)$ is as in Proposition \ref{prop:singularities}(\ref{item:singularities:class 4}).

If $b = 2$ then, 
up to an automorphism of the form $(x, t) \mapsto (x - a_0 - a_1 t - a_2 t^2, t - a')$,
we can assume that $h_{21} = h_{14} = h_{10} = h_{01} = 0$.
Hence, it is as in equation \eqref{eq:H:class 2} if and only if $h_{07} = 0$,
and in that case $\Sing(X)$ is as in Proposition \ref{prop:singularities}(\ref{item:singularities:class 2}).

Note that these automorphisms preserve the group structure up to changing the origin.
\end{rem}

\begin{prop}
Let $b$ and $\normalization{A}$ be as above,
and suppose $D$ is a derivation satisfying conditions (\ref{item:k-p-closed})--(\ref{item:fixed locus is a subgroup}).
If $\Sing(X)$ consists only of RDPs, then $X$ is a contracted inseparable Kummer surface with abelian-like covering $A$.
\end{prop}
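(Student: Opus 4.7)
The plan is to reduce to the converse direction of Theorem \ref{thm:projective equation of supersingular Kummer}.

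First, I apply Proposition \ref{prop:derivations on normalization}(\ref{item:all derivations}) to write $D$ (after rescaling) as $H_y \partialdd{x} + H_x \partialdd{y}$ for some $H$ of the form \eqref{eq:H:class 4:general} when $b=4$, and analogously for $b=2$. In the case $b=2$, condition (\ref{item:fixed locus is a subgroup}) combined with Proposition \ref{prop:derivations on normalization}(\ref{item:derivations whose fixed locus is a subgroup}) forces $h_{07}=0$. The translation automorphisms of Remark \ref{rem:simplification} then absorb the underlined coefficients of $H$, reducing it to the exact shape of equation \eqref{eq:H:class 4} or \eqref{eq:H:class 2}.

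Next, I identify $X = \sSpec \cR$ with the surface of Theorem \ref{thm:projective equation of supersingular Kummer}. The key computation takes place on $A^{\sm} = \Spec k[x, y]$ (resp.\ $\Spec k[x, t]$): the functions $u := x^2$, $v := y^2$, and $w := H$ are $D$-invariant (the last because $D(H) = 2 H_x H_y = 0$ in characteristic $2$), and the Freshman's dream yields $H^2 = \tilde H(x^2, y^2)$ with $\tilde H$ obtained by squaring the coefficients of $H$. Remarkably, $\tilde H$ again has the shape of \eqref{eq:H:class 4}. A direct $k[u, v]$-module computation, using that $k[x,y]$ is free of rank $4$ over $k[u,v]$ with basis $\set{1, x, y, xy}$ and analyzing the system $D\phi = 0$ (which collapses to $S a_1 = S a_2 = 0$ and $a_1 Q + a_2 R = 0$ with $a_1 = H_y$, $a_2 = H_x$), yields $k[x, y]^D = k[u, v, w]/(w^2 - \tilde H)$ in both the multiplicative ($h_{11} \neq 0$) and additive ($h_{11} = 0$) cases.

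Globally, using the description of $\normalization{A}$ and $\Delta$ in Section \ref{subsec:normalization} together with the formula $\cO_A = \cO_X + \cO_{\normalization{A}}(-\Delta)$, I extend this to show that $X = \sSpec \cR$ is the surface of Theorem \ref{thm:projective equation of supersingular Kummer}, with $P_\infty$ an RDP of type $D_4$. Once this identification is made, the converse direction of Theorem \ref{thm:projective equation of supersingular Kummer} applies: since $\Sing(X)$ consists only of RDPs by hypothesis, the $16$ exceptional curves of the minimal resolution $\tilde X$ (other than those over $P_\infty$) form a Kummer structure. Finally, matching $D\restrictedto{A^{\sm}}$ with the explicit derivation from the proof of Theorem \ref{thm:group structure of abelian-like covering} confirms that $A$ is the abelian-like covering constructed there.

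The main obstacle will be the global identification in the latter half of Step 2: while the invariant-ring computation on the affine chart $A^{\sm}$ is direct, one must trace $\cR$ carefully along the singular divisor $\Delta$, using condition (\ref{item:regular on -Delta}) to control $D$ there, in order to confirm that the compactification of $\Spec k[u, v, w]/(w^2 - \tilde H)$ equals the scheme $X$ on the nose and not merely up to birational equivalence. This reduces to a local check at each component of $\Delta$ from the explicit formulas for $D$ and the vanishing orders encoded in $\Delta$.
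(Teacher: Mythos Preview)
Your approach is essentially the same as the paper's, which gives a one-line proof: ``By Proposition \ref{prop:derivations on normalization} and Remark \ref{rem:simplification}, $X$ admits an equation as in Theorem \ref{thm:projective equation of supersingular Kummer}.'' You have simply filled in the details that the paper leaves implicit, namely the explicit computation of the invariant ring $k[x,y]^D$ on the affine chart and the global identification of $X$ with the surface of Theorem \ref{thm:projective equation of supersingular Kummer}. The paper treats these as already established by the way Sections \ref{subsec:normalization} and \ref{subsec:Kummer quotients} are set up (in particular, $\normalization{A}$ and $\Delta$ are defined to coincide with the output of the earlier construction, so the identification along $\Delta$ is tautological once the affine-chart computation is done). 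Your worry about the global identification along $\Delta$ is therefore not a genuine obstacle but part of the framework.
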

\begin{proof}
By Proposition \ref{prop:derivations on normalization} and Remark \ref{rem:simplification}, 
$X$ admits an equation as in Theorem \ref{thm:projective equation of supersingular Kummer}.
\end{proof}

\begin{prop} \label{prop:dependence on h of A}
Let $b$ and $\normalization{A}$ be as above,
and suppose $D$ is a derivation satisfying conditions (\ref{item:k-p-closed})--(\ref{item:0-dimensional fixed locus})
(not necessarily (\ref{item:fixed locus is a subgroup})),
with the simplification in Remark \ref{rem:simplification} applied.

If $b = 4$, then the subalgebra $\cO_A \subset \cO_{\normalization{A}}$ depends only on $h_{30}, h_{21}, h_{12}, h_{03}$, and not on $h_{11}$.

If $b = 2$, then the subalgebra $\cO_A \subset \cO_{\normalization{A}}$ depends only on $h_{12}, h_{05}, h_{07}$, and not on $h_{11}, h_{03}$. 
\end{prop}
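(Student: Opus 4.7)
The plan is to combine the description $\cO_A = \cO_X + \cO_{\normalization{A}}(-\Delta)$ of Section~\ref{subsec:normalization} with a check at each height-$1$ prime of $\cO_{\normalization{A}}$. The ambient $\cO_{\normalization{A}}$ and the ideal $\cO_{\normalization{A}}(-\Delta)$ depend only on $b$; only the sub-$\cO_\Sigma$-algebra $\cO_X = \cO_\Sigma + w\cO_\Sigma$, with $w = \sqrt{H}$, depends on the $h_{ij}$. Thus I need to show that the contribution of $h_{11}$ (and of $h_{03}$ as well when $b = 2$) to $w$ is absorbed by the $\cO_{\normalization{A}}(-\Delta)$ summand.

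Because $A$ is reduced and Gorenstein (Proposition~\ref{prop:covering of inseparable Kummer surface}) with $\Sing(A) = \Supp(\Delta)$ of pure codimension $1$, the cokernel $\cO_{\normalization{A}}/\cO_A$ has no embedded components. Two subalgebras of $\cO_{\normalization{A}}$ of the stated form therefore coincide as soon as they agree at every height-$1$ prime of $\cO_{\normalization{A}}$; outside $\Supp(\Delta)$ both equal $\cO_{\normalization{A}}$, so the proposition reduces to a local check at each generic point $\eta_{\Delta_i}$ of a component of $\Supp(\Delta)$.

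At each $\eta_{\Delta_i}$, the completion of $\cO_{\normalization{A}}$ is a DVR $K[[\pi]]$ with $\pi^2$ a uniformizer of $\cO_{\Sigma, \eta_{\Delta_i}}$ and $K$ a purely inseparable quadratic extension of the residue field of $\Sigma$ at $\eta_{\Delta_i}$. I will expand $w = \sqrt{H}$ as a finite Laurent polynomial in $\pi$ using the identity $\sqrt{\sum a_n t^n} = \sum \sqrt{a_n}\pi^n$ valid in characteristic~$2$, and then determine the conditions on $(a,b,c) \in \cO_\Sigma \times \cO_\Sigma \times \pi^{m_i}\cO_{\normalization{A}}$ for $a + bw + c$ to lie in $K[[\pi]]$; requiring that no negative $\pi$-powers survive forces the initial coefficients $b_0, b_1, \dots$ of $b$ to vanish successively. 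The key observation is that in both cases $b \in \{4,2\}$, every Taylor coefficient of $w$ in which $\sqrt{h_{11}}$ (and, when $b = 2$, also $\sqrt{h_{03}}$) appears is multiplied by one of the $b_n$ already forced to be zero; the remaining contributions at sufficiently high $\pi$-power are absorbed by $c$. These coefficients therefore do not enter the relations cutting out $\cO_A$ inside $K[[\pi]]$.

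The main obstacle is the explicit Laurent expansion of $\sqrt{H}$ at each of the six generic points of components of $\Supp(\Delta)$, but the $x \leftrightarrow y$ symmetry when $b = 4$, the evident symmetry among the four exceptional divisors over the $D_4$ at $P_\infty$, and the analogous structural reductions for $b = 2$ cut this down to only a handful of model computations.
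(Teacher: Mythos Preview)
Your approach is correct and essentially the same as the paper's: both use the identity $\cO_A = \cO_X + \cO_{\normalization{A}}(-\Delta)$, localize at (generic) points of each component of $\Delta$, and check which coefficients $h_{ij}$ of $H$ are encoded in the resulting local presentation. The paper's proof is a sketch that simply asserts the reduction to such points; your writeup supplies the justification (via the absence of embedded components in $\cO_{\normalization{A}}/\cO_A$) and spells out the Laurent-expansion mechanism, but the underlying strategy is identical.
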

\begin{proof}
Sketch:
Let $U = \Spec R \subset \normalization{A}$ be an affine open subscheme. 
Let $t \in R$ be a generator of $\cO_{\normalization{A}}(-\Delta)$ and
$v$ a generator of the $\thpower{R}{2}$-module $k(\thpower{A}{2}) H \cap R$.
Then we have $\Gamma(U, \cO_A) = (\thpower{R}{2} \oplus \thpower{R}{2} v) + t R$.

We compute such a presentation at a general point of each component of $\Delta$,
and check which coefficients of $H$ can be recovered from $\thpower{R}{2} v$ modulo $t R$.
It turns out that some coefficients cannot be recovered.
\end{proof}
\begin{rem}
To be precise, from $\cO_A$ we can recover only the ratio of the mentioned coefficients
in the sense that $(h_{30} : h_{21} : h_{12} : h_{03}) = (r h_{30} : r h_{21} : r h_{12} : r h_{03})$ for any $r \in k^*$
(resp.\ $(h_{12} : h_{05} : h_{07}) = (r^2 h_{12} : r^2 h_{05} : r h_{07})$).
\end{rem}

\section{Comparison with other works} \label{sec:final remarks}

\subsection{Kondo--Schr\"oer's construction} \label{subsec:comparison with Kondo--Schroer}
Our family of class $4$ (given in Theorem \ref{thm:projective equation of supersingular Kummer}) can be viewed as a generalization of the constructions of 
Schr\"oer \cite{Schroer:Kummer 2} and Kondo--Schr\"oer \cite{Kondo--Schroer:Kummer 2}, which we now recall.

Let $C_i = \Spec k[u_i^2, u_i^3] \cup \Spec k[u_i^{-1}]$ ($i = 1, 2$), which are cuspidal rational curves, and we denote the cusp by $0 \in C_i$.
Let $A' = C_1 \times C_2$, which is a non-normal surface with $\Sing(A') = (\set{0} \times C_2) \cup (C_1 \times \set{0})$.
Schr\"oer and Kondo--Schr\"oer gave certain families of vector fields $D$ (of additive or multiplicative type) on $A'$
and considered the quotients $X' = A'^D$.
We consider only the case where $X'$ is an RDP K3 surface.
In Schr\"oer's case, $X'$ is given by $X' = (w^2 = H(x,y))$ (where $x = u_1^{-2}$, $y = u_2^{-2}$) using elements $H$ of the form
\[ H = x^4 y + x y^4 + h_{21} x^2 y + h_{12} x y^2, \] 
$(h_{21}, h_{12}) \neq (0, 0)$,
and $\Sing(X')$ is $5 D_4^0$ or $D_4^0 + 2 D_8^0$.
In Kondo--Schr\"oer's case, $H$ is of the form 
\[ H = x^4 y + x y^4 + h_{21} x^2 y + h_{12} x y^2 + h_{11} x y + h_{10} x + h_{01} y,  \] 
and $\Sing(X')$ is as in the previous case if $h_{11} = 0$ 
and $D_4^0 + 16 A_1$ if $h_{11} \neq 0$.
Note that, by a suitable coordinate change of the form $u_i^{-1} \mapsto u_i^{-1} + a_i$, we may assume $h_{10} = h_{01} = 0$ in Kondo--Schr\"oer's case.
Hence their families are special cases of our inseparable Kummer surfaces of class $4$ ($h_{03} = h_{30} = 0$).

A conceptual difference is that they considered $G$-coverings $A' = C_1 \times C_2 \to X'$
but we view $A \to X$ as fundamental objects, 
where $X \to X'$ is the minimal resolution of the RDP of type $D_4$ at $P_{\infty} = (x = y = \infty)$.
While their $A'$ is constant on their family,
our $A$ also depends on the coefficients $h_{21}$ and $h_{12}$, as seen in Proposition \ref{prop:dependence on h of A}.

They also showed that every supersingular K3 surface in characteristic $2$ of Artin invariant at most $3$ 
are realized as (the minimal resolution of) their construction.
This can be compared with Theorem \ref{thm:characterization of supersingular Kummer}.

\subsection{Good reduction of usual Kummer surfaces} \label{subsec:supersingular reduction}

Let $\cO_K$ be a Henselian discrete valuation ring of mixed characteristic $(0, p)$,
and $K = \Frac \cO_K$.
Suppose that $A_K$ is an abelian surface over $K = \Frac \cO_K$,
that it has good reduction over $\cO_K$, which means that the reduction $A_k$ is an abelian surface over $k$.
Then, after replacing $\cO_K$ with a finite extension, 
the Kummer surface $\Km(A_K)$ has good reduction, i.e.,
there is a proper smooth algebraic space $\cX$ over $\cO_K$ with $\cX \otimes_{\cO_K} K \cong \Km(A_K)$.
This was known if either $p \neq 2$ or $A_k$ is non-supersingular, 
in which case the special fiber $X_k$ is naturally isomorphic to $\Km(A_k)$.
The case where $p = 2$ and $A_k$ is supersingular is our recent result (\cite{Matsumoto:kummerred}*{Theorem 1.2}).
Moreover, our proof shows that in that case $\cX_k$ (which is automatically a K3 surface) is supersingular and
that the image by the specialization map of $\Sing(A_K/{\pm 1})$
gives one of the following configuration of curves on the special fiber $\cX_k$:
$16 A_1, 4 D_4, 2 D_8, D_{16}, 2 E_8$,
and that $\cX_k$ is naturally equipped with a Kummer structure of corresponding type.
In fact, the equations of $\cX_k$ (\cite{Matsumoto:kummerred}*{Section 4.3.1 and 4.3.2}) 
are almost identical to the ones given in Theorem \ref{thm:projective equation of supersingular Kummer}. 
(We have examples where types $16 A_1, 4 D_4, 2 D_8$ occur. We do not have examples with $1 D_{16}, 2 E_8$.)

\subsection*{Acknowledgments}
The idea of considering abelian-like coverings to generalize Schr\"oer's construction
came out from discussions with Shigeyuki Kondo.
I thank him heartily.

I thank Kazuhiro Fujiwara, Gebhard Martin, Yukiyoshi Nakkajima, Stefan Schr\"oer, and Fuetaro Yobuko
for helpful comments and discussions.

\begin{bibdiv}
	\begin{biblist}
		\bib{Artin:wild2}{article}{
  author={Artin, M.},
  title={Wildly ramified $Z/2$ actions in dimension two},
  journal={Proc. Amer. Math. Soc.},
  volume={52},
  date={1975},
  pages={60--64},
  issn={0002-9939},
}

\bib{Artin:RDP}{article}{
  author={Artin, M.},
  title={Coverings of the rational double points in characteristic $p$},
  conference={ title={Complex analysis and algebraic geometry}, },
  book={ publisher={Iwanami Shoten, Tokyo}, },
  date={1977},
  pages={11--22},
}

\bib{Badescu:surfaces}{book}{
  author={B\u {a}descu, Lucian},
  title={Algebraic surfaces},
  series={Universitext},
  note={Translated from the 1981 Romanian original by Vladimir Ma\c {s}ek and revised by the author},
  publisher={Springer-Verlag, New York},
  date={2001},
  pages={xii+258},
  isbn={0-387-98668-5},
}

\bib{Bombieri--Mumford:III}{article}{
  author={Bombieri, E.},
  author={Mumford, D.},
  title={Enriques' classification of surfaces in char. $p$. III},
  journal={Invent. Math.},
  volume={35},
  issn={0020-9910},
  date={1976},
  pages={197--232},
}

\bib{Illusie:deRham--Witt}{article}{
  author={Illusie, Luc},
  title={Complexe de de\thinspace Rham-Witt et cohomologie cristalline},
  language={French},
  journal={Ann. Sci. \'{E}cole Norm. Sup. (4)},
  volume={12},
  date={1979},
  number={4},
  pages={501--661},
  issn={0012-9593},
}

\bib{Ito--Ito--Koshikawa:CMliftings}{article}{
  author={Ito, Kazuhiro},
  author={Ito, Tetsushi},
  author={Koshikawa, Teruhisa},
  title={CM liftings of $K3$ surfaces over finite fields and their applications to the Tate conjecture},
  journal={Forum Math. Sigma},
  volume={9},
  date={2021},
  pages={Paper No. e29, 70},
}

\bib{Katsura:Kummer2}{article}{
  author={Katsura, Toshiyuki},
  title={On Kummer surfaces in characteristic $2$},
  conference={ title={Proceedings of the International Symposium on Algebraic Geometry}, address={Kyoto Univ., Kyoto}, date={1977}, },
  book={ publisher={Kinokuniya Book Store, Tokyo}, },
  date={1978},
  pages={525--542},
}

\bib{Katsura--Takeda:quotients}{article}{
  author={Katsura, Toshiyuki},
  author={Takeda, Y.},
  title={Quotients of abelian and hyperelliptic surfaces by rational vector fields},
  journal={J. Algebra},
  volume={124},
  date={1989},
  number={2},
  pages={472--492},
  issn={0021-8693},
}

\bib{Kim--MadapusiPera:2-adic}{article}{
  author={Kim, Wansu},
  author={Madapusi Pera, Keerthi},
  title={2-adic integral canonical models},
  journal={Forum Math. Sigma},
  volume={4},
  date={2016},
  pages={e28, 34},
}

\bib{Kondo--Schroer:Kummer2}{article}{
  author={Kond\={o}, Shigeyuki},
  author={Schr\"{o}er, Stefan},
  title={Kummer surfaces associated with group schemes},
  journal={Manuscripta Math.},
  volume={166},
  date={2021},
  number={3-4},
  pages={323--342},
  issn={0025-2611},
}

\bib{Liedtke--Martin--Matsumoto:RDPtors}{article}{
  author={Liedtke, Christian},
  author={Martin, Gebhard},
  author={Matsumoto, Yuya},
  title={Torsors over the rational double points in characteristic $p$},
  year={2021},
  eprint={https://arxiv.org/abs/2110.03650v1},
  note={to appear in Ast\'erisque},
}

\bib{MadapusiPera:TateK3}{article}{
  author={Madapusi Pera, Keerthi},
  title={The Tate conjecture for K3 surfaces in odd characteristic},
  journal={Invent. Math.},
  volume={201},
  date={2015},
  number={2},
  pages={625--668},
  issn={0020-9910},
}

\bib{Matsumoto:rdpderi}{article}{
  author={Matsumoto, Yuya},
  title={Purely inseparable coverings of rational double points in positive characteristic},
  journal={J. Singul.},
  volume={24},
  pages={79--95},
  year={2022},
}

\bib{Matsumoto:k3alphap}{article}{
  author={Matsumoto, Yuya},
  title={$\mu _p$- and $\alpha _p$-actions on K3 surfaces in characteristic $p$},
  volume={32},
  date={2023},
  pages={271--322},
  year={2023},
  journal={J. Algebraic Geom.},
}

\bib{Matsumoto:k3rdpht}{article}{
  author={Matsumoto, Yuya},
  title={Inseparable maps on $W_n$-valued Ext groups of non-taut rational double point singularities and the height of K3 surfaces},
  year={2023},
  journal={J. Commut. Algebra},
  volume={15},
  number={3},
  pages={377--404},
}

\bib{Matsumoto:kummerred}{article}{
  author={Matsumoto, Yuya},
  title={Supersingular reduction of Kummer surfaces in residue characteristic $2$},
  year={2023},
  eprint={https://arxiv.org/abs/2302.09535v1},
}

\bib{Nikulin:Kummersurfaces}{article}{
  author={Nikulin, V. V.},
  title={Kummer surfaces},
  language={Russian},
  journal={Izv. Akad. Nauk SSSR Ser. Mat.},
  volume={39},
  date={1975},
  number={2},
  pages={278--293, 471},
  issn={0373-2436},
  note={English translation: Math. USSR. Izv. {\bf 9} (1975), no. 2, 261--275 (1976).},
}

\bib{Ogus:crystallinetorelli}{article}{
  author={Ogus, Arthur},
  title={A crystalline Torelli theorem for supersingular $K3$\ surfaces},
  conference={ title={Arithmetic and geometry, Vol. II}, },
  book={ series={Progr. Math.}, volume={36}, publisher={Birkh\"auser Boston, Boston, MA}, },
  date={1983},
  pages={361--394},
}

\bib{Rudakov--Shafarevich:finite}{article}{
  author={Rudakov, A. N.},
  author={Shafarevich, I. R.},
  title={Surfaces of type $K3$\ over fields of finite characteristic},
  language={Russian},
  conference={ title={Current problems in mathematics, Vol. 18}, },
  book={ publisher={Akad. Nauk SSSR, Vsesoyuz. Inst. Nauchn. i Tekhn. Informatsii, Moscow}, },
  date={1981},
  pages={115--207},
  note={English translation: J. of Soviet Math. {\bf 22} (1983), no. 4, 1476--1533.},
}

\bib{Schroer:Kummer2}{article}{
  author={Schr\"{o}er, Stefan},
  title={Kummer surfaces for the self-product of the cuspidal rational curve},
  journal={J. Algebraic Geom.},
  volume={16},
  date={2007},
  number={2},
  pages={305--346},
  issn={1056-3911},
}

\bib{SGA3-1}{collection}{
  title={Sch\'{e}mas en groupes (SGA 3). Tome I. Propri\'{e}t\'{e}s g\'{e}n\'{e}rales des sch\'{e}mas en groupes},
  language={French},
  series={Documents Math\'{e}matiques (Paris) [Mathematical Documents (Paris)]},
  volume={7},
  editor={Gille, Philippe},
  editor={Polo, Patrick},
  note={S\'{e}minaire de G\'{e}om\'{e}trie Alg\'{e}brique du Bois Marie 1962--64. [Algebraic Geometry Seminar of Bois Marie 1962--64]; A seminar directed by M. Demazure and A. Grothendieck with the collaboration of M. Artin, J.-E. Bertin, P. Gabriel, M. Raynaud and J-P. Serre; Revised and annotated edition of the 1970 French original},
  publisher={Soci\'{e}t\'{e} Math\'{e}matique de France, Paris},
  date={2011},
  pages={xxviii+610},
  label={SGA3-1},
}

\bib{Serre:topologie}{article}{
  author={Serre, Jean-Pierre},
  title={Sur la topologie des vari\'{e}t\'{e}s alg\'{e}briques en caract\'{e}ristique $p$},
  language={French},
  conference={ title={Symposium internacional de topolog\'{\i }a algebraica International symposium on algebraic topology}, },
  book={ publisher={Universidad Nacional Aut\'{o}noma de M\'{e}xico and UNESCO, M\'{e}xico}, },
  date={1958},
  pages={24--53},
}

\bib{Shioda:Kummer2}{article}{
  author={Shioda, Tetsuji},
  title={Kummer surfaces in characteristic $2$},
  journal={Proc. Japan Acad.},
  volume={50},
  date={1974},
  pages={718--722},
  issn={0021-4280},
}

	\end{biblist}
\end{bibdiv}

\end{document}